\date{}\textheight=28.1cm \voffset=-4cm \textwidth=18.6cm \hoffset=-3.25cm
\title{Asymptotic Analysis of Boundary Layer Solutions to Poisson--Boltzmann Type Equations in General Bounded Smooth Domains}
\def\_email#1@#2\q_nil{\href{mailto:#1@#2}{{\emailfont #1\emailampersat #2}}}
\newcommand\emailampersat{{\color{cyan}\small@}}
\author{Jhih-Hong Lyu \thanks{Department of Mathematics, National Taiwan University, Taipei 10617, Taiwan (\tt d06221001@ntu.edu.tw).}, Tai-Chia Lin \thanks{Department of Mathematics, National Taiwan University, Taipei 10617, Taiwan; National Center for Theoretical Sciences, Mathematics Division, Taipei 10617, Taiwan ({\tt tclin@math.ntu.edu.tw}).}}
\newtheorem{theorem}{Theorem}
\newtheorem{remark}{Remark}
\newtheorem{proposition}{Proposition}[section]
\newtheorem{corollary}{Corollary}
\newtheorem{lemma}[proposition]{Lemma}
\newtheorem{claim}{Claim}
\numberwithin{equation}{section}
\Crefname{claim}{Claim}{Claims}
\Crefname{corollary}{Corollary}{Corollaries}
\newcommand{\dist}{\operatorname{dist}}
\newcommand{\sgn}{\operatorname{sgn}}
\newcommand{\dd}{\displaystyle}
\newcommand{\R}{\mathbb{R}}
\newcommand{\C}{\mathcal{C}}
\newcommand{\N}{\mathbb{N}}
\newcommand{\W}{\mathcal{W}}
\begin{document}
\maketitle
\begin{abstract}
We study the boundary layer solution to singular perturbation problems involving Poisson--Boltzmann (PB) type equations with a small parameter $\epsilon$ in general bounded smooth domains (including multiply connected domains) under the Robin boundary condition.
The PB type equations include the classical PB, modified PB and charge-conserving PB (CCPB) equations, which are mathematical models for the electric potential and ion distributions.
The CCPB equations present particular analytical challenges due to their nonlocal nonlinearity introduced through integral terms enforcing charge conservation.
Using the principal coordinate system, exponential-type estimates and the moving plane agruments, we rigorously prove asymptotic expansions of boundary layer solutions throughout the whole domain.
The solution domain is partitioned into three characteristic regions based on the distance from the boundary:
\begin{itemize}
\item Region I, where the distance from the boundary is at most $T\sqrt\epsilon$,
\item Region II, where the distance ranges between $T\sqrt\epsilon$ and $\epsilon^\beta$,
\item Region III, where the distance is at least $\epsilon^\beta$,
\end{itemize}
for given parameters $T>0$ and $0<\beta<1/2$.
In Region I, we derive second-order asymptotic formulas explicitly incorporating the effects of boundary mean curvature, while exponential decay estimates are established for Regions II and III.
Furthermore, we obtain asymptotic expansions for key physical quantities, including the electric potential, electric field, total ionic charge density and total ionic charge, revealing how domain geometry regulates electrostatic interactions.
\vspace{5mm}\\{\small {\bf Key words.} boundary layer solutions, PB type equations, Robin boundary condition, mean curvature effect}
\vspace{1mm}\\
{\small {\bf AMS subject classifications.}  35B25, 35C20, 35J60}
\end{abstract}

\section{Introduction}
When a charged surface (e.g., electrode, membrane, colloid) is in contact with an electrolyte, a structured layer of charge known as the electric double layer (EDL) forms.
The EDL plays a crucial role in various physical, engineering, and biological systems (cf. \cite{2022dourado,1999evans,2021petsev}).
It typically consists of two distinct regions:
\begin{itemize}
\item Stern Layer --- The region closest to the surface, where ions are strongly adsorbed, governed by the surface charge density;
\item Diffuse Layer --- A region extending into the electrolyte, where ion distributions are influenced by electrostatic interactions and thermal fluctuations.
\end{itemize}
The behavior of the diffuse layer can be described by different types of Poisson--Boltzmann (PB) equations, including:
\begin{itemize}
\item The classical PB equation, which models charge screening in electrolytes (cf. \cite{2023blossey,2002fogolari,2003lamm});
\item The modified PB equation, which incorporates ion size effects and steric interactions (cf. \cite{2011bazant,1997borukhov,2007grochowski,2009li,2013li,2011lu,2023lyu});
\item The charge-conserving PB equation, which enforces global charge neutrality (cf. \cite{2012fontelos,2011lee,2016lee,2006ryham,2014wan}). 
\end{itemize}
To analyze the structure of the diffuse layer, we study boundary layer solutions to PB-type equations in general bounded smooth domains (including multiply connected domains). This analysis provides key insights into electrostatic interactions in complex geometries, with applications in electrochemistry, biophysics, and materials science (cf. \cite{2009altman,2004baker,2015clarke,2008dong,2021eakins,2023elisea-espinoza,2023khlyipin,1995yang,1997yang}).

The classical PB and modified PB equations can be represented by
\begin{align}
\label{eq.1.01}
-\epsilon\Delta\phi_{\epsilon}=f(\phi_{\epsilon})\quad\text{in}~\Omega,
\end{align}
where $\epsilon>0$ is the dielectric constant and $\phi_{\epsilon}$ is the electric potential.
Here $f=f(\phi)$ is a smooth function for the total ionic charge density which satisfies the following conditions.
\begin{enumerate}
\item[(A1)] The function $f=f(\phi)$ is smooth, strictly decreasing in $\phi$, and satisfies\[m_f=m_f(\mathcal{K}):=\sqrt{-\max_{\phi\in\mathcal{K}}f'(\phi)}>0\quad\text{for any compact interval $\mathcal{K}\subseteq\R$.}\]
\item[(A2)] The function $f$ has a unique zero point $\phi^{*}$, i.e., $f(\phi^{*})=0$, where $\phi^{*}$ is called as reference potential (cf. \cite{1995andelman,2018gray}).
\end{enumerate}
For example, function $f(\phi)=\sum\limits_{i=1}^{I}z_{i}c_{i}^{\text{b}}\exp(-z_{i}\phi)$ describes the total ionic charge density of the classical PB equation, and satisfies conditions (A1) and (A2), where $I$ is the number of ion species, $z_{i}\neq0$ is the valence and $c_i^{\text{b}}$ is the concentration of the $i$th species in the bulk (cf. \cite{2002fogolari,2003lamm}).

The charge-conserving PB (CCPB) equation was derived from the static limit of the Poisson--Nernst--Planck equations in order to guarantee charge neutrality within electrolyte domains (cf. \cite{2014wan}).
The CCPB equation can be denoted as
\begin{align}
\label{eq.1.02}
-\epsilon\Delta\phi_{\epsilon}=\sum_{i=1}^{I}\frac{m_{i}z_{i}\exp(-z_{i}\phi_{\epsilon})}{\int_\Omega\!\exp(-z_{i}\phi_{\epsilon}(y))\,\mathrm{d}y}\quad\text{in}~\Omega,
\end{align}
where the constant $m_{i}>0$ is the total concentration of species $i$ with valence $z_{i}\neq0$ for $i=1,\dots,I$.
For charge neutrality, we assume
\begin{align}
\label{eq.1.03}
\sum_{i=1}^{I}m_{i}z_{i}=0.
\end{align}
Due to the integral term $\int_{\Omega}\!\exp(-z_{i}\phi_{\epsilon}(y))\,\mathrm{d}y$, \eqref{eq.1.02} has nonlocal nonlinearity which makes \eqref{eq.1.02} more difficult than \eqref{eq.1.01}.
Note that equation \eqref{eq.1.02} can be denoted as $-\epsilon\Delta\phi_{\epsilon}=f_{\epsilon}(\phi_{\epsilon}):=\sum\limits_{i=1}^{I}z_{i}c_{i,\epsilon}^{\mathrm b}\exp(-z_{i}\phi_{\epsilon})$ in $\Omega$, which has the same form as the classical PB equation.
Here $f_{\epsilon}(\phi)=\sum\limits_{i=1}^{I}z_{i}c_{i,\epsilon}^{\mathrm b}\exp(-z_{i}\phi)$ presents the total ionic charge density (also see \eqref{eq.3.003}) and $c_{i,\epsilon}^{\mathrm b}=m_{i}\big\slash\left(\int_\Omega\!\exp(-z_{i}\phi_{\epsilon}(y))\,\mathrm{d}y\right)$ is the concentration of the $i$th ion species in the bulk.

For the general bounded smooth domain $\Omega\subseteq\R^d$ ($d\geq2$), we assume
\begin{itemize}
\item[(A3)] $\Omega=\Omega_{0}-\bigcup_{k=1}^K\Omega_{k}$, $K\in\N\cup\{0\}$ (the number of holes) and $\Omega_k$ are bounded, smooth, simply connected domains with $\Omega_k\subset\subset\Omega_{0}$ for $k\in\{1,\dots,K\}$ and $\dist(\Omega_{i},\Omega_{j})>0$ for $i,j\in\{1,\dots,K\}$ and $i\neq j$ (see \Cref{figure:1}). Here $\dist$ denotes the distance.
\end{itemize}
\begin{figure}[!htb]\centering\includegraphics[scale=0.3]{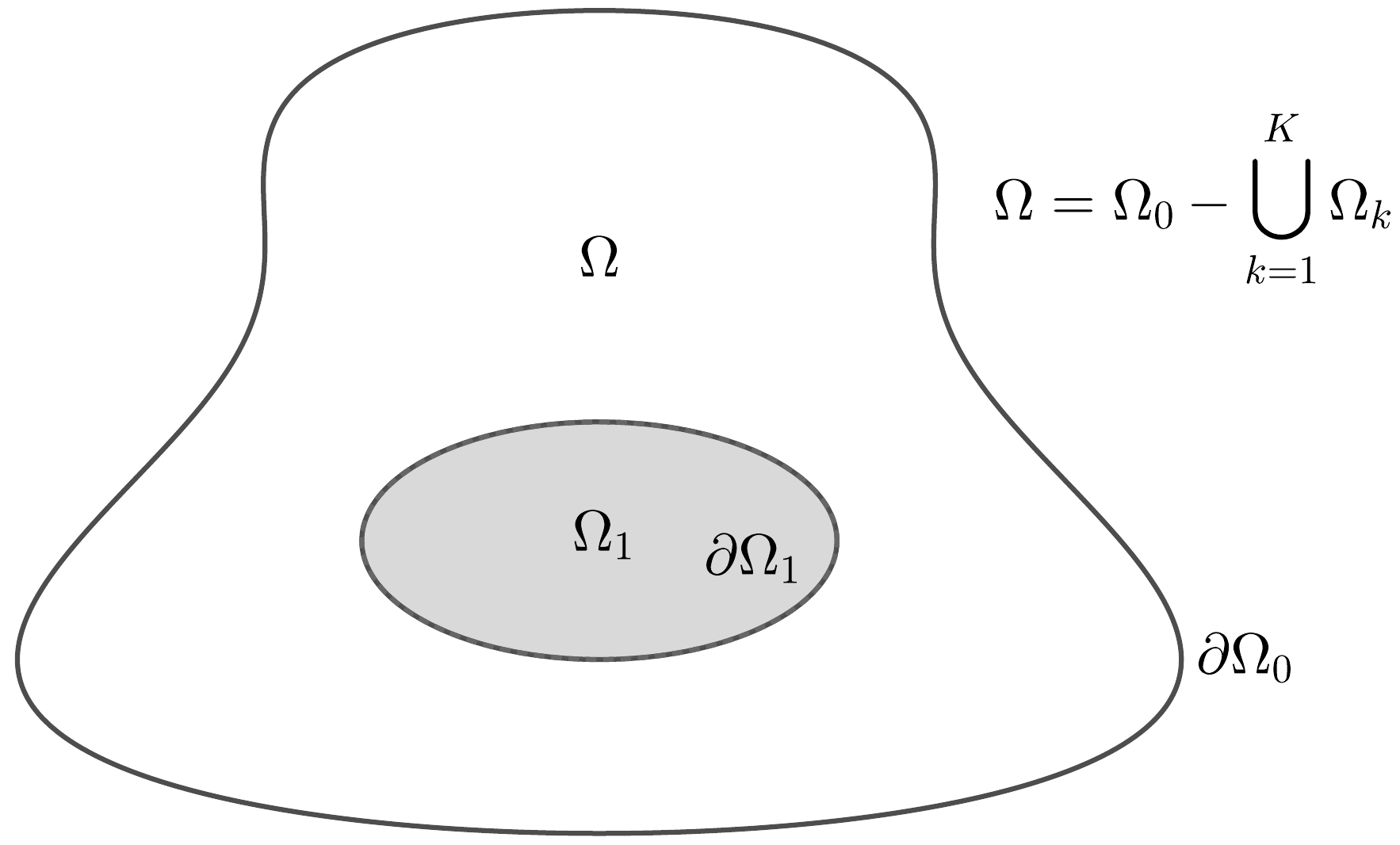}
\caption{We sketch the schematic diagram of bounded smooth domain $\Omega=\Omega_{0}-\bigcup_{k=1}^{K}\Omega_{k}$ and $K=1$, where $\Omega$ has $(K+1)$ boundaries $\partial\Omega_{k}$ for $k=0,1,\dots,K$.}
\label{figure:1}\end{figure}
The boundary condition of \eqref{eq.1.01} and \eqref{eq.1.02} is the Robin boundary condition given by
\begin{align}
\label{eq.1.04}
&\phi_{\epsilon}+\gamma_k\sqrt\epsilon\partial_\nu\phi_{\epsilon}=\phi_{bd,k}\quad\text{on}~\partial\Omega_k~\text{for}~k=0,1,\dots,K,
\end{align}
where $\phi_{bd,k}$ is a constant for the given external electric potential, and $\gamma_k>0$ is the ratio of Stern-layer width to the Debye screening length (cf. \cite{2005bazant,2010das,2006olesen,2010plouraboue}).
As $\phi_{bd,k}=\phi^{*}$ for $k=0,1,\dots,K$, equation \eqref{eq.1.01} with condition \eqref{eq.1.04} has only a trivial solution $\phi_{\epsilon}\equiv\phi^{*}$ so we study equation \eqref{eq.1.01} with condition \eqref{eq.1.04} under the assumption that $\phi_{bd,k}\neq\phi^{*}$ for $k=0,1,\dots,K$.
When constants $\phi_{bd,k}$ are equal, the solution $\phi_{\epsilon}$ to equation \eqref{eq.1.02} with condition \eqref{eq.1.04} is trivial. This leads us to assume that constants $\phi_{bd,k}$ are not equal. When the domain $\Omega=\Omega_{0}$ is simply connected (i.e., condition (A3) with $K=0$), the solution $\phi_{\epsilon}$ to equation \eqref{eq.1.02} with condition \eqref{eq.1.04} is also trivial.
Thus, we study equation \eqref{eq.1.02} with condition \eqref{eq.1.04} under the assumptions that $\phi_{bd,k}$ are not equal and the domain $\Omega$ satisfies (A3) with $K\in\N$.

To find the boundary layer solutions to PB type equations in domain $\Omega$, we assume $\epsilon>0$ (related to the Debye length) as a small parameter tending to zero (cf. \cite{1997barcilon,2015fellner,2015gebbie,2019kamysbayev,2007mori}), and study the singular perturbation problems of \eqref{eq.1.01} and \eqref{eq.1.02} with the Robin boundary condition \eqref{eq.1.04}.
We characterize the asymptotic expansions of solution $\phi_{\epsilon}$ to equation \eqref{eq.1.01} with condition \eqref{eq.1.04} across three distinct regions based on their distance from the boundary:
\begin{itemize}
\item Region I ($\Omega_{k,T,\epsilon}$), where the distance from the boundary $\partial\Omega_k$ is at most $T\sqrt\epsilon$,
\item Region II ($\Omega_{k,T,\epsilon,\beta}$), where the distance from the boundary $\partial\Omega_k$ ranges between $T\sqrt\epsilon$ and $\epsilon^\beta$,
\item Region III ($\Omega_{\epsilon,\beta}$), where the distance from the boundary $\partial\Omega$ is at least $\epsilon^\beta$,
\end{itemize}
for given parameters $T>0$ and $0<\beta<1/2$.
Regions I and II are called tubular neighborhoods around $\partial\Omega_{k}$ (cf. \cite{2004gray}).
A schematic illustration of these regions is shown in \Cref{figure:2}.
\begin{figure}[!htb]\centering\includegraphics[scale=0.27]{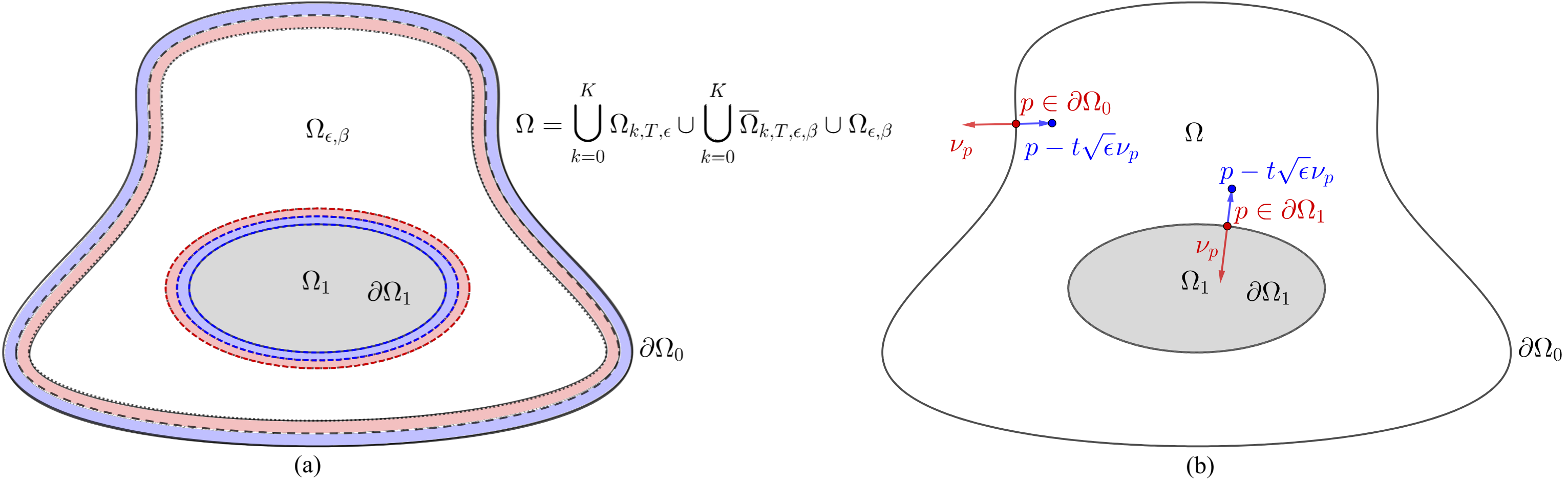}
\caption{Let $0<\beta<1/2$, $T>0$ and $K=1$. In (a), we present the schematic diagram of regions $\Omega_{k,T,\epsilon}=\{x\in\Omega\,:\,\dist(x,\partial\Omega_k)<T\sqrt\epsilon\}$ for $k=0,1$ (blue regions), $\overline\Omega_{k,T,\epsilon,\beta}=\{x\in\Omega\,:\,T\sqrt\epsilon\leq\dist(x,\partial\Omega_k)\leq\epsilon^\beta\}$ for $k=0,1$ (red regions), and $\Omega_{\epsilon,\beta}=\{x\in\Omega\,:\,\dist(x,\partial\Omega)>\epsilon^\beta\}$ (white region) as $\epsilon>0$ sufficiently small. The union of these regions constitutes $\Omega$. In (b), we sketch the point $p-t\sqrt{\epsilon}\nu_{p}$ near the boundary $\partial\Omega_k$ for $k=0,1$, where $p\in\partial\Omega_k$ and $0\leq t\leq\epsilon^{(2\beta-1)/2}$, as $\epsilon\to0^+$. Here $\nu_{p}$ is the unit outer normal at $p$ with respect to $\Omega$.}
\label{figure:2}\end{figure}

\newpage
The results for equation \eqref{eq.1.01} with condition \eqref{eq.1.04} are stated as follows.
\begin{theorem}
\label{theorem:1}
Assume that (A1)--(A3) hold true, and that $\phi_{bd,k}\neq\phi^{*}$ for $k=0,1,\dots,K$.
Let $\phi_{\epsilon}\in\C^\infty(\overline\Omega)$ be the unique solution to equation \eqref{eq.1.01} with condition \eqref{eq.1.04}.
Let $k\in\{0,1,\dots,K\}$, $p\in\partial\Omega_k$, and $T>0$ be arbitrary.
Then
\begin{itemize}
\item[(a)] 
\begin{align}
\label{eq.1.05}
&\phi_{\epsilon}(p-t\sqrt{\epsilon}\nu_{p})=u_{k}(t)+\sqrt\epsilon\Big((d-1)H(p)v_{k}(t)+o_{\epsilon}(1)\Big),\\
\label{eq.1.06}
&\nabla\phi_{\epsilon}(p-t\sqrt{\epsilon}\nu_{p})=-\left(\frac{1}{\sqrt\epsilon}u_{k}'(t)+(d-1)H(p)v_{k}'(t)\right)\nu_{p}+o_{\epsilon}(1)
\end{align}
for $0\leq t\leq T$ as $\epsilon\to0^+$, where $p-t\sqrt{\epsilon}\nu_{p}\in\overline\Omega_{k,T,\epsilon}=\{x\in\overline\Omega\,:\,0\leq\dist(x,\partial\Omega_k)\leq T\sqrt\epsilon\}$ (see \Cref{figure:2}).
Here $H(p)$ and $\nu_{p}$ denote the mean curvature and unit outer normal at $p$ with respect to $\Omega$, respectively.
The term $o_{\epsilon}(1)$, tending to zero as $\epsilon$ goes to zero, depends on $T$ but is independent of $p\in\partial\Omega_k$, which means
\begin{align*}&\lim_{\epsilon\to0^+}\sup_{p\in\partial\Omega_k,\,t\in[0,T]}\left|\frac{1}{\sqrt\epsilon}(\phi_{\epsilon}(p-t\sqrt{\epsilon}\nu_{p})-u_{k}(t))-(d-1)H(p)v_{k}(t)\right|=0,\\&\lim_{\epsilon\to0^+}\sup_{p\in\partial\Omega_k,\,t\in[0,T]}\left|\nabla\phi_{\epsilon}(p-t\sqrt{\epsilon}\nu_{p})+\left(\frac{1}{\sqrt\epsilon}u_{k}'(t)+(d-1)H(p)v_{k}'(t)\right)\nu_{p}\right|=0.\end{align*}
The functions $u_{k}(t)$ and $v_{k}(t)$ are the unique solutions to
\begin{align}
\label{eq.1.07}
&u_{k}''+f(u_{k})=0\quad\text{in}~(0,\infty),\\
\label{eq.1.08}
&u_{k}(0)-\gamma_k u_{k}'(0)=\phi_{bd,k},\\
\label{eq.1.09}
&\lim_{t\to\infty}u_{k}(t)=\phi^{*},
\end{align}and
\begin{align}
\label{eq.1.10}
&v_{k}''+f'(u_{k})v_{k}=u_{k}'\quad\text{in}~(0,\infty),\\
\label{eq.1.11}
&v_{k}(0)-\gamma_kv_{k}'(0)=0,\\
\label{eq.1.12}
&\lim_{t\to\infty}v_{k}(t)=0.
\end{align}
\item[(b)] There exists a positive constant $\epsilon^{*}>0$ such that
\begin{itemize}
\item[(i)]
\begin{align}
\label{eq.1.13}
|\phi_{\epsilon}(p-t\sqrt{\epsilon}\nu_{p})-\phi^{*}|\leq M'\exp(-Mt),\quad|\nabla\phi_{\epsilon}(p-t\sqrt{\epsilon}\nu_{p})|\leq\frac{M'}{\sqrt\epsilon}\exp(-Mt)
\end{align}
for $0<\epsilon<\epsilon^{*}$, $0<\beta<1/2$ and $T\leq t\leq\epsilon^{(2\beta-1)/2}$, i.e., $p-t\sqrt{\epsilon}\nu_{p}\in\overline\Omega_{k,T,\epsilon,\beta}=\{x\in\Omega\,:\,T\sqrt\epsilon\leq\dist(x,\partial\Omega_k)\leq\epsilon^\beta\}$;
\item[(ii)]
\begin{align}
\label{eq.1.14}
|\phi_{\epsilon}(x)-\phi^{*}|\leq M'\exp\left(-M\epsilon^{(2\beta-1)/2}\right),\qquad|\nabla\phi_{\epsilon}(x)|\leq\frac{M'}{\sqrt\epsilon}\exp\left(-M\epsilon^{(2\beta-1)/2}\right)
\end{align}
for $0<\epsilon<\epsilon^{*}$, $0<\beta<1/2$ and $x\in\overline\Omega_{\epsilon,\beta}=\{x\in\Omega\,:\,\dist(x,\partial\Omega)\geq\epsilon^\beta\}$.
\end{itemize}
Here $M>0$ and $M'>0$ are constants independent of $\epsilon$.\end{itemize}\end{theorem}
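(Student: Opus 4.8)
The plan is to combine maximum–principle a priori bounds, an explicit study of the profile ODEs, a matched–asymptotics construction of a global approximate solution, and a linearized error estimate. First I record the a priori bound
\[\min\{\phi^{*},\min_{k}\phi_{bd,k}\}\le\phi_{\epsilon}\le\max\{\phi^{*},\max_{k}\phi_{bd,k}\}\quad\text{on }\overline\Omega,\]
coming from the Robin maximum principle applied to the monotone operator $-\epsilon\Delta-f$ (at an interior maximum of $\phi_{\epsilon}$ the inequality $-\epsilon\Delta\phi_{\epsilon}\ge0$ forces $\phi_{\epsilon}\le\phi^{*}$, while at a boundary maximum $\partial_{\nu}\phi_{\epsilon}\ge0$ forces $\phi_{\epsilon}\le\phi_{bd,k}$, and symmetrically for minima); hence $\phi_{\epsilon}$ stays in a fixed compact interval $\mathcal K$ with $f'(\phi_{\epsilon})\le-m_{f}(\mathcal K)^{2}<0$ uniformly in $\epsilon$. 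Next I analyze the half–line problems \eqref{eq.1.07}--\eqref{eq.1.09} and \eqref{eq.1.10}--\eqref{eq.1.12}. For $u_{k}$, the first integral $\tfrac12(u_{k}')^{2}+F(u_{k})\equiv F(\phi^{*})$ (where $F'=f$, so $\phi^{*}$ is the strict maximum of $F$) identifies $u_{k}$ with the unique orbit on the stable manifold of $(\phi^{*},0)$; the Robin condition \eqref{eq.1.08} selects the initial point via the intermediate value theorem, and linearization at $(\phi^{*},0)$ yields $|u_{k}(t)-\phi^{*}|+|u_{k}'(t)|+|u_{k}''(t)|\le Ce^{-ct}$. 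For $v_{k}$, I use that $u_{k}'$ solves the homogeneous linearized equation and decays, so variation of parameters produces an exponentially decaying solution of the inhomogeneous problem; uniqueness under the homogeneous Robin condition \eqref{eq.1.11} follows from the energy identity $-\gamma_{k}h'(0)^{2}-\int_{0}^{\infty}h'^{2}+\int_{0}^{\infty}f'(u_{k})h^{2}=0$, whose three terms are all nonpositive.

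In each tubular neighborhood $\{0<\dist(x,\partial\Omega_{k})<\delta_{0}\}$ — disjoint for $\delta_{0}$ small by (A3) — I introduce principal (Fermi) coordinates $(s,p)$ with $s=\dist(x,\partial\Omega_{k})$ and $p$ the nearest boundary point, set $t=s/\sqrt\epsilon$, and define the approximate solution
\[\Phi_{\epsilon}(x)=\phi^{*}+\chi(s)\Big(u_{k}(t)-\phi^{*}+\sqrt\epsilon\,(d-1)H(p)\,v_{k}(t)\Big),\]
with $\chi\equiv1$ on $[0,\delta_{0}/2]$, $\chi\equiv0$ on $[\delta_{0},\infty)$, and $\Phi_{\epsilon}\equiv\phi^{*}$ away from $\bigcup_{k}\partial\Omega_{k}$. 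Expanding the Laplacian as $\Delta=\partial_{s}^{2}+(\Delta\dist)\partial_{s}+\Delta_{\mathrm{tan}}$ with $\Delta\dist|_{\partial\Omega_{k}}=-(d-1)H$, and using $\epsilon\partial_{s}^{2}=\partial_{t}^{2}$, $\sqrt\epsilon\,\partial_{s}=\partial_{t}$, the identities \eqref{eq.1.07} and \eqref{eq.1.10} make the $O(1)$ and $O(\sqrt\epsilon)$ parts of $-\epsilon\Delta\Phi_{\epsilon}-f(\Phi_{\epsilon})$ cancel identically; the remaining contributions (the $s$–dependence of $\Delta\dist$, the tangential Laplacian of $p\mapsto H(p)$, the quadratic Taylor remainder of $f$, and the cutoff terms, the latter supported where $u_{k},v_{k}$ are exponentially small) are $O(\epsilon)$ uniformly once the exponential decay of the profiles is invoked. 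A direct evaluation at $s=0$ using \eqref{eq.1.08} and \eqref{eq.1.11} shows $\Phi_{\epsilon}$ satisfies \eqref{eq.1.04} exactly. Hence $R_{\epsilon}:=-\epsilon\Delta\Phi_{\epsilon}-f(\Phi_{\epsilon})$ obeys $\|R_{\epsilon}\|_{L^{\infty}(\Omega)}=O(\epsilon)$.

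Writing $\psi_{\epsilon}=\phi_{\epsilon}-\Phi_{\epsilon}$, Taylor's theorem gives $-\epsilon\Delta\psi_{\epsilon}+c_{\epsilon}\psi_{\epsilon}=R_{\epsilon}$ with $c_{\epsilon}=-f'(\eta_{\epsilon})\ge m_{f}^{2}>0$ and homogeneous Robin data, so comparison with the constant $m_{f}^{-2}\|R_{\epsilon}\|_{L^{\infty}}$ yields $\|\psi_{\epsilon}\|_{L^{\infty}}=O(\epsilon)$; rescaling $x\mapsto x/\sqrt\epsilon$ on balls of radius $c\sqrt\epsilon$ and applying interior and boundary (Robin) $W^{2,p}$/Schauder estimates gives $\|\nabla\psi_{\epsilon}\|_{L^{\infty}}=O(\sqrt\epsilon)$. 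In Region I one has $\chi\equiv1$, so $\Phi_{\epsilon}(p-t\sqrt\epsilon\,\nu_{p})=u_{k}(t)+\sqrt\epsilon(d-1)H(p)v_{k}(t)$ and, by differentiating, $\nabla\Phi_{\epsilon}(p-t\sqrt\epsilon\,\nu_{p})=-\big(\tfrac1{\sqrt\epsilon}u_{k}'(t)+(d-1)H(p)v_{k}'(t)\big)\nu_{p}+O(\sqrt\epsilon)$; adding the estimates for $\psi_{\epsilon}$ proves \eqref{eq.1.05}--\eqref{eq.1.06} with $o_{\epsilon}(1)=O(\sqrt\epsilon)$, uniformly in $p\in\partial\Omega_{k}$. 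For part (b), in $\{0<s<\delta_{0}\}$ I use the barrier $\overline w(x)=M'\big(e^{-Ms/\sqrt\epsilon}+e^{-M(\delta_{0}-s)/\sqrt\epsilon}\big)$ with a fixed $M\in(0,m_{f})$ and $M'\ge\max_{k}|\phi_{bd,k}-\phi^{*}|$: since $|\Delta\dist|$ is bounded on the collar and $-f'\ge m_{f}^{2}$, $\overline w$ is a supersolution of the operator satisfied by $w:=\phi_{\epsilon}-\phi^{*}$ for $\epsilon$ small, it dominates $|w|$ on $\{s=\delta_{0}\}$ by the a priori bound, and $\overline w+\gamma_{k}\sqrt\epsilon\,\partial_{\nu}\overline w\to M'(1+\gamma_{k}M)$ on $\{s=0\}$ handles the Robin comparison there; the Robin maximum principle then gives $|w|\le2M'e^{-Ms/\sqrt\epsilon}$ for $s\le\epsilon^{\beta}$, i.e. \eqref{eq.1.13}, and the gradient bound follows from the same rescaled interior estimate. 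Finally the maximum principle for $-\epsilon\Delta-f'(\xi_{\epsilon})$ on $\overline\Omega_{\epsilon,\beta}$ reduces \eqref{eq.1.14} to \eqref{eq.1.13} evaluated at $t=\epsilon^{(2\beta-1)/2}$.

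The main obstacle is the construction and residual estimate of $\Phi_{\epsilon}$: one must carry out the Laplacian expansion in principal coordinates carefully enough to see the exact cancellation of the $O(1)$ and $O(\sqrt\epsilon)$ orders against \eqref{eq.1.07} and \eqref{eq.1.10}, control the curvature variation and the tangential derivatives of $H$, and verify that the cutoff contributes only exponentially small errors — all uniformly in $p$ and in $\epsilon$. A secondary difficulty is choosing a barrier in part (b) that is simultaneously a supersolution on the whole collar $\{0<s<\delta_{0}\}$ and large enough on the inner face $\{s=\delta_{0}\}$, which is why the two–exponential barrier is used rather than $e^{-Ms/\sqrt\epsilon}$ alone.
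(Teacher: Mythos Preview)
Your approach is correct and constitutes a genuinely different proof from the paper's. The paper does \emph{not} build an approximate solution and estimate the error; instead it runs a two-step blow-up/compactness scheme. For the first order it rescales near each $p\in\partial\Omega_k$ via principal coordinates, passes to a subsequential $\C^{2,\alpha}_{\mathrm{loc}}$ limit $u_{k,p}$ solving $\Delta u_{k,p}+f(u_{k,p})=0$ in $\R^d_+$ with the Robin condition, proves an exponential bound for $u_{k,p}$ by comparison with radial solutions on balls, and then uses a \emph{moving plane argument} in $\R^d_+$ to show that $u_{k,p}$ depends only on $z^d$ and hence equals $u_k$. The same machinery is repeated for $\varphi_{k,\epsilon}=(\phi_\epsilon-u_k(\delta_k/\sqrt\epsilon))/\sqrt\epsilon$: uniform boundedness via maximum principle, compactness, and moving planes identify the limit as $(d-1)H(p)v_k$. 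Part~(b) is obtained from comparison with radial solutions on interior balls (Proposition~2.6).

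Your matched-asymptotics route is more direct and yields the quantitatively sharper conclusion $o_\epsilon(1)=O(\sqrt\epsilon)$ in \eqref{eq.1.05}--\eqref{eq.1.06}, with uniformity in $p$ coming for free from the global $L^\infty$ error bound on $\psi_\epsilon$. The paper's compactness route, by contrast, gives only $o_\epsilon(1)$ and must in principle argue uniformity in $p$ separately (via compactness of $\partial\Omega_k$); on the other hand it discovers the form of the correction rather than positing it, and the moving-plane Liouville step is reusable machinery that the paper leans on again for the nonlocal CCPB equation (Theorem~2), where the nonlinearity $f_\epsilon$ depends on $\phi_\epsilon$ itself and an explicit ansatz is not available until after a bootstrapping argument. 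Your two-exponential barrier for part~(b) is also cleaner than the paper's comparison with radial solutions in touching balls.
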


\begin{remark}
\label{remark:1}
In electrostatics, the solution $\phi_{\epsilon}$ represents the electric potential, which is of order $\mathcal{O}_\epsilon(1)$ (cf. \eqref{eq.1.05}), while the associated electric field $-\nabla\phi_{\epsilon}$ is of order $\mathcal{O}_\epsilon(1/\sqrt\epsilon)$ (cf. \eqref{eq.1.06}) in Region I, denoted $\Omega_{k,T,\epsilon}$.
Reflecting geometric enhancement or suppression, the boundary mean curvature $H(p)$ contributes to the second-order corrections in the asymptotic expansions \eqref{eq.1.05} and \eqref{eq.1.06}, where $u_{k}$ and $v_{k}$ are solutions to equations \eqref{eq.1.07}--\eqref{eq.1.12} and satisfy the following properties.
When the external electric potential $\phi_{bd,k}>\phi^{*}$ and $k\in\{0,1,\dots,K\}$, the function $u_{k}(t)\in(\phi^{*},\phi_{bd,k})$ is strictly decreasing for $t\in(0,\infty)$, and $v_{k}$ is positive on $(0,\infty)$, strictly increasing on $(0,t_k^{*})$ and strictly decreasing on $(t_k^{*},\infty)$, where $t_k^{*}0$ is a constant.
Conversely, when $\phi_{bd,k}<\phi^{*}$, $u_{k}(t)\in(\phi_{bd,k},\phi^{*})$ is strictly increasing for $t\in(0,\infty)$, and $v_{k}$ is negative on $(0,\infty)$, strictly decreasing on $(0,t_k^{*})$ and strictly increasing on $(t_k^{*},\infty)$ (cf. \Cref{proposition:B.1,proposition:C.1}).
The asymptotic formulas \eqref{eq.1.05}--\eqref{eq.1.06} thus provide approximations for the electric potential and electric field induced by an applied potential difference.
In addition, the exponential decay estimates are given by \eqref{eq.1.13} with respect to the variable $t$ in Region II ($\Omega_{k,T,\epsilon,\beta}$), and by \eqref{eq.1.14} with respect to the parameter $\epsilon$ in Region III ($\Omega_{\epsilon,\beta}$), highlighting the localized nature of boundary layer phenomena.
\end{remark}

The total ionic charge density plays crucial roles in the behavior of biological and physical systems (cf. \cite{1989russel}).
Using formulas \eqref{eq.1.05}, \eqref{eq.1.13} and \eqref{eq.1.14} along with Taylor expansion of $f$, we derive the asymptotic expansion of the total ionic charge density $f(\phi_{\epsilon})$ as follows.
Let $k\in\{0,1,\dots,K\}$, $p\in\partial\Omega_k$, $T>0$, and $0<\beta<1/2$ be arbitrary.
\begin{itemize}
\item In Region I ($\Omega_{k,T,\epsilon}$),
\begin{align}
\label{eq.1.15}
f(\phi_{\epsilon}(p-t\sqrt{\epsilon}\nu_{p}))=f(u_{k}(t))+\sqrt\epsilon[(d-1)H(p)f'(u_{k}(t))v_{k}(t)+o_{\epsilon}(1)]\quad\text{for}~0\leq t\leq T;\end{align}
\item In Region II ($\Omega_{k,T,\epsilon,\beta}$),
\[|f(\phi_{\epsilon}(p-t\sqrt{\epsilon}\nu_{p}))|\leq M'\exp(-Mt)\quad\text{for}~T\leq t\leq\epsilon^{(2\beta-1)/2};\]
\item In Region III ($\Omega_{\epsilon,\beta}$),
\[|f(\phi_{\epsilon}(x))|\leq M'\exp(-M\epsilon^{(2\beta-1)/2})\quad\text{for}~x\in\overline\Omega_{\epsilon,\beta},\]
\end{itemize}
as $0<\epsilon<\epsilon^{*}$, where $\epsilon^{*}$ comes from \Cref{theorem:1}(b), $M'$ and $M$ are generic positive constants independent of $\epsilon$.
Here we have used the condition $f(\phi^{*})=0$, which follows from assumption (A2).

The total ionic charge within regions $\overline\Omega_{k,T,\epsilon}$, $\overline\Omega_{k,T,\epsilon,\beta}$ and $\overline\Omega_{\epsilon,\beta}$ can be expressed by the integral of the total ionic charge density $f(\phi_{\epsilon})$ over $\overline\Omega_{k,T,\epsilon}$, $\overline\Omega_{k,T,\epsilon,\beta}$ and $\overline\Omega_{\epsilon,\beta}$, respectively. By \Cref{theorem:1}, we have the following results.
\begin{corollary}\label{corollary:1}
Under the hypothesis of \Cref{theorem:1}, if $\phi_{bd,k}>\phi^{*}$ (or $\phi_{bd,k}<\phi^{*}$), then we have
\begin{itemize}
\item[(a)] $\int_{\overline\Omega_{k,T,\epsilon}}\!f(\phi_{\epsilon}(x))\,\mathrm{d}x=\sqrt\epsilon|\partial\Omega_k|(u_{k}'(0)-u_{k}'(T))$\\${\color{white}\int_{\overline\Omega_{k,T,\epsilon}}\!f(\phi_{\epsilon}(x))\,\mathrm{d}x=}+\,\epsilon(d-1)\left(\int_{\partial\Omega_k}\!H(p)\,\mathrm{d}S_p\right)(Tu_{k}'(T)+v_{k}'(0)-v_{k}'(T))+\epsilon o_{\epsilon}(1)<0$ (or $>0$),
\item[(b)] $\int_{\overline\Omega_{k,T,\epsilon,\beta}}\!f(\phi_{\epsilon}(x))\,\mathrm{d}x=\sqrt\epsilon|\partial\Omega_k|u_{k}'(T)+\epsilon(d-1)\left(\int_{\partial\Omega_k}\!H(p)\,\mathrm{d}S_p\right)(-Tu_{k}'(T)+v_{k}'(T))+\epsilon o_{\epsilon}(1)<0$ (or $>0$),
\item[(c)] $\left|\int_{\overline\Omega_{\epsilon,\beta}}\!f(\phi_{\epsilon}(x))\,\mathrm{d}x\right|\leq\sqrt\epsilon M'\exp\left(-M\epsilon^{(2\beta-1)/2}\right)$
\end{itemize}
for $0<\epsilon<\epsilon^{*}$ and $0<\beta<1/2$, where $|\partial\Omega_k|$ is the surface area of $\partial\Omega_k$.
\end{corollary}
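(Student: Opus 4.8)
The plan is to reduce each regional integral of $f(\phi_{\epsilon})$ to boundary integrals of the normal derivative and then feed in \Cref{theorem:1}. The starting point is that, by \eqref{eq.1.01}, $f(\phi_{\epsilon})=-\epsilon\Delta\phi_{\epsilon}$, so for any subdomain $D\subseteq\Omega$ with piecewise smooth boundary the divergence theorem gives
\[\int_{D}f(\phi_{\epsilon})\,\mathrm{d}x=-\epsilon\int_{\partial D}\partial_{\nu}\phi_{\epsilon}\,\mathrm{d}S,\]
with $\nu$ the outward unit normal of $D$. First I would record two preliminary facts. For $\epsilon$ small, (A3) (together with $\dist(\Omega_{i},\Omega_{j})>0$ and $\Omega_{k}\subset\subset\Omega_{0}$) implies that the sets $\{x\in\Omega:\dist(x,\partial\Omega_{k})\le\epsilon^{\beta}\}$, $k=0,\dots,K$, are pairwise disjoint genuine tubular neighborhoods; hence each parallel hypersurface $\Sigma_{k,s}:=\{x\in\Omega:\dist(x,\partial\Omega_{k})=s\}$ with $0\le s\le\epsilon^{\beta}$ is a smooth embedded hypersurface, each of Regions I--III is a smooth compact region, and $\partial\overline\Omega_{k,T,\epsilon}=\partial\Omega_{k}\cup\Sigma_{k,T\sqrt\epsilon}$, $\partial\overline\Omega_{k,T,\epsilon,\beta}=\Sigma_{k,T\sqrt\epsilon}\cup\Sigma_{k,\epsilon^{\beta}}$ and $\partial\overline\Omega_{\epsilon,\beta}=\bigcup_{k=0}^{K}\Sigma_{k,\epsilon^{\beta}}$, all disjoint unions. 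Second, from the principal coordinate system already used for \Cref{theorem:1}, the surface element on $\Sigma_{k,s}$ expands as $\mathrm{d}S_{\Sigma_{k,s}}=\bigl(1-(d-1)H(p)s+O(s^{2})\bigr)\,\mathrm{d}S_{p}$ uniformly in the foot point $p\in\partial\Omega_{k}$ of $x=p-s\nu_{p}$.

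For part (a), on $\partial\Omega_{k}$ the outward normal of $\overline\Omega_{k,T,\epsilon}$ is $\nu_{p}$ (for $k=0$ and for $k\ge1$ alike), and \eqref{eq.1.06} at $t=0$ gives $\partial_{\nu_{p}}\phi_{\epsilon}(p)=-\tfrac{1}{\sqrt\epsilon}u_{k}'(0)-(d-1)H(p)v_{k}'(0)+o_{\epsilon}(1)$ uniformly in $p$; on $\Sigma_{k,T\sqrt\epsilon}$ the outward normal is $-\nu_{p}$ and \eqref{eq.1.06} at $t=T$ applies, to be integrated against the surface-element expansion with $s=T\sqrt\epsilon$. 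Substituting into the divergence identity, expanding the products, and collecting the orders $\sqrt\epsilon$ and $\epsilon$ — the uniform remainders contributing $\epsilon\,o_{\epsilon}(1)$ after integrating over the finite-measure surface $\partial\Omega_{k}$ — produces exactly the formula in (a). Equivalently, one may change variables in the shell $0\le t\le T$ and integrate \eqref{eq.1.15} against the Jacobian $1-(d-1)H(p)t\sqrt\epsilon+O(\epsilon t^{2})$, using $u_{k}''=-f(u_{k})$ and $f'(u_{k})v_{k}=u_{k}'-v_{k}''$ (so that the $O(\epsilon)$ integrand equals $(tu_{k}')'-v_{k}''$) to evaluate $\int_{0}^{T}f(u_{k})\,\mathrm{d}t=u_{k}'(0)-u_{k}'(T)$ and $\int_{0}^{T}\bigl(f'(u_{k})v_{k}-tf(u_{k})\bigr)\,\mathrm{d}t=\bigl[tu_{k}'-v_{k}'\bigr]_{0}^{T}$; the two routes agree.

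For part (b), I would write $\int_{\overline\Omega_{k,T,\epsilon,\beta}}f(\phi_{\epsilon})=\int_{\mathcal{N}_{k}}f(\phi_{\epsilon})-\int_{\overline\Omega_{k,T,\epsilon}}f(\phi_{\epsilon})$, where $\mathcal{N}_{k}=\{x\in\Omega:\dist(x,\partial\Omega_{k})\le\epsilon^{\beta}\}$. By the divergence identity, $\int_{\mathcal{N}_{k}}f(\phi_{\epsilon})=-\epsilon\int_{\partial\Omega_{k}}\partial_{\nu_{p}}\phi_{\epsilon}\,\mathrm{d}S+\epsilon\int_{\Sigma_{k,\epsilon^{\beta}}}\partial_{\nu_{p}}\phi_{\epsilon}\,\mathrm{d}S_{\Sigma}$, where the first term is precisely the $\partial\Omega_{k}$-contribution already appearing in (a), while on $\Sigma_{k,\epsilon^{\beta}}$ (i.e.\ $t=\epsilon^{(2\beta-1)/2}$) the bound \eqref{eq.1.13} gives $|\partial_{\nu_{p}}\phi_{\epsilon}|\le\tfrac{M'}{\sqrt\epsilon}\exp(-M\epsilon^{(2\beta-1)/2})$ and, $|\Sigma_{k,\epsilon^{\beta}}|$ being bounded uniformly in $\epsilon$, that term is $O\!\bigl(\sqrt\epsilon\,\exp(-M\epsilon^{(2\beta-1)/2})\bigr)=\epsilon\,o_{\epsilon}(1)$. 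Subtracting the formula of (a) then yields (b). For part (c), bounding $\partial_{\nu}\phi_{\epsilon}$ on $\partial\overline\Omega_{\epsilon,\beta}=\bigcup_{k}\Sigma_{k,\epsilon^{\beta}}$ by \eqref{eq.1.14} and using that $\sum_{k}|\Sigma_{k,\epsilon^{\beta}}|$ is bounded gives $\bigl|\int_{\overline\Omega_{\epsilon,\beta}}f(\phi_{\epsilon})\bigr|\le\sqrt\epsilon\,M'\exp(-M\epsilon^{(2\beta-1)/2})$ after renaming the constants (alternatively, integrate the pointwise bound $|f(\phi_{\epsilon})|\le M'\exp(-M\epsilon^{(2\beta-1)/2})$ over $\overline\Omega_{\epsilon,\beta}\subseteq\Omega$ and absorb $|\Omega|$ into a slightly smaller exponential rate).

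It remains to fix the signs. The leading terms of (a) and (b) are the strictly signed constants $\sqrt\epsilon\,|\partial\Omega_{k}|\,(u_{k}'(0)-u_{k}'(T))$ and $\sqrt\epsilon\,|\partial\Omega_{k}|\,u_{k}'(T)$. When $\phi_{bd,k}>\phi^{*}$, \Cref{remark:1} gives $u_{k}(t)\in(\phi^{*},\phi_{bd,k})$ and $u_{k}$ strictly decreasing on $(0,\infty)$, whence $u_{k}''=-f(u_{k})>0$; thus $u_{k}'$ is negative and strictly increasing and $u_{k}'(0)<u_{k}'(T)<0$, so both leading coefficients are strictly negative, of exact order $\sqrt\epsilon$, and dominate the $O(\epsilon)$ remainders — hence the full right-hand sides of (a) and (b) are $<0$ for $0<\epsilon<\epsilon^{*}$, after shrinking $\epsilon^{*}$ if necessary. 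The case $\phi_{bd,k}<\phi^{*}$ is symmetric and gives $>0$. The only real difficulty is bookkeeping: keeping the orientation of $\nu$ consistent on the outer boundary versus the hole boundaries and on the inner versus outer faces of each tubular shell, and using the uniform-in-$p$ control — from \Cref{theorem:1} and the principal-coordinate framework — of both the surface-element expansion on $\Sigma_{k,s}$ and the $o_{\epsilon}(1)$ remainder in \eqref{eq.1.06}.
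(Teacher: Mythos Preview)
Your proposal is correct and follows essentially the same strategy as the paper. The only cosmetic difference is in part~(a): the paper integrates the pointwise expansion \eqref{eq.1.15} over the shell via the coarea formula with the Steiner Jacobian and then simplifies using $u_{k}''=-f(u_{k})$ and $v_{k}''=u_{k}'-f'(u_{k})v_{k}$ --- which is precisely your ``alternative'' route --- whereas you lead with the divergence identity and \eqref{eq.1.06}; for (b) the paper applies the divergence theorem directly to $\overline\Omega_{k,T,\epsilon,\beta}$ rather than going through $\mathcal{N}_{k}$ and subtracting (a), but the two computations are algebraically identical.
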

\noindent 
\Cref{corollary:1}(a) and (b) imply that the total ionic charge in the near-boundary regions $\Omega_{k,T,\epsilon}$ and $\Omega_{k,T,\epsilon,\beta}$ is negative (or positive) when $\phi_{bd,k}>\phi^{*}$ (or $\phi_{bd,k}<\phi^{*}$), the external electric potential is greater (or less) than the reference potential.
This gives a way for the redistribution of ionic species by the applied electric potential difference.
Moreover,
\begin{align}
\label{eq.1.16}
0<\frac{\int_{\overline\Omega_{k,T,\epsilon,\beta}}\!f(\phi_{\epsilon}(x))\,\mathrm{d}x}{\int_{\overline\Omega_{k,T,\epsilon}}\!f(\phi_{\epsilon}(x))\,\mathrm{d}x}\to\frac{u_{k}'(T)}{u_{k}'(0)-u_{k}'(T)}\quad\text{and}\quad\frac{|\overline\Omega_{k,T,\epsilon}|}{|\overline\Omega_{k,T,\epsilon,\beta}|}\to0\quad\text{as}~\epsilon\to0^+,\end{align}
for $T>0$, $0<\epsilon<1/2$ and $k=0,1,\dots,K$.
Since $u_{k}'(T)$ decays exponentially to zero as $T$ goes to infinity (cf. \Cref{proposition:B.2}(a)), then \eqref{eq.1.16} shows that for sufficiently large $T$, the majority of charged particles concentrate within the region $\Omega_{k,T,\epsilon}$, whose volume satisfies $|\Omega_{k,T,\epsilon}|\ll|\Omega_{k,T,\epsilon,\beta}|$ as $\epsilon\to0^+$.
In addition, \Cref{corollary:1}(c) implies that the total ionic charge in the region $\Omega_{\epsilon,\beta}$ decays exponentially as $\epsilon\to0^+$, demonstrating the emergence of electroneutrality in this bulk region.
This result provides a rigorous justification for the commonly used electroneutrality assumption in bulk electrolyte solutions, while also quantifying the rate at which neutrality is approached in terms of the parameter $\beta$.

Analogously to \Cref{theorem:1}, we characterize the asymptotic expansions of solution $\phi_{\epsilon}$ to equation \eqref{eq.1.02} with condition \eqref{eq.1.04} across three distinct regions.
The results can be stated as below.
\begin{theorem}
\label{theorem:2}
Assume that the domain $\Omega$ satisfies condition (A3) with $K\in\N$, $\phi_{bd,k}$ are not equal, and the charge neutrality condition \eqref{eq.1.03} holds true, where $m_{i}>0$ and $z_{i}\neq0$ for $i=1,\dots,I$.
Let $\phi_{\epsilon}\in\C^\infty(\overline\Omega)$ be the unique solution to equation \eqref{eq.1.02} with condition \eqref{eq.1.04}.
Let $k\in\{0,1,\dots,K\}$, $p\in\partial\Omega_k$, and $T>0$ be arbitrary.
Then
\begin{itemize}
\item[(a)]
\begin{align}
\label{eq.1.17}
&\phi_{\epsilon}(p-t\sqrt{\epsilon}\nu_{p})=u_{k}(t)+\sqrt\epsilon[(d-1)H(p)v_{k}(t)+w_{k}(t)+o_{\epsilon}(1)],\\
\label{eq.1.18}
&\nabla\phi_{\epsilon}(p-t\sqrt{\epsilon}\nu_{p})=-\left(\frac{1}{\sqrt\epsilon}u_{k}'(t)+(d-1)H(p)v_{k}'(t)+w_{k}'(t)\right)\nu_{p}+o_{\epsilon}(1)
\end{align}
for $0\leq t\leq T$ as $\epsilon\to0^+$, where $p-t\sqrt{\epsilon}\nu_{p}\in\overline\Omega_{k,T,\epsilon}$ (see \Cref{figure:2}).
Here $H(p)$ and $\nu_{p}$ denote the mean curvature and the unit outer normal at $p$ with respect to $\Omega$, respectively.
The term $o_{\epsilon}(1)$, tending to zero as $\epsilon\to0^+$, depends on $T$ but is independent of $p\in\partial\Omega_k$, which means
\begin{align*}&\lim_{\epsilon\to0^+}\sup_{p\in\partial\Omega_k,\,t\in[0,T]}\left|\frac{1}{\sqrt\epsilon}(\phi_{\epsilon}(p-t\sqrt{\epsilon}\nu_{p})-u_{k}(t))-(d-1)H(p)v_{k}(t)-w_{k}(t)\right|=0,\\&\lim_{\epsilon\to0^+}\sup_{p\in\partial\Omega_k,\,t\in[0,T]}\left|\nabla\phi_{\epsilon}(p-t\sqrt{\epsilon}\nu_{p})+\left(\frac{1}{\sqrt\epsilon}u_{k}'(t)+(d-1)H(p)v_{k}'(t)+w_{k}'(t)\right)\nu_{p}\right|=0.\end{align*}
The functions $u_{k}(t)$, $v_{k}(t)$, and $w_{k}(t)$ are the unique solutions to
\begin{align}
\label{eq.1.19}
&u_{k}''+f_{0}(u_{k})=0\quad\text{in}~(0,\infty),\\
\label{eq.1.20}
&u_{k}(0)-\gamma_ku_{k}'(0)=\phi_{bd,k},\\
\label{eq.1.21}
&\lim_{t\to\infty}u_{k}(t)=\phi_{0}^{*},
\end{align}
\begin{align}
\label{eq.1.22}
&v_{k}''+f_{0}'(u_{k})v_{k}=u_{k}'\quad\text{in}~(0,\infty)\\
\label{eq.1.23}
&v_{k}(0)-\gamma_kv_{k}'(0)=0,\\
\label{eq.1.24}
&\lim_{t\to\infty}v_{k}(t)=0,
\end{align}
and
\begin{align}
\label{eq.1.25}
&w_{k}''+f_{0}'(u_{k})w_{k}=-f_1(u_{k})\quad\text{in}~(0,\infty),\\
\label{eq.1.26}
&w_{k}(0)-\gamma_kw_{k}'(0)=0,\\
\label{eq.1.27}
&\lim_{t\to\infty}w_{k}(t)=Q.
\end{align}
The constants $\phi_{0}^{*}$ and $Q$ are determined uniquely by \eqref{eq.3.026}--\eqref{eq.3.027} and \eqref{eq.3.114}, respectively.
The functions $f_{0}$ and $f_1$ are given by
\begin{align}
\label{eq.1.28}
&f_{0}(\phi)=\frac{1}{|\Omega|}\sum_{i=1}^{I}m_{i}z_{i}\exp(-z_{i}(\phi-\phi_{0}^{*}))\quad\text{for}~\phi\in\R,\\
\label{eq.1.29}
&f_1(\phi)=-Qf_{0}'(\phi)+\hat f_1(\phi)\quad\text{for}~\phi\in\R,
\end{align}
where $|\Omega|$ is the volume of $\Omega$, and\begin{align*}&\hat{f}_{1}(\phi)=\frac{1}{|\Omega|}\sum_{i=1}^{I}\hat m_{i}z_{i}\exp(-z_{i}(\phi-\phi_{0}^{*}))\quad\text{for}~\phi\in\R,\\&\hat m_{i}=\frac{m_{i}}{|\Omega|}\sum_{k=0}^K|\partial\Omega_k|\int_{0}^{\infty}\![1-\exp(-z_{i}(u_{k}(s)-\phi_{0}^{*}))]\,\mathrm{d}s\quad\text{for}~i=1,\dots,I.\end{align*}
Furthermore, the constant  $Q$ satisfies
\begin{align}
\label{eq.1.30}
Q=\lim_{\epsilon\to0^+}\frac{\phi_{\epsilon}^{*}-\phi_{0}^{*}}{\sqrt\epsilon}=\frac{\dd\sum_{k=0}^{K}\frac{|\partial\Omega_{k}|\hat{F}_{1}(u_{k}(0))+(d-1)\left(\int_{\partial\Omega_{k}}\!H(p)\,\mathrm{d}S_{p}\right)\left(\int_{0}^{\infty}\!u_{k}'^2(s)\,\mathrm{d}s\right)}{u_{k}'(0)+\gamma_{k}f_{0}(u_{k}(0))}}{\dd\sum_{k=0}^{K}\frac{|\partial\Omega_{k}|f_{0}(u_{k}(0))}{u_{k}'(0)+\gamma_{k}f_{0}(u_{k}(0))}},\end{align}
where $\phi_{\epsilon}^{*}$ is the unique zero of function $f_{\epsilon}$ for $\epsilon>0$ (cf. \Cref{proposition:3.02}) and $\hat F_1(\phi)=\int_{\phi^*}^\phi\!\hat{f}_1(s)\,\mathrm ds=\frac1{|\Omega|}\sum_{i=1}^I\hat{m}_i[1-\exp(-z_i(\phi-\phi_0^*))]$ for $\phi\in\R$.
\item[(b)] There exists a positive constant $\epsilon^{*}>0$ such that
\begin{itemize}
\item[(i)] 
\begin{align}
\label{eq.1.31}
|\phi_{\epsilon}(p-t\sqrt{\epsilon}\nu_{p})-\phi_{\epsilon}^{*}|\leq M'\exp(-Mt),\quad|\nabla\phi_{\epsilon}(p-t\sqrt{\epsilon}\nu_{p})|\leq\frac{M'}{\sqrt\epsilon}\exp(-Mt)
\end{align}
for $0<\epsilon<\epsilon^{*}$, $0<\beta<1/2$ and $T\leq t\leq\epsilon^{(2\beta-1)/2}$, i.e., $p-t\sqrt{\epsilon}\nu_{p}\in\overline\Omega_{k,T,\epsilon,\beta}$;
\item[(ii)] \begin{align}
\label{eq.1.32}
|\phi_{\epsilon}(x)-\phi_{\epsilon}^{*}|\leq M'\exp\left(-M\epsilon^{(2\beta-1)/2}\right),\quad|\nabla\phi_{\epsilon}(x)|\leq\frac{M'}{\sqrt\epsilon}\exp\left(-M\epsilon^{(2\beta-1)/2}\right)
\end{align}
for $0<\epsilon<\epsilon^{*}$ and $0<\beta<1/2$.
\end{itemize}
Here $M>0$ and $M'>0$ are constants independent of $\epsilon$.\end{itemize}\end{theorem}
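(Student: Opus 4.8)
\noindent\textbf{Proof strategy for \Cref{theorem:2}.}
The key point is that \eqref{eq.1.02} is a classical-PB-type equation $-\epsilon\Delta\phi_\epsilon=f_\epsilon(\phi_\epsilon)$ with the $\epsilon$-dependent nonlinearity $f_\epsilon(\phi)=\sum_{i=1}^{I}c_{i,\epsilon}^{\mathrm b}z_i\exp(-z_i\phi)$, $c_{i,\epsilon}^{\mathrm b}=m_i/I_{i,\epsilon}$, $I_{i,\epsilon}:=\int_\Omega\exp(-z_i\phi_\epsilon(y))\,\mathrm dy$. The plan is: (i) obtain $\epsilon$-uniform a priori bounds placing $\phi_\epsilon(\overline\Omega)$ in a fixed compact interval $\mathcal K$, so that $f_\epsilon$ satisfies (A1)--(A2) uniformly in small $\epsilon$ (in particular $m_{f_\epsilon}\geq m_0>0$, and $f_\epsilon$ has a unique zero $\phi_\epsilon^*$, cf.\ \Cref{proposition:3.02}); (ii) establish the first-order expansion $f_\epsilon=f_0+\sqrt\epsilon f_1+o_\epsilon(\sqrt\epsilon)$ in $\C^2(\mathcal K)$, with $f_0,f_1$ as in \eqref{eq.1.28}--\eqref{eq.1.29}, and identify the constants $\phi_0^*$ and $Q$; (iii) run the boundary-layer construction of \Cref{theorem:1} on $f_\epsilon$, carrying the extra $\sqrt\epsilon$-term. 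For (i): since $0<c_{i,\epsilon}^{\mathrm b}\leq m_i/(|\Omega|\min_{\phi\in\mathcal K}\mathrm e^{-z_i\phi})$ whenever $\phi_\epsilon(\overline\Omega)\subseteq\mathcal K$, the maximum principle, the comparison functions $\phi_\epsilon^*\pm C\mathrm e^{-m_0t}$ and the moving-plane argument already used for \eqref{eq.1.01} close the uniform bound; existence and uniqueness of $\phi_\epsilon$ follow from a fixed-point/monotonicity argument for the map sending $(c_i)_i$ to $(m_i/\int_\Omega\mathrm e^{-z_i\phi^{(c)}}\,\mathrm dy)_i$, where $\phi^{(c)}$ solves \eqref{eq.1.01} with nonlinearity $\sum_ic_iz_i\mathrm e^{-z_i\,\cdot}$ and the Robin condition \eqref{eq.1.04}.

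For (ii) the difficulty is the nonlocal coupling, so one bootstraps. A crude pass gives $\phi_\epsilon\to\phi_0^*$ uniformly on compact subsets of $\Omega$ and $\phi_\epsilon|_{\partial\Omega_k}\to u_k(0)$; splitting $\Omega$ into $\overline\Omega_{k,T,\epsilon}$, $\overline\Omega_{k,T,\epsilon,\beta}$, $\overline\Omega_{\epsilon,\beta}$, writing $\mathrm dy=\sqrt\epsilon\,(1+\mathcal O(\sqrt\epsilon\,t))\,\mathrm dt\,\mathrm dS_p$ in principal coordinates, and using the exponential decay of $u_k-\phi_0^*$, one evaluates $I_{i,\epsilon}=|\Omega|\mathrm e^{-z_i\phi_0^*}(1+\mathcal O(\sqrt\epsilon))$ to relative accuracy $o_\epsilon(\sqrt\epsilon)$; inserting $\phi_\epsilon^*=\phi_0^*+\sqrt\epsilon\,Q+o_\epsilon(\sqrt\epsilon)$ then gives $f_\epsilon=f_0+\sqrt\epsilon f_1+o_\epsilon(\sqrt\epsilon)$. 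The scalars are fixed by a global balance: integrating \eqref{eq.1.02} over $\Omega$ and using the charge-neutrality condition \eqref{eq.1.03} gives the \emph{exact} identity $\int_\Omega f_\epsilon(\phi_\epsilon)=\sum_{i=1}^{I}z_im_i=0$, which the divergence theorem and \eqref{eq.1.04} rewrite as $\sum_{k=0}^{K}\gamma_k^{-1}\int_{\partial\Omega_k}(\phi_{bd,k}-\phi_\epsilon)\,\mathrm dS=0$. Expanding $\phi_\epsilon|_{\partial\Omega_k}$ by part (a): the $\mathcal O(1)$ part yields $\sum_{k=0}^{K}|\partial\Omega_k|\,u_k'(0)=0$, which is \eqref{eq.3.026}--\eqref{eq.3.027}; the substitution $\tilde u_k=u_k-\phi_0^*$ shows $u_k'(0)$ depends only on $\phi_{bd,k}-\phi_0^*$, and the energy relation $\tfrac12u_k'(0)^2=-F_0(u_k(0))$ ($F_0$ the normalized antiderivative of $f_0$) makes $\phi_0^*\mapsto\sum_k|\partial\Omega_k|u_k'(0;\phi_0^*)$ strictly monotone, so the root $\phi_0^*$ is unique. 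The $\mathcal O(\sqrt\epsilon)$ part, after solving the $v_k$- and $w_k$-equations by reduction of order against the homogeneous solution $u_k'$ and using the energy identities obtained by multiplying by $u_k'$ and integrating over $(0,\infty)$ (which produce $\int_0^\infty u_k'^2\,\mathrm ds$ and $\hat F_1(u_k(0))$), gives \eqref{eq.3.114}, i.e.\ the formula \eqref{eq.1.30} for $Q$; a further monotonicity check gives its uniqueness.

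Given (i)--(ii), part (a) is the argument of \Cref{theorem:1}(a) with $\phi^*$ replaced by $\phi_\epsilon^*$ and the inner ansatz replaced by $\phi_\epsilon(p-t\sqrt\epsilon\nu_p)=u_k(t)+\sqrt\epsilon[(d-1)H(p)v_k(t)+w_k(t)]+o_\epsilon(\sqrt\epsilon)$. In principal coordinates $-\epsilon\Delta=-\partial_t^2+\sqrt\epsilon\,(\text{mean-curvature term})\,\partial_t+\epsilon\,(\text{tangential operator})$, while $f_\epsilon(\phi_\epsilon)=f_0(u_k)+\sqrt\epsilon\big[f_0'(u_k)\big((d-1)H(p)v_k+w_k\big)+f_1(u_k)\big]+o_\epsilon(\sqrt\epsilon)$; matching powers of $\sqrt\epsilon$ reproduces \eqref{eq.1.19}--\eqref{eq.1.24} for $u_k,v_k$ and gives the inhomogeneous linear problem \eqref{eq.1.25}--\eqref{eq.1.27} for $w_k$, the condition $w_k(\infty)=Q$ being forced by matching to the bulk value $\phi_\epsilon^*=\phi_0^*+\sqrt\epsilon\,Q+o_\epsilon(\sqrt\epsilon)$. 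As in \Cref{theorem:1}, rigor comes from sub/supersolutions $u_k+\sqrt\epsilon[(d-1)H(p)v_k+w_k]\pm\sqrt\epsilon\,\eta_\epsilon$ on the tubular neighborhood, the maximum principle, and interior Schauder estimates for the gradient statement \eqref{eq.1.18}, all uniformly in $p\in\partial\Omega_k$ since $\partial\Omega$ is smooth and compact. Part (b) is then verbatim \Cref{theorem:1}(b): the barriers $\phi_\epsilon^*\pm M'\mathrm e^{-Mt}$ with $0<M<m_0$ and the accompanying gradient bounds on $T\leq t\leq\epsilon^{(2\beta-1)/2}$ and on $\overline\Omega_{\epsilon,\beta}$, using only $m_{f_\epsilon}\geq m_0>0$.

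The main obstacle is the self-consistency loop in (ii): the $o_\epsilon(\sqrt\epsilon)$-accurate expansion of $f_\epsilon$ presupposes the $\sqrt\epsilon$-accurate expansion of $\phi_\epsilon$ needed to evaluate $I_{i,\epsilon}$ to that order, while that expansion is itself derived from the expansion of $f_\epsilon$. The bootstrap (crude uniform bounds $\Rightarrow$ $o_\epsilon(1)$ expansion of $f_\epsilon$ $\Rightarrow$ $o_\epsilon(1)$ boundary-layer expansion of $\phi_\epsilon$ $\Rightarrow$ $o_\epsilon(\sqrt\epsilon)$ expansion of $I_{i,\epsilon}$ $\Rightarrow$ the claimed expansion) must be closed while simultaneously proving that the two scalar equations for $\phi_0^*$ and $Q$ are uniquely solvable; establishing the $\epsilon$-uniform a priori bound and the existence/uniqueness of the nonlocal fixed point in (i) is the other delicate point, and everything downstream mirrors the classical-PB analysis of \Cref{theorem:1}.
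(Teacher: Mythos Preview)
Your overall architecture matches the paper's: rewrite \eqref{eq.1.02} as $-\epsilon\Delta\phi_\epsilon=f_\epsilon(\phi_\epsilon)$, get uniform bounds so that $f_\epsilon$ satisfies (A1)--(A2) with zero $\phi_\epsilon^*$, obtain a first-order boundary-layer expansion and the algebraic system \eqref{eq.3.026}--\eqref{eq.3.027} fixing $\phi_0^*$, then upgrade to the $\sqrt\epsilon$-expansion of $f_\epsilon$ and run the Theorem~1 machinery with the extra inhomogeneous term producing $w_k$. The paper's rigor mechanism differs from your sub/supersolution sketch: it uses compactness plus moving-plane arguments to show every subsequential limit of the rescaled functions depends only on $z^d$ and hence equals the unique ODE solution, rather than building barriers.

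The substantive gap is your bootstrap step ``$o_\epsilon(1)$ boundary-layer expansion of $\phi_\epsilon$ $\Rightarrow$ $o_\epsilon(\sqrt\epsilon)$ expansion of $I_{i,\epsilon}$''. This does not close. The bulk contribution to $I_{i,\epsilon}$ is $|\overline\Omega_{\epsilon,\beta}|\exp(-z_i\phi_\epsilon^*)+(\text{exp.\ small})$, so $(I_{i,\epsilon}-|\Omega|\mathrm e^{-z_i\phi_0^*})/\sqrt\epsilon$ contains the term $-z_i|\Omega|\mathrm e^{-z_i\phi_0^*}(\phi_\epsilon^*-\phi_0^*)/\sqrt\epsilon$, and nothing you have proved so far bounds $(\phi_\epsilon^*-\phi_0^*)/\sqrt\epsilon$. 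Your line ``inserting $\phi_\epsilon^*=\phi_0^*+\sqrt\epsilon Q+o_\epsilon(\sqrt\epsilon)$'' assumes exactly what has to be shown; you correctly flag the loop at the end but do not break it.

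The paper breaks it by a separate contradiction argument (its Section~3.3): suppose $(\phi_\epsilon^*-\phi_0^*)/\sqrt\epsilon\to\infty$ along a sequence, and rescale by $\phi_\epsilon^*-\phi_0^*$ instead of $\sqrt\epsilon$, setting $\varphi_{k,\epsilon}=(\phi_\epsilon-u_k(\delta_k/\sqrt\epsilon))/(\phi_\epsilon^*-\phi_0^*)$. With this scaling one computes $B_{i,\epsilon}:=(I_{i,\epsilon}-|\Omega|\mathrm e^{-z_i\phi_0^*})/(\phi_\epsilon^*-\phi_0^*)\to -z_i|\Omega|\mathrm e^{-z_i\phi_0^*}$ and hence $f_\epsilon(\phi)=f_0(\phi)-(\phi_\epsilon^*-\phi_0^*)f_0'(\phi)+o(\phi_\epsilon^*-\phi_0^*)$. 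The limit $\theta_k$ of $\varphi_{k,\epsilon}$ then solves $\theta_k''+f_0'(u_k)\theta_k=f_0'(u_k)$, $\theta_k(0)-\gamma_k\theta_k'(0)=0$, $\theta_k(\infty)=1$, and admits the explicit solution $\theta_k=1-u_k'/(u_k'(0)+\gamma_kf_0(u_k(0)))$, giving $\theta_k'(0)=f_0(u_k(0))/(u_k'(0)+\gamma_kf_0(u_k(0)))>0$ for every $k$. Feeding the resulting second-order charge balance back into $\int_\Omega f_\epsilon(\phi_\epsilon)=0$ forces $\sum_k|\partial\Omega_k|\theta_k'(0)=0$, a contradiction. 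This is the missing idea; once $(\phi_\epsilon^*-\phi_0^*)/\sqrt\epsilon$ is bounded, your bootstrap (and the identification of $Q$ via the $\sqrt\epsilon$-order of the same charge balance, giving \eqref{eq.3.114} directly rather than through a separate monotonicity check) goes through as in the paper.
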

\newpage
The main difference of \Cref{theorem:1,theorem:2} comes from the integral terms in \eqref{eq.1.02} which give the solution $w_{k}$ to \eqref{eq.1.25}--\eqref{eq.1.27} and \eqref{eq.1.30}. This reflects the effect of nonlocal nonlinearity in \eqref{eq.1.02}.

\begin{remark}
\label{remark:2}
Recall that $f_{\epsilon}(\phi)=\sum\limits_{i=1}^{I}z_{i}c_{i,\epsilon}^{\mathrm{b}}\exp(-z_{i}\phi)$ represents the total ionic charge density and $c_{i,\epsilon}^{\mathrm{b}}=\dfrac{m_{i}}{\int_{\Omega}\!\exp(-z_{i}\phi_{\epsilon}(y))\,\mathrm{d}y}$ is the concentration of the $i$th ion species in the bulk.
From \eqref{eq.1.30}, we find
\begin{align}
\label{eq.1.33}
&c_{i,\epsilon}^{\mathrm b}=c_i^{\mathrm b}+\sqrt\epsilon\left(\frac{m_{i}z_{i}Q\exp(z_{i}\phi_{0}^{*})+\hat m_{i}\exp(z_{i}\phi_{0}^{*})}{|\Omega|}+o_{\epsilon}(1)\right)\quad\text{for}~i=1,\dots,I,\\
\label{eq.1.34}
&f_{\epsilon}(\phi)=f_{0}(\phi)+\sqrt\epsilon(f_1(\phi)+o_{\epsilon}(1))\quad\text{for}~\phi\in\R,
\end{align}
(cf. \Cref{remark:7} and \eqref{eq.3.096} in \Cref{section:3.4}), where $c_i^{\mathrm b}:=m_{i}\exp(z_{i}\phi_{0}^{*})/|\Omega|$ (is the concentration of the $i$th ion species in the bulk) may correspond to the bulk concentration in the classical PB equation $-\epsilon\Delta\phi_{\epsilon}=\sum_{i=1}^{I}z_{i}c_i^{\mathrm b}\exp(-z_{i}\phi_{\epsilon})=f_{0}(\phi_{\epsilon})$ in $\Omega$.
Here $f_{0}$ and $f_1$ are given in \eqref{eq.1.28}--\eqref{eq.1.29}. 
\end{remark}

As in \Cref{remark:1}, we have the following observation.
\begin{remark}
\label{remark:3}
In electrostatics, the solution $\phi_{\epsilon}$ represents the electric potential, and its gradient $-\nabla\phi_{\epsilon}$ represents the electric potential, with orders $\mathcal{O}_{\epsilon}(1)$ and $\mathcal{O}_{\epsilon}(1/\sqrt{\epsilon})$, respectively, in the region $\Omega_{k,T,\epsilon}$.
Similar to \Cref{remark:1}, the boundary curvature $H(p)$ contributes to the second-order corrections in the asymptotic expansions \eqref{eq.1.17}--\eqref{eq.1.18}, where $u_k$, $v_k$, and $w_k$ are the solutions to \eqref{eq.1.19}--\eqref{eq.1.27} and satisfy the following properties.
When the external electric potential $\phi_{bd,k}>\phi_{0}^{*}$ and $k\in\{0,1,\dots,K\}$, the function $u_{k}(t)\in(\phi_{0}^{*},\phi_{bd,k})$ is strictly decreasing for $t\in(0,\infty)$, and $v_{k}$ is positive on $(0,\infty)$, strictly increasing on $(0,t_k^{*})$ and strictly decreasing on $(t_{k}^{*},\infty)$, where $t_k^{*}$ is a constant.
Conversely, when $\phi_{bd,k}<\phi_{0}^{*}$, $u_{k}(t)\in(\phi_{bd,k},\phi_{0}^{*})$ is strictly increasing for $t\in(0,\infty)$, and $v_{k}$ is negative on $(0,\infty)$, strictly decreasing on $(0,t_k^{*})$ and strictly increasing on $(t_k^{*},\infty)$ (cf. \Cref{proposition:B.1,proposition:C.1}).
The asymptotic formulas \eqref{eq.1.17}--\eqref{eq.1.18} thus provide approximations for the electric potential and electric field induced by an applied potential difference.
Additionally, the exponential decay estimates are given by \eqref{eq.1.31} with respect to the variable $t$ in Region II ($\Omega_{k,T,\epsilon,\beta}$), and by \eqref{eq.1.32} with respect to the parameter $\epsilon$ in Region III ($\Omega_{\epsilon,\beta}$), highlighting the localized nature of boundary layer phenomena.
However, $w_{k}$, which arises from the nonlocal nonlinearity, may not behave like $u_{k}$ and $v_{k}$ because the constants $\hat m_{i}$ do not have the same sign.
\end{remark}

As the previous discussion for total ionic charge density of \eqref{eq.1.01}, we use formulas \eqref{eq.1.17}, \eqref{eq.1.31}, \eqref{eq.1.32}, \eqref{eq.1.34}, and the Taylor expansions of $f_0$ and $f_1$ to derive the asymptotic expansion of the total ionic charge density $f_{\epsilon}(\phi_{\epsilon})$ as follows.
Let $k\in\{0,1,\dots,K\}$, $p\in\partial\Omega_k$, $T>0$, and $0<\beta<1/2$ be arbitrary.
\begin{itemize}
\item In Region I ($\Omega_{k,T,\epsilon}$),
\begin{align}
\label{eq.1.35}
f_{\epsilon}(\phi_{\epsilon}(p-t\sqrt{\epsilon}\nu_{p}))=f_0(u_{k}(t))+\sqrt{\epsilon}[(d-1)H(p)f_{0}'(u_{k}(t))v_{k}(t)+f_{0}'(u_{k}(t))w_{k}(t)+f_{1}(u_{k}(t))+o_{\epsilon}(1)]\end{align}
for $0\leq t\leq T$;
\item In Region II ($\Omega_{k,T,\epsilon,\beta}$),
\[|f_{\epsilon}(\phi_{\epsilon}(p-t\sqrt{\epsilon}\nu_{p}))|\leq M'\exp(-Mt)\quad\text{for}~T\leq t\leq\epsilon^{(2\beta-1)/2};\]
\item In Region III ($\Omega_{\epsilon,\beta}$),
\[|f_{\epsilon}(\phi_{\epsilon}(x))|\leq M'\exp(-M\epsilon^{(2\beta-1)/2})\quad\text{for}~x\in\overline\Omega_{\epsilon,\beta},\]
\end{itemize}
as $0<\epsilon<\epsilon^{*}$, where $\epsilon^{*}$ comes from \Cref{theorem:2}(b), $M'$ and $M$ are generic positive constants independent of $\epsilon$.
Here we have used the condition $f_{\epsilon}(\phi_{\epsilon}^{*})=0$, which follows from \Cref{proposition:3.02}, and $f_0(\phi_0^*)=0$.

As in \Cref{corollary:1}, we use \Cref{theorem:2} to derive the asymptotic expansions of total ionic charge within regions $\overline\Omega_{k,T,\epsilon}$, $\overline\Omega_{k,T,\epsilon,\beta}$ and $\overline\Omega_{\epsilon,\beta}$ as below.
\begin{corollary}\label{corollary:2}
Under the hypothesis of \Cref{theorem:2}, if $\phi_{bd,k}>\phi_{0}^{*}$ (or $\phi_{bd,k}<\phi_{0}^{*}$), then we have
\begin{itemize}
\item[(a)] $\int_{\overline\Omega_{k,T,\epsilon}}\!f_{\epsilon}(\phi_{\epsilon}(x))\,\mathrm{d}x=\sqrt\epsilon|\partial\Omega_k|(u_{k}'(0)-u_{k}'(T))\\{\color{white}\int_{\overline\Omega_{k,T,\epsilon}}\!f_{\epsilon}(\phi_{\epsilon}}+\epsilon\left[(d-1)\left(\int_{\partial\Omega_k}\!H(p)\,\mathrm{d}S_p\right)(Tu_{k}'(T)+v_{k}'(0)-v_{k}'(T))+|\partial\Omega_k|(w_{k}'(0)-w_{k}'(T))+o_{\epsilon}(1)\right]<0$ (or $>0$),
\item[(b)] $\int_{\overline\Omega_{k,T,\epsilon,\beta}}\!f_{\epsilon}(\phi_{\epsilon}(x))\,\mathrm{d}x=\sqrt\epsilon|\partial\Omega_k|u_{k}'(T)$\\${\color{white}\int_{\overline\Omega_{k,T,\epsilon,\beta}}\!f_{\epsilon}(\phi_{\epsilon}(x))\,\mathrm{d}x=}+\epsilon\left[(d-1)\left(\int_{\partial\Omega_k}\!H(p)\,\mathrm{d}S_p\right)(-Tu_{k}'(T)+v_{k}'(T))+|\partial\Omega_k|w_{k}'(T)\right]+\epsilon o_{\epsilon}(1)\Big]<0$ (or $>0$),
\item[(c)] $\left|\int_{\overline\Omega_{\epsilon,\beta}}\!f_{\epsilon}(\phi_{\epsilon}(x))\,\mathrm{d}x\right|\leq\sqrt\epsilon M'\exp\left(-M\epsilon^{(2\beta-1)/2}\right)$
\end{itemize}
for $0<\epsilon<\epsilon^{*}$ and $0<\beta<1/2$, where $|\partial\Omega_k|$ is the surface area of $\partial\Omega_k$.
\end{corollary}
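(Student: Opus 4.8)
The plan is to convert each of the three volume integrals into a boundary integral by means of the divergence theorem, using the identity $f_{\epsilon}(\phi_{\epsilon})=-\epsilon\Delta\phi_{\epsilon}$ coming from \eqref{eq.1.02}, and then to substitute the gradient expansion \eqref{eq.1.18} and the exponential estimates \eqref{eq.1.31}--\eqref{eq.1.32} on the relevant parallel surfaces. Throughout one works with $0<\epsilon<\epsilon^{*}$ small enough (shrinking the $\epsilon^{*}$ of \Cref{theorem:2} if necessary) that the maps $(p,t)\mapsto p-t\sqrt{\epsilon}\nu_{p}$ are diffeomorphisms onto the tubular neighbourhoods involved and the parallel surfaces $S_{k,s}:=\{x\in\Omega:\dist(x,\partial\Omega_{k})=s\}$ stay mutually disjoint for the relevant values of $s$. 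On $S_{k,s}$ the outward unit normal at $p-s\nu_{p}$ is $\nu_{p}$, and the surface measure is $J_{k}(p,s)\,\mathrm{d}S_{p}$ with $J_{k}(p,s)=1-(d-1)H(p)s+O(s^{2})$ the principal-coordinate Jacobian already used in the proof of \Cref{theorem:2}; the key local input is that \eqref{eq.1.18} gives $\nabla\phi_{\epsilon}(p-t\sqrt{\epsilon}\nu_{p})\cdot\nu_{p}=-\big(\epsilon^{-1/2}u_{k}'(t)+(d-1)H(p)v_{k}'(t)+w_{k}'(t)\big)+o_{\epsilon}(1)$ uniformly in $p\in\partial\Omega_{k}$.

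For part (a) I would write $\overline\Omega_{k,T,\epsilon}=\{p-t\sqrt{\epsilon}\nu_{p}:p\in\partial\Omega_{k},\,0\le t\le T\}$, whose boundary consists of $\partial\Omega_{k}$ (at $t=0$, outward normal $\nu_{p}$) and $S_{k,T\sqrt{\epsilon}}$ (at $t=T$, outward normal $-\nu_{p}$), so that
\[\int_{\overline\Omega_{k,T,\epsilon}}\!f_{\epsilon}(\phi_{\epsilon})\,\mathrm{d}x=-\epsilon\!\int_{\partial\Omega_{k}}\!\nabla\phi_{\epsilon}(p)\cdot\nu_{p}\,\mathrm{d}S_{p}+\epsilon\!\int_{\partial\Omega_{k}}\!\nabla\phi_{\epsilon}(p-T\sqrt{\epsilon}\nu_{p})\cdot\nu_{p}\,J_{k}(p,T\sqrt{\epsilon})\,\mathrm{d}S_{p}.\]
Substituting the expansion above at $t=0$ and $t=T$ together with $J_{k}(p,T\sqrt{\epsilon})=1-(d-1)H(p)T\sqrt{\epsilon}+O(\epsilon)$, the singular term $\epsilon^{-1/2}u_{k}'(T)$ should meet the curvature part $-(d-1)H(p)T\sqrt{\epsilon}$ of the Jacobian and produce exactly the $Tu_{k}'(T)$ contribution; collecting powers of $\sqrt{\epsilon}$, and using that every $o_{\epsilon}(1)$ is uniform in $p$ so that its surface integral is still $o_{\epsilon}(1)$, yields the stated formula. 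As a cross-check one may instead integrate \eqref{eq.1.35} directly in collar coordinates and simplify the $t$-integrals with the ODEs \eqref{eq.1.19}, \eqref{eq.1.22}, \eqref{eq.1.25}, which reproduces the combinations $Tu_{k}'(T)+v_{k}'(0)-v_{k}'(T)$ and $w_{k}'(0)-w_{k}'(T)$. Part (b) is the identical computation on $\overline\Omega_{k,T,\epsilon,\beta}=\{p-t\sqrt{\epsilon}\nu_{p}:T\le t\le\epsilon^{(2\beta-1)/2}\}$: the inner face $S_{k,T\sqrt{\epsilon}}$ (outward normal $\nu_{p}$) contributes via \eqref{eq.1.18} at $t=T$ with the Jacobian $J_{k}(p,T\sqrt{\epsilon})$, while on the outer face $S_{k,\epsilon^{\beta}}$ estimate \eqref{eq.1.31} at $t=\epsilon^{(2\beta-1)/2}$ bounds $|\nabla\phi_{\epsilon}|$ by $\tfrac{M'}{\sqrt{\epsilon}}\exp(-M\epsilon^{(2\beta-1)/2})$, and since $|S_{k,\epsilon^{\beta}}|$ stays bounded this face contributes $O\big(\sqrt{\epsilon}\exp(-M\epsilon^{(2\beta-1)/2})\big)=\epsilon\,o_{\epsilon}(1)$ and is absorbed. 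Part (c) is then immediate from the divergence theorem and \eqref{eq.1.32}: $\big|\int_{\overline\Omega_{\epsilon,\beta}}f_{\epsilon}(\phi_{\epsilon})\,\mathrm{d}x\big|\le\epsilon\,|\partial\overline\Omega_{\epsilon,\beta}|\,\tfrac{M'}{\sqrt{\epsilon}}\exp(-M\epsilon^{(2\beta-1)/2})$, and $|\partial\overline\Omega_{\epsilon,\beta}|$ remains bounded as $\epsilon\to0^{+}$, giving the claimed bound after renaming $M'$.

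Finally I would settle the sign assertions from the leading terms $\sqrt{\epsilon}|\partial\Omega_{k}|(u_{k}'(0)-u_{k}'(T))$ in (a) and $\sqrt{\epsilon}|\partial\Omega_{k}|u_{k}'(T)$ in (b). When $\phi_{bd,k}>\phi_{0}^{*}$, \Cref{proposition:C.1} gives that $u_{k}$ is strictly decreasing with $u_{k}(\infty)=\phi_{0}^{*}$; since $u_{k}''=-f_{0}(u_{k})$ and $f_{0}$ is strictly decreasing with $f_{0}(\phi_{0}^{*})=0$ (from \eqref{eq.1.28} and \eqref{eq.1.03}), we have $f_{0}(u_{k})<0$, so $u_{k}$ is strictly convex and $u_{k}'$ increases to $0$, whence $u_{k}'(0)<u_{k}'(T)<0$. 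Both leading terms are then strictly negative, and because the remaining terms are $O(\epsilon)$ the full expressions are negative for all $0<\epsilon<\epsilon^{*}$ after a further shrinking of $\epsilon^{*}$; the case $\phi_{bd,k}<\phi_{0}^{*}$ is symmetric ($u_{k}$ strictly increasing and concave, $0<u_{k}'(T)<u_{k}'(0)$), giving positivity. The only genuinely delicate point is the bookkeeping of the first-order curvature corrections in (a) and (b)—in particular pinning down the coefficient $Tu_{k}'(T)$—which forces one to use precisely the same orientation convention for $H$ and the Jacobian expansion $J_{k}(p,s)=1-(d-1)H(p)s+O(s^{2})$ as in the proof of \Cref{theorem:2}; with \Cref{theorem:2} in hand, everything else is routine.
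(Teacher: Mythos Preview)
Your proposal is correct and essentially matches the paper's proof in \Cref{section:3.5}, with one harmless variation: for part (a) the paper integrates the pointwise expansion \eqref{eq.1.35} in collar coordinates via the coarea formula (the approach you list as a ``cross-check''), whereas you lead with the divergence theorem; for parts (b) and (c) your argument and the paper's are identical (divergence theorem plus \eqref{eq.1.18} on the inner face and the exponential estimates \eqref{eq.1.31}--\eqref{eq.1.32} on the outer face). Your divergence-theorem route for (a) is arguably cleaner since it treats all three parts uniformly, while the paper's coarea computation for (a) makes the appearance of the combinations $Tu_{k}'(T)+v_{k}'(0)-v_{k}'(T)$ and $w_{k}'(0)-w_{k}'(T)$ more transparent by tying them directly to the ODEs \eqref{eq.1.19}, \eqref{eq.1.22}, \eqref{eq.1.25}.

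One small correction: in your sign argument you cite \Cref{proposition:C.1} for the monotonicity of $u_{k}$, but that proposition concerns the second-order profile $V$; the monotonicity and convexity of $u_{k}$ come from \Cref{proposition:B.1} (applied with $f=f_{0}$). Otherwise the sign analysis is exactly right.
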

\noindent \Cref{corollary:2} is consistent with the charge neutrality condition \eqref{eq.1.03}, as demonstrated by equations \eqref{eq.3.027} and \eqref{eq.3.115} in \Cref{section:3}.
\Cref{corollary:2}(a) and \ref{corollary:2}(b) also indicate that the total ionic charge in the near-boundary regions $\Omega_{k,T,\epsilon}$ and $\Omega_{k,T,\epsilon,\beta}$ is negative (or positive) when $\phi_{bd,k}>\phi_0^{*}$ (or $\phi_{bd,k}<\phi_0^{*}$).
Such behavior is analogous to that described in \Cref{corollary:1}, where the sign of the total ionic charge is similarly determined by the comparison between $\phi_{bd,k}$ and $\phi_0^*$.
This redistribution of ionic species is driven by the applied electric potential difference, consistent with the behavior in \Cref{corollary:1}.
Moreover, analogously to \eqref{eq.1.16}, 
\[0<\frac{\int_{\overline{\Omega}_{k,T,\epsilon,\beta}}\!f_{\epsilon}(\phi_{\epsilon}(x))\,\mathrm dx}{\int_{\overline{\Omega}_{k,T,\epsilon}}\!f_{\epsilon}(\phi_{\epsilon}(x))\,\mathrm dx}\to\frac{u_{k}'(T)}{u_{k}'(0)-u_{k}'(T)}\quad\text{and}\quad\frac{|\overline{\Omega}_{k,T,\epsilon}|}{|\overline{\Omega}_{k,T,\epsilon,\beta}|}\to0\quad\text{as}~\epsilon\to0^+.\]
Since $u_{k}'(T)$ decays exponentially to zero as $T\to\infty$ (cf. \Cref{proposition:B.2}(a)), this implies that for sufficiently large $T$, the majority of charged particles are concentrated within the region $\Omega_{k,T,\epsilon}$, despite its volume being significantly smaller than that of the region $\Omega_{k,T,\epsilon,\beta}$.
Additionally, \Cref{corollary:2}(c) implies that the total ionic charge in the region $\Omega_{\epsilon,\beta}$ decays exponentially as $\epsilon\to0^+$ as in \Cref{corollary:1}(c).

For the proof of \Cref{theorem:1}, we first establish the uniform boundedness of the solution $\phi_{\epsilon}$ (cf. \Cref{proposition:2.1}), and then employ the principal coordinate system (cf. \cite[Section 14.6]{1977gilbarg}) and rescale spatial variables by $\sqrt\epsilon$ to obtain the local convergence and to derive equation \eqref{eq.2.10} in $\R_{+}^{d}$ with condition \eqref{eq.2.11} (cf. \Cref{lemma:2.3}).
Using the exponential-type estimate of \eqref{eq.2.10}--\eqref{eq.2.11} (cf. \Cref{lemma:2.4}) and the moving plane arguments (cf. \Cref{proposition:2.5}), we prove the first-order asymptotic expansion of $\phi_{\epsilon}$ (see \eqref{eq.2.39}--\eqref{eq.2.40}).
Using the first-order asymptotic expansion, we derive the exponential-type estimate of $\phi_{\epsilon}$ and $|\nabla\phi_{\epsilon}|$ (cf. \Cref{proposition:2.6}), which implies \Cref{theorem:1}(b) holds true.
For the second-order asymptotic expansion of $\phi_{\epsilon}$, we study $\varphi_{k,\epsilon}(x)=\epsilon^{-1/2}[\phi_{\epsilon}(x)-u_{k}(\delta_k(x)/\sqrt\epsilon)]$ for $x\in\overline\Omega_{k,\epsilon,\beta}:=\{x\in\overline\Omega\,:\,\delta_k(x)\leq\epsilon^{\beta}\}$ and $k=0,1,\dots,K$.
We prove the uniform boundedness of $\varphi_{k,\epsilon}$ (cf. \Cref{proposition:2.7}) and the local convergence to get equation \eqref{eq.2.67} in $\R_{+}^{d}$ with condition \eqref{eq.2.68} (cf. \Cref{lemma:2.8}).
Using the exponential-type estimate \eqref{eq.2.69} and the moving plane arguments (cf. \Cref{proposition:2.9}), we obtain \eqref{eq.1.05} and \eqref{eq.1.06} in \Cref{theorem:1}(a).

For the proof of \Cref{theorem:2}, we rewrite equation \eqref{eq.1.02} as $-\epsilon\Delta\phi_{\epsilon}=f_{\epsilon}(\phi_{\epsilon})$ in $\Omega$, where $f_{\epsilon}(\phi)=\sum_{i=1}^{I}(m_{i}z_{i}/A_{i,\epsilon})\exp(-z_{i}\phi)$ for $\phi\in\R$, $A_{i,\epsilon}=\int_\Omega\!\exp(-z_{i}\phi_{\epsilon}(y))\,\mathrm{d}y$ for $i=1,\dots,I$, and $\phi_{\epsilon}$ is the solution to equation \eqref{eq.1.02} with condition \eqref{eq.1.04}.
In \Cref{section:3.1}, we first prove the uniform boundedness of $\phi_{\epsilon}$ and show that $f_{\epsilon}$ also satisfies conditions (A1)--(A2) with the unique zero $\phi_{\epsilon}^{*}$.
As in the proof in \Cref{section:2.1}, we establish the first-order asymptotic expansions of $\phi_{\epsilon}$ and $\nabla\phi_{\epsilon}$.
Moreover, we also determine the unique value of $\phi_0^*$ by the algebraic equations \eqref{eq.3.026}--\eqref{eq.3.027}.
However, unlike the proof of \Cref{theorem:1}, the unique zero $\phi_{\epsilon}^*$ of $f_{\epsilon}$ may depend on the parameter $\epsilon$ so it is pivotal to prove, by contradiction, the uniform boundedness of $|\phi_{\epsilon}^{*}-\phi_0^*|/\sqrt{\epsilon}$ (cf. \Cref{section:3.3}).
More specifically, in \Cref{section:3.3}, under the assumption $|\phi_{\epsilon}^{*}-\phi_0^*|/\sqrt{\epsilon}\to\infty$ as $\epsilon$ goes to zero, we obtain the second-order asymptotic expansions of $A_{i,\epsilon}$, $f_{\epsilon}$, and $\phi_{\epsilon}$ and the total ionic charge over three distinct regions, which contradicts the electroneutrality condition \eqref{eq.1.03}.
Then in \Cref{section:3.4}, we use the uniform boundedness of $|\phi_{\epsilon}^{*}-\phi_0^*|/\sqrt{\epsilon}$ to get the asymptotic expansion of $A_{i,\epsilon}$ (see \Cref{remark:7}). This implies that the function $f_{\epsilon}$ has the asymptotic expansion $f_{\epsilon}(\phi)=f_{0}(\phi)+\sqrt\epsilon(f_1(\phi)+o_{\epsilon}(1))$ for $\phi\in\R$, where $f_{0}$ and $f_1$ are smooth functions (see \eqref{eq.1.28}--\eqref{eq.1.29}).
Thus, equation \eqref{eq.1.02} can be transformed into $-\epsilon\Delta\phi_{\epsilon}=f_{0}(\phi_{\epsilon})+\sqrt\epsilon(f_1(\phi_{\epsilon})+o_{\epsilon}(1))$ in $\Omega$ (cf. \eqref{eq.3.097}), where $f_{0}$ satisfies (A1)--(A2) with a unique zero $\phi_{0}^{*}$ and $f_1$ is bounded on any compact subsets of $\R$.
Therefore, as in the proof of \Cref{theorem:1}, we may apply the principal coordinate system and the moving plane arguments to prove \Cref{theorem:2}(a).

The electric forces play a central role in the behavior of biological and physical systems (cf. \cite{2014wan}).
In \cite{2017griffiths,1989russel}, the Maxwell stress tensor $\mathscr{T}_{\text{M}}$ of equation \eqref{eq.1.01} is defined as
\begin{align}
\label{eq:2025.0609.1538}
\mathscr{T}_{\text{M}}=\varepsilon\nabla\phi_\varepsilon\otimes\nabla\phi_\varepsilon-\frac\varepsilon2|\nabla\phi_\varepsilon|^2I_d,
\end{align}
where $\otimes$ denotes the tensor product.
Then by \eqref{eq.1.06} and \eqref{eq:2025.0609.1538}, we get the Maxwell stress tensor $\mathscr T_{\text M}$ at $p-t\sqrt{\varepsilon}\nu_p$:
\[\mathscr T_{\text M}(p-t\sqrt{\varepsilon}\nu_p)=[u_k'^2(t)+2\sqrt{\varepsilon}(d-1)H(p)u_k'(t)v_k'(t)]\left(\nu_p\otimes\nu_p-\frac12I_d\right)+\sqrt{\varepsilon}o_\varepsilon(1)\]
for $0\leq t\leq T$ and $p\in\partial\Omega_k$.
By \eqref{eq.B.08} and \eqref{eq.B.13} with $U=u_k$, we have $u_k'^2(t)=2(F(\phi^*)-F(u_k(t)))=-2F(u_k(t))$ for all $t\geq0$, where $F(\phi)=\int_{\phi^*}^\phi\!f(s)\,\mathrm ds$ and $\phi^*$ is the unique zero of $f$.
Hence
\begin{align*}
\mathscr{T}_{\text M}(p-t\sqrt{\varepsilon}\nu_p)=[-F(u_k(t))+\sqrt\varepsilon(d-1)H(p)u_k'(t)v_k'(t)](2\nu_p\otimes\nu_p-I_d)+\sqrt\varepsilon o_\varepsilon(1),
\end{align*}
which gives
\begin{align}
\label{eq:2025.0609.1626}
\mathscr{T}_{\text{M}}(p-t\sqrt{\varepsilon}\nu_p)\nu_p=-F(u_k(t))\nu_p+\sqrt{\varepsilon}[(d-1)H(p)u_k'(t)v_k'(t)\nu_p+o_\varepsilon(1)]
\end{align}
for $0\leq t\leq T$ and $p\in\partial\Omega_k$.
Note that $F(u_k(t))<0$ for $t\geq0$ because $F(\phi^*)=F'(\phi^*)=0$ and $F''(\phi)=f'(\phi)<0$ for $\phi\in\R$.
In \eqref{eq:2025.0609.1626}, $\mathscr{T}_{\text{M}}(p-t\sqrt{\varepsilon}\nu_p)\nu_p$ is the stress tensor $\mathscr{T}_{\text{M}}(p-t\sqrt{\varepsilon}\nu_p)$ acting on $\nu_p$, which provides the electric force at $p-t\sqrt{\varepsilon}\nu_p$ for $0\leq t\leq T$, $p\in\partial\Omega_k$, and $k\in\{0,1,\dots,K\}$.

For equation \eqref{eq.1.02} with condition \eqref{eq.1.04}, we can use \eqref{eq.1.05}--\eqref{eq.1.12}, \eqref{eq.1.18}, and \eqref{eq:2025.0609.1538} directly to derive the asymptotic formula of the Maxwell stress tensor $\mathscr{T}_{\text{M}}$:
\begin{align*}
\mathscr{T}_{\text{M}}(p-t\sqrt{\varepsilon}\nu_p)=\{u_k'^2(t)+2\sqrt{\varepsilon}u_k'(t)[(d-1)H(p)v_k'(t)+w_k'(t)]\}\left(\nu_p\otimes\nu_p-\frac12I_d\right)+\sqrt{\varepsilon}o_\varepsilon(1)
\end{align*}
for $0\leq t\leq T$ and $p\in\partial\Omega_k$.
By \eqref{eq.B.08} and \eqref{eq.B.13} with $U=u_k$, we have $u_k'^2(t)=2(F_0(\phi^*)-F_0(u_k(t)))=-2F_0(u_k(t))$ for all $t\geq0$, where $F_0(\phi)=\int_{\phi_0^*}^\phi\!f_0(s)\,\mathrm ds$ and $\phi_0^*$ is the unique zero of $f_0$.
Hence
\begin{align*}
\mathscr{T}_{\text M}(p-t\sqrt{\varepsilon}\nu_{p})=\{-F_0(u_k(t))+\sqrt\varepsilon u_k'(t)[(d-1)H(p)v_k'(t)+w_k'(t)]\}(2\nu_p\otimes\nu_p-I_d)+\sqrt\varepsilon o_\varepsilon(1),
\end{align*}
which implies
\begin{align}
\label{eq:2025.0609.1645}
\mathscr{T}_{\text{M}}(p-t\sqrt{\varepsilon}\nu_{p})\nu_p=-F_0(u_k(t))\nu_p+\sqrt{\varepsilon}u_k'(t)\{[(d-1)H(p)v_k'(t)+w_k'(t)]\nu_p+o_\varepsilon(1)\}
\end{align}
for $0\leq t\leq T$ and $p\in\partial\Omega_k$.
Note that $F_0(u_k(t))<0$ for $t\geq0$ because $F_0(\phi_0^*)=F_0'(\phi_0^*)=0$ and $F_0''(\phi)=f_0'(\phi)<0$ for $\phi\in\R$.
In \eqref{eq:2025.0609.1645}, $\mathscr{T}_{\text{M}}(p-t\sqrt{\varepsilon}\nu_p)\nu_p$ is the stress tensor $\mathscr{T}_{\text{M}}(p-t\sqrt{\varepsilon}\nu_p)$ acting on $\nu_p$, which provides the electric force at $p-t\sqrt{\varepsilon}\nu_p$ for $0\leq t\leq T$, $p\in\partial\Omega_k$, and $k\in\{0,1,\dots,K\}$.

\newpage
Throughout the paper we shall use the following notations.
\begin{itemize}
\item $\mathcal{O}_\epsilon(1)$ is a bounded quantity independent of $\epsilon$, and $o_{\epsilon}(1)$ is a small quantity tending to zero as $\epsilon$ goes to zero.
\item $\R_{+}^{d}=\{(z=(z',z^{d})\in\R^d:z^{d}>0\}$, $\partial\R_{+}^{d}=\{z=(z',z^{d})\in\R^d:z^{d}=0\}$, and $\overline\R_{+}^{d}=\R_{+}^{d}\cup\partial\R_{+}^{d}$.
\item $e_d$ stands for the unit vector in the positive direction of $z^{d}$-axis.
\item $|\Omega|$ denotes the volume of $\Omega$, and $|\partial\Omega_k|$ stands for the surface area of $\partial\Omega_k$ for $k=0,1,\dots,K$.
\item $\Omega_{k,T,\epsilon}=\{x\in\Omega\,:\,\dist(x,\partial\Omega_k)<T\sqrt\epsilon\}$, $\Omega_{k,T,\epsilon,\beta}=\{x\in\Omega\,:\,T\sqrt\epsilon<\dist(x,\partial\Omega_k)<\epsilon^\beta\}$, $\Omega_{k,\epsilon,\beta}=\{x\in\Omega\,:\,\dist(x,\partial\Omega_k)<\epsilon^\beta\}$ and $\Omega_{\epsilon,\beta}=\{x\in\Omega\,:\,\dist(x,\partial\Omega)>\epsilon^\beta\}$, where $\dist$ denotes the distance.
\item $\delta_{k}(x)=\dist(x,\partial\Omega_k)$ and $\delta(x)=\dist(x,\partial\Omega)=\min\{\delta_{0}(x),\delta_{1}(x),\dots,\delta_{K}(x)\}$.
\item $\dd\underline{\phi_{bd}}=\min_{0\leq k\leq K}\phi_{bd,k}$ and $\dd\overline{\phi_{bd}}=\max_{0\leq k\leq K}\phi_{bd,k}$.
\item $\epsilon^{*}>0$ is a sufficiently small constant, and $M>0$ is a generic constant independent of $\epsilon>0$.
\end{itemize}

The rest of the paper is organized as follows.
The proofs of \Cref{theorem:1,theorem:2} are given in \Cref{section:2,section:3}, respectively.
For the analysis of ordinary differential equations \eqref{eq.1.07}--\eqref{eq.1.12}, \eqref{eq.1.19}--\eqref{eq.1.21} and \eqref{eq.1.22}--\eqref{eq.1.27}, one may refer to \Cref{appendix:A,appendix:B,appendix:C,appendix:D}.

\noindent{\bf Acknowledgement}. The research of T.-C. Lin is partially supported by the National Center for Theoretical Sciences (NCTS) and NSTC grants 112-2115M-002-012MY3 and 113-2115-M-002-002.
%The authors also express their thanks to reviewers for the careful and insightful review.

\section{Proof of \texorpdfstring{\Cref{theorem:1}}{Theorem 1}}
\label{section:2}

In this section, we derive the first- and second-order terms in the asymptotic expansion of the solution $\phi_{\epsilon}$ to equation \eqref{eq.1.01} with condition \eqref{eq.1.04} near $\partial\Omega_k$ for $k=0,1,\dots,K$.
For the first-order term, we use the principal coordinate system \eqref{eq.2.01} to straighten the boundary $\partial\Omega_k$ locally and rescale spatial variables by $\sqrt\epsilon$ so that equations \eqref{eq.1.01} with condition \eqref{eq.1.04} can be transformed into \eqref{eq.2.08}--\eqref{eq.2.09} in the upper half-ball $B_{b/\sqrt\epsilon}^+$.
We then show that the solution to \eqref{eq.2.08}--\eqref{eq.2.09} converges (locally) to the solution to \eqref{eq.2.10}--\eqref{eq.2.11} in the upper half-space $\R_{+}^{d}$ (cf. \Cref{lemma:2.3}).
By the moving plane arguments on \eqref{eq.2.10}--\eqref{eq.2.11}, we prove that the solution to \eqref{eq.2.10}--\eqref{eq.2.11} depends only on the single variable $z^{d}$ (cf. \Cref{proposition:2.5}), which is the unique solution to \eqref{eq.1.07}--\eqref{eq.1.09}.
Here we have to assume that $\phi_{bd,k}$ are constants.
Hence we obtain the asymptotic expansions $\phi_{\epsilon}(p-t\sqrt{\epsilon}\nu_{p})=u_{k}(t)+o_{\epsilon}(1)$ and $\nabla\phi_{\epsilon}(p-t\sqrt{\epsilon}\nu_{p})=-\epsilon^{-1/2}(u_{k}'(t)\nu_{p}+o_{\epsilon}(1))$ for $T>0$, $p\in\partial\Omega_k$ and $0\leq t\leq T$, where $u_{k}$ is the unique solution to \eqref{eq.1.07}--\eqref{eq.1.09} and $\nu_{p}$ is the unit outer normal at $p$ (cf. \Cref{section:2.1}).

For the second-order term, we set $\varphi_{k,\epsilon}(x)=\epsilon^{-1/2}(\phi_{\epsilon}(x)-u_{k}(\delta_k(x)/\sqrt\epsilon))$ for $x\in\overline\Omega_{k,\epsilon,\beta}$ and $k=0,1,\dots,K$, where $\Omega_{k,\epsilon,\beta}=\{x\in\Omega:\delta_k(x)<\epsilon^\beta\}$ and $0<\beta<1/2$.
Then by the exponential-type estimate of $u_{k,p}$ and $\phi_{\epsilon}$ (cf. \Cref{lemma:2.4} and \Cref{proposition:2.6}), we may prove the uniform boundedness of $\varphi_{k,\epsilon}$ (cf. \Cref{proposition:2.7}).
Moreover, following similar arguments as in \Cref{section:2.1}, we use the principal coordinate system and the moving plane arguments to prove the local convergence of $\varphi_{k,\epsilon}$ and obtain $\varphi_{k,\epsilon}(p-t\sqrt{\epsilon}\nu_{p})=(d-1)H(p)v_{k}(t)+o_{\epsilon}(1)$ and $\nabla\varphi_{k,\epsilon}(p-t\sqrt{\epsilon}\nu_{p})=-\epsilon^{-1/2}[(d-1)H(p)v_{k}'(t)\nu_{p}+o_{\epsilon}(1)]$ for $T>0$, $p\in\partial\Omega_k$ and $0\leq t\leq T$ (as $\epsilon>0$ sufficiently small), where $v_{k}$ is the unique solution to \eqref{eq.1.10}--\eqref{eq.1.12} (cf. \Cref{section:2.2}).
Therefore, we obtain equations \eqref{eq.1.05}--\eqref{eq.1.06} and complete the proof of \Cref{theorem:1}.

\subsection{First-order asymptotic expansion of \texorpdfstring{$\phi_{\epsilon}$}{ϕ.ϵ}}
\label{section:2.1}

Equation \eqref{eq.1.01} with the boundary condition \eqref{eq.1.04} is the Euler--Lagrange equation of the following energy functional
\[E[\phi]=\int_\Omega\!\left(\frac{\epsilon}2|\nabla\phi|^2-F(\phi)\right)\,\mathrm{d}x+\sqrt\epsilon\sum_{k=0}^{K}\frac{1}{2\gamma_k}\int_{\partial\Omega_k}\!(\phi-\phi_{bd,k})^2\,\mathrm{d}S\quad\text{for}~\phi\in H^1(\Omega).\]
Since $F'' = f' < 0$ in $\R$, we can apply the direct method in the calculus of variations to get the unique minimizer of $E[\phi]$ over $H^1(\Omega)$ (cf. \cite{2008struwe}).
Then by the standard bootstrap argument and smoothness of $f$, we obtain the existence and uniqueness of the smooth solution $\phi\in\C^{\infty}(\overline{\Omega})$ to equation \eqref{eq.1.01} with condition \eqref{eq.1.04} (cf. \cite{1977gilbarg}).
Because the nonlinear term $f$ is strictly decreasing (cf. (A1)), we can prove the uniform boundedness of the solution $\phi_{\epsilon}$ as follows.

\begin{proposition}[Uniform boundedness of $\phi_{\epsilon}$]
\label{proposition:2.1}
For $\epsilon>0$, let $\phi_{\epsilon}\in\C^\infty(\overline\Omega)$ be the solution to equation \eqref{eq.1.01} with condition \eqref{eq.1.04}.
Then $|\phi_{\epsilon}(x)-\phi^{*}|\leq C_{1}$ for $x\in\overline\Omega$ and $\epsilon>0$, where $\phi^{*}$ is the unique zero of $f$ (cf. (A2)) and $C_{1}>0$ is a constant independent of $\epsilon$.
\end{proposition}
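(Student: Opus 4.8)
The plan is to establish the uniform bound via a maximum-principle comparison argument, exploiting the strict monotonicity of $f$ and the structure of the Robin condition. First I would set $\psi_\epsilon = \phi_\epsilon - \phi^*$, so that $\psi_\epsilon$ solves $-\epsilon\Delta\psi_\epsilon = f(\phi^* + \psi_\epsilon)$ in $\Omega$ with $\psi_\epsilon + \gamma_k\sqrt\epsilon\,\partial_\nu\psi_\epsilon = \phi_{bd,k} - \phi^*$ on $\partial\Omega_k$. Since $f(\phi^*)=0$ and $f$ is strictly decreasing, we have $f(\phi^*+s)\,s < 0$ for $s\neq 0$; in particular, at an interior maximum point of $\psi_\epsilon$ where $\psi_\epsilon > 0$ one gets $-\epsilon\Delta\psi_\epsilon > 0$, which is consistent with a maximum, so interior extrema do not immediately give the bound — the bound must come from the boundary. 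The key point is that, by the Hopf-type boundary analysis, the maximum of $\psi_\epsilon$ over $\overline\Omega$ is controlled by $\max_k|\phi_{bd,k}-\phi^*|$.

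Concretely, let $x_0\in\overline\Omega$ be a point where $\psi_\epsilon$ attains its maximum. If $x_0$ is interior, then $\Delta\psi_\epsilon(x_0)\leq 0$, hence $f(\phi^*+\psi_\epsilon(x_0))\leq 0$, which by strict monotonicity of $f$ and $f(\phi^*)=0$ forces $\psi_\epsilon(x_0)\geq 0$; but that only says the max is nonnegative. To get the upper bound I would instead argue as follows: suppose for contradiction $\psi_\epsilon(x_0) > \max_k(\phi_{bd,k}-\phi^*)^+ =: A$. Consider the set $\{\psi_\epsilon > A\}$, which is open, contained in $\Omega$, and nonempty. On this set $f(\phi^*+\psi_\epsilon) < f(\phi^*+A) \leq 0$, so $-\epsilon\Delta\psi_\epsilon < 0$, i.e. $\psi_\epsilon$ is strictly subharmonic there. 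By the maximum principle, $\psi_\epsilon$ has no interior maximum in this set, so its supremum is attained on the boundary $\partial\{\psi_\epsilon>A\}$. That boundary consists either of points where $\psi_\epsilon = A$ (where the value is $A$, contradicting that the sup exceeds $A$) or of points on $\partial\Omega$. At a point $q\in\partial\Omega_k$ that is a maximum of $\psi_\epsilon$ over $\overline{\{\psi_\epsilon>A\}}$, the outer normal derivative satisfies $\partial_\nu\psi_\epsilon(q)\geq 0$, so the Robin condition gives $\psi_\epsilon(q) = \phi_{bd,k}-\phi^* - \gamma_k\sqrt\epsilon\,\partial_\nu\psi_\epsilon(q) \leq \phi_{bd,k}-\phi^* \leq A$, again a contradiction. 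Hence $\psi_\epsilon \leq A$ throughout $\overline\Omega$; the symmetric argument with $\min$ gives $\psi_\epsilon \geq -\max_k(\phi_{bd,k}-\phi^*)^- \geq -\max_k|\phi_{bd,k}-\phi^*|$. Taking $C_1 := \max_{0\le k\le K}|\phi_{bd,k}-\phi^*|$, which is independent of $\epsilon$, finishes the proof.

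The main technical point to handle carefully is the boundary-point argument: one needs that at a maximum point $q$ of $\psi_\epsilon$ lying on the smooth boundary $\partial\Omega_k$, the outer normal derivative $\partial_\nu\psi_\epsilon(q)$ is indeed $\geq 0$. This is standard since $\psi_\epsilon\in\C^\infty(\overline\Omega)$ and $\partial\Omega$ is smooth (it is just the one-sided derivative at a maximum), but one should be slightly careful when $q$ lies on the relative boundary of $\{\psi_\epsilon>A\}$ rather than being a global max — working instead with the global maximum of $\psi_\epsilon$ over all of $\overline\Omega$ (rather than over the subset) sidesteps this: if that global max exceeds $A$ it is either interior, where strict subharmonicity on $\{\psi_\epsilon>A\}$ rules it out by the strong maximum principle, or on $\partial\Omega$, where the Robin computation above rules it out. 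I expect no serious obstacle; the only thing that makes this nontrivial (and is exactly where assumption (A1), strict monotonicity, enters) is that without strict decrease of $f$ one could not conclude $f(\phi^*+A)\le 0$ with the right sign to make $\psi_\epsilon$ subharmonic on the super-level set. Everything else is the classical maximum principle for the operator $-\epsilon\Delta - c(x)$ with $c(x) = -[f(\phi^*+\psi_\epsilon(x))-f(\phi^*)]/\psi_\epsilon(x) \geq 0$ on the relevant set.
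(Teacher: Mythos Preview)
Your approach is essentially the paper's: analyze the global maximum of $\phi_\epsilon$ (equivalently of $\psi_\epsilon=\phi_\epsilon-\phi^*$), use the equation at an interior maximum and the Robin condition at a boundary maximum, and conclude $\min\{\phi^*,\underline{\phi_{bd}}\}\le\phi_\epsilon\le\max\{\phi^*,\overline{\phi_{bd}}\}$.

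There is a sign slip in your interior-max discussion that causes an unnecessary detour. At an interior maximum $x_0$ one has $\Delta\psi_\epsilon(x_0)\le0$, hence $f(\phi^*+\psi_\epsilon(x_0))=-\epsilon\Delta\psi_\epsilon(x_0)\ge0$; since $f$ is strictly decreasing with $f(\phi^*)=0$, this forces $\psi_\epsilon(x_0)\le0$, not $\ge0$. So an interior maximum already gives $\psi_\epsilon\le0\le A$ directly, and your super-level-set argument, while correct, is not needed. The paper proceeds exactly this way: interior max $\Rightarrow \phi_\epsilon(x_0)\le\phi^*$; boundary max $\Rightarrow \phi_\epsilon(x_0)\le\overline{\phi_{bd}}$ via the Robin identity with $\partial_\nu\phi_\epsilon(x_0)\ge0$. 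Your final constant $C_1=\max_k|\phi_{bd,k}-\phi^*|$ agrees with the paper's.
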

\begin{proof}
We first prove that $\phi_{\epsilon}(x)\leq\max\{\phi^{*},\overline{\phi_{bd}}\}$ for $x\in\overline\Omega$ and $\epsilon>0$.
Suppose that $\phi_{\epsilon}$ attains its maximum value at an interior point $x_{0}$, which implies $\Delta\phi_{\epsilon}(x_{0})\leq0$.
Then by \eqref{eq.1.01} and (A2), we obtain $f(\phi^{*})=0\leq-\epsilon\Delta\phi_{\epsilon}(x_{0})=f(\phi_{\epsilon}(x_{0}))$.
Due to the strict decrease of $f$ (cf. (A1)), we have $\phi_{\epsilon}(x_{0})\leq\phi^{*}$, which implies $\phi_{\epsilon}(x)\leq\phi^{*}$ for $x\in\overline\Omega$.
On the other hand, we suppose that $\phi_{\epsilon}$ attains its maximum value at a boundary point $x_{0}\in\partial\Omega_{k_{0}}$ for some $k_{0}\in\{0,1,\dots,K\}$, which implies $\partial_\nu\phi_{\epsilon}(x_{0})\geq0$.
By the boundary condition \eqref{eq.1.04} with $\gamma_{k_{0}}>0$, we get
\[\phi_{\epsilon}(x)\leq\phi_{\epsilon}(x_{0})=\phi_{bd,k_{0}}-\gamma_{k_{0}}\sqrt\epsilon\partial_\nu\phi_{\epsilon}(x_{0})\leq\phi_{bd,k_{0}}\leq\overline{\phi_{bd}}:=\max_{0\leq k\leq K}\phi_{bd,k}\quad\text{for}~x\in\overline\Omega.\]
Thus, $\phi_{\epsilon}(x)\leq\max\{\phi^{*},\overline{\phi_{bd}}\}$ for $x\in\overline\Omega$ and $\epsilon>0$.
Similarly, we can get $\phi_{\epsilon}(x)\geq\min\{\phi^{*},\underline{\phi_{bd}}\}$ for $x\in\overline\Omega$ and $\epsilon>0$, where $\underline{\phi_{bd}}:=\min\limits_{0\leq k\leq K}\phi_{bd,k}$.
Hence $\min\{\phi^{*},\underline{\phi_{bd}}\}\leq\phi_{\epsilon}(x)\leq\max\{\phi^{*},\overline{\phi_{bd}}\}$, i.e., $|\phi_{\epsilon}(x)-\phi^{*}|\leq C_{1}$ for $x\in\overline\Omega$ and $\epsilon>0$, where $C_{1}>0$ is a constant independent of $\epsilon>0$.
Therefore, we complete the proof of \Cref{proposition:2.1}.
\end{proof}

Now we introduce the principal coordinate system and the associated diffeomorphism $\Psi_p$, which straightens the boundary portion near each point $p\in\partial\Omega_k$ for $k\in\{0,1,\dots,K\}$.
Fix $k\in\{0,1,\dots,K\}$ arbitrarily.
After a suitable rotation of the coordinate system, we may write the boundary point as $p=(p',p^d)\in\R^{d-1}\times\R$ and assume that the inner normal to $\partial\Omega_k$ at $p$ points in the direction of the positive $x^d$-axis.
To straighten the portion of $\partial\Omega_k$, we employ the principal coordinate system $y=(y',y^d)$ (cf. \cite[Section 14.6]{1977gilbarg}).
Thus, there exist a neighborhood $\mathcal{N}_p$ of $p$, a constant $b_p'>0$ (depending on $p$ and $\partial\Omega_k$) and a smooth function $\psi_p=\psi_p(x')$ (defined for $|x'-p'|<b_p'$) satisfying
\begin{enumerate}
\item[(D1)] $\psi_p(p')=p^d$,
\item[(D2)] $\nabla\psi_p(p')=0$,
\item[(D3)] $\partial\Omega_k\cap\mathcal{N}_p=\{x=(x',x^d)\in\R^d:x^d=\psi_p(x'),\,|x'-p'|<b_p'\}$,
\item[(D4)] $\mathcal{N}_p^+:=\Omega\cap\mathcal{N}_p=\{x=(x',x^d)\in\R^d:x^d>\psi_p(x'),\,|x'-p'|<b_p'\}\cap\mathcal{N}_p$ and $\overline{\mathcal{N}_p}^+=\Psi_p(\overline B_b^+)$.
\end{enumerate}
Here the diffeomorphism $\Psi_p:\overline{B}_b^+\to\R^d$ is defined by
\begin{align}
\label{eq.2.01}
&x=(x',x^d)=\Psi_p(y)=(p'+y',\psi_p(p'+y'))-y^d\nu\quad\text{for}~y=(y',y^d)\in\overline{B}_b^+,
\end{align}
where $\overline{B}_b^+=\{y=(y',y^d)\in\R^d\,:\,|y|\leq b,\,y^d\geq0\}$, $b$ is a positive constant (depending on $\Omega$) and $\nu=(\nu',\nu^d)=(\nabla\psi_p(p'+y'),-1)/\sqrt{1+|\nabla\psi_p(p'+y')|^2}$ is the unit outer normal at $(p'+y',\psi_p(p'+y'))$ with respect to $\Omega$ (see \Cref{figure:3}). 
Moreover, $y^d=\dist(x,\partial\Omega_k)$ and $(y',y^d)=\Psi_p^{-1}(x',x^d)$ for $x=(x',x^d)\in\overline{\mathcal{N}}_p^+$.

\begin{figure}[!htb]\centering\includegraphics[scale=0.29]{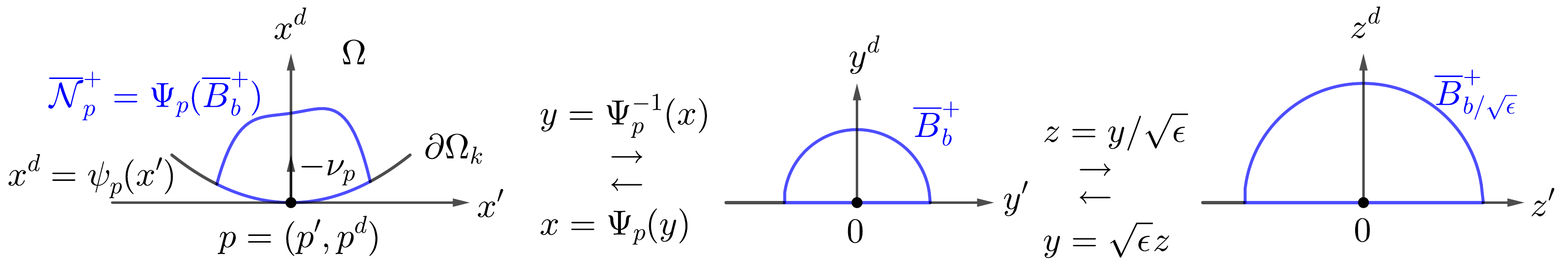}
\caption{The function $\psi_p$ describes the portion of $\partial\Omega_k$ near $p\in\partial\Omega_k$.
The diffeomorphism $x=\Psi_p(y)$ maps an upper half ball $\overline{B}_b^+$ to the neighborhood $\overline{\mathcal{N}}_p^+$, and $z=y/\sqrt\epsilon$ dilates $\overline{B}_b^+$ into $\overline{B}_{b/\sqrt\epsilon}^+$.}
\label{figure:3}
\end{figure}

By \eqref{eq.2.01} and rescaling $y$ by $\sqrt{\epsilon}$, we obtain
\begin{lemma}
\label{lemma:2.2}
For $\phi\in\C^\infty(\overline{\Omega})$, $p\in\partial\Omega=\bigcup_{k=0}^K\partial\Omega_k$ and $\epsilon>0$, let $u_{p,\epsilon}:\overline{B}_{b/\sqrt\epsilon}^+\to\R$ satisfy $u_{p,\epsilon}(z)=\phi(\Psi_p(\sqrt\epsilon z))$ for $z\in\overline B_{b/\sqrt\epsilon}^+$ (see \Cref{figure:3}).
Then $u_{p,\epsilon}\in\C^\infty(\overline{B}_{b/\sqrt\epsilon}^+)$, and
\begin{align}
\label{eq.2.02}
&\epsilon\Delta_x\phi(\Psi_p(\sqrt{\epsilon}z))=\sum_{i,j=1}^da_{ij}(z)\frac{\partial^2u_{p,\epsilon}}{\partial z^{i}\partial z^j}(z)+\sum_{j=1}^db_j(z)\frac{\partial u_{p,\epsilon}}{\partial z^j}(z)\quad\text{for}~z\in B_{b/\sqrt\epsilon}^+,\\
\label{eq.2.03}
&\partial_{\nu_{p}}\phi(\Psi_p(\sqrt{\epsilon}(z^{d}e_d)))=-\epsilon^{-1/2}\partial_{z^{d}}u_{p,\epsilon}(z^{d}e_d)\quad~\text{for}~0\leq z^{d}<b/\sqrt\epsilon,\\
\label{eq.2.04}
&\phi(p-t\sqrt{\epsilon}\nu_{p})=u_{p,\epsilon}(te_d)\quad\text{for}~0\leq t<b/\sqrt\epsilon,
\end{align}
where $e_d$ and $\nu_{p}$ are the unit vector in the positive direction of $z^{d}$-axis and the unit outer normal at $p$ with respect to $\Omega$, respectively. The coefficients $a_{ij}$ and $b_j$ satisfy
\begin{align}
\label{eq.2.05}
a_{ij}(z)&=\delta_{ij}+2\psi_{ij}\sqrt{\epsilon}z^{d}+\epsilon|z|^2\mathcal{O}(1)\quad\text{for}~1\leq i,j\leq d~\text{and}~z\in B_{b/\sqrt\epsilon}^+,\\
\label{eq.2.06}
b_j(z)&=-\delta_{jd}\sqrt\epsilon(d-1)H(p)+\epsilon|z|\mathcal{O}(1)\quad\text{for}~1\leq j\leq d~\text{and}~z\in B_{b/\sqrt\epsilon}^+.
\end{align}
Here $\delta_{ij}=\begin{cases}1&\text{if}~i=j\\0&\text{if}~i\neq j\end{cases}$ is the Kronecker symbol, $\mathcal{O}(1)$ is a bounded quantity independent of $\epsilon$ and $z$, and
\begin{align*}
&\psi_{ij}=\psi_{ji}:=\partial_{x^ix^j}\psi_p(p')\quad\text{for}~1\leq i,j\leq d-1,\\
&\psi_{id}=\psi_{di}:=\frac12\sum_{n=1}^{d-1}\psi_{in}\quad\text{for}~1\leq i\leq d-1,\\
&\psi_{dd}:=0.
\end{align*}
Besides, $H(p)=(d-1)^{-1}\sum_{i=1}^{d-1}\psi_{ii}$ stands for the mean curvature at the boundary point $p$ with respect to $\Omega$ due to (D2) (see \cite[(14.92)]{1977gilbarg}).
\end{lemma}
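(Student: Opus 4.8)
The plan is to read off \eqref{eq.2.02}--\eqref{eq.2.06} from the change of variables $x=\Psi_p(\sqrt\epsilon z)$ together with a second-order Taylor expansion of $\Psi_p$ about $y=0$. The three ``scalar'' statements are immediate. Smoothness of $u_{p,\epsilon}$ on $\overline B_{b/\sqrt\epsilon}^+$ follows from $\phi\in\C^\infty(\overline\Omega)$ and smoothness of the diffeomorphism $\Psi_p$ on $\overline B_b^+$. Restricting $\Psi_p$ to the $z^d$-axis, (D2) gives $\nabla\psi_p(p')=0$, so the unit outer normal at $p$ is $\nu_p=-e_d$, and by (D1), $\Psi_p(\sqrt\epsilon z^de_d)=(p',\psi_p(p'))+\sqrt\epsilon z^de_d=p-\sqrt\epsilon z^d\nu_p$; hence $u_{p,\epsilon}(te_d)=\phi(\Psi_p(\sqrt\epsilon te_d))=\phi(p-t\sqrt\epsilon\nu_p)$, which is \eqref{eq.2.04}, and differentiating $u_{p,\epsilon}(z^de_d)=\phi(p-z^d\sqrt\epsilon\nu_p)$ in $z^d$ gives \eqref{eq.2.03}.

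For \eqref{eq.2.02} I would write $\Psi:=\Psi_p$, let $g_{ij}(y)=\sum_{m=1}^d\partial_{y^i}\Psi^m(y)\,\partial_{y^j}\Psi^m(y)$ be the Riemannian metric induced by $\Psi$ on $\overline B_b^+$, with inverse $(g^{ij})$ and determinant $g=\det(g_{ij})>0$, and invoke the change-of-variables identity for the Laplacian,
\begin{align*}
(\Delta_x\phi)(\Psi(y))=\sum_{i,j=1}^dg^{ij}(y)\,\partial_{y^iy^j}(\phi\circ\Psi)(y)+\sum_{j=1}^d\Big(\tfrac1{\sqrt g}\,\partial_{y^i}\big(\sqrt g\,g^{ij}\big)\Big)(y)\,\partial_{y^j}(\phi\circ\Psi)(y).
\end{align*}
Substituting $y=\sqrt\epsilon z$ and using $\partial_{z^j}u_{p,\epsilon}(z)=\sqrt\epsilon\,(\partial_{y^j}(\phi\circ\Psi))(\sqrt\epsilon z)$ and $\partial_{z^iz^j}u_{p,\epsilon}(z)=\epsilon\,(\partial_{y^iy^j}(\phi\circ\Psi))(\sqrt\epsilon z)$, multiplication by $\epsilon$ turns this into \eqref{eq.2.02} with $a_{ij}(z)=g^{ij}(\sqrt\epsilon z)$ and $b_j(z)=\sqrt\epsilon\,\big(\tfrac1{\sqrt g}\partial_{y^i}(\sqrt g\,g^{ij})\big)(\sqrt\epsilon z)$; the leftover $\epsilon$ and $\sqrt\epsilon$ are precisely the mismatch between $z$- and $y$-derivatives, and they are what generate the powers of $\epsilon$ in \eqref{eq.2.05}--\eqref{eq.2.06}.

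It then remains to Taylor expand $g^{ij}$ and the drift coefficient at $y=0$. From (D1)--(D2) and smoothness of $\psi_p$ one gets, on $\overline B_b^+$, $\Psi(y)=p+y+Q(y)+\mathcal O(|y|^3)$ with $Q^d(y)=\tfrac12\sum_{\alpha,\beta=1}^{d-1}\psi_{\alpha\beta}y^\alpha y^\beta$ and $Q^\gamma(y)=-y^d\sum_{\beta=1}^{d-1}\psi_{\gamma\beta}y^\beta$ for $1\le\gamma\le d-1$, where $(\psi_{\alpha\beta})=D^2\psi_p(p')$; this is the step where the boundary Hessian — hence, via $H(p)=(d-1)^{-1}\sum_{\alpha=1}^{d-1}\psi_{\alpha\alpha}$ (see \cite[(14.92)]{1977gilbarg}), the mean curvature — enters. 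Since $y^d=\dist(x,\partial\Omega_k)$, the coordinates are of Fermi type, so $g_{dd}\equiv1$ and $g_{d\alpha}\equiv0$ and the entries of \eqref{eq.2.05} with $i=d$ or $j=d$ hold trivially; for the remaining block $1\le i,j\le d-1$, $D\Psi=I+DQ+\mathcal O(|y|^2)$ yields $g_{ij}(y)=\delta_{ij}-2y^d\psi_{ij}+\mathcal O(|y|^2)$, hence $g^{ij}(y)=\delta_{ij}+2y^d\psi_{ij}+\mathcal O(|y|^2)$, and since $|\sqrt\epsilon z|\le b$ on $\overline B_{b/\sqrt\epsilon}^+$ this gives the rest of \eqref{eq.2.05}. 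For the drift, $D\Psi(0)=I$, $g^{kl}(0)=\delta_{kl}$ and $\sqrt g(y)=1-y^d(d-1)H(p)+\mathcal O(|y|^2)$ give $\big(\tfrac1{\sqrt g}\partial_{y^i}(\sqrt g\,g^{ij})\big)(0)=-\sum_{k=1}^d\partial_{y^ky^k}\Psi^j(0)=-\Delta_yQ^j=-\delta_{jd}(d-1)H(p)$; by smoothness the coefficient is $-\delta_{jd}(d-1)H(p)+\mathcal O(|y|)$, so $b_j(z)=-\delta_{jd}\sqrt\epsilon(d-1)H(p)+\epsilon|z|\mathcal O(1)$, which is \eqref{eq.2.06}. (Equivalently, this drift equals $-\big(\Delta_x\dist(\cdot,\partial\Omega_k)\big)(\Psi(y))$, whose boundary value is $-(d-1)H$; see \cite[Section 14.6]{1977gilbarg}.) All $\mathcal O$-remainders are controlled by the $\C^3$-norm of $\Psi_p$ on the compact set $\overline B_b^+$, hence are independent of $\epsilon$ and $z$. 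The main obstacle — and the only step needing genuine care — is this last bookkeeping: pinning down that the coefficient of $\sqrt\epsilon z^d$ in $a_{ij}$ is exactly $2\psi_{ij}$ and that the constant part of $b_j/\sqrt\epsilon$ is exactly $-\delta_{jd}(d-1)H(p)$, which reduces to the explicit quadratic $Q$ above, property (D2), and the mean-curvature identity.
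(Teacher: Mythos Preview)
Your proposal is correct and is precisely the standard principal-coordinate calculation the paper has in mind; the paper omits the proof entirely, saying only that it ``follows standard calculations of the principal coordinate system (cf.~\cite{1977gilbarg}).'' Your (correct) observation that $g_{d\alpha}\equiv0$ in Fermi coordinates gives $a_{\alpha d}\equiv0$ exactly, which is actually sharper than---and slightly at odds with---the lemma's stated value $\psi_{\alpha d}=\tfrac12\sum_{n}\psi_{\alpha n}$; this minor discrepancy in the lemma's statement is harmless, since only the diagonal corrections in \eqref{eq.2.05} and the $b_d$ term in \eqref{eq.2.06} are used downstream.
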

\noindent Note that \eqref{eq.2.05}--\eqref{eq.2.06} imply that $a_{ij}$ and $b_j$ satisfy
\begin{align*}
\lim_{\epsilon\to0^+}a_{ij}(z)=\delta_{ij}\quad\lim_{\epsilon\to0^+}b_j(z)=0\quad\text{for}~R>0,\,1\leq i,j\leq d,~|z|<R~\text{and}~z^{d}>0.
\end{align*}
The proof of \Cref{lemma:2.2} follows standard calculations of the principal coordinate system (cf. \cite{1977gilbarg}) so we omit it here.

\newpage

To get the first-order asymptotic expansion of $\phi_{\epsilon}$ near the boundary $\partial\Omega_k$, we define
\begin{align}
\label{eq.2.07}
u_{k,p,\epsilon}(z)=\phi_{\epsilon}(\Psi_p(\sqrt\epsilon z))\quad\text{for}~z\in\overline{B}_{b/\sqrt\epsilon}^+,~p\in\partial\Omega_k~\text{and}~k=0,1,\dots,K,\end{align}
where $\phi_{\epsilon}$ is the solution to equation \eqref{eq.1.01} with condition \eqref{eq.1.04}.
Then by \eqref{eq.2.02}, \eqref{eq.2.03}, and \eqref{eq.2.07}, $u_{k,p,\epsilon}$ satisfies
\begin{alignat}{2}
\label{eq.2.08}
\sum_{i,j=1}^da_{ij}(z)\frac{\partial^2u_{k,p,\epsilon}}{\partial z^{i}\partial z^j}+\sum_{j=1}^db_j(z)\frac{\partial u_{k,p,\epsilon}}{\partial z^j}+f(u_{k,p,\epsilon})&=0&&\quad\text{in}~B_{b/\sqrt\epsilon}^+,\\
\label{eq.2.09}
u_{k,p,\epsilon}-\gamma_k\partial_{z^{d}}u_{k,p,\epsilon}&=\phi_{bd,k}&&\quad\text{on}~\overline{B}_{b/\sqrt\epsilon}^+\cap\partial\R_{+}^{d},
\end{alignat}
where $\partial\R_{+}^{d}=\{z=(z',z^{d})\in\R^d:z^{d}=0\}$, $a_{ij}$ and $b_j$ are given in \eqref{eq.2.05}--\eqref{eq.2.06}.
Based on the uniform boundedness of $\phi_{\epsilon}$ (cf. \Cref{proposition:2.1}), together with the standard elliptic $L^q$-estimate and Schauder's estimate, we obtain the $\C_{\text{loc}}^{2,\alpha}$-estimate of $u_{k,p,\epsilon}$.
Using a diagonal process, we can prove the convergence of $u_{k,p,\epsilon}$ in $\C^{2,\alpha}(\overline{B}_m^+)$ as $\epsilon$ tends to zero (up to a subsequence) for $m\in\N$.
The result is stated below.
\begin{lemma}
\label{lemma:2.3}
For any sequence $\{\epsilon_{n}\}_{n=1}^\infty$ of positive numbers with $\dd\lim_{n\to\infty}\epsilon_{n}=0$, $\alpha\in(0,1)$ and $p\in\partial\Omega_k$ ($k\in\{0,1,\dots,K\}$), there exists a subsequence $\{\epsilon_{nn}\}_{n=1}^\infty$ such that $\dd\lim_{n\to\infty}\|u_{k,p,\epsilon_{nn}}-u_{k,p}\|_{\C^{2,\alpha}(\overline{B}_m^+)}=0$ for $m\in\N$, where $\overline{B}_m^+=\{z=(z',z^{d})\in\R^d\,:\,|z|\leq m,\,z^{d}\geq0\}$ and $u_{k,p}\in\C^{2,\alpha}_{\text{loc}}(\overline\R_{+}^{d})$ satisfies
\begin{alignat}{2}
\label{eq.2.10}
\Delta u_{k,p}+f(u_{k,p})&=0\quad&&\text{in}~\R_{+}^{d},\\
\label{eq.2.11}
u_{k,p}-\gamma_k\partial_{z^{d}}u_{k,p}&=\phi_{bd,k}\quad&&\text{on}~\partial\R_{+}^{d}.
\end{alignat}
Here $\R_{+}^{d}=\{z=(z',z^{d})\in\R^{d}\,:\,z^{d}>0\}$.
\end{lemma}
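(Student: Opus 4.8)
The plan is to obtain the limiting equation \eqref{eq.2.10}--\eqref{eq.2.11} by a standard compactness-and-diagonalization argument, where the only real input is the uniform a priori bound on $\phi_{\epsilon}$ already furnished by \Cref{proposition:2.1}. First I would fix $k$, $p\in\partial\Omega_k$, and the sequence $\{\epsilon_n\}$ with $\epsilon_n\to0^+$, and work with $u_{k,p,\epsilon_n}$ defined by \eqref{eq.2.07} on the expanding domains $\overline B_{b/\sqrt{\epsilon_n}}^+$. By \Cref{proposition:2.1}, $|u_{k,p,\epsilon_n}(z)-\phi^*|\le C_1$ uniformly in $n$ and $z$, so the right-hand side $f(u_{k,p,\epsilon_n})$ is bounded in $L^\infty$ on every fixed half-ball $\overline B_m^+$ once $b/\sqrt{\epsilon_n}>m$. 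The coefficients $a_{ij},b_j$ from \eqref{eq.2.05}--\eqref{eq.2.06} are, on $\overline B_m^+$, uniformly bounded and uniformly elliptic for $n$ large (since $a_{ij}\to\delta_{ij}$ uniformly on $\overline B_m^+$, the ellipticity constant is eventually bounded below by, say, $1/2$), and they are bounded in $\C^{0,\alpha}(\overline B_m^+)$ — indeed in $\C^\infty$ with uniform bounds — because $\psi_p$ is smooth and the $\mathcal{O}(1)$ terms carry higher derivatives that are likewise $\mathcal{O}(1)$.

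Next I would run the elliptic estimates in two stages on each $\overline B_m^+$. The interior/boundary $L^q$-estimate (global $W^{2,q}$ up to the flat boundary portion $\{z^d=0\}$, using that \eqref{eq.2.09} is a Robin condition with smooth, $\epsilon$-independent coefficient $\gamma_k>0$) gives a uniform $W^{2,q}(\overline B_{m}^+)$ bound for $u_{k,p,\epsilon_n}$ on a slightly smaller half-ball, hence by Sobolev embedding a uniform $\C^{1,\alpha}$ bound; this upgrades $f(u_{k,p,\epsilon_n})$ to a uniform $\C^{0,\alpha}$ bound, and then the Schauder estimate up to the flat boundary yields a uniform $\C^{2,\alpha}(\overline B_{m-1}^+)$ bound. (One should take $q$ large enough that $W^{2,q}\hookrightarrow\C^{1,\alpha}$, and note the lateral part of $\partial B_m^+$ causes no trouble because we only claim the bound on the smaller ball $\overline B_{m-1}^+$, keeping away from the curved boundary.) Since $m$ is arbitrary, $\{u_{k,p,\epsilon_n}\}$ is precompact in $\C^{2,\alpha'}(\overline B_m^+)$ for every $m$ and every $\alpha'<\alpha$, by Arzelà--Ascoli. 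A diagonal extraction over $m=1,2,\dots$ produces a subsequence $\{\epsilon_{nn}\}$ and a limit $u_{k,p}\in\C^{2,\alpha}_{\mathrm{loc}}(\overline\R_+^d)$ with $u_{k,p,\epsilon_{nn}}\to u_{k,p}$ in $\C^{2,\alpha'}(\overline B_m^+)$ for all $m$.

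Finally I would pass to the limit in \eqref{eq.2.08}--\eqref{eq.2.09}. On any $\overline B_m^+$, for $n$ large, $\C^{2,\alpha'}$-convergence lets me send $\partial^2_{ij}u_{k,p,\epsilon_{nn}}\to\partial^2_{ij}u_{k,p}$ and $\partial_j u_{k,p,\epsilon_{nn}}\to\partial_j u_{k,p}$ uniformly; combined with $a_{ij}\to\delta_{ij}$ and $b_j\to0$ uniformly on $\overline B_m^+$ (the remark just after \Cref{lemma:2.2}) and the continuity of $f$, the PDE \eqref{eq.2.08} becomes $\Delta u_{k,p}+f(u_{k,p})=0$ in $B_m^+$, hence in $\R_+^d$ since $m$ is arbitrary. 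The boundary condition \eqref{eq.2.09} passes to the limit directly on $\overline B_m^+\cap\partial\R_+^d$ because $\gamma_k$ is fixed and $u_{k,p,\epsilon_{nn}},\partial_{z^d}u_{k,p,\epsilon_{nn}}$ converge uniformly there, giving \eqref{eq.2.11}. I do not expect a genuine obstacle here; the mildest technical point to be careful about is that the estimates must be taken up to the flat boundary $\{z^d=0\}$ (so one uses the boundary versions of the $L^q$ and Schauder estimates for the Robin problem), and that the lateral boundary of $B_m^+$ is handled simply by shrinking the ball at each stage, which is why the statement is phrased with the nested balls $\overline B_m^+$ and only asserts $\C^{2,\alpha}_{\mathrm{loc}}$ regularity of the limit.
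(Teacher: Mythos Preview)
Your proposal is correct and follows essentially the same approach as the paper, which only sketches the argument in one sentence (``based on the uniform boundedness of $\phi_\epsilon$ \dots\ together with the standard elliptic $L^q$-estimate and Schauder's estimate, we obtain the $\C^{2,\alpha}_{\mathrm{loc}}$-estimate of $u_{k,p,\epsilon}$, \dots\ using a diagonal process''). Your write-up is in fact more careful than the paper's outline, particularly in handling the flat Robin boundary versus the lateral boundary and in noting that the $\C^{2,\alpha}$ convergence (rather than $\C^{2,\alpha'}$ for $\alpha'<\alpha$) requires running the Schauder estimate at a slightly higher H\"older exponent, which is available since the coefficients are smooth.
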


For the equation \eqref{eq.2.10} together with the boundary condition \eqref{eq.2.11}, we can establish the following exponential-type estimate of $u_{k,p}$.
\begin{lemma}
\label{lemma:2.4}
For $p\in\partial\Omega_k$ and $k\in\{0,1,\dots,K\}$, the solution $u_{k,p}$ to \eqref{eq.2.10}--\eqref{eq.2.11} satisfies
\begin{align}
\label{eq.2.12}
|u_{k,p}(z)-\phi^{*}|\leq2C_{1}\exp(-C_{2}z^{d})\quad\text{for}~z=(z',z^{d})\in\overline\R_{+}^{d}~\text{and}~z^{d}\geq(d-1)/(4C_{2}),
\end{align}
where $C_{1}>0$ is a constant (independent of $\epsilon$) given in \Cref{proposition:2.1} such that $|\phi_{\epsilon}(x)-\phi^{*}|\leq C_{1}$ for $x\in\overline\Omega$, and $C_{2}=m_f([\phi^{*}-C_{1},\phi^{*}+C_{1}])/8>0$.
\end{lemma}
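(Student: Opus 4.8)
The plan is to treat \eqref{eq.2.10} as a semilinear elliptic problem on the half-space and compare $u_{k,p}$ with an exponentially decaying barrier, exploiting the strict monotonicity of $f$ encoded in (A1). First I would record the pointwise bound that is already available: since $u_{k,p}$ is a $\C^{2,\alpha}_{\mathrm{loc}}$ limit (in the sense of \Cref{lemma:2.3}) of the functions $u_{k,p,\epsilon_{nn}}(z)=\phi_{\epsilon_{nn}}(\Psi_p(\sqrt{\epsilon_{nn}}z))$, the uniform bound $|\phi_\epsilon-\phi^*|\le C_1$ from \Cref{proposition:2.1} passes to the limit, so $|u_{k,p}(z)-\phi^*|\le C_1$ for all $z\in\overline{\R^d_+}$. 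Write $w(z)=u_{k,p}(z)-\phi^*$. By the mean value theorem and (A2), $f(u_{k,p})=f(u_{k,p})-f(\phi^*)=f'(\xi)\,w$ for some $\xi$ between $u_{k,p}$ and $\phi^*$, hence $\xi\in[\phi^*-C_1,\phi^*+C_1]=:\mathcal K$, and by (A1) the coefficient $c(z):=-f'(\xi(z))$ satisfies $c(z)\ge m_f(\mathcal K)^2=:4C_2\cdot(2C_2)>0$; more to the point $c(z)\ge m_f(\mathcal K)^2 = 64\,C_2^2$ where $C_2=m_f(\mathcal K)/8$. Thus $w$ solves the linear equation $\Delta w = c(z)\,w$ in $\R^d_+$ with $c\ge 64 C_2^2 > 0$ bounded below.

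Next I would build a comparison function that dominates $|w|$ on a half-space $\{z^d\ge R\}$ for a suitable $R$. The natural candidate is $\Phi(z)=2C_1\exp(-C_2 z^d)$, which depends only on $z^d$ and satisfies $\Delta\Phi=C_2^2\Phi$. Then $\Delta(\Phi\mp w)=C_2^2\Phi\mp c\,w$; when $\Phi\ge |w|$ fails we want a sign, so I work with $\Phi-w$ and $\Phi+w$ on the domain $D_R=\{z\in\R^d_+:z^d>R\}$ and use the maximum principle. On the flat part $\{z^d=R\}$ we have $\Phi(z)=2C_1 e^{-C_2 R}$; choosing $R=(d-1)/(4C_2)$ (the threshold in the statement) is not yet obviously enough to force $\Phi\ge C_1$ there, so the cleaner route is: first prove decay with a possibly worse constant, or observe that on $z^d=R$ we only need $\Phi(z)\ge |w(z)|$, i.e. $2C_1 e^{-C_2R}\ge C_1$, which holds iff $C_2 R\le\ln 2$. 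Since $C_2=m_f(\mathcal K)/8$ and one may always shrink $C_2$ (the estimate only gets weaker), I would, if necessary, replace $C_2$ by $\min\{m_f(\mathcal K)/8,\ \ln 2/R_0\}$ — but in fact the paper's bookkeeping ties $R=(d-1)/(4C_2)$ to $C_2$, so $C_2R=(d-1)/4$, and for this to be $\le\ln2$ we need $d\le 3$; for general $d$ the constant $2$ in front would be enlarged to $C_1 e^{(d-1)/4}/C_1$, i.e. the right-hand side should really read $C' C_1\exp(-C_2 z^d)$. I would therefore present the argument with a generic constant and then note that with the specific normalization $C_2=m_f(\mathcal K)/8$ the prefactor $2C_1$ suffices for the range $z^d\ge(d-1)/(4C_2)$ precisely because $e^{-C_2 z^d}\le e^{-(d-1)/4}$ is absorbed.

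The core step is the maximum principle on the \emph{unbounded} domain $D_R$, and this is the main obstacle: the standard maximum principle needs either boundedness of the domain or a growth restriction at infinity. Here $\Phi-w$ and $\Phi+w$ are both bounded on $D_R$ (since $|w|\le C_1$ and $0<\Phi\le 2C_1$), so I would invoke the Phragmén–Lindelöf form of the maximum principle for $L u:=\Delta u - c(z)u$ with $c\ge 0$: a bounded subsolution of $Lu\ge0$ on $D_R$ that is $\le 0$ on $\partial D_R=\{z^d=R\}$ is $\le 0$ throughout. Concretely, $L(\Phi-w)=C_2^2\Phi - c\,\Phi - (C_2^2 w - c\,w)=(C_2^2-c)(\Phi-w)$; since $c\ge 64C_2^2>C_2^2$, the zeroth-order coefficient $c-C_2^2$ is positive, and $(\Delta-(c-C_2^2))(\Phi-w)=0$... wait, I should instead check signs directly: $\Delta(\Phi-w)=C_2^2\Phi-c\,w$. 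On the set where $\Phi-w$ would attain a positive interior maximum we'd have $\Delta(\Phi-w)\le0$, i.e. $C_2^2\Phi\le c\,w\le c\,\Phi$ (using $w\le\Phi$ there), which is consistent and gives nothing — so the clean way is to write the equation for $\Phi-w$ with a good zeroth-order term. Set $g=\Phi-w$; then $\Delta g - C_2^2 g = -c\,w + C_2^2 w$... this is getting circular, so the right device is: $\Delta g = C_2^2\Phi - c w$ and we want $g\ge0$; add and subtract $c\,\Phi$ to get $\Delta g + c\,g = (C_2^2 - c)\Phi \le 0$ since $c\ge C_2^2$. Now $g$ is a bounded supersolution of $\Delta g + c\,g\le 0$... but $c>0$ is the \emph{wrong sign} for a maximum principle on an unbounded set. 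The resolution — and the genuinely delicate point I would dwell on — is that we must use $c$ \emph{bounded below by a strictly positive constant}, not just $\ge 0$: I would compare with the $1$D ODE $\Phi''=C_2^2\Phi$ and use that $c-C_2^2\ge 0$ together with a sliding/ODE-comparison argument along the $z^d$ direction (or equivalently the sub/supersolution structure of the one-variable profile), concluding $0\le \Phi-w$, i.e. $u_{k,p}-\phi^*\le 2C_1 e^{-C_2 z^d}$ on $z^d\ge R$; the symmetric argument with $\Phi+w$ gives the lower bound. I would close by remarking that the hypothesis $z^d\ge(d-1)/(4C_2)$ is exactly what is needed so that the boundary comparison at $z^d=R$ and the drift term bound $|b_j|$-type contributions (inherited from \eqref{eq.2.06} in the limit, hence absent here but morally accounting for the $(d-1)$) are controlled, giving \eqref{eq.2.12}.
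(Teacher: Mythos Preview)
Your approach has a workable core idea but the execution breaks down. The sign computation is wrong: with $\Delta w - c(z)w = 0$ and $\Delta\Phi = C_2^2\Phi$, setting $g=\Phi-w$ gives
\[
\Delta g - c(z)\,g \;=\; (C_2^2 - c(z))\,\Phi \;\le\; 0,
\]
not $\Delta g + c\,g \le 0$ as you wrote. With the correct sign the operator is $\Delta - c$ with $c\ge 64C_2^2>0$, and the Phragm\'en--Lindel\"of maximum principle (bounded supersolution, nonnegative on $\{z^d=R\}$) does yield $g\ge0$. So the argument can be repaired, but as written it is circular and never closes. Your boundary-matching worry is also real: requiring $2C_1e^{-C_2R}\ge C_1$ at $R=(d-1)/(4C_2)$ forces $(d-1)/4\le\ln 2$, which fails for $d\ge4$; your fix of ``enlarging the prefactor'' would change the statement. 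Finally, the closing remark that the threshold $(d-1)/(4C_2)$ ``morally accounts for'' the drift $b_j$ from \eqref{eq.2.06} is incorrect --- those coefficients vanish in the limit equation \eqref{eq.2.10}, so there is no drift here at all.

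The paper avoids both difficulties by a different comparison. Rather than a one-dimensional barrier on the unbounded slab $\{z^d>R\}$, it fixes $\overline z$ with $\overline z^d\ge(d-1)/(4C_2)$, works on the \emph{ball} $B_{\overline z^d}(\overline z)\subset\R^d_+$, and compares $u_{k,p}$ with the \emph{radial} solutions $u^\pm$ of $\Delta u^\pm+f(u^\pm)=0$ in that ball with Dirichlet data $\phi^*\pm C_1$. The comparison $u^-\le u_{k,p}\le u^+$ is then the ordinary maximum principle on a bounded domain (no Phragm\'en--Lindel\"of needed), and the decay of $u^\pm$ is read off from the one-dimensional ODE estimate \Cref{proposition:A.2} applied with $\epsilon=1$ and radius $R=\overline z^d$. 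The hypothesis $\overline z^d\ge(d-1)/(4C_2)$ is exactly the smallness condition in \Cref{proposition:A.2} (it controls the $(d-1)/r$ term in the radial Laplacian), and the factor $2$ in $2C_1$ is produced by that proposition, not by any boundary matching on a slab. Evaluating at the center gives \eqref{eq.2.12} at $\overline z$.
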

\begin{proof}Let $\overline{z}=(\overline{z}',\overline{z}^{d})\in\overline\R_{+}^{d}$ arbitrarily with $\overline{z}^{d}\geq(d-1)/(4C_{2})$.
Then let $u^\pm$ be the solution to
\begin{alignat}{2}
\label{eq.2.13}
\Delta u^{\pm}+f(u^{\pm})&=0\quad&&\text{in}~B_{\overline{z}^{d}}(\overline{z}),\\
\label{eq.2.14}
u^{\pm}&=\phi^{*}\pm C_{1}\quad&&\text{on}~\partial B_{\overline{z}^{d}}(\overline{z}).
\end{alignat}
We claim that
\begin{align}
\label{eq.2.15}
u^{-}\leq u_{k,p}\leq u^{+}\quad\text{in}~\overline B_{\overline{z}^{d}}(\overline{z}).
\end{align}
We first prove $u^{+}\geq u_{k,p}$ and set $\overline{u}_{k,p}^{+}:=u^{+}-u_{k,p}$ in $\overline{B}_{\overline{z}^{d}}(\overline{z})$.
Then by \eqref{eq.2.10} and \eqref{eq.2.13}--\eqref{eq.2.14}, $\overline{u}_{k,p}^{+}$ satisfies
\begin{align}
\label{eq.2.16}
&\Delta\overline{u}_{k,p}^{+}+c(z)\overline{u}_{k,p}^{+}=0\quad\text{in}~B_{\overline{z}^{d}}(\overline{z}),\\
\label{eq.2.17}
&\overline{u}_{k,p}^{+}=C_{1}-(u_{k,p}-\phi^{*})\geq0\quad\text{on}~\partial B_{\overline{z}^{d}}(\overline{z}),\end{align}
where $c(z)=(f(u^{+}(z))-f(u_{k,p}(z)))/(u^{+}(z)-u_{k,p}(z))$ if $u^+(z)\neq u_{k,p}(z)$; $c(z)=f'(u_{k,p}(z))$ if $u^+(z)=u_{k,p}(z)$.
Here we have used \eqref{eq.2.07}, \Cref{proposition:2.1} and \Cref{lemma:2.3}.
Note that $c(z)<0$ for $z\in B_{\overline{z}^{d}}(\overline{z})$ because $f$ is strictly decreasing on $\R$ (cf. (A1)).
Thus, we apply the maximum principle to \eqref{eq.2.16}--\eqref{eq.2.17} and obtain $\overline u_{k,p}^+\geq0$ in $\overline B_{\overline{z}^{d}}(\overline{z})$, which implies $u^+\geq u_{k,p}$ in $\overline B_{\overline{z}^{d}}(\overline{z})$.
Similarly, we can prove $u^-\leq u_{k,p}$ in $B_{\overline{z}^{d}}(\overline{z})$, and get \eqref{eq.2.15}.
Since $u^\pm$ are the radial solutions to \eqref{eq.2.13}--\eqref{eq.2.14}, then by \Cref{proposition:A.2} in \Cref{appendix:A} with $\epsilon=1$ and replacing $B_R(0)$, $r$, $\Phi_\epsilon$, $\Phi_{bd}$, $m_f$ by $B_{\overline{z}^{d}}(\overline{z})$, $|z-\overline{z}|$, $u^\pm$, $\phi^{*}\pm C_{1}$, we obtain
\begin{align}
\label{eq.2.18}
|u^\pm(z)-\phi^{*}|\leq2C_{1}\exp(-C_{2}(\overline{z}^{d}-|z-\overline{z}|))\quad\text{for}~z\in B_{\overline{z}^{d}}(\overline{z}).
\end{align}
Here we have used the fact that $\overline{z}^{d}\geq(d-1)/(4C_{2})$.
Along with \eqref{eq.2.15} and \eqref{eq.2.18}, we obtain
\[|u_{k,p}(\overline{z})-\phi^{*}|=|u_{k,p}(\overline{z}',\overline{z}^{d})-\phi^{*}|\leq|u^\pm(\overline{z}',\overline{z}^{d})-\phi^{*}|\leq2C_{1}\exp(-C_{2}\overline{z}^{d})\]
for $\overline{z}\in\R_{+}^{d}$ and $\overline{z}^{d}\geq(d-1)/(4C_{2})$, which gives \eqref{eq.2.12}.
Therefore, we complete the proof of \Cref{lemma:2.4}.
\end{proof}

\newpage

From \Cref{lemma:2.3}, the solution $u_{k,p}$ to \eqref{eq.2.10}--\eqref{eq.2.11} may, a priori, depend on the sequence $\{\epsilon_{n}\}_{n=1}^\infty$ and on the point $p\in\partial\Omega_k$.
To establish the independence from the sequence and the point, we employ the moving plane arguments to prove that $u_{k,p}$ satisfies $u_{k,p}(z)=u_{k}(z^{d})$ for $z=(z',z^{d})\in\overline\R_{+}^{d}$ (see \Cref{proposition:2.5}), where $u_{k}$ is the unique solution to \eqref{eq.1.07}--\eqref{eq.1.09}, and $u_{k}$ is independent of the sequence $\{\epsilon_{n}\}_{n=1}^{\infty}$ and the point $p\in\partial\Omega_{k}$. Consequently, we improve the results of \Cref{lemma:2.3} as
\begin{align}
\label{eq.2.19}
\lim_{\epsilon\to0^+}\|u_{k,p,\epsilon}-u_{k}\|_{\C^{2,\alpha}(\overline B_m^+)}=0\quad\text{for}~m\in\N~\text{and}~\alpha\in(0,1),
\end{align}
and
\begin{align}
\label{eq.2.20}
\lim_{\epsilon\to0^+}u_{k,p,\epsilon}(z)=u_{k}(z^{d})\quad\text{for}~z=(z',z^{d})\in\overline\R_{+}^{d},
\end{align}
where $u_{k,p,\epsilon}$ is defined in \eqref{eq.2.07}. Below are the details.
\begin{proposition}
\label{proposition:2.5}
For $p\in\partial\Omega_k$ and $k\in\{0,1,\dots,K\}$, the solution $u_{k,p}$ to \eqref{eq.2.10}--\eqref{eq.2.11} satisfies
\begin{itemize}
\item[(a)] $u_{k,p}$ depends only on the variable $z^{d}$, i.e., $u_{k,p}(z)=u_{k,p}(z^{d})$ for $z=(z',z^{d})\in\overline\R_{+}^{d}$.
\item[(b)] $u_{k,p}$ is independent of $p$ and depends only on $k$, i.e., $u_{k,p}(z^{d})=u_{k}(z^{d})$ for $z^{d}\in[0,\infty)$, where $u_{k}$ is the unique solution to \eqref{eq.1.07}--\eqref{eq.1.09}.
\end{itemize}
\end{proposition}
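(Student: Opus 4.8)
The plan is to prove part (a) by the method of moving planes applied to \eqref{eq.2.10}--\eqref{eq.2.11} in the upper half-space, and then deduce part (b) from the one-dimensional ODE theory in \Cref{appendix:B}. For part (a), fix an arbitrary unit vector $e\in\partial\R_+^d$ (i.e., $e$ is tangential, $e^d=0$); I want to show $u_{k,p}$ is invariant under translations in the $e$-direction, which together with the rotational freedom in the tangential variables forces $u_{k,p}$ to depend only on $z^d$. First I would set up the sliding family of hyperplanes $T_\lambda=\{z\cdot e=\lambda\}$, with $\Sigma_\lambda$ the portion of $\R_+^d$ on one side and $z^\lambda$ the reflection of $z$ across $T_\lambda$. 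Define $w_\lambda(z)=u_{k,p}(z^\lambda)-u_{k,p}(z)$ on $\Sigma_\lambda$. Because the nonlinearity $f$ is $z$-independent and the reflection is tangential, $w_\lambda$ satisfies a linear elliptic equation $\Delta w_\lambda+c_\lambda(z)w_\lambda=0$ with $c_\lambda(z)=(f(u_{k,p}(z^\lambda))-f(u_{k,p}(z)))/(u_{k,p}(z^\lambda)-u_{k,p}(z))<0$ by the strict monotonicity in (A1); crucially, the Robin boundary condition \eqref{eq.2.11} is preserved under tangential reflection, so $w_\lambda-\gamma_k\partial_{z^d}w_\lambda=0$ on the flat boundary portion of $\Sigma_\lambda$, and $w_\lambda=0$ on $T_\lambda$ itself.

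The core of the moving-plane argument then runs as usual. Start the plane near infinity: here the uniform exponential decay $|u_{k,p}(z)-\phi^*|\le 2C_1\exp(-C_2 z^d)$ from \Cref{lemma:2.4}, combined with the interior/boundary gradient estimates inherited from the $\C^{2,\alpha}_{\mathrm{loc}}$ bound in \Cref{lemma:2.3}, guarantees $u_{k,p}$ and $\nabla u_{k,p}$ are bounded on all of $\overline\R_+^d$; this boundedness (together with $c_\lambda<0$) lets me apply the maximum principle on the unbounded domain $\Sigma_\lambda$ — the sign-definiteness $c_\lambda<0$ means no positive lower eigenvalue obstruction, and the decay kills any contribution from infinity — to conclude $w_\lambda\ge 0$ in $\Sigma_\lambda$ for $\lambda$ large. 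Then slide $\lambda$ and let $\lambda_0=\inf\{\lambda:w_\mu\ge0\text{ on }\Sigma_\mu\text{ for all }\mu\ge\lambda\}$. One shows $\lambda_0=-\infty$: if not, at $\lambda=\lambda_0$ either $w_{\lambda_0}\equiv0$ (giving reflection symmetry about $T_{\lambda_0}$, which, since the boundary hyperplane is not symmetric about any such $T_{\lambda_0}$, is impossible unless the profile is already $e$-independent — handled directly) or $w_{\lambda_0}>0$ strictly in $\Sigma_{\lambda_0}$ by the strong maximum principle and the Hopf lemma (the latter applied carefully at the flat boundary, where the Robin condition must be used to rule out a boundary zero), and then a compactness-plus-continuity argument pushes the plane slightly past $\lambda_0$, contradicting minimality. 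Since $\lambda_0=-\infty$ for every tangential direction $e$, $u_{k,p}$ is monotone and symmetric in every tangential direction simultaneously, which is only possible if $u_{k,p}=u_{k,p}(z^d)$.

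With part (a) in hand, part (b) is short: plugging $u_{k,p}(z)=u_{k,p}(z^d)$ into \eqref{eq.2.10}--\eqref{eq.2.11} gives exactly the two-point boundary-value problem $u_{k,p}''+f(u_{k,p})=0$ on $(0,\infty)$, $u_{k,p}(0)-\gamma_k u_{k,p}'(0)=\phi_{bd,k}$, and (from \Cref{lemma:2.4}) $\lim_{t\to\infty}u_{k,p}(t)=\phi^*$, i.e., precisely \eqref{eq.1.07}--\eqref{eq.1.09}. By the uniqueness result for this ODE (\Cref{proposition:B.1} in \Cref{appendix:B}), $u_{k,p}\equiv u_k$, which in particular is independent of $p$ and of the chosen subsequence $\{\epsilon_{nn}\}$; a standard subsequence-of-subsequences argument then upgrades the convergence in \Cref{lemma:2.3} to the full-limit statements \eqref{eq.2.19}--\eqref{eq.2.20}. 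I expect the main obstacle to be the moving-plane step on the \emph{unbounded} half-space with a \emph{Robin} boundary condition: one must justify the maximum principle at infinity (where the exponential decay of \Cref{lemma:2.4} is the essential input) and, more delicately, run the Hopf lemma at the flat boundary in a way compatible with \eqref{eq.2.11} — the Robin condition, unlike Dirichlet or Neumann, requires showing that a boundary minimum of $w_{\lambda_0}$ cannot occur, which is where the sign $\gamma_k>0$ and $c_{\lambda_0}<0$ must be combined carefully.
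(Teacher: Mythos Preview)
Your treatment of part (b) is correct and matches the paper. The genuine gap is in part (a), at the step ``start the plane near infinity \dots\ the decay kills any contribution from infinity.'' The exponential estimate of \Cref{lemma:2.4} gives decay only in the normal variable $z^d$; in the tangential variables $z'$ the function $u_{k,p}$ is merely bounded. Thus for a tangential hyperplane $T_\lambda=\{z\cdot e=\lambda\}$ there is no mechanism that forces $w_\lambda\ge0$ on $\Sigma_\lambda$ for large $\lambda$: the reflected domain $\Sigma_\lambda$ is always an unbounded quarter-space, never narrow, and $w_\lambda$ has no decay as $|z'|\to\infty$, so neither a narrow-domain maximum principle nor a Phragm\'en--Lindel\"of argument is available to initiate the sliding. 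The obstacle you single out (the Hopf lemma under the Robin condition) is real but secondary; the primary difficulty is the absence of any starting position for the plane.

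The paper sidesteps this by dropping reflection entirely and arguing directly with tangential \emph{translations}. For fixed $h\in\R^{d-1}\setminus\{0\}$ one sets $\tilde u(z)=u_{k,p}(z'+h,z^d)-u_{k,p}(z)$ and assumes $\zeta:=\sup\tilde u>0$. The $z^d$-decay of \Cref{lemma:2.4} confines this supremum to a strip $\R^{d-1}\times[0,L]$; the strong maximum principle (using $c<0$) and the Robin condition (if a boundary maximum occurred then $\partial_{z^d}\tilde u=\tilde u/\gamma_k>0$) rule out attainment of $\zeta$ at any finite point, so $\zeta$ is approached along a sequence $z_n=(z_n',z_n^d)$ with $|z_n'|\to\infty$. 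One then translates, setting $u_n(z):=u_{k,p}(z'+z_n',z^d)$; by the tangential translation invariance of \eqref{eq.2.10}--\eqref{eq.2.11} these solve the same problem, and the uniform $\C^{2,\alpha}_{\mathrm{loc}}$ bounds yield a subsequential limit $u_\infty$. The corresponding $\tilde u_\infty(z)=u_\infty(z'+h,z^d)-u_\infty(z)$ now \emph{does} attain its supremum $\zeta>0$ at a finite point $(0,z_\infty^d)$, and the same maximum-principle/Robin reasoning applied to $\tilde u_\infty$ gives the contradiction. This translation--compactness device is exactly what replaces the missing tangential decay; if you insert it into your scheme you will find that the reflection and critical-$\lambda_0$ machinery become redundant, and the paper's direct argument is the streamlined version.
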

\begin{proof}
To prove (a), we fix $p\in\partial\Omega_k$, $k\in\{0,1,\dots,K\}$ and $h\in\R^{d-1}-\{0\}$ arbitrarily.
For notational convenience, we denote $u_{k,p}$ by $u$ in the proof of (a).
To use the moving plane arguments, we define $\tilde u$ by
\begin{align}
\label{eq.2.21}
\tilde u(z)=u(z'+h,z^{d})-u(z)\quad\text{for}~z=(z',z^{d})\in\overline\R_{+}^{d}.
\end{align}
For the proof of (a), it suffices to show $\tilde u\equiv0$, which is equivalent to showing $\dd\sup_{\overline\R_{+}^{d}}\tilde u=0$ and $\dd\inf_{\overline\R_{+}^{d}}\tilde u=0$.
We first prove $\dd\sup_{\overline\R_{+}^{d}}\tilde u=0$.
Suppose by contradiction that
\begin{align*}
\zeta:=\sup_{\overline\R_{+}^{d}}\tilde u>0.
\end{align*}
By \eqref{eq.2.12} and \eqref{eq.2.21}, we have
\begin{align*}
|\tilde u(z)|\leq|u(z'+h,z^{d})-\phi^{*}|+|u(z)-\phi^{*}|\leq4C_{1}\exp(-C_{2}z^{d})
\end{align*}
for $z=(z',z^{d})\in\overline\R_{+}^{d}$ and $z^{d}\geq(d-1)/(4C_{2})$.
Clearly, there exists $L>(d-1)/(4C_{2})$ such that $4C_{1}\exp(-C_{2}L)\leq\zeta/2$ and
\begin{align}
\label{eq.2.22}
|\tilde u(z)|\leq\zeta/2\quad\text{for}~z=(z',z^{d})\in\R_{+}^{d}\quad\text{and}\quad z^{d}\geq L.\end{align}
Note that $\tilde u$ (defined in \eqref{eq.2.21}) may depend on $h$, but $L$ is independent of $h$ (because $C_{1}$ and $C_{2}$ are independent of $h$).
Thus
\begin{align}
\label{eq.2.23}
\zeta=\sup_{\overline\R_{+}^{d}}\tilde u=\sup_{\R^{d-1}\times[0,L]}\tilde u>0.
\end{align}
Then from \eqref{eq.2.10}--\eqref{eq.2.11}, $\tilde u$ satisfies
\begin{alignat}{2}
\label{eq.2.24}
\Delta\tilde u+c_1(z)\tilde u&=0\quad&&\text{in}~\R_{+}^{d},\\
\label{eq.2.25}
\tilde u-\gamma_k\partial_{z^{d}}\tilde u&=0\quad&&\text{on}~\partial\R_{+}^{d},
\end{alignat}
where
\[c_1(z)=\begin{cases}
\dd\frac{f(u(z'+h,z^{d}))-f(u(z))}{u(z'+h,z^{d})-u(z)}&\text{if}~u(z'+h,z^{d})\neq u(z),\\
f'(u(z))&\text{if}~u(z'+h,z^{d})=u(z).
\end{cases}\]
Note that $c_1(z)<0$ for $z\in\R_{+}^{d}$ because of the strict decrease of $f$ (cf. (A1)).
By the strong maximum principle applied to \eqref{eq.2.24} in $\R^{d-1}\times(0,L)$ (cf. \cite[Theorem 3.5]{1977gilbarg}), $\tilde u$ has no interior maximum point in $\R^{d-1}\times(0,L)$.
Moreover, by \eqref{eq.2.22} and \eqref{eq.2.23}, the maximum point of $\tilde u$ cannot be located on $\R^{d-1}\times\{L\}$ .
On the other hand, by \eqref{eq.2.25}, the maximum point of $\tilde u$ cannot be located on $\partial \R_{+}^{d}$ because $\partial_{z^{d}}\tilde u(z',0)=\tilde u(z',0)/\gamma_k=\zeta/\gamma_k>0$ if $(z',0)$ is the maximum point of $\tilde u$, i.e., $\tilde u(z',0)=\zeta$.
Hence there exists a sequence $z_n=(z_n',z_n^d)\in\R^{d-1}\times[0,L]$ with $\dd\lim_{n\to\infty}|z'_n|=\infty$ and
\begin{align}
\label{eq.2.26}
\lim_{n\to\infty}z_n^d=z_\infty^d\in[0,L]
\end{align}such that
\begin{align}
\label{eq.2.27}
\lim_{n\to\infty}\tilde u(z_n',z_n^d)=\zeta.
\end{align}
Let
\begin{align}
\label{eq.2.28}
u_n(z)=u(z'+z_n',z^{d})\quad\text{for}~n\in\N~\text{and}~z=(z',z^{d})\in\overline\R_{+}^{d}.
\end{align}
By the translation invariance (to $z'$) of \eqref{eq.2.10}--\eqref{eq.2.11}, $u_n$ satisfies
\begin{alignat*}{2}
\Delta u_n+f(u_n)&=0\quad&&\text{in}~\R_{+}^{d},\\
u_n-\gamma_k\partial_{z^{d}}u_n&=\phi_{bd,k}\quad&&\text{on}~\partial\R_{+}^{d}.
\end{alignat*}
Note that by \eqref{eq.2.07}, \Cref{proposition:2.1}, and \Cref{lemma:2.3}, $\|u_n\|_{L^\infty(\R_{+}^{d})}\leq C_{1}+|\phi^{*}|$ for $n\in\N$.
Then similar to \Cref{lemma:2.3}, we can apply the elliptic $L^q$-estimate, Sobolev's compact embedding theorem, Schauder's estimate and the diagonal process to get a subsequence $\{u_{n_{jj}}\}_{j=1}^\infty$ of $\{u_n\}_{n=1}^\infty$ such that
\begin{align}
\label{eq.2.29}
\lim_{j\to\infty}\|u_{n_{jj}}-u_\infty\|_{\C^{2,\alpha}(\overline B_m^+)}=0\quad\text{for}~m\in\N~\text{and}~\alpha\in(0,1),
\end{align}
where $u_\infty\in\C^{2,\alpha}_{\text{loc}}(\overline\R_{+}^{d})$ satisfies
\begin{alignat}{2}
\label{eq.2.30}
\Delta u_\infty+f(u_\infty)&=0\quad&&\text{in}~\R_{+}^{d},\\
\label{eq.2.31}
u_\infty-\gamma_k\partial_{z^{d}}u_\infty&=\phi_{bd,k}\quad&&\text{on}~\partial\R_{+}^{d}.
\end{alignat}

Now we derive the contradiction and conclude that $\dd\zeta=\sup_{\overline\R_{+}^{d}}\tilde u=\sup_{\R^{d-1}\times[0,L]}\tilde u=0$ (cf. \eqref{eq.2.23}). Let
\begin{align}
\label{eq.2.32}
\tilde u_\infty(z)=u_\infty(z'+h,z^{d})-u_\infty(z)\quad\text{for}~z=(z',z^{d})\in\R^{d-1}\times[0,L].
\end{align}
From \eqref{eq.2.23}, we have $\zeta\geq\tilde u(z)$ for $z\in\R^{d-1}\times[0,L]$.
Then by \eqref{eq.2.21} and \eqref{eq.2.28} with $n=n_{jj}$, we get
\begin{align}
\label{eq.2.33}
\zeta\geq u_{n_{jj}}(z'+h,z^{d})-u_{n_{jj}}(z)\quad\text{for}~z\in\R^{d-1}\times[0,L]~\text{and}~j\in\N.
\end{align}
We prove that $\zeta$ is the maximum value of $\tilde u_\infty$ as follows.
\begin{claim}
\label{claim:1}
$\tilde u_\infty$ attains its maximum value $\zeta$ at $(0,z_\infty^d)\in\R^{d-1}\times[0,L]$.
\end{claim}
\begin{proof}[Proof of \Cref{claim:1}]
Fix $z=(z',z^{d})\in\R^{d-1}\times[0,L]$ arbitrarily. Then there exists $m_1\in\N$ such that $z\in\overline B_{m_1}^+$ and $(z'+h,z^{d})\in\overline B_{m_1}^+$.
By \eqref{eq.2.29} with $m=m_1$, we have
\begin{align}
\label{eq.2.34}
\lim_{j\to\infty}u_{n_{jj}}(z'+h,z^{d})=u_\infty(z'+h,z^{d})\quad\text{and}\quad\lim_{j\to\infty}u_{n_{jj}}(z)=u_\infty(z).
\end{align}
Taking $j\to\infty$ in \eqref{eq.2.33}, we use \eqref{eq.2.32} and \eqref{eq.2.34} to obtain
\begin{equation}
\label{eq.2.35}
\zeta\geq u_\infty(z'+h,z^{d})-u_\infty(z)=\tilde u_\infty(z).
\end{equation}
By \eqref{eq.2.26}, $\dd\lim_{j\to\infty}z_{n_{jj}}^d=z_\infty^d\in[0,L]$ and hence there exists a positive integer $m_2$ such that $(h,z_{n_{jj}}^d)\in\overline B_{m_2}^+$ and $(0,z_{n_{jj}}^d)\in\overline B_{m_2}^+$ for $j\in\N$. Then we use \eqref{eq.2.29} with $m=m_2$ to get
\begin{align}
\label{eq.2.36}
\lim_{j\to\infty}u_{n_{jj}}(h,z_{n_{jj}}^d)=u_\infty(h,z_\infty^d)\quad\text{and}\quad\lim_{j\to\infty}u_{n_{jj}}(0,z_{n_{jj}}^d)=u_\infty(0,z_\infty^d).
\end{align}
By \eqref{eq.2.21}, \eqref{eq.2.27} and \eqref{eq.2.28} with $n=n_{jj}$,
\[\zeta=\lim_{j\to\infty}[u_{n_{jj}}(h,z_{n_{jj}}^d)-u_{n_{jj}}(0,z_{n_{jj}}^d)],\]
and then along with \eqref{eq.2.32}, \eqref{eq.2.35} and \eqref{eq.2.36}, we have
\[\zeta=u_\infty(h,z_\infty^d)-u_\infty(0,z_\infty^d)=\tilde u_\infty(0,z_\infty^d),\]
which completes the proof of \Cref{claim:1}.\end{proof}
\noindent By \eqref{eq.2.30}--\eqref{eq.2.32}, $\tilde u_\infty$ satisfies
\begin{alignat}{2}
\label{eq.2.37}
\Delta\tilde u_\infty+c_2(z)\tilde u_\infty&=0\quad&&\text{in}~\R^{d-1}\times(0,L),\\
\label{eq.2.38}
\tilde{u}_{\infty}-\gamma_{k}\partial_{z^{d}}\tilde{u}_{\infty}&=0\quad&&\text{on}~\partial\R_{+}^{d},
\end{alignat}
where
\[c_{2}(z)=\begin{cases}\dd\frac{f(u_\infty(z'+h,z^{d}))-f(u_{\infty}(z))}{u_{\infty}(z'+h,z^{d})-u_\infty(z)}&\text{if}~u_\infty(z'+h,z^{d})\neq u_{\infty}(z),\\f'(u_{\infty}(z))&\text{if}~u_\infty(z'+h,z^{d})=u_\infty(z).
\end{cases}\]
Note that $c_{2}(z)<0$ for $z\in\R_{+}^{d}$ because of (A1).
Then by the strong maximum principle to \eqref{eq.2.37} (cf. \cite[Theorem 3.5]{1977gilbarg}), the maximum point $(0,z_{\infty}^{d})$ of $\tilde u_\infty$ cannot be an interior point of $\R^{d-1}\times(0,L)$.
Hence either $z_{\infty}^{d}=0$ or $z_{\infty}^{d}=L$.
From \eqref{eq.2.22}, \eqref{eq.2.28} with $n=n_{jj}$, \eqref{eq.2.32} and \eqref{eq.2.36}, we have $\tilde{u}_{\infty}(0,L)\leq\zeta/2$.
Thus, by \Cref{claim:1}, $z_{\infty}^{d}\neq L$, $z_{\infty}^{d}=0$, $\tilde{u}_{\infty}(0,0)=\zeta$ and $(0,0)$ is the maximum point of $\tilde{u}_{\infty}$ in $\R^{d-1}\times[0,L]$.
However, \eqref{eq.2.38} gives
\[0<\zeta=\tilde{u}_{\infty}(0,0)=\gamma_k\partial_{z^{d}}\tilde{u}_{\infty}(0,0)\leq0,\]
which leads to a contradiction.
Hence we obtain $\zeta=\dd\sup_{\overline{\R}_{+}^{d}}\tilde u=0$.
Similarly, we can prove $\dd\inf_{\overline{\R}_{+}^{d}}\tilde u=0$ by contradiction. Suppose that $\dd\zeta':=-\inf_{\overline{\R}_{+}^{d}}\tilde{u}>0$. Then by \eqref{eq.2.12} and \eqref{eq.2.21}, we have $|\tilde{u}(z)|\leq\zeta'/2$ for $z=(z',z^{d})\in\R_{+}^{d}$ and $z^{d}\geq L$ for some $L>0$ independent of $h$, and thus $\dd\zeta'=-\inf_{{\R}^{d-1}\times[0,L]}\tilde u$.
Arguing as in \eqref{eq.2.24}--\eqref{eq.2.36}, we can prove $\tilde u_\infty$ attains its minimum value at $(0,z_{\infty}^{d})$, where $z_\infty^d\in[0,L]$. Then we can apply the maximum principle to \eqref{eq.2.37}--\eqref{eq.2.38} to get a contradiction and complete the proof of (a).

To complete the proof, it remains to prove (b). By (a), $u_{k,p}=u_{k,p}(z^{d})$ solves the ordinary differential equation \eqref{eq.1.07} with the initial condition \eqref{eq.1.08}. Moreover, by the exponential-type estimate \eqref{eq.2.12}, $u_{k,p}$ satisfies the asymptotic formula \eqref{eq.1.09}.
Note that the equations \eqref{eq.1.07}--\eqref{eq.1.09} are independent of $p$.
Hence by the uniqueness of \eqref{eq.1.07}--\eqref{eq.1.09} (cf. \Cref{appendix:B}), $u_{k,p}$ is independent of $p$; we therefore denote it by $u_{k}$ for $k\in\{0,1,\dots,K\}$, which finalizes the proof of (b). Therefore, we complete the proof of \Cref{proposition:2.5}.
\end{proof}

Using \Cref{proposition:2.5}, we prove \eqref{eq.2.19} and \eqref{eq.2.20}.
Let $T>0$, $k\in\{0,1,\dots,K\}$, and $p\in\partial\Omega_k$.
Then by \eqref{eq.2.04}, \eqref{eq.2.07}, and \eqref{eq.2.19}--\eqref{eq.2.20}, we find the first-order asymptotic expansions of $\phi_{\epsilon}$ and $\nabla\phi_{\epsilon}$:
\begin{align}
\label{eq.2.39}
\phi_{\epsilon}(p-t\sqrt{\epsilon}\nu_{p})=u_{k,p,\epsilon}(te_{d})=u_{k}(t)+o_{\epsilon}(1),\\
\label{eq.2.40}
\nabla\phi_{\epsilon}(p-t\sqrt{\epsilon}\nu_{p})=-\frac{1}{\sqrt{\epsilon}}(u_{k}'(t)\nu_{p}+o_{\epsilon}(1))
\end{align}
for $0\leq t\leq T$ as $\epsilon\to0^+$.

To derive the second-order asymptotic expansions of $\phi_{\epsilon}$ and $\nabla\phi_{\epsilon}$ and capture the asymptotic behavior of the term $o_{\epsilon}(1)/\sqrt{\epsilon}$ in the next section, it is crucial to establish the following exponential-type estimates of $\phi_{\epsilon}$ and $\nabla\phi_{\epsilon}$:
\begin{proposition}[Exponential-type estimates of $\phi_{\epsilon}$ and $\nabla\phi_{\epsilon}$]
\label{proposition:2.6}
Let $\Omega$ satisfy the uniform interior sphere condition, i.e., there exists $R>0$ such that $B_R(p-R\nu_{p})\subseteq\Omega$ and $\partial B_R(p-R\nu_{p})\cap\partial\Omega=\{p\}$ for $p\in\partial\Omega$, where $\nu_{p}$ is the unit outer normal to $\partial\Omega$ at $p$.
Then we have
\begin{align}
\label{eq.2.41}
&|\phi_{\epsilon}(x)-\phi^{*}|\leq2C_{1}\exp\left(-\frac{C_{2}\delta(x)}{\sqrt{\epsilon}}\right),\\
\label{eq.2.42}
&|\nabla\phi_{\epsilon}(x)|\leq\frac{M}{\sqrt\epsilon}\exp\left(-\frac{C_{2}\delta(x)}{\sqrt{2\epsilon}}\right)\end{align}
for $x\in\overline\Omega$ and $0<\epsilon<\epsilon^{*}$, where $C_{1}$ and $C_{2}$ are positive constants given in \Cref{proposition:2.1} and \Cref{lemma:2.4}, respectively.
Here $\epsilon^{*}$ is a sufficiently small constant, $M>0$ is a positive constant independent of $\epsilon$, and $\delta(x)=\dist(x,\partial\Omega)$ for $x\in\overline\Omega$.
\end{proposition}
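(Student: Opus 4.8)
The plan is to prove the potential estimate \eqref{eq.2.41} first, by a radial sub/supersolution (barrier) comparison, and then to deduce the gradient estimate \eqref{eq.2.42} from \eqref{eq.2.41} via interior elliptic estimates, falling back on the first-order expansion \eqref{eq.2.40} in the thin boundary layer where interior estimates no longer fit.

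\emph{Step 1 (proof of \eqref{eq.2.41}).} Fix $x\in\overline\Omega$; the case $\delta(x)=0$ is immediate from \Cref{proposition:2.1}, so assume $\delta(x)>0$. First I would exhibit a ball $B_\rho(c)\subseteq\Omega$ with $x\in\overline{B_\rho(c)}$, $\rho\geq R$ and $\rho-|x-c|=\delta(x)$: if $\delta(x)\geq R$, take $c=x$ and $\rho=\delta(x)$; if $0<\delta(x)<R$, let $p\in\partial\Omega$ be the nearest boundary point (unique since $\delta(x)<R$ and $\Omega$ satisfies the uniform interior sphere condition with radius $R$), so that $x=p-\delta(x)\nu_{p}$, and take $c=p-R\nu_{p}$ and $\rho=R$, which lies in $\Omega$ and satisfies $|x-c|=R-\delta(x)$. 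On $B_\rho(c)$ let $u^{\pm}$ solve $-\epsilon\Delta u^{\pm}=f(u^{\pm})$ with $u^{\pm}=\phi^{*}\pm C_{1}$ on $\partial B_\rho(c)$; by \Cref{appendix:A} these exist and are radial. Since $|\phi_{\epsilon}-\phi^{*}|\leq C_{1}$ on $\partial B_\rho(c)$ by \Cref{proposition:2.1} and $f$ is strictly decreasing, the maximum-principle comparison used in the proof of \Cref{lemma:2.4} gives $u^{-}\leq\phi_{\epsilon}\leq u^{+}$ in $\overline{B_\rho(c)}$. Applying \Cref{proposition:A.2} to $u^{\pm}$ (with parameter $\epsilon$; after rescaling by $\sqrt\epsilon$ the admissibility requirement reads $\rho\geq\sqrt\epsilon\,(d-1)/(4C_{2})$, which holds for all $0<\epsilon<\epsilon^{*}$ because $\rho\geq R$ is fixed) yields $|u^{\pm}(x)-\phi^{*}|\leq 2C_{1}\exp(-C_{2}(\rho-|x-c|)/\sqrt\epsilon)=2C_{1}\exp(-C_{2}\delta(x)/\sqrt\epsilon)$, and \eqref{eq.2.41} follows.

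\emph{Step 2 (proof of \eqref{eq.2.42}).} Here I would split on the size of $\delta(x)$ at a fixed threshold $c_{0}\sqrt\epsilon$, with $c_{0}>0$ a constant chosen large enough that the elementary inequality below holds. If $\delta(x)\leq c_{0}\sqrt\epsilon$, then for $\epsilon^{*}$ small $x$ lies in the tubular neighborhood of a single component $\partial\Omega_{k}$ and equals $p-t\sqrt\epsilon\nu_{p}$ with $p\in\partial\Omega_{k}$ and $0\leq t\leq c_{0}$, so \eqref{eq.2.40} with $T=c_{0}$ gives $|\nabla\phi_{\epsilon}(x)|\leq M/\sqrt\epsilon$; since $\exp(-C_{2}\delta(x)/\sqrt{2\epsilon})\geq\exp(-c_{0}C_{2}/\sqrt2)$ is bounded below, \eqref{eq.2.42} holds after enlarging $M$. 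If $\delta(x)>c_{0}\sqrt\epsilon$, set $\psi=\phi_{\epsilon}-\phi^{*}$, which solves $-\epsilon\Delta\psi=g\psi$ in $\Omega$ with $\|g\|_{L^{\infty}}\leq\max_{[\phi^{*}-C_{1},\phi^{*}+C_{1}]}|f'|$ by (A1) and \Cref{proposition:2.1}; rescaling $\widetilde\psi(z)=\psi(x+\sqrt\epsilon z)$ turns this into $-\Delta\widetilde\psi=\widetilde g\,\widetilde\psi$ on $B_{2}(0)$, which lies in the domain of $\widetilde\psi$ since $\delta(x)>2\sqrt\epsilon$, with $\widetilde g$ bounded, and interior $W^{2,q}$ and Schauder estimates give $\sqrt\epsilon\,|\nabla\phi_{\epsilon}(x)|=|\nabla\widetilde\psi(0)|\leq M\,\|\widetilde\psi\|_{L^{\infty}(B_{2}(0))}=M\sup_{B_{2\sqrt\epsilon}(x)}|\psi|$. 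Estimating the right-hand side by \eqref{eq.2.41}, using $\delta(y)\geq\delta(x)-2\sqrt\epsilon$ on $B_{2\sqrt\epsilon}(x)$ and the inequality $(\delta(x)-2\sqrt\epsilon)/\sqrt\epsilon\geq\delta(x)/\sqrt{2\epsilon}$ (valid once $c_{0}\geq 4+2\sqrt2$), we obtain $|\nabla\phi_{\epsilon}(x)|\leq(M/\sqrt\epsilon)\exp(-C_{2}\delta(x)/\sqrt{2\epsilon})$. Combining the two regimes proves \eqref{eq.2.42}.

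The delicate point is Step 1: the barrier ball must be large enough for \Cref{proposition:A.2} to apply uniformly in $x$ and in small $\epsilon$, no matter how close $x$ sits to $\partial\Omega$. This is exactly where the uniform interior sphere condition is used, furnishing a fixed radius $R$ that dominates $\sqrt\epsilon\,(d-1)/(4C_{2})$ for all $0<\epsilon<\epsilon^{*}$; without it the comparison cannot be run near the boundary. Step 2 is then routine, the harmless-looking factor $\sqrt2$ in the exponent being precisely the slack that absorbs the $O(\sqrt\epsilon)$ enlargement of the ball in the interior gradient estimate.
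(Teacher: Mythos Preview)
Your Step~1 is essentially the paper's argument: same interior-sphere ball construction, same radial barriers $u^{\pm}$ with Dirichlet data $\phi^{*}\pm C_{1}$, same maximum-principle comparison, same appeal to \Cref{proposition:A.2}. No issues there.

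Your Step~2 is correct but takes a different route from the paper. The paper first proves the global bound $|\nabla\phi_{\epsilon}|\leq M/\sqrt\epsilon$ (using \eqref{eq.2.40} on $\partial\Omega$ and a rescaled interior estimate inside), and then derives the exponential gradient decay via a Bochner-type inequality: from \eqref{eq.1.01} one computes $\epsilon\Delta|\nabla\phi_{\epsilon}|^{2}\geq -2f'(\phi_{\epsilon})|\nabla\phi_{\epsilon}|^{2}\geq 128C_{2}^{2}|\nabla\phi_{\epsilon}|^{2}$, so $|\nabla\phi_{\epsilon}|^{2}$ is a subsolution of a linear equation on each ball $B_{R_{0}}(x_{0})$, and comparison with the radial supersolution having boundary value $\max_{\partial B_{R_{0}}(x_{0})}|\nabla\phi_{\epsilon}|^{2}$ gives the decay. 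Your approach instead feeds the already-proved potential estimate \eqref{eq.2.41} back into a rescaled interior gradient estimate for $\psi=\phi_{\epsilon}-\phi^{*}$, then handles the thin boundary strip $\delta(x)\leq c_{0}\sqrt\epsilon$ directly via \eqref{eq.2.40}. Your method is arguably more elementary (no Bochner identity, just standard $W^{2,q}$ interior estimates) and makes transparent why the exponent loses the factor $\sqrt2$: it is exactly the slack needed to absorb the $2\sqrt\epsilon$ enlargement of the ball. The paper's method, on the other hand, treats $|\nabla\phi_{\epsilon}|^{2}$ on the same footing as $\phi_{\epsilon}-\phi^{*}$ and reuses the radial-comparison machinery once more; it avoids the near/far case split but requires the differential inequality for the gradient squared.
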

\noindent Hereafter $\epsilon^{*}>0$ is a sufficiently small constant, and $M>0$ is a generic constant independent of $\epsilon>0$.
\begin{proof}[Proof of \Cref{proposition:2.6}]
For $x\in\Omega$, then by the uniform interior sphere condition of $\Omega$, there exists $B_{R_{0}}(x_{0})\subseteq\Omega$ with $R_{0}\geq R$ such that $x\in B_{R_{0}}(x_{0})$ and $\partial B_{R_{0}}(x_{0})\cap\partial\Omega\neq\emptyset$.
Note that if $\delta(x)\geq R$, then we may set $x_{0}=x$ and $R_{0}=\delta(x)$; if $\delta(x)<R$, then we take $x_{0}=p_{x}+(x-p_{x})R/|x-p_{x}|$ and $R_{0}=R$, where $p_{x}\in\partial\Omega$ is the unique closest point to $x$.
Hence we have
\begin{align}
\label{eq.2.43}
R_{0}-|x-x_{0}|=\dist(x,\partial B_{R_{0}}(x_{0}))=\delta(x)\quad\text{for}~x\in\Omega.
\end{align}

To show \eqref{eq.2.41}, we let $\phi_{\epsilon}^{\pm}$ be radial solutions to
\begin{alignat}{2}
\label{eq.2.44}
-\epsilon\Delta\phi_{\epsilon}^{\pm}&=f(\phi_{\epsilon}^{\pm})&&\quad\text{in}~B_{R_{0}}(x_{0}),\\
\label{eq.2.45}
\phi_{\epsilon}^{\pm}&=\phi^{*}\pm C_{1}&&\quad\text{on}~\partial B_{R_{0}}(x_{0}).
\end{alignat}
The existence of solutions to \eqref{eq.2.44}--\eqref{eq.2.45} follows from the standard variational method. Now we claim
\begin{align}
\label{eq.2.46}
\phi_{\epsilon}^{-}(x)\leq\phi_{\epsilon}(x)\leq\phi_{\epsilon}^{+}(x)\quad\text{for}~x\in\overline B_{R_{0}}(x_{0}).
\end{align}
We first prove $\phi_{\epsilon}^+\geq\phi_{\epsilon}$.
Let $\overline\phi_{\epsilon}^+:=\phi_{\epsilon}^+-\phi_{\epsilon}$ in $\overline B_{R_{0}}(x_{0})$.
Then by \eqref{eq.1.01}, \eqref{eq.2.44}--\eqref{eq.2.45} and \Cref{proposition:2.1}, $\overline\phi_{\epsilon}^+$ satisfies
\begin{alignat}{2}
\label{eq.2.47}
&\epsilon\Delta\overline\phi_{\epsilon}^++c(x)\overline\phi_{\epsilon}^+=0\quad&&\text{in}~B_{R_{0}}(x_{0}),\\
\label{eq.2.48}
&\overline\phi_{\epsilon}^+=C_{1}-(\phi_{\epsilon}-\phi^{*})\geq0\quad&&\text{on}~\partial B_{R_{0}}(x_{0}),
\end{alignat}
where $c(x)=(f(\phi_{\epsilon}^+(x))-f(\phi_{\epsilon}(x)))/(\phi_{\epsilon}^+(x)-\phi_{\epsilon}(x))$ if $\phi_{\epsilon}^+(x)\neq\phi_{\epsilon}(x)$; $c(x)=f'(\phi_{\epsilon}(x))$ if $\phi_{\epsilon}^+(x)=\phi_{\epsilon}(x)$.
Note that $c(x)<0$ for $x\in B_{R_{0}}(x_{0})$ because $f$ is strictly decreasing on $\R$ (cf. (A1)).
Thus, we apply the maximum principle to \eqref{eq.2.47}--\eqref{eq.2.48} and obtain $\overline\phi_{\epsilon}^+\geq0$ in $\overline B_{R_{0}}(x_{0})$, which implies $\phi_{\epsilon}^+\geq\phi_{\epsilon}$ in $\overline B_{R_{0}}(x_{0})$.
Similarly, we prove $\phi_{\epsilon}^-\leq\phi_{\epsilon}$ in $\overline B_{R_{0}}(x_{0})$, and get \eqref{eq.2.46}.
Since $\phi_{\epsilon}^\pm$ are the radial solutions to \eqref{eq.2.44}--\eqref{eq.2.45}, then by \eqref{eq.2.43} and \Cref{proposition:A.2} (in \Cref{appendix:A}) by replacing $B_{R}(0)$ with $B_{R_{0}}(x_{0})$, we may use the fact that $R_{0}\geq R$ and obtain
\begin{align}
\label{eq.2.49}
|\phi_{\epsilon}^\pm(x)-\phi^{*}|\leq2C_{1}\exp\left(-\frac{C_{2}\dist(x,\partial B_{R_{0}}(x_{0}))}{\sqrt{\epsilon}}\right)=2C_{1}\exp\left(-\frac{C_{2}\delta(x)}{\sqrt\epsilon}\right)
\end{align}
for $x\in\overline B_{R_{0}}(x_{0})$ and $0<\epsilon<8(C_{2}R)^2/(d-1)^2$, which gives \eqref{eq.2.41}.

To get \eqref{eq.2.42}, we first claim
\begin{align}
\label{eq.2.50}
|\nabla\phi_{\epsilon}(x)|\leq M/\sqrt\epsilon\quad\text{for}~x\in\overline\Omega\quad\text{and}\quad0<\epsilon<(\delta(x))^2,
\end{align}
where $M>0$ is independent of $\epsilon>0$.
When $x\in\partial\Omega$, we may use \eqref{eq.2.40} to obtain \eqref{eq.2.50}.
Note that $B_{\sqrt\epsilon}(x_1)\subseteq\Omega$ for $x_1\in\Omega$ and $0<\epsilon<(\delta(x_1))^2$.
Set $y=(x-x_1)/\sqrt\epsilon$ and $\tilde\phi_{\epsilon}(y)=\phi_{\epsilon}(x_1+\sqrt\epsilon y)$.
Then from \eqref{eq.1.01}, we have $-\Delta\tilde\phi_{\epsilon}=f(\tilde\phi_{\epsilon})$ in $B_1(0)$.
By the uniform boundedness of $\phi_{\epsilon}$ (cf. \Cref{proposition:2.1}), we can apply the elliptic $L^q$-estimate and obtain $\|\tilde\phi_{\epsilon}\|_{W^{2,q}(B_{1/2}(0))}\leq M$ for $q>1$.
Using Sobolev's compact embedding theorem, we get $\|\tilde\phi_{\epsilon}\|_{\C^{1,\alpha}(B_{1/4}(0))}\leq M$ for $\alpha\in(0,1)$.
In particular, we have $|\nabla\tilde\phi_{\epsilon}(0)|\leq M$, which implies \eqref{eq.2.50} holds true for $x\in\Omega$.

Now we prove \eqref{eq.2.42}.
By \eqref{eq.1.01}, we have
\begin{align}
\label{eq.2.51}
\begin{aligned}
\epsilon\Delta|\nabla\phi_{\epsilon}|^2&=2\epsilon\sum_{i,j=1}^d\left(\frac{\partial^2\phi_{\epsilon}}{\partial x^j\partial x^i}\right)^2+2\epsilon\sum_{j=1}^d\frac{\partial\phi_{\epsilon}}{\partial x^j}\frac\partial{\partial x^j}\Delta\phi_{\epsilon}\\&\geq-2\nabla\phi_{\epsilon}\cdot\nabla(f(\phi_{\epsilon}))=-2f'(\phi_{\epsilon})|\nabla\phi_{\epsilon}|^2\quad\text{in}~B_{R_{0}}(x_{0}).
\end{aligned}
\end{align}
Along with \Cref{proposition:2.1} and (A1), we have $\epsilon\Delta|\nabla\phi_{\epsilon}|^2\geq128C_{2}^2|\nabla\phi_{\epsilon}|^2$ in $B_{R_{0}}(x_{0})$, where $C_{2}=m_f([\phi^{*}-C_{1},\phi^{*}+C_{1}])/8>0$ is given in \Cref{lemma:2.4}.
Let $\overline\phi_{\epsilon}$ be the solution to $\epsilon\Delta\overline\phi_{\epsilon}=128C_{2}^2\overline\phi_{\epsilon}$ in $B_{R_{0}}(x_{0})$ with the Dirichlet boundary condition $\overline\phi_{\epsilon}=\max\limits_{\partial B_{R_{0}}(x_{0})}|\nabla\phi_{\epsilon}|^2$ on $\partial B_{R_{0}}(x_{0})$.
Then by the standard comparison principle, we obtain
\[|\nabla\phi_{\epsilon}(x)|^2\leq|\overline\phi_{\epsilon}(x)|\leq2\left(\max_{\partial B_{R_{0}}(x_{0})}|\nabla\phi_{\epsilon}|^2\right)\exp\left(-\frac{\sqrt2C_{2}\dist(x,\partial B_{R_{0}}(x_{0}))}{\sqrt\epsilon}\right)\]
for $x\in\overline B_{R_{0}}(x_{0})$ and $0<\epsilon<16(C_{2}R)^2/(d-1)^2$.
Combining \eqref{eq.2.50}, we obtain
\[|\nabla\phi_{\epsilon}(x_{0})|\leq\frac{M}{\sqrt\epsilon}\exp\left(-\frac{C_{2}\delta(x_{0})}{\sqrt{2\epsilon}}\right)\quad\text{for}~0<\epsilon<16(C_{2}R)^2/(d-1)^2,\]
which implies \eqref{eq.2.42}.
Therefore, we complete the proof of \Cref{proposition:2.6}.
\end{proof}

By \Cref{proposition:2.6}, it is clear that \Cref{theorem:1}(b) holds true.

\subsection{Second-order asymptotic expansion of \texorpdfstring{$\phi_{\epsilon}$}{ϕ.ϵ} in \texorpdfstring{$\Omega_{k,\epsilon}$}{Ω.k,ϵ}}
\label{section:2.2}

In this section, we prove the second-order asymptotic expansion of $\phi_{\epsilon}$ near the boundary $\partial\Omega_{k}$ for $k\in\{0,1,\dots,K\}$.
Since $\dist(\Omega_{i},\Omega_{j})>0$ for $i\neq j\in\{0,1,\dots,K\}$ (cf. (A3)), there exists $\epsilon^{*}>0$ sufficiently small such that $\dist(\Omega_{i,\epsilon,\beta},\Omega_{j,\epsilon,\beta})\geq d_{0}>0$ for $i\neq j$ and $\Omega_{k,\epsilon,\beta}=\{x\in\Omega:\,\delta_k(x):=\dist(x,\partial\Omega_{k})<\epsilon^{\beta}\}$ contains no focal points for $0<\beta<1/2$ and $0<\epsilon<\epsilon^{*}$ (cf. \cite{1994cecil}), where $d_{0}>0$ is independent of $\epsilon$.
Then for $x\in\overline\Omega_{k,\epsilon,\beta}$, there exists a unique $p_{x}\in\partial\Omega_k$ (the closest point to $x$) and $0\leq t_{x}=\delta_{k}(x)/\sqrt{\epsilon}\leq\epsilon^{(2\beta-1)/2}$ such that $x=p_{x}-t_{x}\sqrt{\epsilon}\nu_{p_{x}}$, where $\nu_{p_{x}}$ is the unit outer normal at $p_{x}$ with respect to $\Omega=\Omega_0-\bigcup_{k=1}^{K}\Omega_{k}$.
Hence \eqref{eq.2.39}--\eqref{eq.2.40} become
\begin{align}
\label{eq.2.52}
&\phi_{\epsilon}(x)=u_{k}(\delta_{k}(x)/\sqrt{\epsilon})+o_{\epsilon}(1),\\
\label{eq.2.53}
&\nabla\phi_{\epsilon}(x)=-\frac{1}{\sqrt{\epsilon}}[u_{k}'(\delta_k(x)/\sqrt{\epsilon})\nu_{p_{x}}+o_{\epsilon}(1)].
\end{align}

For the second-order term of $\phi_{\epsilon}$, we use \eqref{eq.2.52} to define the following function:
\begin{align}
\label{eq.2.54}
\varphi_{k,\epsilon}(x)=\frac{\phi_{\epsilon}(x)-u_{k}(\delta_{k}(x)/\sqrt{\epsilon})}{\sqrt{\epsilon}}\quad\text{for}~x\in\overline\Omega_{k,\epsilon,\beta}~\text{and}~k=0,1,\dots,K,
\end{align}
where $u_{k}$ is the unique solution to \eqref{eq.1.07}--\eqref{eq.1.09}, and $\delta_k(x)=\dist(x,\partial\Omega_{k})$ for $k=0,1,\dots,K$.
Then by \eqref{eq.1.01}, \eqref{eq.1.07}, and \eqref{eq.2.54}, $\varphi_{k,\epsilon}$ satisfies
\begin{align}
\label{eq.2.55}
\epsilon\Delta\varphi_{k,\epsilon}(x)+c_{\epsilon}(x)\varphi_{k,\epsilon}(x)=g_\epsilon(x)\quad\text{for}~x\in\Omega_{k,\epsilon,\beta},
\end{align}
where
\begin{align}
\label{eq.2.56}
&c_{\epsilon}(x)=\begin{cases}\dd\frac{f(\phi_{\epsilon}(x))-f(u_{k}(\delta_k(x)/\sqrt\epsilon))}{\phi_{\epsilon}(x)-u_{k}(\delta_{k}(x)/\sqrt{\epsilon})}&\text{if}~\phi_{\epsilon}(x)\neq u_{k}(\delta_{k}(x)/\sqrt{\epsilon});\\
f'(\phi_{\epsilon}(x))&\text{if}~\phi_{\epsilon}(x)=u_{k}(\delta_{k}(x)/\sqrt{\epsilon}),
\end{cases}\\
\label{eq.2.57}
&g_{\epsilon}(x)=(d-1)H(p_{x})u_{k}'(\delta_{k}(x)/\sqrt{\epsilon})+\epsilon^{\beta}\mathcal{O}_\epsilon(1).\end{align}
Here we have used the fact that $0\leq\delta_k(x)\leq\epsilon^{\beta}$, $|\nabla\delta_k(x)|=1$, and
\begin{align}
\label{eq.2.58}
\Delta\delta_k(x)=-\sum_{i=1}^{d-1}\frac{\kappa_i}{1-\kappa_i\delta_k(x)}=-(d-1)H(p_x)+\epsilon^{\beta}\mathcal{O}_\epsilon(1)\quad\text{for}~x\in\Omega_{k,\epsilon,\beta},
\end{align}
where $\kappa_i$ are the principal curvatures at $p_x\in\partial\Omega_k$, and $H(p_x)=(d-1)^{-1}\sum_{i=1}^{d-1}\kappa_i$ is the mean curvature at $p_x\in\partial\Omega_k$ (cf. \cite[Lemma~14.17]{1977gilbarg}).
By (A1), \eqref{eq.2.12} and \Cref{proposition:2.1}, $c_\epsilon(x)\leq-64C_{2}^2<0$ for $x\in\overline\Omega_{k,\epsilon,\beta}$, where $C_{2}=m_f([\phi^{*}-C_{1},\phi^{*}+C_{1}])/8$.
Moreover, $g_\epsilon$ is uniformly bounded with respect to $\epsilon$ in $\Omega_{k,\epsilon}$ because $u_{k}'$ is bounded on $[0,\infty)$ by \Cref{proposition:B.1} in \Cref{appendix:B}. For the boundary condition of $\varphi_{k,\epsilon}$, we use \eqref{eq.1.04}, \eqref{eq.1.08}, \eqref{eq.2.03} and \eqref{eq.2.54} to get
\begin{align}
\label{eq.2.59}
\varphi_{k,\epsilon}+\gamma_k\sqrt\epsilon\partial_\nu\varphi_{k,\epsilon}=0\quad\text{on}~\partial\Omega_k.
\end{align}

Then we prove the uniform boundedness of $\varphi_{k,\epsilon}$ as follows.
\begin{proposition}[Uniform boundedness of $\varphi_{k,\epsilon}$]
\label{proposition:2.7}
There exists a constant $M>0$ independent of $\epsilon$ such that $\dd\max_{\overline\Omega_{k,\epsilon,\beta}}|\varphi_{k,\epsilon}|\leq M$ for $k=0,1,\dots,K$ and $0<\epsilon<\epsilon^{*}$.
\end{proposition}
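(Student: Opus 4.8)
The plan is to run a maximum-principle argument on the linear equation \eqref{eq.2.55}, exploiting the strict negativity of the zeroth-order coefficient $c_{\epsilon}$ and the uniform boundedness of the inhomogeneity $g_{\epsilon}$, in the same spirit as the proof of \Cref{proposition:2.1}, but with an extra preliminary step controlling $\varphi_{k,\epsilon}$ on the inner boundary of the thin shell. For $\epsilon<\epsilon^{*}$ the set $\overline\Omega_{k,\epsilon,\beta}$ is compact and, since it contains no focal points, its boundary splits into two disjoint smooth hypersurfaces: the outer piece $\partial\Omega_k$, on which $\varphi_{k,\epsilon}$ satisfies the Robin condition \eqref{eq.2.59}, and the inner level set $\Gamma_{k,\epsilon}:=\{x\in\overline\Omega:\delta_k(x)=\epsilon^{\beta}\}$.

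First I would bound $\varphi_{k,\epsilon}$ on $\Gamma_{k,\epsilon}$. Using $\dist(\Omega_{i,\epsilon,\beta},\Omega_{j,\epsilon,\beta})\geq d_0>0$ for $i\neq j$, one checks that $\delta(x)=\delta_k(x)$ for every $x\in\overline\Omega_{k,\epsilon,\beta}$ (otherwise $x$ would lie in the closure of two disjoint shells), so on $\Gamma_{k,\epsilon}$ one has $\delta(x)=\delta_k(x)=\epsilon^{\beta}$, hence $\delta(x)/\sqrt\epsilon=\epsilon^{(2\beta-1)/2}$. Then \eqref{eq.2.41} of \Cref{proposition:2.6} gives $|\phi_{\epsilon}(x)-\phi^{*}|\leq2C_1\exp(-C_2\epsilon^{(2\beta-1)/2})$, while \eqref{eq.2.12} of \Cref{lemma:2.4} together with $u_{k,p}(z)=u_k(z^d)$ from \Cref{proposition:2.5} gives $|u_k(\epsilon^{(2\beta-1)/2})-\phi^{*}|\leq2C_1\exp(-C_2\epsilon^{(2\beta-1)/2})$ once $\epsilon<\epsilon^{*}$ is small enough that $\epsilon^{(2\beta-1)/2}\geq(d-1)/(4C_2)$. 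Since $0<\beta<1/2$, the exponent $\epsilon^{(2\beta-1)/2}\to\infty$ as $\epsilon\to0^{+}$, so by \eqref{eq.2.54} we obtain $|\varphi_{k,\epsilon}(x)|\leq4C_1\epsilon^{-1/2}\exp(-C_2\epsilon^{(2\beta-1)/2})\to0$ on $\Gamma_{k,\epsilon}$; in particular $|\varphi_{k,\epsilon}|\leq1$ on $\Gamma_{k,\epsilon}$ for all $\epsilon<\epsilon^{*}$ after shrinking $\epsilon^{*}$.

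Next I would apply the maximum principle. Recall $c_{\epsilon}\leq-64C_2^2<0$ on $\overline\Omega_{k,\epsilon,\beta}$ and, by \eqref{eq.2.57} together with the boundedness of $H$ on the compact smooth surface $\partial\Omega_k$ and of $u_k'$ on $[0,\infty)$ (cf. \Cref{proposition:B.1}), one has $\|g_{\epsilon}\|_{L^\infty(\Omega_{k,\epsilon,\beta})}\leq M$ uniformly in $\epsilon$. Let $M_0:=\max_{\overline\Omega_{k,\epsilon,\beta}}\varphi_{k,\epsilon}$, attained at some $x_0$, and suppose $M_0>0$. If $x_0$ is an interior point, then $\Delta\varphi_{k,\epsilon}(x_0)\leq0$, so from \eqref{eq.2.55}, $-64C_2^2M_0\geq c_{\epsilon}(x_0)M_0=g_{\epsilon}(x_0)-\epsilon\Delta\varphi_{k,\epsilon}(x_0)\geq-\|g_{\epsilon}\|_{L^\infty}$, giving $M_0\leq\|g_{\epsilon}\|_{L^\infty}/(64C_2^2)$. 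If $x_0\in\partial\Omega_k$, the outward normal derivative at a boundary maximum satisfies $\partial_{\nu}\varphi_{k,\epsilon}(x_0)\geq0$, so \eqref{eq.2.59} forces $M_0=\varphi_{k,\epsilon}(x_0)=-\gamma_k\sqrt\epsilon\,\partial_{\nu}\varphi_{k,\epsilon}(x_0)\leq0$, contradicting $M_0>0$. If $x_0\in\Gamma_{k,\epsilon}$, then $M_0\leq1$ by the previous paragraph. Hence $\max_{\overline\Omega_{k,\epsilon,\beta}}\varphi_{k,\epsilon}\leq\max\{1,M/(64C_2^2)\}$, and applying the same reasoning to $-\varphi_{k,\epsilon}$ (which solves an equation of the same type with inhomogeneity $-g_{\epsilon}$ and the same Robin condition) bounds $-\min_{\overline\Omega_{k,\epsilon,\beta}}\varphi_{k,\epsilon}$ by the same constant. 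The resulting bound is independent of $\epsilon\in(0,\epsilon^{*})$, which is the assertion. The only points needing care are the identity $\delta(x)=\delta_k(x)$ on $\overline\Omega_{k,\epsilon,\beta}$ — which is what lets \eqref{eq.2.41} deliver decay measured in $\epsilon^{\beta}$ on $\Gamma_{k,\epsilon}$ — and the observation that $\beta<1/2$ makes the prefactor $\epsilon^{-1/2}$ harmless against the super-exponential factor $\exp(-C_2\epsilon^{(2\beta-1)/2})$; the maximum-principle step itself is routine.
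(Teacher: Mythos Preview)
Your proof is correct and follows essentially the same route as the paper: bound $\varphi_{k,\epsilon}$ on the inner level set $\Gamma_{k,\epsilon}$ via the exponential estimates \eqref{eq.2.12} and \eqref{eq.2.41}, rule out maxima on $\partial\Omega_k$ using the Robin condition \eqref{eq.2.59}, and handle interior maxima using \eqref{eq.2.55} together with $c_\epsilon\leq-64C_2^2<0$. The only difference is cosmetic --- the paper phrases the interior step as a contradiction argument (supposing the maximum diverges along a sequence $\epsilon_n\to0$), whereas you extract the explicit bound $\max\{1,\|g_\epsilon\|_{L^\infty}/(64C_2^2)\}$ directly.
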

\begin{proof}
Fix $k\in\{0,1,\dots,K\}$.
It is equivalent to proving that $\dd\max_{\overline\Omega_{k,\epsilon,\beta}}\varphi_{k,\epsilon}\leq M$ and $\dd\min_{\overline\Omega_{k,\epsilon,\beta}}\varphi_{k,\epsilon}\geq-M$ for some constant $M>0$ independent of $\epsilon$.
First, we show that $\dd\max_{\overline\Omega_{k,\epsilon,\beta}}\varphi_{k,\epsilon}\leq M$ for $0<\epsilon<\epsilon^{*}$.
Suppose by contradiction that there exists a sequence $\{\epsilon_{n}\}_{n=1}^\infty$ of positive numbers with $\dd\lim_{n\to\infty}\epsilon_{n}=0$ and $\{x_n\}_{n=1}^\infty\subset\overline\Omega_{k,\epsilon_{n},\beta}$ such that $\dd\varphi_{k,\epsilon_{n}}(x_n)=\max_{\overline\Omega_{k,\epsilon_{n},\beta}}\varphi_{k,\epsilon_{n}}\geq n$ for $n\in\N$.
Since $0<\beta<1/2$, we may, without loss of generality, assume that $8C_{1}\exp(-C_{2}\epsilon_{n}^{(2\beta-1)/2})\leq\sqrt{\epsilon_{n}}$ for all $n\in\N$, where $C_{1}$ and $C_{2}$ are given in \Cref{proposition:2.1} and \Cref{lemma:2.4}, respectively.
Note that the maximum point $x_n$ cannot lie on the boundary $\partial\Omega_k$ because from \eqref{eq.2.59}, $\partial_\nu\varphi_{k,\epsilon_{n}}(x_n)=-\varphi_{k,\epsilon_{n}}(x_n)/(\gamma_k\sqrt{\epsilon_{n}})\leq-n/(\gamma_k\sqrt{\epsilon_{n}})<0$ if the maximum point $x_n\in\partial\Omega_k$.
On the other hand, by \eqref{eq.2.12} (with \Cref{proposition:2.5}) and \eqref{eq.2.41}, we have
\begin{align}
\label{eq.2.60}
|\varphi_{k,\epsilon_{n}}(x)|\leq\frac{|\phi_{\epsilon_{n}}(x)-\phi^{*}|+|u_{k}(\delta_k(x)/\sqrt{\epsilon_{n}})-\phi^{*}|}{\sqrt{\epsilon_{n}}}\leq\frac{4C_{1}}{\sqrt{\epsilon_{n}}}\exp\left(-C_{2}\epsilon_{n}^{(2\beta-1)/2}\right)\leq\frac12
\end{align}
for $\delta_k(x)=\epsilon_{n}^{\beta}$ and $n\in\N$.
This shows that $x_n$ cannot lie on the boundary $\partial\Omega_{k,\epsilon_{n},\beta}$.
Hence $x_n\in\Omega_{k,\epsilon_{n},\beta}$ for all $n\in\N$, which implies $\nabla\varphi_{k,\epsilon_{n}}(x_n)=0$ and $\Delta\varphi_{k,\epsilon_{n}}(x_n)\leq0$ for $n\in\N$.
Thus by \eqref{eq.2.55},
\begin{align}
\label{eq.2.61}
0\leq-\epsilon_{n}\Delta\varphi_{k,\epsilon_{n}}(x_n)=c_{\epsilon_{n}}(x_n)\varphi_{k,\epsilon_{n}}(x_n)-g_{\epsilon_{n}}(x_n).
\end{align}
Recall that $g_\epsilon$ is uniformly bounded, $\varphi_{k,\epsilon_{n}}(x_n)\geq n$ for $n\in\N$, and $c_\epsilon\leq-64C_{2}^2<0$ in $\overline\Omega_{k,\epsilon}$ for $\epsilon>0$.
Letting $n\to\infty$, we obtain $\dd\lim_{n\to\infty}c_{\epsilon_{n}}(x_n)\varphi_{k,\epsilon_{n}}(x_n)=-\infty$, which contradicts \eqref{eq.2.61}.
Thus, $\dd\max_{\overline\Omega_{k,\epsilon,\beta}}\varphi_{k,\epsilon}\leq M$ for some constant $M>0$ independent of $\epsilon$.
Similarly, we may use \eqref{eq.2.12}, \eqref{eq.2.41}, \eqref{eq.2.55} and \eqref{eq.2.59} to prove $\dd\min_{\overline\Omega_{k,\epsilon,\beta}}\varphi_{k,\epsilon}\geq-M$ for $0<\epsilon<\epsilon^{*}$, where $M>0$ is independent of $\epsilon>0$.
Therefore, we complete the proof of \Cref{proposition:2.7}.
\end{proof}

To study the asymptotic expansion of $\varphi_{k,\epsilon}$, we first claim that $\Psi_p(\overline B_{\epsilon^{(2\beta+1)/2}}^+)\subseteq\overline\Omega_{k,\epsilon,\beta}$ for $p\in\partial\Omega_k$, $k\in\{0,1,\dots,K\}$ and sufficiently small $\epsilon>0$ because
\begin{align*}
\delta_k(\Psi_p(y))=|\delta_k(\Psi_p(y))-\delta_k(p)|\leq C|\Psi_p(y)-p|\leq C'|y|\leq\epsilon^{\beta}
\end{align*}
for $y\in\overline B_{\epsilon^{(2\beta+1)/2}}^+$, $p\in\partial\Omega_k$, $k\in\{0,1,\dots,K\}$ and  sufficiently small $\epsilon>0$, where $\Psi_p$ is the diffeomorphism in \eqref{eq.2.01}, $C=C(\Omega)$ and $C'=C'(p,\Omega)$ are positive constants independent of $y$.
Here we have used the facts that $\delta_k(p)=0$, $\psi_p(p')=p^d$ (cf. (D1)) and the smoothness of $\psi_p$ and $\delta_k$.
Hence there exists $\epsilon^{*}>0$ sufficiently small such that $\Psi_p(\overline B_{\epsilon^{(2\beta+1)/2}}^+)\subset\overline\Omega_{k,\epsilon,\beta}\cap\Psi_p(\overline B_b^+)$ for $0<\epsilon<\epsilon^{*}$.
As in \Cref{section:2.1}, we apply the principal coordinate system \eqref{eq.2.01} to the neighborhood $\Psi_p(\overline B_{\epsilon^{(2\beta+1)/2}}^+)$, rescale the spatial variable $y$ by $\sqrt{\epsilon}$ and define
\begin{align}
\label{eq.2.62}
\W_{k,p,\epsilon}(z)=\varphi_{k,\epsilon}(\Psi_p(\sqrt\epsilon z))\quad\text{for}~z\in\overline B_{\epsilon^{(2\beta-1)/2}}^+~\text{and}~0<\epsilon<\epsilon^{*}.
\end{align}
Note that $\W_{k,p,\epsilon}$ is well-defined due to $\Psi_p(\overline B_{\epsilon^{(2\beta+1)/2}}^+)\subset\overline\Omega_{k,\epsilon,\beta}\cap\Psi_p(\overline B_b^+)$ for $0<\epsilon<\epsilon^{*}$.

By \Cref{lemma:2.2} and equations \eqref{eq.2.55}--\eqref{eq.2.59}, $\W_{k,p,\epsilon}$ satisfies
\begin{alignat}{2}
\label{eq.2.63}
\sum_{i,j=1}^da_{ij}(z)\frac{\partial^2\W_{k,p,\epsilon}}{\partial z^{i}\partial z^j}+\sum_{j=1}^db_j(z)\frac{\partial \W_{k,p,\epsilon}}{\partial z^j}+c_\epsilon(z)\W_{k,p,\epsilon}&=g_\epsilon(z)\quad&&\text{in}~B_{\epsilon^{(2\beta-1)/2}}^+,\\
\label{eq.2.64}
\W_{k,p,\epsilon}-\gamma_k\partial_{z^{d}}\W_{k,p,\epsilon}&=0\quad&&\text{on}~\overline B_{\epsilon^{(2\beta-1)/2}}^+\cap\partial\R_{+}^{d}
\end{alignat}
for $0<\epsilon<\epsilon^{*}$, where $a_{ij}$ and $b_j$ are given in \eqref{eq.2.05}--\eqref{eq.2.06}, and
\begin{align}
\label{eq.2.65}
&c_\epsilon(z)=\begin{cases}\dd\frac{f(u_{k}(z^{d})+\sqrt{\epsilon}\W_{k,p,\epsilon}(z))-f(u_{k}(z^{d}))}{\sqrt{\epsilon}\W_{k,p,\epsilon}(z)}&\text{if}~\W_{k,p,\epsilon}(z)\neq0;\\
f'(u_{k}(z^{d}))&\text{if}~\W_{k,p,\epsilon}(z)=0,\end{cases}\\
\label{eq.2.66}
&g_\epsilon(z)=(d-1)H(p)u_{k}'(z^{d})+\epsilon^{\beta}\mathcal{O}_\epsilon(1).
\end{align}
As in the proofs of \Cref{lemma:2.3,lemma:2.4}, we may use the uniform boundedness of $\varphi_{k,\epsilon}$ (cf. \Cref{proposition:2.7}) to prove the convergence of $\W_{k,p,\epsilon}$ in $\C^{2,\alpha}(\overline B_m^+)$ as $\epsilon$ tends to zero (up to a subsequence) for $m\in\N$ and then prove the exponential-type estimate of $\W_{k,p}$.
\begin{lemma}
\label{lemma:2.8}
For any sequence $\{\epsilon_{n}\}_{n=1}^\infty$ of positive numbers with $\dd\lim_{n\to\infty}\epsilon_{n}=0$, $\alpha\in(0,1)$, and $p\in\partial\Omega_k$ ($k\in\{0,1,\dots,K\}$), there exists a subsequence $\{\epsilon_{nn}\}_{n=1}^\infty$ such that $\dd\lim_{n\to\infty}\|\W_{k,p,\epsilon_{nn}}-\W_{k,p}\|_{\C^{2,\alpha}(\overline B_m^+)}=0$ for $m\in\N$, where $\W_{k,p}\in\C^{2,\alpha}_{\text{loc}}(\overline{\R}_{+}^{d})$ satisfies
\begin{alignat}{2}
\label{eq.2.67}
\Delta \W_{k,p}+f'(u_{k})\W_{k,p}&=(d-1)H(p)u_{k}'(z^{d})\quad&&\text{in}~\R_{+}^{d},\\
\label{eq.2.68}
\W_{k,p}-\gamma_k\partial_{z^{d}}\W_{k,p}&=0\quad&&\text{on}~\partial\R_{+}^{d}.
\end{alignat}
Moreover, there exists $M>0$ such that $\W_{k,p}$ satisfies
\begin{align}
\label{eq.2.69}
|\W_{k,p}(z)-(d-1)H(p)v_{k}(z^{d})|\leq M\exp(-C_{2}z^{d})
\end{align}
for $z=(z',z^{d})\in\overline\R_{+}^{d}$ and $z^{d}\geq(d-1)/(4C_{2})$, where $v_{k}$ is the solution to \eqref{eq.1.10}--\eqref{eq.1.12} and $C_{2}$ is given in \Cref{lemma:2.4}.
\end{lemma}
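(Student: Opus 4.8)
The plan is to follow the template already established for \Cref{lemma:2.3,lemma:2.4} (and used for the first-order term), adapting it to the linear inhomogeneous setting of $\W_{k,p,\epsilon}$. First I would establish the local $\C^{2,\alpha}$-compactness: by \Cref{proposition:2.7} the functions $\W_{k,p,\epsilon}$ are uniformly bounded, the coefficients $a_{ij},b_j$ satisfy the uniform ellipticity and convergence $a_{ij}\to\delta_{ij}$, $b_j\to 0$ recorded after \Cref{lemma:2.2}, the zeroth-order coefficient $c_\epsilon$ is bounded (indeed $c_\epsilon\le -64C_2^2<0$) with $c_\epsilon(z)\to f'(u_k(z^d))$ pointwise by \eqref{eq.2.65} together with $\sqrt\epsilon\W_{k,p,\epsilon}\to 0$, and the right-hand side $g_\epsilon(z)=(d-1)H(p)u_k'(z^d)+\epsilon^\beta\mathcal{O}_\epsilon(1)\to (d-1)H(p)u_k'(z^d)$ locally uniformly (using boundedness of $u_k'$ from \Cref{proposition:B.1}). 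Then the interior elliptic $L^q$-estimate, Sobolev embedding, boundary Schauder estimate (the Robin/oblique boundary condition \eqref{eq.2.64} is of the right regularity type), and a diagonal argument over $m\in\N$ produce a subsequence $\{\epsilon_{nn}\}$ with $\W_{k,p,\epsilon_{nn}}\to\W_{k,p}$ in $\C^{2,\alpha}(\overline B_m^+)$ for each $m$; passing to the limit in \eqref{eq.2.63}--\eqref{eq.2.64} yields \eqref{eq.2.67}--\eqref{eq.2.68}. I should note that the domain $B_{\epsilon^{(2\beta-1)/2}}^+$ is exhausting $\overline\R_+^d$ since $\epsilon^{(2\beta-1)/2}\to\infty$ as $\epsilon\to 0^+$ when $0<\beta<1/2$, so every fixed $\overline B_m^+$ is eventually contained in the domain.

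Next I would prove the exponential estimate \eqref{eq.2.69}. Set $\rho_{k,p}:=\W_{k,p}-(d-1)H(p)v_k$; subtracting \eqref{eq.1.10} (multiplied by $(d-1)H(p)$) from \eqref{eq.2.67} shows $\rho_{k,p}$ satisfies $\Delta\rho_{k,p}+f'(u_k(z^d))\rho_{k,p}=0$ in $\R_+^d$ with the Robin condition $\rho_{k,p}-\gamma_k\partial_{z^d}\rho_{k,p}=0$ on $\partial\R_+^d$ (using \eqref{eq.1.11} and \eqref{eq.2.68}), and $\rho_{k,p}$ is bounded: $|\W_{k,p}|\le M$ from \Cref{proposition:2.7} and $v_k$ is bounded from \Cref{proposition:B.1}. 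The decay now follows from a comparison argument analogous to the proof of \Cref{lemma:2.4}: for $\overline z\in\overline\R_+^d$ with $\overline z^d\ge(d-1)/(4C_2)$, compare $\rho_{k,p}$ on the ball $B_{\overline z^d}(\overline z)\subset\R_+^d$ against radial super/subsolutions of $\Delta w+f'(u_k)w=0$ with boundary data $\pm M$ on $\partial B_{\overline z^d}(\overline z)$. Because $f'(u_k)\le -64C_2^2<0$ uniformly (from (A1), \Cref{proposition:2.1}, and the exponential estimate \eqref{eq.2.12}), the coefficient is bounded above by a negative constant; one then invokes \Cref{proposition:A.2} from \Cref{appendix:A} (with $\epsilon=1$, $\Phi_{bd}$ replaced by $0$, and the bound $M$ in place of $2C_1$) to get $|\rho_{k,p}(z)|\le M'\exp(-C_2(\overline z^d-|z-\overline z|))$ on that ball, and evaluating at the center gives $|\rho_{k,p}(\overline z)|\le M'\exp(-C_2\overline z^d)$, which is \eqref{eq.2.69}.

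The main obstacle I anticipate is handling the boundary estimates and the comparison argument in a way that is genuinely uniform in $\epsilon$ and in $p\in\partial\Omega_k$. For the compactness part, the subtlety is that the Schauder constants depend on the $\C^{0,\alpha}$-norms of the variable coefficients $a_{ij}$ near $\partial\R_+^d$; one must check via \eqref{eq.2.05}--\eqref{eq.2.06} that these are controlled independently of small $\epsilon$ on each fixed $\overline B_m^+$ (they are, since the $\mathcal O(1)$ terms there carry factors $\sqrt\epsilon$ or $\epsilon$). For the decay part, the delicate point is that the comparison balls $B_{\overline z^d}(\overline z)$ must stay inside $\R_+^d$ — which is exactly why the restriction $\overline z^d\ge(d-1)/(4C_2)$ appears, mirroring \Cref{lemma:2.4} — and one needs $f'(u_k)$ bounded away from $0$ uniformly, which is where the uniform bound on $u_k$ (hence $u_k$ ranging in a fixed compact interval, so $\max f'(u_k)<0$ by (A1)) is essential. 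Everything else is a routine repetition of the machinery already deployed in \Cref{section:2.1}.
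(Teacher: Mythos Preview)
Your proposal is correct and follows essentially the same approach as the paper, which simply indicates that the proof proceeds ``as in the proofs of \Cref{lemma:2.3,lemma:2.4}'' using the uniform boundedness from \Cref{proposition:2.7}. Two minor points: the boundedness of $v_k$ comes from \Cref{proposition:C.1} (or the explicit formula \eqref{eq.C.04}) rather than \Cref{proposition:B.1}; and in the decay step your ``radial super/subsolutions'' must indeed be barriers for the constant-coefficient problem $\Delta w = 64C_2^2 w$ (since the coefficient $f'(u_k(z^d))$ is not radially symmetric about $\overline z$), which is what makes \Cref{proposition:A.2} applicable---but you already signal this by writing ``super/subsolutions'' rather than ``solutions''.
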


From \Cref{lemma:2.8}, the solution $\W_{k,p}$ to \eqref{eq.2.67}--\eqref{eq.2.68} may, a priori depend on the sequence $\{\epsilon_{n}\}_{n=1}^\infty$.
To establish the independence from the sequence, we apply the moving plane arguments to prove $\W_{k,p}(z)=\W_{k,p}(z^{d})=(d-1)H(p)v_{k}(z^{d})$ for $z=(z',z^{d})\in\overline\R_{+}^{d}$ (see \Cref{proposition:2.9} below), where $v_{k}$ is the unique solution to \eqref{eq.1.10}--\eqref{eq.1.12}, and $v_{k}$ is independent of the choice of sequence $\{\epsilon_{n}\}_{n=1}^\infty$ and point $p\in\partial\Omega_k$.
Therefore, the results of \Cref{lemma:2.8} can be improved as
\begin{align}
\label{eq.2.70}
\lim_{\epsilon\to0^+}\|\W_{k,p,\epsilon}-(d-1)H(p)v_{k}\|_{\C^{2,\alpha}(\overline B_m^+)}=0\quad\text{for}~m\in\N~\text{and}~\alpha\in(0,1),
\end{align}
and
\begin{align}
\label{eq.2.71}
\lim_{\epsilon\to0^+}\W_{k,p,\epsilon}(z)=(d-1)H(p)v_{k}(z^{d})\quad\text{for}~z=(z',z^{d})\in\overline\R_{+}^{d},
\end{align}
where $\W_{k,p,\epsilon}$ is defined in \eqref{eq.2.62}.
Below are the details.
\begin{proposition}
\label{proposition:2.9}
For $p\in\partial\Omega_k$ and $k\in\{0,1,\dots,K\}$, the solution $\W_{k,p}$ to \eqref{eq.2.67}--\eqref{eq.2.68} satisfies
\begin{enumerate}
\item[(a)] $\W_{k,p}$ depends only on the variable $z^{d}$, i.e., $\W_{k,p}(z)=\W_{k,p}(z^{d})$ for $z=(z',z^{d})\in\overline\R_{+}^{d}$.
\item[(b)] $\W_{k,p}(z^{d})=(d-1)H(p)v_{k}(z^{d})$ for $z^{d}\in[0,\infty)$, where $v_{k}$ is the unique solution to \eqref{eq.1.10}--\eqref{eq.1.12}.
\end{enumerate}
\end{proposition}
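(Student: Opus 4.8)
The plan is to run the translation‑and‑compactness (moving‑plane‑type) scheme used in the proof of \Cref{proposition:2.5}, while exploiting that problem \eqref{eq.2.67}--\eqref{eq.2.68} for $\W_{k,p}$ is already \emph{linear}, with a strictly negative zeroth‑order coefficient $f'(u_{k}(z^{d}))$ (strictly negative because $u_{k}$ ranges over a compact interval and $f'<0$ by (A1)). First I would set $V(z):=(d-1)H(p)v_{k}(z^{d})$ and $w:=\W_{k,p}-V$. Using that $v_{k}$ solves \eqref{eq.1.10}--\eqref{eq.1.12}, a one‑line computation gives $\Delta V+f'(u_{k}(z^{d}))V=(d-1)H(p)[v_{k}''+f'(u_{k})v_{k}]=(d-1)H(p)u_{k}'(z^{d})$ and $V-\gamma_{k}\partial_{z^{d}}V=(d-1)H(p)[v_{k}(0)-\gamma_{k}v_{k}'(0)]=0$ on $\partial\R_{+}^{d}$, so by \eqref{eq.2.67}--\eqref{eq.2.68} the function $w$ satisfies
\begin{alignat*}{2}
\Delta w+f'(u_{k}(z^{d}))w&=0\quad&&\text{in}~\R_{+}^{d},\\
w-\gamma_{k}\partial_{z^{d}}w&=0\quad&&\text{on}~\partial\R_{+}^{d}.
\end{alignat*}
Moreover $w$ is bounded on $\overline\R_{+}^{d}$ (pass to the limit in the uniform bound of \Cref{proposition:2.7}, noting that the radii $\epsilon^{(2\beta-1)/2}\to\infty$, and that $V$ is bounded by \Cref{appendix:C}), and $|w(z)|\leq M\exp(-C_{2}z^{d})$ for $z^{d}\geq(d-1)/(4C_{2})$ by \eqref{eq.2.69}. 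Establishing $w\equiv0$ yields $\W_{k,p}(z)=(d-1)H(p)v_{k}(z^{d})$ at once, which gives both (a) and (b), the uniqueness of $v_{k}$ being supplied by \Cref{appendix:C}.

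Second, since $-w$ solves the same homogeneous problem, it suffices to prove $\zeta:=\sup_{\overline\R_{+}^{d}}w\leq0$, and I would argue by contradiction assuming $\zeta>0$. The exponential decay of $w$ produces an $L>(d-1)/(4C_{2})$, independent of all parameters, with $|w|\leq\zeta/2$ on $\{z^{d}\geq L\}$, so $\zeta=\sup_{\R^{d-1}\times[0,L]}w$. The strong maximum principle applied to the displayed equation rules out an interior maximum of $w$ in $\R^{d-1}\times(0,L)$; the slab bound rules out a maximum on $\R^{d-1}\times\{L\}$; and the homogeneous Robin condition rules out a maximum on $\partial\R_{+}^{d}$, since at a purported boundary maximum one would have $\partial_{z^{d}}w=w/\gamma_{k}=\zeta/\gamma_{k}>0$, impossible there. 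Hence the supremum is only approached along a sequence $z_{n}=(z_{n}',z_{n}^{d})$ with $|z_{n}'|\to\infty$, $z_{n}^{d}\to z_{\infty}^{d}\in[0,L]$ and $w(z_{n})\to\zeta$; I would then translate, $w_{n}(z):=w(z'+z_{n}',z^{d})$, observe that $w_{n}$ solves the same problem (the coefficient $f'(u_{k}(z^{d}))$ and right‑hand side depend only on $z^{d}$) with the same uniform bound and the same exponential decay in $z^{d}$, and extract via elliptic estimates and a diagonal process (as in \Cref{lemma:2.3}) a subsequential $\C^{2,\alpha}_{\mathrm{loc}}$ limit $w_{\infty}$ that solves the homogeneous problem, satisfies $w_{\infty}\leq\zeta$, $|w_{\infty}(z)|\leq M\exp(-C_{2}z^{d})$ for large $z^{d}$, and $w_{\infty}(0,z_{\infty}^{d})=\zeta$.

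Finally, $(0,z_{\infty}^{d})$ is a global maximum of $w_{\infty}$; the strong maximum principle excludes $z_{\infty}^{d}\in(0,L)$ and the decay bound $|w_{\infty}(0,L)|\leq\zeta/2$ excludes $z_{\infty}^{d}=L$, forcing $z_{\infty}^{d}=0$; but then \eqref{eq.2.68} gives $0<\zeta=w_{\infty}(0,0)=\gamma_{k}\partial_{z^{d}}w_{\infty}(0,0)\leq0$, the normal derivative being nonpositive at a boundary maximum of the half‑space problem --- a contradiction. Thus $\zeta\leq0$, and the same reasoning applied to $-w$ gives $w\equiv0$. The main obstacle, exactly as in \Cref{proposition:2.5}, is that the extremum of $w$ may be attained only in the limit $|z'|\to\infty$: this is precisely what forces the translation step, together with the verification that the exponential decay in $z^{d}$ is uniform in the tangential variable and hence survives in the limit $w_{\infty}$. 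A cosmetic alternative, mirroring \Cref{proposition:2.5} more closely, is to first prove (a) by the moving‑plane argument applied to $\tilde\W(z)=\W_{k,p}(z'+h,z^{d})-\W_{k,p}(z)$ and then deduce (b) from the ODE uniqueness in \Cref{appendix:C}; the linear route above is shorter because one works with $w$ directly.
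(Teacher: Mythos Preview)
Your proposal is correct. The paper omits the proof, only indicating that it ``is similar to that of \Cref{proposition:2.5}'', which means: first establish (a) by applying the moving-plane argument to $\tilde\W(z)=\W_{k,p}(z'+h,z^{d})-\W_{k,p}(z)$ for arbitrary $h\in\R^{d-1}\setminus\{0\}$, then read off (b) from the uniqueness of the ODE \eqref{eq.1.10}--\eqref{eq.1.12} using the decay \eqref{eq.2.69}. You instead subtract the target $V=(d-1)H(p)v_{k}$ at the outset and run the translation--compactness Liouville argument directly on the homogeneous problem for $w=\W_{k,p}-V$, which delivers (a) and (b) in a single stroke. The underlying machinery --- strict negativity of $f'(u_{k})$, the uniform-in-$z'$ exponential decay \eqref{eq.2.69}, the strong maximum principle in the slab, the Robin condition ruling out boundary maxima, and the translation step to handle suprema escaping to $|z'|\to\infty$ --- is identical in both arrangements; your route simply exploits the linearity of \eqref{eq.2.67}--\eqref{eq.2.68} to avoid the detour through the parameter $h$. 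You yourself note the ``cosmetic alternative'' at the end, which is precisely the paper's implied argument.
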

As in \Cref{proposition:2.5}, we may use the moving plane arguments to prove \Cref{proposition:2.9}.
The proof of \Cref{proposition:2.9} is similar to that of \Cref{proposition:2.5} so we omit it here.

We may use \Cref{proposition:2.9} to prove \eqref{eq.2.70}, \eqref{eq.2.71}, and $\W_{k,p}(z)=(d-1)H(p)v_{k}(z^{d})$ for $z=(z',z^{d})\in\overline\R_{+}^{d}$, where $v_{k}$ is the solution to \eqref{eq.1.10}--\eqref{eq.1.12}.
Let $T>0$, $k\in\{0,1,\dots,K\}$, and $p\in\partial\Omega_k$.
Then by \eqref{eq.2.01}, \eqref{eq.2.62}, \eqref{eq.2.70}--\eqref{eq.2.71} and $\nabla\delta_k(p-t\sqrt{\epsilon}\nu_{p})=-\nu_{p}$, we have
\begin{align}
\label{eq.2.72}
\varphi_{k,\epsilon}(p-t\sqrt{\epsilon}\nu_{p})=\W_{k,p,\epsilon}(te_d)=\W_{k,p}(t)+o_{\epsilon}(1)=(d-1)H(p)v_{k}(t)+o_{\epsilon}(1)
\end{align}
and
\begin{align}
\label{eq.2.73}
\nabla\varphi_{k,\epsilon}(p-t\sqrt{\epsilon}\nu_{p})=-\frac{1}{\sqrt{\epsilon}}(\W_{k,p}'(t)\nu_{p}+o_{\epsilon}(1))=-\frac{1}{\sqrt{\epsilon}}((d-1)H(p)v_{k}'(t)\nu_{p}+o_{\epsilon}(1))
\end{align}
for $0\leq t\leq T$ as $\epsilon\to0^+$.
Combining \eqref{eq.2.54}, \eqref{eq.2.62}, and \eqref{eq.2.72}--\eqref{eq.2.73}, we obtain
\begin{align*}
&\phi_{\epsilon}(p-t\sqrt{\epsilon}\nu_{p})=u_{k}(t)+\sqrt{\epsilon}\Big((d-1)H(p)v_{k}(t)+o_{\epsilon}(1)\Big),\\
&\nabla\phi_{\epsilon}(p-t\sqrt{\epsilon}\nu_{p})=-\left(\frac{1}{\sqrt{\epsilon}}u_{k}'(t)+(d-1)H(p)v_{k}'(t)\right)\nu_{p}+o_{\epsilon}(1)
\end{align*}
for $0\leq t\leq T$ as $\epsilon\to0^+$, which implies \eqref{eq.1.05}--\eqref{eq.1.06}.
Therefore, we complete the proof of \Cref{theorem:1}(a).

\subsection{Proof of \texorpdfstring{\Cref{corollary:1}}{Corollary 1}}
\label{section:2.3}

We present the proof of \Cref{corollary:1}.
To prove (a), we integrate \eqref{eq.1.15} over $\overline\Omega_{k,T,\epsilon}$ and apply the coarea formula (see \cite{2015evans,2002lin}) to obtain
\begin{align}
\label{eq.2.74}
\int_{\overline{\Omega}_{k,T,\epsilon}}\!f(\phi_{\epsilon}(x))\,\mathrm{d}x=\sqrt{\epsilon}\int_{0}^{T}\!\int_{\partial\Omega_{k}}\!\{f(u_{k}(t))+\sqrt{\epsilon}[(d-1)H(p)f'(u_{k}(t))v_{k}(t)+o_{\epsilon}(1)]\}\mathcal{J}(t,p)\,\mathrm{d}S_p\,\mathrm{d}t,\end{align}
where
\begin{align}\label{eq.2.75}
\mathcal{J}(t,p)=1-t\sqrt{\epsilon}(d-1)H(p)+t\sqrt{\epsilon}o_{\epsilon}(1)\quad\text{for}~p\in\partial\Omega_{k}~\text{and}~t\geq0,
\end{align}
follows from Steiner's formula (cf. \cite{1981abbena,1977gilbarg,2004gray}).
Combining \eqref{eq.1.07}, \eqref{eq.1.10}, and \eqref{eq.2.74}--\eqref{eq.2.75}, we have
\begin{align*}\int_{\overline\Omega_{k,T,\epsilon}}\!f(\phi_{\epsilon}(x))\,\mathrm{d}x&=\sqrt{\epsilon}\int_{0}^{T}\!\int_{\partial\Omega_{k}}\!f(u_{k}(t))\,\mathrm{d}S_{p}\,\mathrm{d}t\\&\quad+\epsilon(d-1)\left(\int_{\partial\Omega_{k}}\!H(p)\,\mathrm{d}S_{p}\right)\left(\int_{0}^{T}\![f'(u_{k}(t))v_{k}(t)-tf(u_{k}(t))]\,\mathrm{d}t\right)+\epsilon o_{\epsilon}(1)\\&=-\sqrt{\epsilon}|\partial\Omega_k|\int_{0}^{T}\!u_{k}''(t)\,\mathrm{d}t\\&\quad+\epsilon(d-1)\left(\int_{\partial\Omega_{k}}\!H(p)\,\mathrm{d}S_{p}\right)\left(\int_{0}^{T}\![u_{k}'(t)-v_{k}''(t)+tu_{k}''(t)]\,\mathrm{d}t\right)+\epsilon o_{\epsilon}(1)\\&=\sqrt{\epsilon}|\partial\Omega_k|(u_{k}'(0)-u_{k}'(T))\\&\quad+\epsilon(d-1)\left(\int_{\partial\Omega_k}\!H(p)\,\mathrm{d}S_{p}\right)(Tu_{k}'(T)+v_{k}'(0)-v_{k}'(T))+\epsilon o_{\epsilon}(1),\end{align*}
which completes the proof of (a).

We now  prove (b). Note that
\[\partial\Omega_{k,T,\epsilon,\beta}=\{p-T\sqrt{\epsilon}\nu_{p}\in\Omega\,:\,p\in\partial\Omega_{k}\}\cup\{p-\epsilon^{\beta}\nu_{p}\in\Omega\,:\,p\in\partial\Omega_{k}\}\]
is the union of two disjoint parallel surfaces to the boundary $\partial\Omega_{k}$.
Integrating \eqref{eq.1.01} over $\overline\Omega_{k,T,\epsilon,\beta}$ and applying the divergence theorem, we obtain
\begin{align}
\label{eq.2.76}
\begin{aligned}
\int_{\overline\Omega_{k,T,\epsilon,\beta}}\!f(\phi_{\epsilon}(x))\,\mathrm{d}x&=-\epsilon\int_{\overline\Omega_{k,T,\epsilon,\beta}}\!\Delta\phi_{\epsilon}(x)\,\mathrm{d}x\\&=-\epsilon\int_{\partial\Omega_{k,T,\epsilon,\beta}}\partial_{\nu_x}\phi_{\epsilon}(x)\,\mathrm{d}S_x\\&=-\int_{\partial\Omega_{k}}\!(\epsilon\partial_{\nu_p}\phi_{\epsilon}(p-T\sqrt{\epsilon}\nu_{p}))\mathcal{J}(T,p)\,\mathrm{d}S_{p}\\&\quad+\int_{\partial\Omega_{k}}\!(\epsilon\partial_{\nu_p}\phi_{\epsilon}(p-\epsilon^{\beta}\nu_{p}))\mathcal{J}(\epsilon^{(2\beta-1)/2},p)\,\mathrm{d}S_p,
\end{aligned}
\end{align}
where $\nu_x$ is the unit outer normal at $x\in\partial\Omega_{k,T,\epsilon,\beta}$ with respect to $\Omega_{k,T,\epsilon,\beta}$ and
\begin{align}
\label{eq.2.77}
\mathcal{J}(T,p)=1-T\sqrt{\epsilon}(d-1)H(p)+\sqrt{\epsilon}o_{\epsilon}(1),\quad\mathcal{J}(\epsilon^{(2\beta-1)/2},p)=1-\epsilon^{\beta}(d-1)H(p)+\epsilon^{\beta}o_{\epsilon}(1).
\end{align}
Here we have used Steiner's formula in differential geometry (cf. \cite{1981abbena,2004gray}).
By \eqref{eq.1.06}, we have
\begin{align}
\label{eq.2.78}
\begin{aligned}
&\quad-\int_{\partial\Omega_{k}}\!(\epsilon\partial_{\nu_p}\phi_{\epsilon}(p-T\sqrt{\epsilon}\nu_{p}))\mathcal{J}(T,p)\,\mathrm{d}S_{p}\\&=-\int_{\partial\Omega_{k}}\!(\epsilon\partial_{\nu_p}\phi_{\epsilon}(p-T\sqrt{\epsilon}\nu_{p}))[1-T\sqrt{\epsilon}(d-1)H(p)+\sqrt{\epsilon}o_{\epsilon}(1)]\,\mathrm{d}S_{p}\\&=\sqrt{\epsilon}\int_{\partial\Omega_{k}}\!\{u_{k}'(T)+\sqrt{\epsilon}[(d-1)H(p)v_{k}'(T)+o_{\epsilon}(1)]\}[1-T\sqrt{\epsilon}(d-1)H(p)+\sqrt{\epsilon}o_{\epsilon}(1)]\,\mathrm{d}S_{p}\\&=
\sqrt{\epsilon}|\partial\Omega_k|u_{k}'(T)+\epsilon(d-1)\left(\int_{\partial\Omega_k}\!H(p)\,\mathrm{d}S_p\right)(-Tu_{k}'(T)+v_{k}'(T))+\epsilon o_{\epsilon}(1).
\end{aligned}
\end{align}
On the other hand, by \Cref{theorem:1}(b,ii), we have
\begin{align}
\label{eq.2.79}
\left|\int_{\partial\Omega_{k}}\!(\epsilon\partial_{\nu_p}\phi_{\epsilon}(p-\epsilon^{\beta}\nu_{p}))\mathcal{J}(\epsilon^{(2\beta-1)/2},p)\,\mathrm{d}S_p\right|\leq\sqrt{\epsilon}|\partial\Omega_{k}|M'\exp\left(-M\epsilon^{(2\beta-1)/2}\right).
\end{align}
Combining \eqref{eq.2.76}--\eqref{eq.2.79} with $0<\beta<1/2$, we get (b).
By an argument similar to that used in the proof of (b), one may use \eqref{eq.2.79} to establish (c).
Therefore, we complete the proof of \Cref{corollary:1}.

\section{Proof of \texorpdfstring{\Cref{theorem:2}}{Theorem 2}}
\label{section:3}

In this section, we derive the first- and second-order asymptotic expansions of the solution $\phi_{\epsilon}$ to equation \eqref{eq.1.02} with condition \eqref{eq.1.04} under the charge neutrality condition \eqref{eq.1.03}. Equation \eqref{eq.1.02} with condition \eqref{eq.1.04} can be denoted by
\begin{alignat}{2}
\label{eq.3.001}
-\epsilon\Delta\phi_{\epsilon}&=f_{\epsilon}(\phi_{\epsilon})\quad&&\text{in}~\Omega,\\
\label{eq.3.002}
\phi_{\epsilon}+\gamma_{k}\sqrt{\epsilon}\partial_{\nu}\phi_{\epsilon}&=\phi_{bd,k}\quad&&\text{on}~\partial\Omega_{k}~\text{for}~k=0,1,\dots,K,
\end{alignat}
where $f_{\epsilon}=f_{\epsilon}(\phi)$ is defined as
\begin{align}
\label{eq.3.003}
f_{\epsilon}(\phi)=\sum_{i=1}^{I}\frac{m_{i}z_{i}}{A_{i,\epsilon}}\exp(-z_{i}\phi)\quad\text{for}~\phi\in\R,\quad\text{and}\quad A_{i,\epsilon}=\int_{\Omega}\!\exp(-z_{i}\phi_{\epsilon}(y))\,\mathrm{d}y.
\end{align}
In \Cref{section:3.1}, we prove the uniform boundedness of $\phi_{\epsilon}$ and show that $f_{\epsilon}$ satisfies conditions (A1)--(A2) with the unique zero $\phi_{\epsilon}^{*}$ for $\epsilon>0$.
Thus, $f_{\epsilon}(\phi)=f_{0}(\phi)+o_{\epsilon}(1)$ for $\phi\in\R$, and \eqref{eq.3.001} can be approximated by $-\epsilon\Delta\phi_{\epsilon}=f_{0}(\phi_{\epsilon})+o_{\epsilon}(1)$ in $\Omega$, where
\begin{align*}
f_{0}(\phi)=\frac{1}{|\Omega|}\sum_{i=1}^{I}m_{i}z_{i}\exp(-z_{i}(\phi-\phi_{0}^{*}))\quad\text{for}~\phi\in\R,
\end{align*}
and $\phi_{0}^{*}$ is the pointwise limit of $\phi_{\epsilon}$ as $\epsilon$ tends to zero (see \Cref{remark:4}).
Note that $f_{0}$ is strictly decreasing on $\R$ and has a unique zero $\phi_{0}^{*}$. Hence we can use the method of \Cref{section:2.1} to prove the first-order asymptotic expansions $\phi_{\epsilon}(p-t\sqrt{\epsilon}\nu_{p})=u_{k}(t)+o_{\epsilon}(1)$ and $\nabla\phi_{\epsilon}(p-t\sqrt{\epsilon}\nu_{p})=-\epsilon^{-1/2}(u_{k}'(t)\nu_{p}+o_{\epsilon}(1))$ for $T>0$, $p\in\partial\Omega_{k}$, and $0\leq t\leq T$, where $u_{k}$ is the unique solution to \eqref{eq.1.19}--\eqref{eq.1.21} (cf. \Cref{section:3.2}).
To obtain the second-order asymptotic expansions of $\phi_{\epsilon}$ and $\nabla\phi_{\epsilon}$, it is essential to show the uniform boundedness of $|\phi_{\epsilon}^{*}-\phi_{0}^{*}|/\sqrt{\epsilon}$ in \Cref{section:3.3}.
We suppose, by contradiction, that $|\phi_{\epsilon}^{*}-\phi_{0}^{*}|/\sqrt{\epsilon}\to\infty$ as $\epsilon\to0^+$, and then derive the asymptotic expansions of the integral terms $A_{i,\epsilon}$ in \eqref{eq.3.003} and the nonlinear term $f_{\epsilon}$.
Then, as in \Cref{section:2.2,section:2.3}, we obtain the asymptotic expansions of $\phi_{\epsilon}$ and $\nabla\phi_{\epsilon}$, and hence the asymptotic expansions of total ionic charge over the regions $\overline{\Omega}_{k,T,\epsilon}$, $\overline{\Omega}_{k,T,\epsilon,\beta}$, and $\overline{\Omega}_{\epsilon,\beta}$ (see \Cref{proposition:3.12}), which contradicts the charge neutrality condition \eqref{eq.1.03}.
As a result, we show $\phi_{\epsilon}^{*}=\phi_{0}^{*}+\sqrt{\epsilon}(Q+o_{\epsilon}(1))$ (see \eqref{eq.3.076} in \Cref{remark:6}).
In \Cref{section:3.4}, we use $\phi_{\epsilon}^{*}=\phi_{0}^{*}+\sqrt{\epsilon}(Q+o_{\epsilon}(1))$ and follow a similar argument of \Cref{section:3.3} to establish the asymptotic expansions of $A_{i,\epsilon}$ and get the further asymptotic expansion of $f_{\epsilon}$: $f_{\epsilon}(\phi)=f_{0}(\phi)+\sqrt{\epsilon}(f_1(\phi)+o_{\epsilon}(1))$ for $\phi\in\R$ (cf. \eqref{eq.3.096}), where
\begin{align}
\label{eq.3.004}
f_{1}(\phi)=-Qf_{0}'(\phi)+\hat{f}_{1}(\phi)\quad\text{for}~\phi\in\R.
\end{align}
Here \begin{align*}&\hat{f}_{1}(\phi)=\frac{1}{|\Omega|}\sum_{i=1}^{I}\hat m_{i}z_{i}\exp(-z_{i}(\phi-\phi_{0}^{*}))\quad\text{for}~\phi\in\R,\\&\hat m_{i}=\frac{m_{i}}{|\Omega|}\sum_{k=0}^K|\partial\Omega_k|\int_{0}^{\infty}\![1-\exp(-z_{i}(u_{k}(s)-\phi_{0}^{*}))]\,\mathrm{d}s\quad\text{for}~i=1,\dots,I.\end{align*}
Consequently, equation \eqref{eq.3.001} becomes
\[-\epsilon\Delta\phi_{\epsilon}=f_{0}(\phi_{\epsilon})+\sqrt{\epsilon}(f_1(\phi_{\epsilon})+o_{\epsilon}(1))\quad\text{in}~\Omega,\]
and we can apply the method of \Cref{section:2.2} to prove the second-order asymptotic expansions of $\phi_{\epsilon}$ and $\nabla\phi_{\epsilon}$ (cf. \eqref{eq.1.17}--\eqref{eq.1.18}).
In \Cref{section:3.4}, using the asymptotic expansions of $\phi_{\epsilon}$ and $\nabla\phi_{\epsilon}$, we prove \Cref{corollary:2} and then apply \Cref{corollary:2} to determine the unique value of $Q$, which verifies \Cref{remark:6}.
Therefore, we complete the proof of \Cref{theorem:2}.

\subsection{Estimate of the solution \texorpdfstring{$\phi_{\epsilon}$}{ϕ.ϵ} and \texorpdfstring{$f_{\epsilon}$}{f.ϵ}}
\label{section:3.1}

Equation \eqref{eq.1.02} with the boundary condition \eqref{eq.1.04} is the Euler--Lagrange equation of the following energy functional
\[E[\phi]=\frac{\epsilon}{2}\int_\Omega\!|\nabla\phi|^2\,\mathrm{d}x-\sum_{i=1}^{I}m_{i}\ln\left(\int_\Omega\!\exp(-z_{i}\phi)\,\mathrm{d}x\right)+\sqrt{\epsilon}\sum_{k=0}^K\frac{1}{2\gamma_k}\int_{\partial\Omega_k}\!(\phi-\phi_{bd,k})^2\,\mathrm{d}S\quad\text{for}~\phi\in H^1(\Omega).\]
The existence and uniqueness of the smooth solution $\phi_{\epsilon}\in\C^\infty(\overline{\Omega})$ was proved in \cite{2014lee}.
Due to the charge neutrality condition \eqref{eq.1.03}, we prove the uniform boundedness of the solution $\phi_{\epsilon}$ as follows.
\begin{proposition}[Uniform boundedness of $\phi_{\epsilon}$]
\label{proposition:3.01}
Assume that $\phi_{bd,k}$ are not equal and \eqref{eq.1.03} holds true.
Let $\phi_{\epsilon}\in\C^{\infty}(\overline{\Omega})$ be the unique solution to equations \eqref{eq.3.001}--\eqref{eq.3.002}.
Then $\phi_{\epsilon}$ satisfies
\begin{align}
\label{eq.3.005}
\underline{\phi_{bd}}\leq\phi_{\epsilon}(x)\leq\overline{\phi_{bd}}\quad\text{for}~x\in\overline\Omega~\text{and}~\epsilon>0,
\end{align}
where $\underline{\phi_{bd}}=\min\limits_{0\leq k\leq K}\phi_{bd,k}$ and $\overline{\phi_{bd}}=\max\limits_{0\leq k\leq K}\phi_{bd,k}$.
\end{proposition}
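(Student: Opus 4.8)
The plan is to repeat the maximum--principle scheme of \Cref{proposition:2.1}, but to cope with the fact that the zero $\phi_\epsilon^{*}$ of the nonlinearity $f_\epsilon$ now varies with $\epsilon$ (through the integrals $A_{i,\epsilon}$); the device that replaces the ``fixed zero $\phi^{*}$'' used in \Cref{proposition:2.1} is the charge neutrality condition \eqref{eq.1.03} together with an integral-versus-pointwise comparison of $A_{i,\epsilon}$ at a global extremum of $\phi_\epsilon$. Before starting I would record two elementary facts. First, $\phi_\epsilon$ is non-constant: if $\phi_\epsilon\equiv c$, then \eqref{eq.3.002} forces $c=\phi_{bd,k}$ for every $k$, contradicting the hypothesis that the $\phi_{bd,k}$ are not all equal. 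Second, since $m_i>0$, $z_i\neq0$ for all $i$ and $\sum_{i=1}^{I}m_iz_i=0$, there is at least one index $i_+$ with $z_{i_+}>0$ (and at least one with negative valence); also $f_\epsilon$ is, for each fixed $\epsilon$, smooth and strictly decreasing on $\R$ because $f_\epsilon'(\phi)=-\sum_{i=1}^{I}(m_iz_i^{2}/A_{i,\epsilon})\exp(-z_i\phi)<0$, although this last fact is not actually needed for the present bound.

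For the upper bound, let $x_0\in\overline\Omega$ be a point at which $\phi_\epsilon$ attains its global maximum. If $x_0\in\partial\Omega_{k_0}$ for some $k_0$, then $\partial_\nu\phi_\epsilon(x_0)\geq0$, so \eqref{eq.3.002} gives $\phi_\epsilon(x_0)=\phi_{bd,k_0}-\gamma_{k_0}\sqrt\epsilon\,\partial_\nu\phi_\epsilon(x_0)\leq\phi_{bd,k_0}\leq\overline{\phi_{bd}}$, as desired. It remains to rule out an interior maximum. If $x_0\in\Omega$, then $\Delta\phi_\epsilon(x_0)\leq0$, hence by \eqref{eq.3.001}, $f_\epsilon(\phi_\epsilon(x_0))=-\epsilon\Delta\phi_\epsilon(x_0)\geq0$. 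On the other hand, since $\phi_\epsilon(y)\leq\phi_\epsilon(x_0)$ for all $y\in\Omega$ we have $A_{i,\epsilon}=\int_\Omega\exp(-z_i\phi_\epsilon(y))\,\mathrm dy\geq|\Omega|\exp(-z_i\phi_\epsilon(x_0))$ when $z_i>0$ and $A_{i,\epsilon}\leq|\Omega|\exp(-z_i\phi_\epsilon(x_0))$ when $z_i<0$; either way, since $m_i>0$,
\[\frac{m_iz_i}{A_{i,\epsilon}}\exp(-z_i\phi_\epsilon(x_0))\leq\frac{m_iz_i}{|\Omega|},\]
and for the index $i_+$ the inequality is strict, because non-constancy and continuity of $\phi_\epsilon$ force $\phi_\epsilon<\phi_\epsilon(x_0)$ on a set of positive measure, whence $A_{i_+,\epsilon}>|\Omega|\exp(-z_{i_+}\phi_\epsilon(x_0))$. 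Summing over $i$ and using \eqref{eq.1.03},
\[f_\epsilon(\phi_\epsilon(x_0))=\sum_{i=1}^{I}\frac{m_iz_i}{A_{i,\epsilon}}\exp(-z_i\phi_\epsilon(x_0))<\frac1{|\Omega|}\sum_{i=1}^{I}m_iz_i=0,\]
contradicting $f_\epsilon(\phi_\epsilon(x_0))\geq0$. Therefore the maximum is attained on $\partial\Omega$ and $\phi_\epsilon\leq\overline{\phi_{bd}}$ on $\overline\Omega$.

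The lower bound $\phi_\epsilon\geq\underline{\phi_{bd}}$ follows by the symmetric argument at a global minimum point $x_1$: if $x_1\in\partial\Omega_{k_1}$ then $\partial_\nu\phi_\epsilon(x_1)\leq0$ and \eqref{eq.3.002} gives $\phi_\epsilon(x_1)\geq\phi_{bd,k_1}\geq\underline{\phi_{bd}}$, while if $x_1$ is interior then $\Delta\phi_\epsilon(x_1)\geq0$ gives $f_\epsilon(\phi_\epsilon(x_1))\leq0$, whereas the reversed comparison of $A_{i,\epsilon}$ with $|\Omega|\exp(-z_i\phi_\epsilon(x_1))$ combined with \eqref{eq.1.03} and the strictness at $i_+$ yields $f_\epsilon(\phi_\epsilon(x_1))>0$, a contradiction; hence the minimum is attained on $\partial\Omega$. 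This establishes \eqref{eq.3.005}. The one genuinely new point compared with \Cref{proposition:2.1} — and the step I expect to require the most care — is the interior-extremum case: one cannot appeal to a fixed zero of the nonlinearity, and instead must bound the nonlocal coefficients $A_{i,\epsilon}$ by their pointwise surrogates at the extremum and then close the argument through charge neutrality, using the non-constancy of $\phi_\epsilon$ to upgrade one of the comparisons to a strict inequality.
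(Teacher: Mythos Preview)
Your proof is correct and follows essentially the same approach as the paper's own proof: both compare the nonlocal coefficients $A_{i,\epsilon}$ against the pointwise surrogates $|\Omega|\exp(-z_i\phi_\epsilon(x_0))$ at an interior extremum and close via charge neutrality \eqref{eq.1.03}, then handle the boundary case through the Robin condition. The only cosmetic difference is that you front-load non-constancy to obtain a strict inequality $f_\epsilon(\phi_\epsilon(x_0))<0$ directly, whereas the paper writes the chain $0\leq f_\epsilon(\phi_\epsilon(x_0))\leq|\Omega|^{-1}\sum_i m_iz_i=0$ and then reads off constancy from the forced equality; the logical content is identical.
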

\begin{proof}We first prove that $\phi_{\epsilon}$ attains its maximum value at a boundary point.
Suppose by contradiction that $\phi_{\epsilon}$ attains the maximum value at an interior point $x_{0}\in\Omega$. It is clear that $\phi_{\epsilon}(x_{0})\geq\phi_{\epsilon}(y)$ for $y\in\overline{\Omega}$ and $\Delta\phi_{\epsilon}(x_{0})\leq0$.
Then by \eqref{eq.1.03} with $m_{i}>0$ for $i=1,\dots,I$, \eqref{eq.3.001} and \eqref{eq.3.003},
\begin{align*}
0\leq-\epsilon\Delta\phi_{\epsilon}(x_{0})=\sum_{i=1}^{I}\frac{m_{i}z_{i}\exp(-z_{i}\phi_{\epsilon}(x_{0}))}{\int_\Omega\!\exp(-z_{i}\phi_{\epsilon}(y))\,\mathrm{d}y}=\sum_{i=1}^{I}\frac{m_{i}z_{i}}{\int_\Omega\!\exp(-z_{i}(\phi_{\epsilon}(y)-\phi_{\epsilon}(x_{0})))\,\mathrm{d}y}\leq\sum_{i=1}^{I}\frac{m_{i}z_{i}}{|\Omega|}=0,
\end{align*}
which implies $\phi_{\epsilon}(x)=\phi_{\epsilon}(x_{0})$ for $x\in\overline\Omega$. 
Hence by \eqref{eq.1.04}, $\phi_{bd,k}$ are equal but this contradicts the assumption that $\phi_{bd,k}$ are not equal.
Thus, $\phi_{\epsilon}$ must attain the maximum value at a boundary point $x_{k_{0}}\in\partial\Omega_{k_{0}}$ for some $k_{0}\in\{0,1,\dots,K\}$ and $\partial_\nu\phi_{\epsilon}(x_{k_{0}})\geq0$.
Moreover, from \eqref{eq.1.04} with $\gamma_{k_{0}}>0$, we obtain
\[\phi_{\epsilon}(x)\leq\phi_{\epsilon}(x_{k_{0}})=\phi_{bd,k_{0}}-\gamma_{k_{0}}\sqrt{\epsilon}\partial_\nu\phi_{\epsilon}(x_{k_{0}})\leq\phi_{bd,k_{0}}\leq\overline{\phi_{bd}}\quad\text{for}~x\in\overline{\Omega}.\]
Similarly, we can prove that $\phi_{\epsilon}$ attains the minimum value at a boundary point and hence $\phi_{\epsilon}\geq\underline{\phi_{bd}}$ in $\overline{\Omega}$. Therefore, we complete the proof of \Cref{proposition:3.01}.
\end{proof}

By \eqref{eq.3.003} and \Cref{proposition:3.01}, we prove $f_{\epsilon}$ satisfies conditions (A1)--(A2).
\begin{proposition}
\label{proposition:3.02}
Assume that $\phi_{bd,k}$ are not equal and \eqref{eq.1.03} holds true.
Function $f_{\epsilon}=f_{\epsilon}(\phi)$ is strictly decreasing on $\R$ and has a unique zero denoted by $\phi_{\epsilon}^{*}\in(\underline{\phi_{bd}},\overline{\phi_{bd}})$ (which depends on $\epsilon$). Moreover, $f_{\epsilon}$ satisfies
\begin{align}
\label{eq.3.006}
&|f_{\epsilon}(\phi)|\leq M\quad\text{for}~\phi\in[\underline{\phi_{bd}},\overline{\phi_{bd}}]~\text{and}~\epsilon>0,\\
\label{eq.3.007}
&f_{\epsilon}'(\phi)\leq-C_{3}^2\quad\text{for}~\phi\in\R~\text{and}~\epsilon>0,
\end{align}
where $M$ and $C_{3}$ are positive constants independent of $\epsilon>0$.
\end{proposition}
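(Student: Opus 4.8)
The plan is to exploit the explicit exponential-sum structure of $f_\epsilon$ together with the uniform bound on $\phi_\epsilon$ from \Cref{proposition:3.01}, and to compare $f_\epsilon$ with the $\epsilon$-independent template function $f$ satisfying (A1)--(A2). First I would establish that each integral coefficient $A_{i,\epsilon}=\int_\Omega\exp(-z_i\phi_\epsilon(y))\,\mathrm dy$ is bounded above and below by positive constants independent of $\epsilon$: indeed, by \Cref{proposition:3.01} we have $\underline{\phi_{bd}}\leq\phi_\epsilon\leq\overline{\phi_{bd}}$ on $\overline\Omega$, so $|\Omega|\exp(-z_i\overline{\phi_{bd}})\wedge|\Omega|\exp(-z_i\underline{\phi_{bd}})\leq A_{i,\epsilon}\leq|\Omega|\exp(-z_i\underline{\phi_{bd}})\vee|\Omega|\exp(-z_i\overline{\phi_{bd}})$, giving $0<c_1\leq A_{i,\epsilon}\leq c_2$ for constants $c_1,c_2$ depending only on $\Omega$, the $z_i$, and $\underline{\phi_{bd}},\overline{\phi_{bd}}$, but not on $\epsilon$. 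Consequently the effective bulk concentrations $c_{i,\epsilon}^{\mathrm b}=m_i/A_{i,\epsilon}$ satisfy $0<m_i/c_2\leq c_{i,\epsilon}^{\mathrm b}\leq m_i/c_1$ uniformly in $\epsilon$.

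Next I would verify monotonicity and the derivative bound \eqref{eq.3.007}. Differentiating, $f_\epsilon'(\phi)=-\sum_{i=1}^I (m_i z_i^2/A_{i,\epsilon})\exp(-z_i\phi)$, which is a sum of strictly negative terms (each $z_i\neq0$, $m_i>0$, $A_{i,\epsilon}>0$), so $f_\epsilon$ is strictly decreasing on all of $\R$. For the quantitative bound, note that on any compact interval $\phi$ lies in, each term is bounded below in absolute value; more usefully, since we only need \eqref{eq.3.007} to produce the constant $C_3$ used later (for the exponential-type estimates), it suffices to bound $\max_\phi f_\epsilon'(\phi)$ over $\phi$ in the relevant compact range. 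But in fact \eqref{eq.3.007} is claimed for all $\phi\in\R$, so I would argue as follows: pick the index $i_0$ with, say, $z_{i_0}>0$ (such an index exists because $\sum m_iz_i=0$ with all $m_i>0$ forces both signs to appear); then for $\phi\geq 0$ the term $(m_{i_0}z_{i_0}^2/A_{i_0,\epsilon})\exp(-z_{i_0}\phi)$ need not be bounded below, so instead I would use that the whole sum $\sum (m_iz_i^2/A_{i,\epsilon})\exp(-z_i\phi)$ always has at least one term bounded below by a positive constant uniformly in $\phi$ and $\epsilon$: writing the exponents $-z_i\phi$, for any $\phi$ at least one of these is $\geq0$ (take $i$ with $z_i\phi\leq0$), and that term is then $\geq m_iz_i^2/c_2>0$. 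Taking $C_3^2:=\min_i (m_iz_i^2)/c_2>0$ (independent of $\epsilon$) yields $f_\epsilon'(\phi)\leq-C_3^2$ for all $\phi\in\R$ and all $\epsilon>0$.

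For the existence and location of the unique zero $\phi_\epsilon^*$: strict monotonicity gives uniqueness immediately. For existence in $(\underline{\phi_{bd}},\overline{\phi_{bd}})$, I would show $f_\epsilon(\underline{\phi_{bd}})>0>f_\epsilon(\overline{\phi_{bd}})$. The cleanest route is to observe that $\phi_\epsilon$ is nonconstant (since the $\phi_{bd,k}$ are not equal), so by the maximum principle argument already used in \Cref{proposition:3.01} the maximum $\overline{\phi_{bd}}$ and minimum $\underline{\phi_{bd}}$ of $\phi_\epsilon$ are attained on the boundary and $\phi_\epsilon$ is not identically equal to either; then at an interior point $x$ where $\phi_\epsilon(x)$ is strictly between its extremes, $-\epsilon\Delta\phi_\epsilon(x)=f_\epsilon(\phi_\epsilon(x))$ changes sign appropriately — more directly, testing \eqref{eq.3.001} or simply using that $\int_\Omega f_\epsilon(\phi_\epsilon)=-\epsilon\int_\Omega\Delta\phi_\epsilon=-\epsilon\int_{\partial\Omega}\partial_\nu\phi_\epsilon$ together with the Robin condition forces the existence of points where $\phi_\epsilon<\phi_\epsilon^*$ and points where $\phi_\epsilon>\phi_\epsilon^*$, hence $\underline{\phi_{bd}}\leq\min\phi_\epsilon<\phi_\epsilon^*<\max\phi_\epsilon\leq\overline{\phi_{bd}}$. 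Finally, \eqref{eq.3.006} follows from the computed bounds on $c_{i,\epsilon}^{\mathrm b}$: for $\phi\in[\underline{\phi_{bd}},\overline{\phi_{bd}}]$, $|f_\epsilon(\phi)|\leq\sum_i (m_i/c_1)|z_i|\exp(|z_i|\max(|\underline{\phi_{bd}}|,|\overline{\phi_{bd}}|))=:M$, independent of $\epsilon$. The main obstacle is not any single step but making the uniform-in-$\phi$ lower bound for $-f_\epsilon'$ on all of $\R$ airtight — the "at least one exponent is nonnegative" observation is the crux, and one must be careful that the constant it yields genuinely does not degrade as $\epsilon\to0^+$, which it does not since it only depends on the uniform two-sided bounds on $A_{i,\epsilon}$ coming from \Cref{proposition:3.01}.
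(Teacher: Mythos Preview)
Your proposal is correct and follows essentially the same route as the paper: bound the $A_{i,\epsilon}$ uniformly via \Cref{proposition:3.01}, obtain $f_\epsilon'\leq -C_3^2$ on all of $\R$ by the ``for each $\phi$ some $z_i\phi\leq0$'' observation, and locate $\phi_\epsilon^*$ strictly inside $(\underline{\phi_{bd}},\overline{\phi_{bd}})$ using the integral identity $\int_\Omega f_\epsilon(\phi_\epsilon)=\sum_i m_iz_i=0$ together with nonconstancy of $\phi_\epsilon$. One small clarification: that integral identity comes directly from \eqref{eq.1.03} and the definition of $A_{i,\epsilon}$ (each summand integrates to $m_iz_i$), not from the divergence theorem plus the Robin condition as your wording suggests.
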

\noindent Hereafter $M>0$ denotes a generic constant independent of $\epsilon>0$.
\begin{proof}By \eqref{eq.3.005}, the integral term $A_{i,\epsilon}$ satisfies the following estimate:
\begin{align}
\label{eq.3.008}
\alpha_{1}\leq A_{i,\epsilon}=\int_{\Omega}\!\exp(-z_{i}\phi_{\epsilon}(y))\,\mathrm{d}y\leq\alpha_{2}\quad\text{for}~i=1,\dots,I,
\end{align}
where $\alpha_1$ and $\alpha_2$ are positive constants independent of $\epsilon$.
Note that $z_{i}\neq0$ for $i=1,\dots,I$.
Then from \eqref{eq.3.003}, we can use \eqref{eq.3.008} to find
\begin{align*}
f_{\epsilon}(\phi)=\sum_{i=1}^{I}\frac{m_{i}z_{i}}{A_{i,\epsilon}}\exp(-z_{i}\phi)&=\left(\sum_{z_{i}>0}+\sum_{z_{i}<0}\right)\frac{m_{i}z_{i}}{A_{i,\epsilon}}\exp(-z_{i}\phi)\\&\geq\sum_{z_{i}>0}\frac{m_{i}z_{i}}{\alpha_2}\exp(-z_{i}\phi)+\sum_{z_{i}<0}\frac{m_{i}z_{i}}{\alpha_1}\exp(-z_{i}\phi):=g_1(\phi)\quad\text{for}~\phi\in\R,\end{align*}
and
\begin{align*}
f_{\epsilon}(\phi)=\left(\sum_{z_{i}>0}+\sum_{z_{i}<0}\right)\frac{m_{i}z_{i}}{A_{i,\epsilon}}\exp(-z_{i}\phi)\leq\sum_{z_{i}>0}\frac{m_{i}z_{i}}{\alpha_1}\exp(-z_{i}\phi)+\sum_{z_{i}<0}\frac{m_{i}z_{i}}{\alpha_2}\exp(-z_{i}\phi):=g_2(\phi)\quad\text{for}~\phi\in\R,\end{align*}
which gives
\begin{align}
\label{eq.3.009}
g_1(\phi)\leq f_{\epsilon}(\phi)\leq g_2(\phi)\quad\text{for}~\phi\in\R.
\end{align}
Clearly, $g_1$ and $g_2$ are independent of $\epsilon$, strictly decreasing on $\R$ and
\begin{align}
\label{eq.3.010}
\lim_{\phi\to\pm\infty}g_i(\phi)=\mp\infty\quad\text{for}~i=1,2.
\end{align}
Hence by \eqref{eq.3.009}, $g_1(\overline{\phi_{bd}})\leq f_{\epsilon}(\phi)\leq g_2(\underline{\phi_{bd}})$ for $\phi\in[\underline{\phi_{bd}},\overline{\phi_{bd}}]$, which implies \eqref{eq.3.006}.
Moreover, by \eqref{eq.3.003} and \eqref{eq.3.008}, there exists $C_{3}>0$ independent of $\epsilon$ such that
\begin{align*}
f_{\epsilon}'(\phi)=-\sum_{i=1}^{I}\frac{m_{i}z_{i}^2}{A_{i,\epsilon}}\exp(-z_{i}\phi)\leq-\sum_{i=1}^{I}\frac{m_{i}z_{i}^2}{\alpha_1}\exp(-z_{i}\phi)\leq-C_{3}^2\quad\text{for}~\phi\in\R,
\end{align*}
which gives \eqref{eq.3.007}.
Here $C_{3}^2=\alpha_{1}^{-1}\min\left\{\sum\limits_{z_{i}<0}m_{i}z_{i}^2,\sum\limits_{z_{i}>0}m_{i}z_{i}^2\right\}$ comes from
\[\sum_{i=1}^{I}\frac{m_{i}z_{i}^2}{\alpha_1}\exp(-z_{i}\phi)\geq\sum_{z_{i}<0}\frac{m_{i}z_{i}^2}{\alpha_1}\quad\text{for}~\phi\geq0\quad\text{and}\quad\sum_{i=1}^{I}\frac{m_{i}z_{i}^2}{\alpha_1}\exp(-z_{i}\phi)\geq\sum_{z_{i}>0}\frac{m_{i}z_{i}^2}{\alpha_1}\quad\text{for}~\phi\leq0\]
because of $z_{i}z_j<0$ for some $i,j\in\{1,\dots,I\}$.
By \eqref{eq.3.007} and \eqref{eq.3.009}--\eqref{eq.3.010}, $f_{\epsilon}$ is strictly decreasing and has a unique zero $\phi_{\epsilon}^{*}\in\R$.
To complete the proof, it remains to show $\phi_{\epsilon}^{*}\in(\underline{\phi_{bd}},\overline{\phi_{bd}})$ for $\epsilon>0$.
If $\phi_{\epsilon}^{*}\geq\overline{\phi_{bd}}$ for some $\epsilon>0$, then by \eqref{eq.3.005}, $\phi_{\epsilon}^{*}\geq\phi_{\epsilon}(x)$ for $x\in\overline\Omega$.
Due to the strict decrease of $f_{\epsilon}$, we have $0=f_{\epsilon}(\phi_{\epsilon}^{*})\leq f_{\epsilon}(\phi_{\epsilon}(x))$ for $x\in\overline\Omega$.
Then by \eqref{eq.1.03} and \eqref{eq.3.003}, we obtain
\[0=\int_\Omega\!f_{\epsilon}(\phi_{\epsilon}^{*})\,\mathrm{d}x\leq\int_\Omega\!f_{\epsilon}(\phi_{\epsilon}(x))\,\mathrm{d}x=\int_\Omega\!\sum_{i=1}^{I}\frac{m_{i}z_{i}\exp(-z_{i}\phi_{\epsilon}(x))}{\int_\Omega\!\exp(-z_{i}\phi_{\epsilon}(y))\,\mathrm{d}y}\,\mathrm{d}x=\sum_{i=1}^{I}m_{i}z_{i}=0,\]
which implies $f_{\epsilon}(\phi_{\epsilon})\equiv f_{\epsilon}(\phi_{\epsilon}^{*})=0$ and hence $\phi_{\epsilon}\equiv\phi_{\epsilon}^{*}$ in $\overline\Omega$.
By \eqref{eq.1.04}, $\phi_{bd,k}$ are equal but this is impossible because $\phi_{bd,k}$ are not equal. Thus $\phi_{\epsilon}^{*}<\overline{\phi_{bd}}$ for $\epsilon>0$.
Similarly, we can prove $\phi_{\epsilon}^{*}>\underline{\phi_{bd}}$ for $\epsilon>0$.
Therefore, the proof of \Cref{proposition:3.02} is complete.
\end{proof}

By \Cref{proposition:3.01,proposition:3.02}, we prove the asymptotic limit of $\phi_{\epsilon}$ in $\Omega$ as below.
\begin{proposition}
\label{proposition:3.03}
There exists a constant $\phi_{0}^{*}\in[\underline{\phi_{bd}},\overline{\phi_{bd}}]$ such that $\dd\lim_{\epsilon\to0^+}\phi_{\epsilon}(x)=\phi_{0}^{*}$ for $x\in\Omega$ (up to a subsequence).
\end{proposition}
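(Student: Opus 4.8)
The plan is to show that, in the interior of $\Omega$, the solution $\phi_{\epsilon}$ is exponentially close to its reference level $\phi_{\epsilon}^{*}$ (the unique zero of $f_{\epsilon}$), and then to extract a convergent subsequence of the bounded family $\{\phi_{\epsilon}^{*}\}$. By \Cref{proposition:3.02} we have $\phi_{\epsilon}^{*}\in(\underline{\phi_{bd}},\overline{\phi_{bd}})$, so $\{\phi_{\epsilon}^{*}\}_{\epsilon>0}$ is bounded and, by Bolzano--Weierstrass, there are $\epsilon_{n}\to0^{+}$ and a constant $\phi_{0}^{*}\in[\underline{\phi_{bd}},\overline{\phi_{bd}}]$ with $\phi_{\epsilon_{n}}^{*}\to\phi_{0}^{*}$. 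Since \Cref{proposition:3.01} gives $|\phi_{\epsilon}(x)-\phi_{\epsilon}^{*}|\leq\overline{\phi_{bd}}-\underline{\phi_{bd}}=:C_{1}$ for all $x\in\overline\Omega$, it then remains only to prove that, for each fixed $x_{0}\in\Omega$, $\phi_{\epsilon}(x_{0})-\phi_{\epsilon}^{*}\to0$ as $\epsilon\to0^{+}$.

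First I would fix $x_{0}\in\Omega$, set $\rho=\delta(x_{0})>0$ so that $B_{\rho}(x_{0})\subseteq\Omega$, and reuse the barrier construction from \Cref{lemma:2.4} and \Cref{proposition:2.6}. Let $\phi_{\epsilon}^{\pm}$ be the radial solutions of $-\epsilon\Delta\phi_{\epsilon}^{\pm}=f_{\epsilon}(\phi_{\epsilon}^{\pm})$ in $B_{\rho}(x_{0})$ with $\phi_{\epsilon}^{\pm}=\phi_{\epsilon}^{*}\pm C_{1}$ on $\partial B_{\rho}(x_{0})$; these exist by the direct method since $f_{\epsilon}$ is strictly decreasing (\Cref{proposition:3.02}). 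Applying the maximum principle to $\phi_{\epsilon}^{+}-\phi_{\epsilon}$ and to $\phi_{\epsilon}-\phi_{\epsilon}^{-}$, whose zeroth-order coefficients are negative because $f_{\epsilon}'\leq-C_{3}^{2}<0$ (\Cref{proposition:3.02}), and using $|\phi_{\epsilon}-\phi_{\epsilon}^{*}|\leq C_{1}$ on $\partial B_{\rho}(x_{0})$, I obtain $\phi_{\epsilon}^{-}\leq\phi_{\epsilon}\leq\phi_{\epsilon}^{+}$ in $\overline B_{\rho}(x_{0})$.

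Next, since $f_{\epsilon}$ satisfies (A1)--(A2) with unique zero $\phi_{\epsilon}^{*}$ and with $m_{f_{\epsilon}}$ bounded below uniformly in $\epsilon$ (again \Cref{proposition:3.02}), \Cref{proposition:A.2} applied on $B_{\rho}(x_{0})$ yields, for all $\epsilon$ small enough (depending only on $\rho$), the exponential bound $|\phi_{\epsilon}^{\pm}(x)-\phi_{\epsilon}^{*}|\leq 2C_{1}\exp(-c_{0}\,\mathrm{dist}(x,\partial B_{\rho}(x_{0}))/\sqrt\epsilon)$ with $c_{0}>0$ independent of $\epsilon$. Evaluating at the center $x=x_{0}$ and using the sandwich bound gives $|\phi_{\epsilon}(x_{0})-\phi_{\epsilon}^{*}|\leq 2C_{1}\exp(-c_{0}\rho/\sqrt\epsilon)\to0$. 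Combined with $\phi_{\epsilon_{n}}^{*}\to\phi_{0}^{*}$ this shows $\phi_{\epsilon_{n}}(x_{0})\to\phi_{0}^{*}$, and since $x_{0}\in\Omega$ was arbitrary the proposition follows.

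I do not expect a genuine obstacle: the only delicate points are (i) verifying that the bound $C_{1}$ on $|\phi_{\epsilon}-\phi_{\epsilon}^{*}|$ and the decay rate $c_{0}$ in \Cref{proposition:A.2} are uniform in $\epsilon$, which is precisely what \Cref{proposition:3.01,proposition:3.02} provide, and (ii) observing that the smallness threshold on $\epsilon$ in \Cref{proposition:A.2} may depend on the fixed point $x_{0}$ through $\rho=\delta(x_{0})$, which is harmless for a pointwise-in-$\Omega$ conclusion. A softer alternative, if one prefers to avoid \Cref{proposition:A.2}, is to multiply \eqref{eq.3.001} by $\phi_{\epsilon}-\phi_{\epsilon}^{*}$, integrate by parts using \eqref{eq.3.002}, and invoke $f_{\epsilon}'\leq-C_{3}^{2}$ to get $C_{3}^{2}\|\phi_{\epsilon}-\phi_{\epsilon}^{*}\|_{L^{2}(\Omega)}^{2}\leq C\sqrt\epsilon$, hence $L^{2}$-convergence; however this only delivers almost-everywhere convergence along a further subsequence, so for the stated pointwise statement I would carry out the barrier argument above.
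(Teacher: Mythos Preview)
Your argument is correct and takes a genuinely different route from the paper's own proof. The paper does not invoke $\phi_{\epsilon}^{*}$ at this stage; instead it first establishes an interior gradient decay estimate \eqref{eq.3.012} (by rescaling to get $|\nabla\phi_{\epsilon}|\leq M'/\sqrt{\epsilon}$ and then comparing $|\nabla\phi_{\epsilon}|^{2}$ with a radial solution of $\epsilon\Delta\overline\phi_{\epsilon}=2C_{3}^{2}\overline\phi_{\epsilon}$), fixes one reference point $x_{1}$, extracts a convergent subsequence of the scalar values $\phi_{\epsilon_{n}}(x_{1})$, and then uses the gradient decay to propagate this limit to every other interior point. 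By contrast, you go straight to the barrier comparison on $B_{\rho}(x_{0})$ --- essentially the argument the paper postpones to \Cref{proposition:3.07} and \Cref{proposition:A.4} --- to show $|\phi_{\epsilon}(x_{0})-\phi_{\epsilon}^{*}|\to0$ directly, and then pass to a subsequence of the bounded scalars $\phi_{\epsilon}^{*}$ rather than of $\phi_{\epsilon}(x_{1})$. Your route is shorter, identifies the limit as $\lim\phi_{\epsilon_{n}}^{*}$ from the outset (a fact the paper only records after \eqref{eq.3.036}), and avoids the intermediate gradient machinery; the paper's route produces the gradient estimate \eqref{eq.3.012} as a by-product, but this is superseded anyway by the sharper \eqref{eq.3.037} proved later. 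Your caveats (i) and (ii) are exactly right: uniformity of $C_{1}$ and of the decay rate follow from \Cref{proposition:3.01,proposition:3.02}, and the $x_{0}$-dependent threshold on $\epsilon$ is immaterial for the pointwise conclusion.
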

\begin{proof}We begin by proving
\begin{align}
\label{eq.3.011}
|\nabla\phi_{\epsilon}(x)|\leq M'/\sqrt{\epsilon}\quad\text{for}~x\in\Omega~\text{and}~0<\epsilon<(\delta(x))^2,\end{align}
where $M'>0$ is generic constant independent of $\epsilon>0$ and $\delta(x)=\dist(x,\partial\Omega)$ for $x\in\overline\Omega$.
Let $x_{0}\in\Omega$ be arbitrary and $B_{\sqrt{\epsilon}}(x_{0})\subseteq\Omega$ for $0<\epsilon<(\delta(x_{0}))^2$.
Set $y=(x-x_{0})/\sqrt{\epsilon}$ and $\tilde\phi_{\epsilon}(y)=\phi_{\epsilon}(x_{0}+\sqrt{\epsilon}y)$.
Then from \eqref{eq.3.001}, we have $-\Delta\tilde\phi_{\epsilon}=f_{\epsilon}(\tilde\phi_{\epsilon})$ in $B_1(0)$.
By the uniform boundedness of $\phi_{\epsilon}$ and $f_{\epsilon}(\phi_{\epsilon})$ (cf. \Cref{proposition:3.01,proposition:3.02}), we apply the elliptic $L^q$-estimate to obtain $\|\tilde\phi_{\epsilon}\|_{W^{2,q}(B_{1/2}(0))}\leq M'$ for $q>1$.
Then using Sobolev's compact embedding theorem, we get $\|\tilde\phi_{\epsilon}\|_{\C^{1,\alpha}(B_{1/4}(0))}\leq M'$ for $\alpha\in(0,1)$.
In particular, we have $|\nabla\tilde\phi_{\epsilon}(0)|\leq M'$, which gives $|\nabla\phi _\epsilon(x_{0})|\leq M'/\sqrt{\epsilon}$ and \eqref{eq.3.011}.

Suppose that $\Omega'$ is a smooth subdomain of $\Omega$ such that $\Omega'\subset\subset\Omega$.
We claim that
\begin{align}
\label{eq.3.012}
|\nabla\phi_{\epsilon}(x)|\leq\frac{M'}{\sqrt{\epsilon}}\exp\left(-\frac{\sqrt2C_{3}\dist(\Omega',\partial\Omega)}{16\sqrt{\epsilon}}\right)\quad\text{for}~x\in\Omega'~\text{and}~0<\epsilon<\epsilon^{*},
\end{align}
where $\epsilon^{*}>0$ is a sufficiently small constant depending on $\Omega'$ and $C_{3}>0$ is given in \eqref{eq.3.007}.
Let $x_1\in\Omega'$ be arbitrary.
Clearly, $B_{R_1}(x_1)\subset\subset\Omega$ with $R_1=\dist(\Omega',\partial\Omega)/2$.
Then we use \eqref{eq.3.001} to obtain
\[\epsilon\Delta|\nabla\phi_{\epsilon}|^2=2\epsilon\sum_{i,j=1}^d\left(\frac{\partial^2\phi_{\epsilon}}{\partial x^j\partial x^i}\right)^2+2\epsilon\sum_{j=1}^d\frac{\partial\phi_{\epsilon}}{\partial x^j}\frac{\partial}{\partial x^j}\Delta\phi_{\epsilon}\geq-2\nabla\phi_{\epsilon}\cdot\nabla(f_{\epsilon}(\phi_{\epsilon}))=-2f_{\epsilon}'(\phi_{\epsilon})|\nabla\phi_{\epsilon}|^2\quad\text{in}~B_{R_1}(x_1).\]
Along with \eqref{eq.3.005} and \eqref{eq.3.007}, we obtain $\epsilon\Delta|\nabla\phi_{\epsilon}|^2\geq2C_{3}^2|\nabla\phi_{\epsilon}|^2$ in $B_{R_1}(x_1)$.
Let $\overline\phi_{\epsilon}$ be the solution to $\epsilon\Delta\overline\phi_{\epsilon}=2C_{3}^2\overline\phi_{\epsilon}$ in $B_{R_1}(x_1)$ with the Dirichlet boundary condition $\overline\phi_{\epsilon}=\max\limits_{\partial B_{R_1}(x_1)}|\nabla\phi_{\epsilon}|^2$.
Then the standard comparison principle yields
\begin{align}
\label{eq.3.013}
|\nabla\phi_{\epsilon}(x)|^2\leq|\overline\phi_{\epsilon}(x)|\leq2\left(\max_{\partial B_{R_1}(x_1)}|\nabla\phi_{\epsilon}|^2\right)\exp\left(-\frac{\sqrt2C_{3}\dist(x,\partial B_{R_1}(x_1))}{8\sqrt{\epsilon}}\right)\end{align}
for $x\in\overline B_{R_1}(x_1)$ and $0<\epsilon<\epsilon^{*}$. By \eqref{eq.3.011} and \eqref{eq.3.013}, there exists $\epsilon^{*}>0$ (depending on $\Omega'$) such that
\[|\nabla\phi_{\epsilon}(x_1)|\leq\frac{M'}{\sqrt{\epsilon}}\exp\left(-\frac{\sqrt2C_{3}R_1}{8\sqrt{\epsilon}}\right)=\frac{M'}{\sqrt{\epsilon}}\exp\left(-\frac{\sqrt2C_{3}\dist(\Omega',\partial\Omega)}{16\sqrt{\epsilon}}\right)\]
for $0<\epsilon<\epsilon^{*}$, which gives \eqref{eq.3.012}.

To complete the proof, let $\{\epsilon_{n}\}_{n=1}^\infty$ be a sequence of positive numbers with $\dd\lim_{n\to\infty}\epsilon_{n}=0$.
Since the sequence $\{\phi_{\epsilon_{n}}(x_1)\}_{n=1}^\infty$ is bounded (cf. \Cref{proposition:3.01}), by the Bolzano--Weierstrass theorem, there exists a subsequence $\{\epsilon_{n_k}\}_{k=1}^\infty$ of $\{\epsilon_{n}\}_{n=1}^\infty$ such that $\{\phi_{\epsilon_{n_k}}(x_1)\}_{k=1}^\infty$ converges to a number denoted by $\phi_{0}^{*}$.
Let $y\in\Omega'$ arbitrarily.
Then by \eqref{eq.3.012} and \Cref{proposition:3.01}, we have $|\phi_{\epsilon_{n_k}}(y)-\phi_{\epsilon_{n_k}}(x_1)|\to0$ as $k\to\infty$, which implies $\dd\lim_{k\to\infty}\phi_{\epsilon_{n_k}}(y)=\phi_{0}^{*}$.
Due to the arbitrary choice of $\Omega'$, this finalizes the proof of \Cref{proposition:3.03}.
\end{proof}

\begin{remark}
\label{remark:4}
In \Cref{section:3.2}, we will prove that $\phi_{0}^{*}\in(\underline{\phi_{bd}},\overline{\phi_{bd}})$ is uniquely determined by the algebraic equations \eqref{eq.3.026}--\eqref{eq.3.027}, which improves upon \Cref{proposition:3.03} to yield
\begin{align}
\label{eq.3.014}
\lim_{\epsilon\to0^+}\phi_{\epsilon}(x)=\phi_{0}^{*}\quad\text{for}~x\in\Omega.
\end{align}
Moreover, we will use the first-order asymptotic expansion of $\nabla\phi_{\epsilon}$ near the boundary to improve the interior exponential-type estimate \eqref{eq.3.012} (in the proof of \Cref{proposition:3.03}) and obtain the global exponential-type estimate \eqref{eq.3.037} in \Cref{proposition:3.07}.
\end{remark}
By \eqref{eq.3.014} and the uniform boundedness of $\phi_{\epsilon}$ (cf. \Cref{proposition:3.01}), we apply Lebesgue's dominated convergence theorem to get
\begin{align}
\label{eq.3.015}
&\lim_{\epsilon\to0^+}f_{\epsilon}(\phi)=\sum_{i=1}^{I}\frac{m_{i}z_{i}\exp(-z_{i}\phi)}{\dd\lim_{\epsilon\to0^+}\int_{\Omega}\!\exp(-z_{i}\phi_{\epsilon}(y))\,\mathrm{d}y}=\frac{1}{|\Omega|}\sum_{i=1}^{I}m_{i}z_{i}\exp(-z_{i}(\phi-\phi_{0}^{*})):=f_{0}(\phi)
\end{align}
for $\phi\in\R$, and
\begin{align}
\label{eq.3.016}
\lim_{\epsilon\to0^+}f_{\epsilon}'(\phi)=-\sum_{i=1}^{I}\frac{m_{i}z_{i}^2\exp(-z_{i}\phi)}{\dd\lim_{\epsilon\to0^+}\int_{\Omega}\!\exp(-z_{i}\phi_{\epsilon}(y))\,\mathrm{d}y}=-\frac{1}{|\Omega|}\sum_{i=1}^{I}m_{i}z_{i}^2\exp(-z_{i}(\phi-\phi_{0}^{*}))=f_{0}'(\phi)
\end{align}for $\phi\in\R$.
Note that $\phi_{0}^{*}$ may depend on the choice of sequence $\{\epsilon_{n}\}_{n=1}^\infty$, so may $f_{0}$.
By \eqref{eq.1.03}, \eqref{eq.3.007}, and \eqref{eq.3.015}--\eqref{eq.3.016}, it is clear that $\phi_{0}^{*}$ is the unique zero of $f_{0}$, and $f_{0}$ satisfies
\begin{align}
\label{eq.3.017}
f_{0}'(\phi)\leq-C_{3}^{2}\quad\text{for}~\phi\in\R.
\end{align}

\subsection{First-order asymptotic expansion of \texorpdfstring{$\phi_{\epsilon}$}{ϕ.ϵ}}
\label{section:3.2}

To obtain the first-order asymptotic expansion of $\phi_{\epsilon}$ near the boundary $\partial\Omega_k$, we define
\begin{align}
\label{eq.3.018}
u_{k,p,\epsilon}(z)=\phi_{\epsilon}(\Psi_{p}(\sqrt{\epsilon}z))\quad\text{for}~z\in B_{b/\sqrt{\epsilon}}^{+}~\text{and}~p\in\partial\Omega_k,\end{align}where $\phi_{\epsilon}$ is the solution to \eqref{eq.3.001}--\eqref{eq.3.002} and $\Psi_p$ is defined in \eqref{eq.2.01}.
As in \eqref{eq.2.08}--\eqref{eq.2.09}, we substitute \eqref{eq.2.02}--\eqref{eq.2.03} into \eqref{eq.3.001}--\eqref{eq.3.002} and get
\begin{alignat}{2}
\label{eq.3.019}
\sum_{i,j=1}^da_{ij}(z)\frac{\partial^2u_{k,p,\epsilon}}{\partial z^{i}\partial z^j}+\sum_{j=1}^db_j(z)\frac{\partial u_{k,p,\epsilon}}{\partial z^{j}}+f_{\epsilon}(u_{k,p,\epsilon})&=0\quad&&\text{in}~B_{b/\sqrt{\epsilon}}^{+},\\
\label{eq.3.020}
u_{k,p,\epsilon}-\gamma_{k}\partial_{z^{d}}u_{k,p,\epsilon}&=\phi_{bd,k}\quad&&\text{on}~\overline B_{b/\sqrt{\epsilon}}^{+}\cap\partial\R_{+}^{d},
\end{alignat}
where $a_{ij}$ and $b_j$ are given in \eqref{eq.2.05}--\eqref{eq.2.06} and $f_{\epsilon}$ is defined in \eqref{eq.3.003}.
As for \Cref{lemma:2.3,lemma:2.4}, we may use the uniform boundedness of $\phi_{\epsilon}$ (cf. \Cref{proposition:3.01}) and \eqref{eq.3.015} to prove
\begin{lemma}
\label{lemma:3.04}
For any sequence $\{\epsilon_{n}\}_{n=1}^\infty$ of positive numbers with $\dd\lim_{n\to\infty}\epsilon_{n}=0$, $\alpha\in(0,1)$, and $p\in\partial\Omega_k$ ($k\in\{0,1,\dots,K\}$), there exists a subsequence $\{\epsilon_{nn}\}_{n=1}^\infty$ such that $\dd\lim_{n\to\infty}\|u_{k,p,\epsilon_{nn}}-u_{k,p}\|_{\C^{2,\alpha}(\overline B_m^+)}=0$ for $m\in\N$, where $u_{k,p}\in\C^{2,\alpha}_{\text{loc}}(\overline\R_{+}^{d})$ satisfies
\begin{alignat}{2}
\label{eq.3.021}
\Delta u_{k,p}+f_{0}(u_{k,p})&=0\quad&&\text{in}~\R_{+}^{d},\\
\label{eq.3.022}
u_{k,p}-\gamma_k\partial_{z^{d}}u_{k,p}&=\phi_{bd,k}\quad&&\text{on}~\partial\R_{+}^{d}.
\end{alignat}
Moreover, $u_{k,p}$ satisfies the following exponential-type estimate
\begin{align}
\label{eq.3.023}
|u_{k,p}(z)-\phi_{0}^{*}|\leq2(\overline{\phi_{bd}}-\underline{\phi_{bd}})\exp(-C_{3}z^{d}/8)\end{align}
for $z=(z',z^{d})\in\overline\R_{+}^{d}$ and $z^{d}\geq2(d-1)/C_{3}$,
where $\phi_{0}^{*}$ is defined in \Cref{proposition:3.03} and $C_{3}$ is the positive constant given in \eqref{eq.3.007}.
\end{lemma}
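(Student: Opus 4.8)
The plan is to follow the same route as \Cref{lemma:2.3} (for the $\C^{2,\alpha}_{\text{loc}}$-convergence) and \Cref{lemma:2.4} (for the exponential-type estimate), exploiting the fact that by \Cref{proposition:3.01} the solutions $\phi_\epsilon$ are uniformly bounded in $[\underline{\phi_{bd}},\overline{\phi_{bd}}]$, and by \Cref{proposition:3.02} the nonlinearities $f_\epsilon$ satisfy (A1)--(A2) with a uniform lower bound $f_\epsilon'\le -C_3^2$ and a uniform sup-bound $|f_\epsilon|\le M$ on $[\underline{\phi_{bd}},\overline{\phi_{bd}}]$, independently of $\epsilon$. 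First I would record that $u_{k,p,\epsilon}$ is uniformly bounded on $\overline B_{b/\sqrt\epsilon}^+$ and that the coefficients $a_{ij},b_j$ in \eqref{eq.3.019} converge locally to $\delta_{ij},0$ by \Cref{lemma:2.2}; the right-hand side $f_\epsilon(u_{k,p,\epsilon})$ is uniformly bounded, so the standard interior and boundary elliptic $L^q$-estimates give a uniform $W^{2,q}_{\text{loc}}(\overline\R^d_+)$ bound, hence via Sobolev embedding a uniform $\C^{1,\alpha}_{\text{loc}}$ bound. Bootstrapping once more with Schauder's estimate yields a uniform $\C^{2,\alpha}(\overline B_m^+)$ bound for each $m$, and a diagonal argument over $m\in\N$ extracts a subsequence $\{\epsilon_{nn}\}$ with $u_{k,p,\epsilon_{nn}}\to u_{k,p}$ in $\C^{2,\alpha}(\overline B_m^+)$ for all $m$.

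Next I would identify the limit equation. Passing to the limit in \eqref{eq.3.019} uses $a_{ij}\to\delta_{ij}$, $b_j\to0$ from \eqref{eq.2.05}--\eqref{eq.2.06}, together with the crucial pointwise convergence $f_\epsilon(\phi)\to f_0(\phi)$ from \eqref{eq.3.015} (which rests on \Cref{proposition:3.03} and Lebesgue's dominated convergence theorem applied to the integral terms $A_{i,\epsilon}$). Since the $\C^2$-convergence of $u_{k,p,\epsilon_{nn}}$ is locally uniform, $f_{\epsilon_{nn}}(u_{k,p,\epsilon_{nn}}(z))\to f_0(u_{k,p}(z))$ locally uniformly, giving \eqref{eq.3.021}; the boundary condition \eqref{eq.3.022} follows by passing to the limit in \eqref{eq.3.020}, which is legitimate because of the $\C^1$-convergence up to $\partial\R^d_+$. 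Regularity $u_{k,p}\in\C^{2,\alpha}_{\text{loc}}(\overline\R^d_+)$ is inherited from the uniform estimates. I would remark that, a priori, $u_{k,p}$ may depend on the subsequence (both through the extraction and through the dependence of $\phi_0^*$ and $f_0$ on $\{\epsilon_n\}$), exactly as in \Cref{lemma:2.3}; the removal of this ambiguity is deferred to the moving-plane proposition analogous to \Cref{proposition:2.5}.

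For the exponential-type estimate \eqref{eq.3.023}, I would replicate the comparison argument of \Cref{lemma:2.4} verbatim with the substitutions $f\rightsquigarrow f_0$, $\phi^*\rightsquigarrow\phi_0^*$, $C_1\rightsquigarrow \overline{\phi_{bd}}-\underline{\phi_{bd}}$, and $C_2\rightsquigarrow C_3/8$. Fix $\overline z\in\overline\R^d_+$ with $\overline z^d\ge 2(d-1)/C_3$; on the ball $B_{\overline z^d}(\overline z)$ introduce the radial sub/supersolutions $u^\pm$ solving $\Delta u^\pm+f_0(u^\pm)=0$ with $u^\pm=\phi_0^*\pm(\overline{\phi_{bd}}-\underline{\phi_{bd}})$ on the boundary; since $\|u_{k,p}-\phi_0^*\|_{L^\infty}\le\overline{\phi_{bd}}-\underline{\phi_{bd}}$ (a consequence of \Cref{proposition:3.01} and the $\C^2$-limit), the function $u^+-u_{k,p}$ satisfies an equation $\Delta w+c(z)w=0$ with $c(z)<0$ by the strict monotonicity of $f_0$ (equivalently \eqref{eq.3.017}), so the maximum principle gives $u^-\le u_{k,p}\le u^+$ on $\overline B_{\overline z^d}(\overline z)$. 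Invoking \Cref{proposition:A.2} in \Cref{appendix:A} with $\epsilon=1$ for the radial profiles $u^\pm$ (whose exponential rate is governed by $m_{f_0}\ge C_3$, and where $\overline z^d\ge 2(d-1)/C_3$ plays the role of the threshold $(d-1)/(4C_2)$), one obtains $|u^\pm(z)-\phi_0^*|\le 2(\overline{\phi_{bd}}-\underline{\phi_{bd}})\exp(-\tfrac{C_3}{8}(\overline z^d-|z-\overline z|))$, and evaluating at $z=\overline z$ yields \eqref{eq.3.023}. The only genuinely new ingredient compared with \Cref{section:2.1} is ensuring that all constants ($M$, $C_3$, $\alpha_1,\alpha_2$ bounding the $A_{i,\epsilon}$) are $\epsilon$-independent; this is already guaranteed by \Cref{proposition:3.01,proposition:3.02}, so I expect no serious obstacle — the main point of care is simply checking that the $f_\epsilon$-dependence does not spoil the uniformity in the elliptic estimates, which it does not because the right-hand sides are uniformly bounded and the zeroth-order coefficients arising in the comparison arguments are uniformly negative.
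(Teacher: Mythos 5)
Your proposal is correct and follows essentially the same route the paper intends: the paper dispatches \Cref{lemma:3.04} with a one-line reference to \Cref{lemma:2.3,lemma:2.4}, using the uniform boundedness of $\phi_\epsilon$ from \Cref{proposition:3.01}, the uniform bounds $|f_\epsilon|\le M$ and $f_\epsilon'\le -C_3^2$ from \Cref{proposition:3.02}, and the pointwise (hence locally uniform) convergence $f_\epsilon\to f_0$ from \eqref{eq.3.015}, and you have filled in precisely those steps — elliptic $L^q$-estimates plus Schauder plus a diagonal extraction for the $\C^{2,\alpha}_{\text{loc}}$-convergence, passage to the limit in \eqref{eq.3.019}--\eqref{eq.3.020}, and a verbatim repeat of the comparison/\Cref{proposition:A.2} argument with the substitutions $f\rightsquigarrow f_0$, $\phi^*\rightsquigarrow\phi_0^*$, $C_1\rightsquigarrow\overline{\phi_{bd}}-\underline{\phi_{bd}}$, $C_2\rightsquigarrow C_3/8$. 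You also correctly flag that the sub-/supersolutions must use $f_0$ (not $f_\epsilon$), that $m_{f_0}\ge C_3$ by \eqref{eq.3.017}, and that the a priori dependence of $u_{k,p}$ and $\phi_0^*$ on the subsequence is deferred to \Cref{proposition:3.05,proposition:3.06}.
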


From \Cref{lemma:3.04}, the solution $u_{k,p}$ to \eqref{eq.3.021}--\eqref{eq.3.022} may, a priori, depend on the sequence $\{\epsilon_{n}\}_{n=1}^\infty$ and on the point $p\in\partial\Omega_{k}$.
To establish the independence, we first use the moving plane arguments (as in \Cref{proposition:2.5}) to prove that $u_{k,p}$ satisfies $u_{k,p}(z)=u_{k}(z^{d})$ for $z=(z',z^{d})\in\overline\R_{+}^{d}$, where $u_{k}$ is the unique solution to \eqref{eq.1.19}--\eqref{eq.1.21}, and $u_{k}$ is independent of the point $p\in\partial\Omega_k$.
However, unlike \Cref{section:2}, the solution $u_{k}$ to \eqref{eq.1.19}--\eqref{eq.1.21} may still depend on the sequence $\{\epsilon_{n}\}_{n=1}^\infty$ because $\phi_{0}^{*}$ (the unique zero of $f_{0}$) may depend on the sequence $\{\epsilon_{n}\}_{n=1}^{\infty}$ (see \Cref{proposition:3.03}).
To remove the dependence on the sequence, we prove that $\phi_{0}^{*}$ is the unique solution to the algebraic equations \eqref{eq.3.026}--\eqref{eq.3.027}, which are independent of the sequence $\{\epsilon_{n}\}_{n=1}^{\infty}$, in \Cref{proposition:3.06}.
Consequently, as in \eqref{eq.2.19}--\eqref{eq.2.20}, we may apply \Cref{proposition:3.05,proposition:3.06} to prove that $u_{k,p,\epsilon}$ (defined in \eqref{eq.3.018}) satisfies
\begin{align}
\label{eq.3.024}
\lim_{\epsilon\to0^+}\|u_{k,p,\epsilon}-u_{k}\|_{\C^{2,\alpha}(\overline{B}_{m}^{+})}=0\quad\text{for}~m\in\N~\text{and}~\alpha\in(0,1),
\end{align}
and
\begin{align}
\label{eq.3.025}
\lim_{\epsilon\to0^+}u_{k,p,\epsilon}(z)=u_{k}(z^{d})\quad\text{for}~z=(z',z^{d})\in\overline{\R}_{+}^{d},
\end{align}
which improve the results of \Cref{lemma:3.04}. Below are the details.
\begin{proposition}
\label{proposition:3.05}
For $p\in\partial\Omega_k$ and $k\in\{0,1,\dots,K\}$, the solution $u_{k,p}$ to \eqref{eq.3.021}--\eqref{eq.3.022} satisfies
\begin{enumerate}
\item[(a)] $u_{k,p}$ depends only on the variable $z^{d}$, i.e., $u_{k,p}(z)=u_{k,p}(z^{d})$ for $z=(z',z^{d})\in\overline\R_{+}^{d}$.
\item[(b)] $u_{k,p}$ is independent of $p$ and depends only on $k$, i.e., $u_{k,p}(z^{d})=u_{k}(z^{d})$ for $z^{d}\in[0,\infty)$, where $u_{k}$ is the unique solution to \eqref{eq.1.19}--\eqref{eq.1.21}.
\end{enumerate}
\end{proposition}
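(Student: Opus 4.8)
The plan is to mirror the moving-plane (sliding) argument used for \Cref{proposition:2.5}, since the limiting nonlinearity $f_{0}$ inherits all the structural features of $f$ that the earlier proof exploited: by \eqref{eq.3.015}--\eqref{eq.3.017} together with the charge neutrality condition \eqref{eq.1.03}, the function $f_{0}$ is smooth, strictly decreasing on $\R$, satisfies $f_{0}'(\phi)\le -C_{3}^{2}<0$, and has the unique zero $\phi_{0}^{*}$. Hence the half-space problem \eqref{eq.3.021}--\eqref{eq.3.022} is of exactly the same type as \eqref{eq.2.10}--\eqref{eq.2.11}, with the exponential-type estimate \eqref{eq.3.023} playing the role of \eqref{eq.2.12}. (For this proposition we work with $f_{0}$ and $\phi_{0}^{*}$ attached to a fixed subsequence $\{\epsilon_{n}\}$; removing that dependence is a separate matter, handled in \Cref{proposition:3.06}.)

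For part (a), I would fix $p\in\partial\Omega_k$ and $h\in\R^{d-1}\setminus\{0\}$, abbreviate $u=u_{k,p}$, and introduce $\tilde u(z)=u(z'+h,z^{d})-u(z)$; it then suffices to show $\dd\sup_{\overline\R_{+}^{d}}\tilde u=0$ and $\dd\inf_{\overline\R_{+}^{d}}\tilde u=0$. Assuming $\zeta:=\sup_{\overline\R_{+}^{d}}\tilde u>0$, the decay estimate \eqref{eq.3.023} gives $|\tilde u(z)|\le\zeta/2$ for $z^{d}\ge L$, with $L$ depending only on $C_{3}$ and $\overline{\phi_{bd}}-\underline{\phi_{bd}}$ (hence independent of $h$), so the supremum is taken on the slab $\R^{d-1}\times[0,L]$. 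Since $\tilde u$ solves a linear equation $\Delta\tilde u+c_{1}(z)\tilde u=0$ in $\R_{+}^{d}$ with $c_{1}<0$ (strict monotonicity of $f_{0}$) and $\tilde u-\gamma_{k}\partial_{z^{d}}\tilde u=0$ on $\partial\R_{+}^{d}$, the strong maximum principle excludes an interior maximum, the Robin condition excludes a boundary maximum on $\partial\R_{+}^{d}$ (there $\partial_{z^{d}}\tilde u=\zeta/\gamma_{k}>0$), and \eqref{eq.3.023} excludes a maximum on $\R^{d-1}\times\{L\}$. Thus there is a sequence $z_{n}=(z_{n}',z_{n}^{d})$ with $|z_{n}'|\to\infty$, $z_{n}^{d}\to z_{\infty}^{d}\in[0,L]$, and $\tilde u(z_{n})\to\zeta$. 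Translating by $z_{n}'$ and extracting, as in \Cref{lemma:3.04}, a $\C^{2,\alpha}_{\mathrm{loc}}$-limit $u_{\infty}$ solving \eqref{eq.3.021}--\eqref{eq.3.022}, I obtain $\tilde u_{\infty}(z)=u_{\infty}(z'+h,z^{d})-u_{\infty}(z)$ attaining its maximum $\zeta$ at $(0,z_{\infty}^{d})$; the strong maximum principle rules out $z_{\infty}^{d}\in(0,L)$, the decay estimate rules out $z_{\infty}^{d}=L$, and the Robin condition contradicts $\tilde u_{\infty}(0,0)=\zeta>0$, forcing $\zeta=0$. The symmetric argument applied to $-\tilde u$ gives $\inf\tilde u=0$, hence $\tilde u\equiv0$; as $h$ was arbitrary, $u_{k,p}=u_{k,p}(z^{d})$.

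For part (b), once part (a) holds, $u_{k,p}=u_{k,p}(z^{d})$ solves the ODE $u_{k,p}''+f_{0}(u_{k,p})=0$ on $(0,\infty)$, satisfies the Robin condition $u_{k,p}(0)-\gamma_{k}u_{k,p}'(0)=\phi_{bd,k}$ by \eqref{eq.3.022}, and satisfies $\dd\lim_{t\to\infty}u_{k,p}(t)=\phi_{0}^{*}$ by \eqref{eq.3.023}. These are precisely conditions \eqref{eq.1.19}--\eqref{eq.1.21}, and none of them involves $p$; by the uniqueness of this boundary-value problem (cf. \Cref{appendix:B}, applied with $f$ replaced by $f_{0}$), $u_{k,p}$ is independent of $p$, and I denote it $u_{k}$. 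Together with \Cref{lemma:3.04} this yields \eqref{eq.3.024}--\eqref{eq.3.025}.

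The one genuinely delicate point is the same as in \Cref{proposition:2.5}: because $\R_{+}^{d}$ is non-compact in the tangential directions, the supremum of $\tilde u$ need not be attained, so one must pass to a translation limit and re-run the maximum principle on $\tilde u_{\infty}$. Since $f_{0}$ shares the monotonicity and the uniform exponential-decay structure of $f$, no obstacle beyond bookkeeping arises here, and the potential dependence of $f_{0}$ (hence $u_{k}$) on the chosen subsequence is irrelevant for \Cref{proposition:3.05} and is resolved afterwards via \eqref{eq.3.026}--\eqref{eq.3.027}.
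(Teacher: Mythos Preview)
Your proposal is correct and takes essentially the same approach as the paper: the paper itself simply remarks that the proof of \Cref{proposition:3.05} is similar to that of \Cref{proposition:2.5} because \eqref{eq.3.021}--\eqref{eq.3.022} have the same form as \eqref{eq.2.10}--\eqref{eq.2.11} with $f=f_{0}$, and $f_{0}$ satisfies (A1)--(A2). You have faithfully reproduced that argument, correctly substituting \eqref{eq.3.023} for \eqref{eq.2.12} and noting that the subsequence-dependence of $\phi_{0}^{*}$ is handled separately in \Cref{proposition:3.06}.
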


\noindent The proof of \Cref{proposition:3.05} is similar to that of \Cref{proposition:2.5}
because \eqref{eq.3.021}--\eqref{eq.3.022} have the same form as \eqref{eq.2.10}--\eqref{eq.2.11} with $f=f_{0}$, and $f_{0}$ satisfies (A1)--(A2) (cf. \Cref{proposition:3.03}).

For the uniqueness of $\phi_{0}^{*}$ in \Cref{remark:4}, we use \Cref{proposition:3.03,proposition:3.05} to prove
\begin{proposition}
\label{proposition:3.06}
The value of $\phi_{0}^{*}\in(\underline{\phi_{bd}},\overline{\phi_{bd}})$, defined in \Cref{proposition:3.03}, is uniquely determined by the following equations.
\begin{align}
\label{eq.3.026}
&\phi_{bd,k}-u_{k}(0)=\sgn(\phi_{bd,k}-\phi_{0}^{*})\gamma_k\sqrt{\frac2{|\Omega|}\sum_{i=1}^{I}m_{i}[\exp(-z_{i}(u_{k}(0)-\phi_{0}^{*}))-1]}\quad\text{for}~k=0,1,\dots,K,\\
\label{eq.3.027}
&\sum_{k=0}^{K}|\partial\Omega_{k}|u_{k}'(0)=\sum_{k=0}^K|\partial\Omega_{k}|\frac{\phi_{bd,k}-u_{k}(0)}{\gamma_{k}}=0,
\end{align}
where $u_{k}$ is the unique solution to \eqref{eq.1.19}--\eqref{eq.1.21}.
\end{proposition}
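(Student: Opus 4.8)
The plan is to read off the two identities \eqref{eq.3.026} and \eqref{eq.3.027} as necessary conditions satisfied by \emph{every} subsequential limit $\phi_{0}^{*}$ produced by \Cref{proposition:3.03}, and then to prove separately that the algebraic system \eqref{eq.3.026}--\eqref{eq.3.027} has at most one solution in $(\underline{\phi_{bd}},\overline{\phi_{bd}})$. Combining the two shows that $\phi_{0}^{*}$ is this unique solution — in particular independent of the subsequence — which also upgrades \Cref{proposition:3.03} to the full limit \eqref{eq.3.014} announced in \Cref{remark:4}. Throughout one works along a fixed sequence $\epsilon_{n}\to0^{+}$ for which $\phi_{\epsilon_{n}}\to\phi_{0}^{*}$ pointwise in $\Omega$, so that $f_{\epsilon_{n}}\to f_{0}$ with $f_{0}$ as in \eqref{eq.3.015}, and $u_{k}$ denotes the unique solution of \eqref{eq.1.19}--\eqref{eq.1.21} attached to this $f_{0}$ (uniqueness being the ODE analysis of \Cref{appendix:B,appendix:C}; this does not invoke the present proposition, so there is no circularity).

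First I would obtain \eqref{eq.3.026}. Multiplying \eqref{eq.1.19} by $u_{k}'$ shows the energy $\tfrac12(u_{k}')^{2}+F_{0}(u_{k})$ is constant on $(0,\infty)$, where $F_{0}(\phi)=\int_{\phi_{0}^{*}}^{\phi}f_{0}=-\tfrac1{|\Omega|}\sum_{i=1}^{I}m_{i}[\exp(-z_{i}(\phi-\phi_{0}^{*}))-1]$; since $u_{k}$ is bounded and $u_{k}(t)\to\phi_{0}^{*}$ by \eqref{eq.3.023}, this constant must vanish (otherwise $(u_{k}')^{2}$ stays bounded away from $0$ at infinity and $u_{k}$ is unbounded), so $u_{k}'(0)^{2}=-2F_{0}(u_{k}(0))=\tfrac2{|\Omega|}\sum_{i=1}^{I}m_{i}[\exp(-z_{i}(u_{k}(0)-\phi_{0}^{*}))-1]$. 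The boundary condition \eqref{eq.1.20} gives $u_{k}'(0)=(u_{k}(0)-\phi_{bd,k})/\gamma_{k}$, and the monotonicity of $u_{k}$ (it lies strictly between $\phi_{0}^{*}$ and $\phi_{bd,k}$, cf. \Cref{appendix:C}) fixes the sign, yielding $\phi_{bd,k}-u_{k}(0)=-\gamma_{k}u_{k}'(0)=\sgn(\phi_{bd,k}-\phi_{0}^{*})\,\gamma_{k}\sqrt{u_{k}'(0)^{2}}$, which is \eqref{eq.3.026}. For \eqref{eq.3.027} I would integrate \eqref{eq.3.001} over $\Omega$: by \eqref{eq.1.03} and \eqref{eq.3.003},
\[\int_{\Omega}\!f_{\epsilon}(\phi_{\epsilon})\,\mathrm{d}x=\sum_{i=1}^{I}\frac{m_{i}z_{i}}{A_{i,\epsilon}}\int_{\Omega}\!\exp(-z_{i}\phi_{\epsilon})\,\mathrm{d}x=\sum_{i=1}^{I}m_{i}z_{i}=0,\]
so $\int_{\Omega}\Delta\phi_{\epsilon}=0$, and the divergence theorem together with \eqref{eq.3.002} gives $\sum_{k=0}^{K}\gamma_{k}^{-1}\int_{\partial\Omega_{k}}(\phi_{bd,k}-\phi_{\epsilon})\,\mathrm{d}S=0$ for every $\epsilon>0$. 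Letting $n\to\infty$ along $\{\epsilon_{n}\}$ and using $\phi_{\epsilon_{n}}\to u_{k}(0)$ uniformly on $\partial\Omega_{k}$ gives $\sum_{k}|\partial\Omega_{k}|\gamma_{k}^{-1}(\phi_{bd,k}-u_{k}(0))=0$, equivalently $\sum_{k}|\partial\Omega_{k}|u_{k}'(0)=0$; that is \eqref{eq.3.027}.

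The uniform convergence $\phi_{\epsilon_{n}}\to u_{k}(0)$ on $\partial\Omega_{k}$ — the first delicate point — I would establish by a blow-up argument: if it failed, pick $p_{n}\in\partial\Omega_{k}$ with $|\phi_{\epsilon_{n}}(p_{n})-u_{k}(0)|$ bounded below, set $w_{n}(z)=\phi_{\epsilon_{n}}(\Psi_{p_{n}}(\sqrt{\epsilon_{n}}z))$, note that the principal-coordinate coefficients of \Cref{lemma:2.2} converge to $\delta_{ij}$ and $0$ uniformly for $p_{n}$ in the compact set $\partial\Omega_{k}$ while $f_{\epsilon_{n}}\to f_{0}$, extract a $\C^{2,\alpha}_{\mathrm{loc}}(\overline{\R}_{+}^{d})$ limit solving \eqref{eq.3.021}--\eqref{eq.3.022}, and identify it with $u_{k}$ via the exponential estimate of \Cref{lemma:3.04}, the moving-plane argument of \Cref{proposition:3.05}, and ODE uniqueness, contradicting the lower bound at $z=0$.

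The main obstacle is the uniqueness of solutions to \eqref{eq.3.026}--\eqref{eq.3.027}. For $c\in(\underline{\phi_{bd}},\overline{\phi_{bd}})$, replacing $\phi_{0}^{*}$ by $c$ in \eqref{eq.1.28} produces a strictly decreasing nonlinearity with unique zero $c$, whose problem \eqref{eq.1.19}--\eqref{eq.1.21} has a unique solution $u_{k}^{(c)}$ (cf. \Cref{appendix:B,appendix:C}), and by the first step $u_{k}^{(c)}(0)$ is determined through \eqref{eq.3.026}. Writing $\tau_{k}=\phi_{bd,k}-c$, $\sigma_{k}=u_{k}^{(c)}(0)-c$, and $g(\sigma)=\tfrac2{|\Omega|}\sum_{i=1}^{I}m_{i}[\exp(-z_{i}\sigma)-1]\ge0$ — which is \emph{independent of $c$}, satisfies $g(0)=g'(0)=0$, $g''(0)>0$, and is strictly increasing in $|\sigma|$ on each side of $0$ — equation \eqref{eq.3.026} becomes $|\sigma_{k}|+\gamma_{k}\sqrt{g(\sigma_{k})}=|\tau_{k}|$ with $\sigma_{k}$ of the sign of $\tau_{k}$, so $|\sigma_{k}|$ is a continuous, strictly increasing function of $|\tau_{k}|$ and $\phi_{bd,k}-u_{k}^{(c)}(0)=\tau_{k}-\sigma_{k}=\sgn(\tau_{k})\gamma_{k}\sqrt{g(\sigma_{k})}$. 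Hence
\[G(c):=\sum_{k=0}^{K}\frac{|\partial\Omega_{k}|}{\gamma_{k}}\bigl(\phi_{bd,k}-u_{k}^{(c)}(0)\bigr)=\sum_{k=0}^{K}|\partial\Omega_{k}|\,\sgn(\phi_{bd,k}-c)\sqrt{g(\sigma_{k}(c))},\]
and each summand is continuous and strictly decreasing in $c$ on $(\underline{\phi_{bd}},\overline{\phi_{bd}})$ — positive and decreasing for $c<\phi_{bd,k}$, zero at $c=\phi_{bd,k}$, negative and decreasing for $c>\phi_{bd,k}$, with a genuine sign change there since $\sqrt{g(\sigma)}\sim\sqrt{g''(0)/2}\,|\sigma|$ near $0$. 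As not all $\phi_{bd,k}$ coincide, $G(\underline{\phi_{bd}}^{+})>0>G(\overline{\phi_{bd}}^{-})$, so $G$ has exactly one zero $c^{*}$; thus \eqref{eq.3.026}--\eqref{eq.3.027} has the unique solution $c^{*}$, and since $\phi_{0}^{*}$ satisfies it we conclude $\phi_{0}^{*}=c^{*}$, independently of the subsequence, which proves \Cref{proposition:3.06} and \eqref{eq.3.014}. The two places requiring genuine care are the uniform boundary convergence just described and the verification of the strict monotonicity and endpoint signs of $G$.
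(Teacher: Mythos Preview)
Your derivation of \eqref{eq.3.026}--\eqref{eq.3.027} matches the paper's: the energy identity for $u_{k}$ plus the Robin condition gives \eqref{eq.3.026}, and integrating \eqref{eq.3.001} over $\Omega$ with charge neutrality \eqref{eq.1.03} gives \eqref{eq.3.027} after passing the boundary integral to the limit along the subsequence. (A small point: the monotonicity and sign of $u_{k}$ you invoke are in \Cref{proposition:B.1} of \Cref{appendix:B}, not \Cref{appendix:C}.) Your blow-up argument for uniform convergence of $\phi_{\epsilon}$ on $\partial\Omega_{k}$ is a careful justification of a step the paper treats more tersely by citing \eqref{eq.3.025}.

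Where you genuinely diverge is in the uniqueness argument. The paper proceeds by an ODE comparison (its Claim~2): given two candidate limits $\phi_{0}^{*}>\tilde\phi_{0}^{*}$ with associated profiles $u_{k},\tilde u_{k}$, it shows directly from the differential equations that $u_{k}(0)>\tilde u_{k}(0)$ for every $k$ (via a first-zero argument for $u_{k}-\tilde u_{k}$ combined with the ordering $f_{0}>\tilde f_{0}$), which then contradicts \eqref{eq.3.027}. You instead exploit the translation invariance of the nonlinearity --- that $f_{0}$ depends only on $\phi-c$ --- to rewrite \eqref{eq.3.026} in shifted variables $\sigma_{k}=u_{k}^{(c)}(0)-c$, $\tau_{k}=\phi_{bd,k}-c$ as $|\sigma_{k}|+\gamma_{k}\sqrt{g(\sigma_{k})}=|\tau_{k}|$ with $g$ \emph{independent of $c$}; the strict monotonicity of $G(c)$ then becomes a direct calculus check. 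Your route is shorter and more elementary, leaning on the specific exponential form of $f_{0}$; the paper's ODE-comparison argument is slightly heavier but would survive for nonlinearities lacking this translation symmetry.
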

\begin{proof}It is clear that \eqref{eq.3.026} follows from \eqref{eq.B.10} (in \Cref{appendix:B}) with $\Phi_{bd}=\phi_{bd,k}$, $U_{0}=u_{k}(0)$, $\phi^{*}=\phi_{0}^{*}$, $\gamma=\gamma_{k}$ and $F(\phi)=F_{0}(\phi)=|\Omega|^{-1}\sum\limits_{i=1}^{I}m_{i}(1-\exp(-z_{i}(\phi-\phi_{0}^{*})))$.
On the other hand, we integrate \eqref{eq.3.001} over $\Omega$ and apply the divergence theorem to get
\begin{align}
\label{eq.3.028}
\sum_{k=0}^{K}\int_{\partial\Omega_{k}}\!\frac{\phi_{bd,k}-\phi_{\epsilon}}{\gamma_k}\,\mathrm{d}S=\sqrt{\epsilon}\int_{\partial\Omega}\!\partial_{\nu}\phi_{\epsilon}\,\mathrm{d}S=\frac{1}{\sqrt{\epsilon}}\int_{\Omega}\!\Delta\phi_{\epsilon}(x)\,\mathrm{d}x=-\frac{1}{\sqrt{\epsilon}}\int_{\Omega}\!f_{\epsilon}(\phi_{\epsilon})\,\mathrm{d}x=-\frac{1}{\sqrt{\epsilon}}\sum_{i=1}^{I}m_{i}z_{i}=0.
\end{align}
Here we have used the conditions \eqref{eq.1.03} and \eqref{eq.1.04}.
By \eqref{eq.2.04} and \eqref{eq.3.025}, $\dd\lim_{\epsilon\to0^+}\phi_{\epsilon}(p)=\lim_{\epsilon\to0^+}u_{k,p,\epsilon}(0)=u_{k}(0)$ for $p\in\partial\Omega_k$. Thus, letting $\epsilon\to0^+$, \eqref{eq.3.028} implies \eqref{eq.3.027}. Hence $\phi_{0}^{*}$ satisfies \eqref{eq.3.026}--\eqref{eq.3.027}.

We now prove $\phi_{0}^{*}\in(\underline{\phi_{bd}},\overline{\phi_{bd}})$. Recall that $\phi_{0}^{*}\in[\underline{\phi_{bd}},\overline{\phi_{bd}}]$ (cf. \Cref{proposition:3.03}).
If $\phi_{0}^{*}=\overline{\phi_{bd}}$, then $\phi_{0}^{*}\geq\phi_{bd,k}$ for $k=0,1,\dots,K$.
When $\phi_{0}^{*}=\phi_{bd,k}$, $u_{k}\equiv\phi_{0}^{*}$ on $[0,\infty)$; when $\phi_{0}^{*}>\phi_{bd,k}$, $u_{k}$ is strictly increasing on $[0,\infty)$ and $u_{k}(0)>\phi_{bd,k}$ (cf. \Cref{proposition:B.1}).
Thus, $u_{k}(0)\geq\phi_{bd,k}$ for $k=0,1,\dots,K$.
Along with \eqref{eq.3.027}, $u_{k}(0)=\phi_{bd,k}$ for $k=0,1,\dots,K$, which means $\phi_{0}^{*}=\phi_{bd,k}$ for $k=0,1,\dots,K$.
This cannot happen because $\phi_{bd,k}$ are not equal.
Hence $\phi_{0}^{*}<\overline{\phi_{bd}}$.
Similarly, we can prove $\phi_{0}^{*}\neq\underline{\phi_{bd}}$.
Consequently, $\phi_{0}^{*}\in(\underline{\phi_{bd}},\overline{\phi_{bd}})$.

To complete the proof, it remains to show that algebraic equations \eqref{eq.3.026}--\eqref{eq.3.027} admit a unique solution $\phi_{0}^{*}$.
Suppose that there exist $\{\epsilon_{n}\}_{n=1}^\infty$ and $\{\tilde\epsilon_{n}\}_{n=1}^\infty$ such that $\dd\lim_{n\to\infty}\phi_{\epsilon_{n}}^{*}=\phi_{0}^{*}$ and $\dd\lim_{n\to\infty}\phi_{\tilde\epsilon_{n}}^{*}=\tilde\phi_{0}^{*}$.
For $k=0,1,\dots,K$, let $u_{k}$ be the solutions to
\begin{align*}
&u_{k}''+f_{0}(u_{k})=0\quad\text{in}~(0,\infty),\\
&u_{k}(0)-\gamma_ku_{k}'(0)=\phi_{bd,k},\\
&\lim_{t\to\infty}u_{k}(t)=\phi_{0}^{*},
\end{align*}
and let $\tilde u_{k}$ be the solutions to
\begin{align*}
&\tilde u_{k}''+\tilde f_{0}(\tilde u_{k})=0\quad\text{in}~(0,\infty),\\
&\tilde u_{k}(0)-\gamma_k\tilde u_{k}'(0)=\phi_{bd,k},\\
&\lim_{t\to\infty}\tilde u_{k}(t)=\tilde\phi_{0}^{*}.
\end{align*}
where
\[f_{0}(\phi)=\frac{1}{|\Omega|}\sum_{i=1}^{I}m_{i}z_{i}\exp(-z_{i}(\phi-\phi_{0}^{*}))\quad\text{and}\quad\tilde f_{0}(\phi)=\frac{1}{|\Omega|}\sum_{i=1}^{I}m_{i}z_{i}\exp(-z_{i}(\phi-\tilde\phi_{0}^{*}))\quad\text{for}~\phi\in\R.\]
As $\phi_{0}^{*}=\tilde\phi_{0}^{*}$, it is clear that $f_{0}=\tilde f_{0}$ and $u_{k}=\tilde u_{k}$ for $k=0,1,\dots,K$.
Note that $(u_{0}(0),u_{1}(0),\dots,u_{K}(0),\phi_{0}^{*})$ and $(\tilde u_{0}(0),\tilde u_1(0),\dots,\tilde u_{K}(0),\tilde\phi_{0}^{*})$ satisfy \eqref{eq.3.026}--\eqref{eq.3.027}.
Consequently, the values of $u_{k}(0)$ and $\tilde u_{k}(0)$ depend on $\phi_{0}^{*}$ and $\tilde\phi_{0}^{*}$, respectively.
Now we prove
\begin{claim}
\label{claim:2}
$\phi_{0}^{*}>\tilde{\phi}_{0}^{*}$ ($\phi_{0}^{*}<\tilde\phi_{0}^{*}$, resp.) if and only if $u_{k}(0)>\tilde{u}_{k}(0)$ ($u_{k}(0)<\tilde{u}_{k}(0)$, resp.) for all $k=0,1,\dots,K$.
\end{claim}
\begin{proof}[Proof of \Cref{claim:2}]
We first suppose $\phi_{0}^{*}>\tilde{\phi}_{0}^{*}$.
Let
\[g(\phi,s)=\frac{1}{|\Omega|}\sum_{i=1}^{I}m_{i}z_{i}\exp(-z_{i}(\phi-s))\quad\text{for}~\phi,s\in\R.\]
It is obvious that $g(\phi,\phi_{0}^{*})=f_{0}(\phi)$ and $g(\phi,\tilde\phi_{0}^{*})=\tilde f_{0}(\phi)$ for $\phi\in\R$.
Since \[\partial_sg(\phi,s)=\frac{1}{|\Omega|}\sum_{i=1}^{I}m_{i}z_{i}^2\exp(-z_{i}(\phi-s))>0\quad\text{for}~s\in\R,\]the function $g$ is increasing in $s$ for $\phi\in\R$.
Hence by the assumption $\phi_{0}^{*}>\tilde\phi_{0}^{*}$, we get
\begin{align}
\label{eq.3.029}
f_{0}(\phi)>\tilde f_{0}(\phi)\quad\text{for}~\phi\in\R.
\end{align}
Suppose, to the contrary, that $u_{k_{0}}(0)\leq\tilde u_{k_{0}}(0)$ for some $k_{0}\in\{0,1,\dots,K\}$.
Let $\overline u_{k_{0}}=u_{k_{0}}-\tilde{u}_{k_{0}}$ on $[0,\infty)$.
Then it is easy to verify that $\overline{u}_{k_0}(0)\leq0$ and
\begin{align}
\label{eq.3.030}
&\overline{u}_{k_{0}}''+f_{0}(u_{k_{0}})-\tilde{f}_{0}(\tilde{u}_{k_{0}})=0\quad\text{in}~(0,\infty),\\
\label{eq.3.031}
&\overline{u}_{k_{0}}(0)-\gamma_k\overline{u}_{k_{0}}'(0)=0,\\
\label{eq.3.032}
&\dd\lim_{t\to\infty}\overline{u}_{k_{0}}(t)=\phi_{0}^{*}-\tilde{\phi}_{0}^{*}>0.
\end{align}
Since $\overline{u}_{k_{0}}(0)\leq0$, we have $\overline{u}_{k_{0}}'(0)\leq0$ by \eqref{eq.3.031}.
Moreover, since $\tilde{f}_{0}$ is strictly decreasing, we use \eqref{eq.3.029}--\eqref{eq.3.030} to obtain
\[\overline{u}_{k_{0}}''(0)=\tilde{f}_{0}(\tilde{u}_{k_0}(0))-f_{0}(u_{k_0}(0))\leq\tilde f_{0}(u_{k_0}(0))-f_{0}(u_{k_{0}}(0))<0.\]
Thus, by \eqref{eq.3.032}, there exists $t_1\in(0,\infty)$ such that $\overline u_{k_0}(t)<0$ on $(0,t_1)$, $\overline{u}_{k_{0}}(t_1)=0$ and $\overline u_{k_0}'(t_1)\geq0$.
Integrating \eqref{eq.3.030} over $[0,t_1]$, we get
\begin{align*}
\overline{u}_{k_{0}}'(t_1)-\overline{u}_{k_{0}}'(0)+\int_{0}^{t_{1}}\![f_{0}(u_{k_{0}}(s))-\tilde{f}_{0}(\tilde{u}_{k_{0}}(s))]\,\mathrm{d}s=0,
\end{align*}
which implies
\begin{align}
\label{eq.3.033}
\int_{0}^{t_{1}}\![f_{0}(u_{k_0}(s))-\tilde{f}_{0}(\tilde{u}_{k_0}(s))]\,\mathrm{d}s=\overline{u}_{k_{0}}'(0)-\overline{u}_{k_{0}}'(t_{1})\leq0.
\end{align}
Since $f_{0}$ is strictly decreasing on $\R$ (cf. \eqref{eq.3.017}) and $u_{k_{0}}(t)-\tilde u_{k_0}(t)=\overline u_{k_0}(t)<0$ on $(0,t_1)$, it follows that
\begin{align*}
f_{0}(u_{k_0}(t))-\tilde f_{0}(\tilde u_{k_0}(t))>f_{0}(\tilde u_{k_0}(t))-\tilde f_{0}(\tilde u_{k_0}(t))>0\quad\text{for}~0<t<t_1,
\end{align*}
which contradicts \eqref{eq.3.033}.
Here we have used \eqref{eq.3.029} in the last inequality.
Therefore, we arrive at $\overline u_{k}(0)>0$, which means $u_{k}(0)>\tilde u_{k}(0)$ for all $k=0,1,\dots,K$.
Similarly, we can prove $\phi_{0}^{*}<\tilde\phi_{0}^{*}$ implies $u_{k}(0)<\tilde u_{k}(0)$ and thus complete proof of \Cref{claim:2}.
\end{proof}
To complete the proof of \Cref{proposition:3.06}, we combine \Cref{claim:2} and \eqref{eq.3.027} for $u_{k}$ and $\tilde u_{k}$ to get the following contradiction:
\[0=\sum_{k=0}^K|\partial\Omega_k|\frac{\phi_{bd,k}-u_{k}(0)}{\gamma_k}\neq\sum_{k=0}^K|\partial\Omega_k|\frac{\phi_{bd,k}-\tilde u_{k}(0)}{\gamma_k}=0\quad\text{if}~\phi_{0}^{*}\neq\tilde\phi_{0}^{*},\]
which implies $\phi_{0}^{*}=\tilde\phi_{0}^{*}$ and $u_{k}(0)=\tilde u_{k}(0)$.
Therefore, we complete the proof of \Cref{proposition:3.06}.
\end{proof}

Consequently, we can use \eqref{eq.2.01}, \eqref{eq.2.04}, \eqref{eq.3.018}, and \eqref{eq.3.024}--\eqref{eq.3.025} to obtain the first-order asymptotic expansions of $\phi_{\epsilon}$ and $\nabla\phi_{\epsilon}$:
\begin{align}
\label{eq.3.034}
&\phi_{\epsilon}(p-t\sqrt{\epsilon}\nu_{p})=u_{k}(t)+o_{\epsilon}(1),\\
\label{eq.3.035}
&\nabla\phi_{\epsilon}(p-t\sqrt{\epsilon}\nu_{p})=-\frac{1}{\sqrt{\epsilon}}(u_{k}'(t)\nu_{p}+o_{\epsilon}(1))
\end{align}
for $0\leq t\leq T$ as $\epsilon\to0^{+}$, where $T>0$, $k\in\{0,1,\dots,K\}$, and $p\in\partial\Omega_k$ (as in \eqref{eq.2.39}--\eqref{eq.2.40}).

To derive the second-order asymptotic expansions of $\phi_{\epsilon}$ and $\nabla\phi_{\epsilon}$ and capture the asymptotic behavior of the term $o_{\epsilon}(1)/\sqrt{\epsilon}$ in the subsequent analysis, it is crucial to establish the following exponential-type estimates of $\phi_{\epsilon}$ and $\nabla\phi_{\epsilon}$ analogously to \Cref{proposition:2.6}:
\begin{proposition}[Exponential-type estimates of $\phi_{\epsilon}$ and $\nabla\phi_{\epsilon}$]
\label{proposition:3.07}
Under the hypothesis of \Cref{theorem:2}, let $\Omega$ satisfy the uniform interior sphere condition, i.e., there exists $R>0$ such that $B_R(p-R\nu_{p})\subseteq\Omega$ and $\partial B_R(p-R\nu_{p})\cap\partial\Omega=\{p\}$ for $p\in\partial\Omega$, where $\nu_{p}$ is the unit outer normal of $\partial\Omega$ at $p$.
Then we have
\begin{align}
\label{eq.3.036}
&|\phi_{\epsilon}(x)-\phi_{\epsilon}^{*}|\leq2(\overline{\phi_{bd}}-\underline{\phi_{bd}})\exp\left(-\frac{C_{3}\delta(x)}{8\sqrt{\epsilon}}\right),\\
\label{eq.3.037}
&|\nabla\phi_{\epsilon}|\leq\frac{M'}{\sqrt{\epsilon}}\exp\left(-\frac{M\delta(x)}{\sqrt{\epsilon}}\right)
\end{align}
for $x\in\overline\Omega$ and $0<\epsilon<\epsilon^{*}$, where $C_{3}$ (given in \eqref{eq.3.007}) is independent of $\epsilon$ and $\phi_{\epsilon}^{*}$ is defined in \Cref{proposition:3.02}.
\end{proposition}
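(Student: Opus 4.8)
The plan is to repeat the proof of \Cref{proposition:2.6} almost verbatim, with the fixed nonlinearity $f$ and its zero $\phi^{*}$ replaced by $f_{\epsilon}$ and $\phi_{\epsilon}^{*}$. The point is that, although both $f_{\epsilon}$ and $\phi_{\epsilon}^{*}$ depend on $\epsilon$, \Cref{proposition:3.01,proposition:3.02} already supply all the $\epsilon$-uniform ingredients: $-\epsilon\Delta\phi_{\epsilon}=f_{\epsilon}(\phi_{\epsilon})$ in $\Omega$ with $f_{\epsilon}$ smooth and strictly decreasing, $f_{\epsilon}'\le-C_{3}^{2}$ on $\R$ for $C_{3}$ independent of $\epsilon$ (cf. \eqref{eq.3.007}), $\phi_{\epsilon}^{*}\in(\underline{\phi_{bd}},\overline{\phi_{bd}})$, and $\underline{\phi_{bd}}\le\phi_{\epsilon}\le\overline{\phi_{bd}}$ on $\overline\Omega$; in particular $|\phi_{\epsilon}(x)-\phi_{\epsilon}^{*}|\le\overline{\phi_{bd}}-\underline{\phi_{bd}}$ on $\overline\Omega$, which plays the role of the bound $|\phi_{\epsilon}-\phi^{*}|\le C_{1}$ used in \Cref{proposition:2.6}.

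To prove \eqref{eq.3.036}, I would fix $x\in\Omega$ and, using the uniform interior sphere condition, choose a ball $B_{R_{0}}(x_{0})\subseteq\Omega$ with $R_{0}\ge R$, $x\in B_{R_{0}}(x_{0})$, $\partial B_{R_{0}}(x_{0})\cap\partial\Omega\neq\emptyset$ and $R_{0}-|x-x_{0}|=\delta(x)$, exactly as in \eqref{eq.2.43}. I would then take radial solutions $\phi_{\epsilon}^{\pm}$ of $-\epsilon\Delta\phi_{\epsilon}^{\pm}=f_{\epsilon}(\phi_{\epsilon}^{\pm})$ in $B_{R_{0}}(x_{0})$ with boundary data $\phi_{\epsilon}^{*}\pm(\overline{\phi_{bd}}-\underline{\phi_{bd}})$, show $\phi_{\epsilon}^{-}\le\phi_{\epsilon}\le\phi_{\epsilon}^{+}$ in $\overline B_{R_{0}}(x_{0})$ by linearising the differences $\phi_{\epsilon}^{\pm}-\phi_{\epsilon}$ (the zeroth-order coefficient is negative because $f_{\epsilon}$ is strictly decreasing) and applying the maximum principle, and finally invoke \Cref{proposition:A.2} with nonlinearity $f_{\epsilon}$ — whose $m_{f}$ is bounded below by $C_{3}$, giving decay rate $C_{3}/8$ — together with $R_{0}\ge R$, to get $|\phi_{\epsilon}^{\pm}(x)-\phi_{\epsilon}^{*}|\le2(\overline{\phi_{bd}}-\underline{\phi_{bd}})\exp(-C_{3}\delta(x)/(8\sqrt\epsilon))$ for $0<\epsilon<\epsilon^{*}$, with $\epsilon^{*}$ depending only on $C_{3}R/(d-1)$. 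Sandwiching $\phi_{\epsilon}$ then yields \eqref{eq.3.036}.

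For \eqref{eq.3.037}, I already have the interior gradient bound $|\nabla\phi_{\epsilon}(x)|\le M'/\sqrt\epsilon$ for $0<\epsilon<(\delta(x))^{2}$ from \eqref{eq.3.011}, and the first-order expansion \eqref{eq.3.035} gives the same bound on $\partial\Omega$; hence $|\nabla\phi_{\epsilon}|\le M'/\sqrt\epsilon$ on $\overline\Omega$. Next, on $B_{R_{0}}(x_{0})$, differentiating the equation as in \eqref{eq.2.51} and using \eqref{eq.3.007} gives $\epsilon\Delta|\nabla\phi_{\epsilon}|^{2}\ge-2f_{\epsilon}'(\phi_{\epsilon})|\nabla\phi_{\epsilon}|^{2}\ge2C_{3}^{2}|\nabla\phi_{\epsilon}|^{2}$, so I would compare $|\nabla\phi_{\epsilon}|^{2}$ with the radial solution of $\epsilon\Delta\overline\phi_{\epsilon}=2C_{3}^{2}\overline\phi_{\epsilon}$ in $B_{R_{0}}(x_{0})$ with Dirichlet datum $\max_{\partial B_{R_{0}}(x_{0})}|\nabla\phi_{\epsilon}|^{2}\le M/\epsilon$ and use \eqref{eq.2.43} to obtain $|\nabla\phi_{\epsilon}(x)|\le(M'/\sqrt\epsilon)\exp(-M\delta(x)/\sqrt\epsilon)$ for a suitable $\epsilon$-independent $M>0$, which is \eqref{eq.3.037}.

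I do not expect a serious obstacle: the argument is structurally identical to that of \Cref{proposition:2.6}, and the CCPB-specific feature — the $\epsilon$-dependence of $f_{\epsilon}$ and $\phi_{\epsilon}^{*}$ — is neutralised by the uniform bounds of \Cref{proposition:3.02}. The one thing to be careful about is that all constants (the amplitude $\overline{\phi_{bd}}-\underline{\phi_{bd}}$, the rate $C_{3}$, the threshold $\epsilon^{*}$) are genuinely independent of $\epsilon$, and that the \emph{uniform} interior sphere radius $R$ is used, so that the smallness requirement in \Cref{proposition:A.2} is met at every boundary point simultaneously.
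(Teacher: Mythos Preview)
Your proposal is correct and follows essentially the same approach as the paper's proof. The only cosmetic difference is that the paper cites \Cref{proposition:A.4} (the $\epsilon$-dependent analogue of \Cref{proposition:A.2}) rather than \Cref{proposition:A.2} itself, but since you explicitly note that $m_{f_{\epsilon}}\ge C_{3}$ uniformly in $\epsilon$, your use of \Cref{proposition:A.2} with $f=f_{\epsilon}$ amounts to the same thing.
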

\begin{proof}
As in the proof of \Cref{proposition:2.6}, the uniform interior sphere condition of $\Omega$ ensures that there exists $B_{R_0}(x_0)\subseteq\Omega$ with $R_0\geq R$; hence \eqref{eq.2.43} holds.
To show \eqref{eq.3.036}, we let $\phi_{\epsilon}^\pm$ be the solutions to
\begin{alignat}{2}
\label{eq.3.038}
-\epsilon\Delta\phi_{\epsilon}^\pm&=f_{\epsilon}(\phi_{\epsilon}^\pm)\quad&&\text{in}~B_{R_0}(x_0),\\
\label{eq.3.039}
\phi_{\epsilon}^\pm&=\phi_{\epsilon}^{*}\pm(\overline{\phi_{bd}}-\underline{\phi_{bd}})\quad&&\text{on}~\partial B_{R_0}(x_0),
\end{alignat}
The existence of solutions to \eqref{eq.3.038}--\eqref{eq.3.039} follows from the standard variational method because of the strict decrease of $f_{\epsilon}$ (cf. \Cref{proposition:3.02}).
Following the argument of \Cref{proposition:2.6} for \eqref{eq.2.46}, with replacing $f$ by $f_{\epsilon}$, we obtain $\phi_{\epsilon}^-\leq\phi_{\epsilon}\leq\phi_{\epsilon}^+$ in $\overline B_{R_0}(x_0)$.
Then by \eqref{eq.2.43} and \Cref{proposition:A.4} in \Cref{appendix:A} with $M=C_{3}$, $|\Phi_{bd}-\phi_{\epsilon}^{*}|=\overline{\phi_{bd}}-\underline{\phi_{bd}}$ and $B_R(0)=B_{R_0}(x_0)$, we have
\begin{align}
\label{eq.3.040}
|\phi_{\epsilon}^\pm(x)-\phi_{\epsilon}^{*}|\leq2(\overline{\phi_{bd}}-\underline{\phi_{bd}})\exp\left(-\frac{C_{3}\delta(x)}{8\sqrt{\epsilon}}\right)\quad\text{for}~x\in\overline B_{R_0}(x_0)~\text{and}~0<\epsilon<\epsilon^{*},
\end{align}
which implies \eqref{eq.3.036}.
As in \eqref{eq.2.50}, we can use \eqref{eq.3.035} to improve the estimate \eqref{eq.3.011} as $|\nabla\phi_{\epsilon}(x)|\leq M'/\sqrt{\epsilon}$ for $x\in\overline\Omega$ and $0<\epsilon<(\delta(x))^2$.
Then \eqref{eq.3.037} follows from \eqref{eq.3.007} and the standard comparison principle on \eqref{eq.2.51} (with replacing $f$ by $f_{\epsilon}$).
Therefore, we complete the proof of \Cref{proposition:3.07}.
\end{proof}

Recall that $\delta(x)=\dist(x,\partial\Omega)$.
By \Cref{proposition:3.07}, it is clear that \Cref{theorem:2}(b) holds true.

\subsection{Uniform boundedness of \texorpdfstring{$|\phi_{\epsilon}^{*}-\phi_{0}^{*}|/\sqrt{\epsilon}$}{|ϕ.ϵ*-ϕ.0*|/√ϵ}}
\label{section:3.3}

From \eqref{eq.3.014}--\eqref{eq.3.015} and \eqref{eq.3.036}, the first-order asymptotic expansions of $\phi_{\epsilon}^{*}$, $A_{i,\epsilon}$, and $f_{\epsilon}$ are represented~by
\begin{alignat*}{2}
\phi_{\epsilon}^{*}&=\phi_{0}^{*}+o_{\epsilon}(1),\\
A_{i,\epsilon}&=|\Omega|\exp(-z_{i}\phi_{0}^{*})+o_{\epsilon}(1)&&\quad\text{for}~i=1,\dots,I,\\
f_{\epsilon}(\phi)&=f_{0}(\phi)+o_{\epsilon}(1)&&\quad\text{for}~\phi\in\R,
\end{alignat*}
where $A_{i,\epsilon}=\int_{\Omega}\!\exp(-z_{i}\phi_{\epsilon}(y))\,\mathrm{d}y$, $f_{\epsilon}(\phi)=\sum_{i=1}^{I}(m_{i}z_{i}/A_{i,\epsilon})\exp(-z_{i}\phi)$, and $f_{0}(\phi)$ is given in \eqref{eq.1.28}.
To obtain the second-order asymptotic expansion of $\phi_{\epsilon}$, we require the second-order asymptotic expansions of $f_{\epsilon}$ and $A_{i,\epsilon}$.
Thus, it is necessary to show the uniform boundedness of $(\phi_{\epsilon}^{*}-\phi_{0}^{*})/\sqrt{\epsilon}$.

Suppose by contradiction that there exists a sequence $\{\epsilon_{n}\}_{n=1}^\infty$ such that $\dd\lim_{n\to\infty}|\phi_{\epsilon_{n}}^{*}-\phi_{0}^{*}|/\sqrt{\epsilon_{n}}=\infty$ and $\dd\lim_{n\to\infty}\epsilon_{n}=0$.
For notational convenience, we henceforth drop the subscript $n$ in this section so the assumption becomes
\begin{align}
\label{eq.3.041}
\lim_{\epsilon\to0^+}\frac{|\phi_{\epsilon}^{*}-\phi_{0}^{*}|}{\sqrt{\epsilon}}=\infty.\end{align}
A direct computation gives
\begin{align}
\label{eq.3.042}
\begin{aligned}
f_{1,\epsilon}(\phi)&:=\frac{f_{\epsilon}(\phi)-f_{0}(\phi)}{\phi_{\epsilon}^{*}-\phi_{0}^{*}}\\&=\frac{1}{\phi_{\epsilon}^{*}-\phi_{0}^{*}}\sum_{i=1}^{I}m_{i}z_{i}\left(\frac{\exp(-z_{i}\phi_{0}^{*})}{A_{i,\epsilon}}-\frac{1}{|\Omega|}\right)\exp(-z_{i}(\phi-\phi_{0}^{*}))\\
&=-\sum_{i=1}^{I}\frac{m_{i}z_{i}B_{i,\epsilon}}{|\Omega|[|\Omega|\exp(-z_{i}\phi_{0}^{*})+(\phi_{\epsilon}^{*}-\phi_{0}^{*})B_{i,\epsilon}]}\exp(-z_{i}(\phi-\phi_{0}^{*}))\quad\text{for}~\phi\in\R,
\end{aligned}
\end{align}
where
\begin{align}
\label{eq.3.043}
B_{i,\epsilon}=\frac{A_{i,\epsilon}-|\Omega|\exp(-z_{i}\phi_{0}^{*})}{\phi_{\epsilon}^{*}-\phi_{0}^{*}}=\int_\Omega\!\frac{\exp(-z_{i}\phi_{\epsilon}(y))-\exp(-z_{i}\phi_{0}^{*})}{\phi_{\epsilon}^{*}-\phi_{0}^{*}}\,\mathrm{d}y\end{align}
for $i=1,\dots,I$ and $\epsilon>0$.
Note that $B_{i,\epsilon}$ can be expressed by
\begin{align}
\label{eq.3.044}
B_{i,\epsilon}=J_{i,1}+\sum_{k=0}^K J_{i,k,2}\quad\text{for}~i=1,\dots,I,
\end{align}
where
\begin{align}
\label{eq.3.045}
&J_{i,1}=\int_{\overline\Omega_{\epsilon,\beta}}\!\frac{\exp(-z_{i}\phi_{\epsilon}(y))-\exp(-z_{i}\phi_{0}^{*})}{\phi_{\epsilon}^{*}-\phi_{0}^{*}}\,\mathrm{d}y,\\
\label{eq.3.046}
&J_{i,k,2}=\int_{\Omega_{k,\epsilon,\beta}}\!\frac{\exp(-z_{i}\phi_{\epsilon}(y))-\exp(-z_{i}\phi_{0}^{*})}{\phi_{\epsilon}^{*}-\phi_{0}^{*}}\,\mathrm{d}y\quad\text{for}~k=0,1,\dots,K.
\end{align}
Here $\overline\Omega_{\epsilon,\beta}=\{x\in\Omega\,:\,\delta(x)\geq\epsilon^\beta\}$ and $\Omega_{k,\epsilon,\beta}=\{x\in\Omega\,:\delta_k(x)<\epsilon^\beta\}$ for $0<\beta<1/2$ and $k=0,1,\dots,K$. Clearly, $\Omega_{k,\epsilon,\beta}$ are disjoint for sufficiently small $\epsilon>0$.
Then we may apply \eqref{eq.3.036}, \eqref{eq.3.041}, and \eqref{eq.3.043} to get
\begin{lemma}
\label{lemma:3.08}
$B_{i,\epsilon}=-z_{i}\exp(-z_{i}\phi_{0}^{*})|\Omega|+o_{\epsilon}(1)$ for $i=1,\dots,I$.
\end{lemma}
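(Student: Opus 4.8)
The plan is to estimate the two types of pieces in the given decomposition $B_{i,\epsilon}=J_{i,1}+\sum_{k=0}^{K}J_{i,k,2}$ separately, showing that the bulk term $J_{i,1}$ produces the whole limit $-z_{i}\exp(-z_{i}\phi_{0}^{*})|\Omega|$ while each boundary-layer term $J_{i,k,2}$ is $o_{\epsilon}(1)$. In every integrand I would insert $\exp(-z_{i}\phi_{\epsilon}^{*})$ and write $\exp(-z_{i}\phi_{\epsilon}(y))-\exp(-z_{i}\phi_{0}^{*})=\big(\exp(-z_{i}\phi_{\epsilon}^{*})-\exp(-z_{i}\phi_{0}^{*})\big)+\big(\exp(-z_{i}\phi_{\epsilon}(y))-\exp(-z_{i}\phi_{\epsilon}^{*})\big)$. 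For the first bracket the mean value theorem gives $(\exp(-z_{i}\phi_{\epsilon}^{*})-\exp(-z_{i}\phi_{0}^{*}))/(\phi_{\epsilon}^{*}-\phi_{0}^{*})=-z_{i}\exp(-z_{i}\xi_{\epsilon})$ with $\xi_{\epsilon}$ between $\phi_{\epsilon}^{*}$ and $\phi_{0}^{*}$; since $\phi_{\epsilon}^{*}=\phi_{0}^{*}+o_{\epsilon}(1)$ this equals $-z_{i}\exp(-z_{i}\phi_{0}^{*})+o_{\epsilon}(1)$, uniformly in $y$. For the second bracket the Lipschitz bound $|\exp(-z_{i}a)-\exp(-z_{i}b)|\leq C|a-b|$ on the fixed compact interval $[\underline{\phi_{bd}},\overline{\phi_{bd}}]$ (valid by \Cref{proposition:3.01,proposition:3.02}), combined with the exponential-type estimate \eqref{eq.3.036}, gives $|\exp(-z_{i}\phi_{\epsilon}(y))-\exp(-z_{i}\phi_{\epsilon}^{*})|\leq CM'\exp(-C_{3}\delta(y)/(8\sqrt{\epsilon}))$.

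For the bulk term $J_{i,1}$, on $\overline\Omega_{\epsilon,\beta}$ one has $\delta(y)\geq\epsilon^{\beta}$, so the second-bracket contribution is bounded by $CM'\exp(-C_{3}\epsilon^{\beta-1/2}/8)$; dividing by $|\phi_{\epsilon}^{*}-\phi_{0}^{*}|\geq\sqrt{\epsilon}$ (which holds for small $\epsilon$ by \eqref{eq.3.041}) yields $o_{\epsilon}(1)$ uniformly, because $\epsilon^{-1/2}\exp(-C_{3}\epsilon^{\beta-1/2}/8)\to0$ as $\epsilon\to0^{+}$. Hence the integrand of $J_{i,1}$ equals $-z_{i}\exp(-z_{i}\phi_{0}^{*})+o_{\epsilon}(1)$ uniformly on $\overline\Omega_{\epsilon,\beta}$, and since $|\overline\Omega_{\epsilon,\beta}|\to|\Omega|$ (the complementary tubular neighborhoods have volume $O(\epsilon^{\beta})$), I obtain $J_{i,1}=-z_{i}\exp(-z_{i}\phi_{0}^{*})|\Omega|+o_{\epsilon}(1)$.

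For the layer terms $J_{i,k,2}$, the first-bracket contribution integrates to $(-z_{i}\exp(-z_{i}\phi_{0}^{*})+o_{\epsilon}(1))|\Omega_{k,\epsilon,\beta}|=o_{\epsilon}(1)$, since $|\Omega_{k,\epsilon,\beta}|=O(\epsilon^{\beta})\to0$. For the second-bracket contribution I would apply the coarea formula on $\Omega_{k,\epsilon,\beta}$ (where $\delta=\delta_{k}$ and, for small $\epsilon$, there are no focal points), writing $\int_{\Omega_{k,\epsilon,\beta}}\exp(-C_{3}\delta_{k}(y)/(8\sqrt{\epsilon}))\,\mathrm{d}y=\int_{0}^{\epsilon^{\beta}}\mathcal{A}_{k}(s)\exp(-C_{3}s/(8\sqrt{\epsilon}))\,\mathrm{d}s\leq 2|\partial\Omega_{k}|\int_{0}^{\infty}\exp(-C_{3}s/(8\sqrt{\epsilon}))\,\mathrm{d}s=O(\sqrt{\epsilon})$, where Steiner's formula bounds the parallel-surface area $\mathcal{A}_{k}(s)\leq 2|\partial\Omega_{k}|$. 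Dividing by $|\phi_{\epsilon}^{*}-\phi_{0}^{*}|$ and invoking \eqref{eq.3.041} gives $o_{\epsilon}(1)$. Summing over the finitely many $k$ then produces $B_{i,\epsilon}=J_{i,1}+\sum_{k=0}^{K}J_{i,k,2}=-z_{i}\exp(-z_{i}\phi_{0}^{*})|\Omega|+o_{\epsilon}(1)$, as claimed.

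The main obstacle is precisely the boundary-layer term $J_{i,k,2}$: the naive estimate, with $|\Omega_{k,\epsilon,\beta}|=O(\epsilon^{\beta})$ controlling the numerator and $|\phi_{\epsilon}^{*}-\phi_{0}^{*}|\geq\sqrt{\epsilon}$ the denominator, only gives $O(\epsilon^{\beta-1/2})$, which \emph{diverges} because $\beta<1/2$. The point is that after subtracting $\exp(-z_{i}\phi_{\epsilon}^{*})$ the integrand is not merely bounded but exponentially concentrated within distance $O(\sqrt{\epsilon})$ of $\partial\Omega_{k}$ by \eqref{eq.3.036}, so its integral over the layer is $O(\sqrt{\epsilon})$, not $O(\epsilon^{\beta})$; this is exactly what the contradiction hypothesis $|\phi_{\epsilon}^{*}-\phi_{0}^{*}|/\sqrt{\epsilon}\to\infty$ then absorbs. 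Thus the proof genuinely uses the exponential decay estimate \eqref{eq.3.036} together with \eqref{eq.3.041}, and not merely the $L^{\infty}$ bound on $\phi_{\epsilon}$.
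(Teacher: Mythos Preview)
Your proposal is correct and follows essentially the same route as the paper: split $B_{i,\epsilon}=J_{i,1}+\sum_k J_{i,k,2}$, use the exponential estimate \eqref{eq.3.036} to control the deviation of $\phi_{\epsilon}$ from $\phi_{\epsilon}^{*}$, and invoke the contradiction hypothesis \eqref{eq.3.041} to absorb the $O(\sqrt{\epsilon})$ boundary-layer integral. The only cosmetic difference is that the paper handles $J_{i,1}$ via a two-sided sandwich (hence splitting into the cases $z_i>0$ and $z_i<0$), whereas your additive insertion of $\exp(-z_i\phi_{\epsilon}^{*})$ plus the mean value theorem treats both signs at once; for $J_{i,k,2}$ the paper's inequality \eqref{eq.3.051} is exactly your two-bracket decomposition in disguise.
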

\noindent Henceforth $\epsilon^{*}>0$ is a sufficiently small constant, and $M$ and $M'$ are generic constants independent of $\epsilon$.
\begin{proof}
Fix $i\in\{1,\dots,I\}$.
In view of \eqref{eq.3.044}, it suffices to determine the limits of $J_{i,1}$ and $J_{i,k,2}$.
We begin with an analysis of $J_{i,1}$ when $z_{i}>0$.
By \eqref{eq.3.036}, we have
\[\exp\left[-z_{i}\phi_{\epsilon}^{*}-z_{i}M'\exp\left(-\frac{M\delta(x)}{\sqrt{\epsilon}}\right)\right]\leq\exp(-z_{i}\phi_{\epsilon}(x))\leq\exp\left[-z_{i}\phi_{\epsilon}^{*}+z_{i}M'\exp\left(-\frac{M\delta(x)}{\sqrt{\epsilon}}\right)\right]\]
for $x\in\overline\Omega$ and $0<\epsilon<\epsilon^{*}$, which implies
\begin{align}
\label{eq.3.047}
\begin{aligned}
&\exp(-z_{i}\phi_{0}^{*})\left\{\exp[-z_{i}M'\exp(-M\delta(x)/\sqrt{\epsilon})]\exp[-z_{i}(\phi_{\epsilon}^{*}-\phi_{0}^{*})]-1\right\}\\
\leq&\exp(-z_{i}\phi_{\epsilon}(x))-\exp(-z_{i}\phi_{0}^{*})\\
\leq&\exp(-z_{i}\phi_{0}^{*})\left\{\exp[z_{i}M'\exp(-M\delta(x)/\sqrt{\epsilon})]\exp[-z_{i}(\phi_{\epsilon}^{*}-\phi_{0}^{*})]-1\right\}
\end{aligned}
\end{align}
for $x\in\overline\Omega$ and $0<\epsilon<\epsilon^{*}$.
For $x\in\overline\Omega_{\epsilon,\beta}$ (i.e., $\delta(x)\geq\epsilon^{\beta}$) and $0<\epsilon<\epsilon^{*}$, \eqref{eq.3.047} becomes
\begin{align}
\label{eq.3.048}
\begin{aligned}
&\exp(-z_{i}\phi_{0}^{*})\left\{\exp[-z_{i}M'\exp(-M\epsilon^{(2\beta-1)/2})]\exp[-z_{i}(\phi_{\epsilon}^{*}-\phi_{0}^{*})]-1\right\}\\
\leq&\exp(-z_{i}\phi_{\epsilon}(x))-\exp(-z_{i}\phi_{0}^{*})\\
\leq&\exp(-z_{i}\phi_{0}^{*})\left\{\exp[z_{i}M'\exp(-M\epsilon^{(2\beta-1)/2})]\exp[-z_{i}(\phi_{\epsilon}^{*}-\phi_{0}^{*})]-1\right\}.
\end{aligned}
\end{align}
Now we can combine \eqref{eq.3.045} and \eqref{eq.3.048} to get
\begin{align*}
&\exp(-z_{i}\phi_{0}^{*})|\overline\Omega_{\epsilon,\beta}|\frac{\exp[-z_{i}M'\exp(-M\epsilon^{(2\beta-1)/2})]\exp[-z_{i}(\phi_{\epsilon}^{*}-\phi_{0}^{*})]-1}{\phi_{\epsilon}^{*}-\phi_{0}^{*}}\\\leq& J_{i,1}\leq\exp(-z_{i}\phi_{0}^{*})|\overline\Omega_{\epsilon,\beta}|\frac{\exp[z_{i}M'\exp(-M\epsilon^{(2\beta-1)/2})]\exp[-z_{i}(\phi_{\epsilon}^{*}-\phi_{0}^{*})]-1}{\phi_{\epsilon}^{*}-\phi_{0}^{*}}.
\end{align*}
Along with the fact that
\begin{align*}&\quad\lim_{\epsilon\to0^+}\frac{\exp[z_{i}M'\exp(-M\epsilon^{(2\beta-1)/2})]\exp[-z_{i}(\phi_{\epsilon}^{*}-\phi_{0}^{*})]-1}{\phi_{\epsilon}^{*}-\phi_{0}^{*}}\\&=\lim_{\epsilon\to0^+}\frac{\exp[z_{i}M'\exp(-M\epsilon^{(2\beta-1)/2})-z_{i}(\phi_{\epsilon}^{*}-\phi_{0}^{*})]-1}{\phi_{\epsilon}^*-\phi_{0}^{*}}=-z_{i},\end{align*}
which follows from \eqref{eq.3.041} and $0<\beta<1/2$, we obtain
\begin{align}
\label{eq.3.049}
J_{i,1}=-z_{i}\exp(-z_{i}\phi_{0}^{*})|\Omega|+o_{\epsilon}(1)\quad\text{for}~i=1,\dots,I.\end{align}
For the case of $z_{i}<0$, we can follow a similar argument to get \eqref{eq.3.049}.

Now it remains to estimate $J_{i,k,2}$.
Fix $k\in\{0,1,\dots,K\}$.
For $y\in\overline\Omega_{k,\epsilon,\beta}$ and sufficiently small $\epsilon>0$, there exist unique $p\in\partial\Omega_k$ and $0\leq s\leq\epsilon^{(2\beta-1)/2}$ such that $y=p-s\sqrt{\epsilon}\nu_{p}$.
By the coarea formula (cf. \cite{2015evans,2002lin}), \eqref{eq.3.046} becomes
\begin{align*}
J_{i,k,2}&=\int_{\Omega_{k,\epsilon,\beta}}\!\frac{\exp(-z_{i}\phi_{\epsilon}(y))-\exp(-z_{i}\phi_{0}^{*})}{\phi_{\epsilon}^{*}-\phi_{0}^{*}}\,\mathrm{d}y\\&
=\frac{\sqrt{\epsilon}}{\phi_{\epsilon}^{*}-\phi_{0}^{*}}\int_0^{\epsilon^{(2\beta-1)/2}}\!\!\!\!\!\int_{\partial\Omega_k}\![\exp(-z_{i}\phi_{\epsilon}(p-s\sqrt{\epsilon}\nu_{p}))-\exp(-z_{i}\phi_{0}^{*})]\mathcal{J}(s,p)\,\mathrm{d}S_{p}\,\mathrm{d}s
\end{align*}
for $i=1,\dots,I$ and $k=0,1,\dots,K$.
Along with \eqref{eq.2.75}, we have
\begin{align}
\label{eq.3.050}
|J_{i,k,2}|\leq\frac{2\sqrt{\epsilon}}{|\phi_{\epsilon}^{*}-\phi_{0}^{*}|}\int_0^{\epsilon^{(2\beta-1)/2}}\!\!\!\!\!\int_{\partial\Omega_k}\!|\exp(-z_{i}\phi_{\epsilon}(p-s\sqrt{\epsilon}\nu_{p}))-\exp(-z_{i}\phi_{0}^{*})|\,\mathrm{d}S_p\,\mathrm{d}s.
\end{align}
From \Cref{proposition:3.01,proposition:3.03}, $|\phi_{\epsilon}-\phi_{0}^{*}|$ is uniformly bounded in $\overline\Omega$ and $\epsilon>0$, which implies that there exists $M>1$ such that $|\exp[-z_{i}(\phi_{\epsilon}(x)-\phi_{0}^{*})]-1|\leq M|\phi_{\epsilon}(x)-\phi_{0}^{*}|$ for $x\in\overline{\Omega}$ and $0<\epsilon<\epsilon^{*}$.
Then by \eqref{eq.3.036}, we get
\begin{align}
\label{eq.3.051}
\begin{aligned}|\exp(-z_{i}\phi_{\epsilon}(x))-\exp(-z_{i}\phi_{0}^{*})|&=\exp(-z_{i}\phi_{0}^{*})|\exp[-z_{i}(\phi_{\epsilon}(x)-\phi_{0}^{*})]-1|\\&\leq M'\exp\left(-\frac{M\delta(x)}{\sqrt{\epsilon}}\right)+M'|\phi_{\epsilon}^{*}-\phi_{0}^{*}|\quad\text{for}~x\in\overline{\Omega}~\text{and}~0<\epsilon<\epsilon^{*}.\end{aligned}\end{align}
Combining \eqref{eq.3.041}, \eqref{eq.3.050} and \eqref{eq.3.051}, we arrive at
\begin{align}
\label{eq.3.052}
\begin{aligned}
|J_{i,k,2}|&\leq\frac{2\sqrt{\epsilon}}{|\phi_{\epsilon}^{*}-\phi_{0}^{*}|}\int_0^{\epsilon^{(2\beta-1)/2}}\!\!\!\!\!\int_{\partial\Omega_k}\![M'\exp(-Ms)+M'|\phi_{\epsilon}^{*}-\phi_{0}^{*}|]\,\mathrm{d}S_{p}\,\mathrm{d}s\\&=\frac{M'\sqrt{\epsilon}}{|\phi_{\epsilon}^{*}-\phi_{0}^{*}|}(1-\exp(M\epsilon^{(2\beta-1)/2}))+M'\epsilon^{\beta}\to0\quad\text{as}~\epsilon\to0^+,\end{aligned}
\end{align}
for $i=1,\dots,I$ and $k=0,1,\dots,K$.
Thus, combining \eqref{eq.3.044}, \eqref{eq.3.049}, and \eqref{eq.3.052}, we  complete the proof of \Cref{lemma:3.08}.
\end{proof}

Applying \Cref{lemma:3.08} to \eqref{eq.3.042}, we obtain
\begin{align*}f_{1,\epsilon}(\phi)&=-\sum_{i=1}^{I}\frac{m_{i}z_{i}B_{i,\epsilon}}{|\Omega|[|\Omega|\exp(-z_{i}\phi_{0}^{*})+(\phi_{\epsilon}^{*}-\phi_{0}^{*})B_{i,\epsilon}]}\exp(-z_{i}(\phi-\phi_{0}^{*}))\\&=\frac{1}{|\Omega|}\sum_{i=1}^{I}m_{i}z_{i}^2\exp(-z_{i}(\phi-\phi_{0}^{*}))+o_{\epsilon}(1)=-f_{0}'(\phi)+o_{\epsilon}(1),\end{align*}
which implies
\begin{align}
\label{eq.3.053}
f_{\epsilon}(\phi)=f_{0}(\phi)-(\phi_{\epsilon}^{*}-\phi_{0}^{*})f_{0}'(\phi)+(\phi_{\epsilon}^{*}-\phi_{0}^{*})o_{\epsilon}(1)\quad\text{for}~\phi\in\R.
\end{align}

Regarding \eqref{eq.2.54}, we now define
\begin{align}
\label{eq.3.054}
\varphi_{k,\epsilon}=\frac{\phi_{\epsilon}(x)-u_{k}(\delta_k(x)/\sqrt{\epsilon})}{\phi_{\epsilon}^{*}-\phi_{0}^{*}}\quad\text{for}~x\in\overline\Omega_{k,\epsilon,\beta}~\text{and}~k=0,1,\dots,K,
\end{align}
where $u_{k}$ is the unique solution to \eqref{eq.1.19}--\eqref{eq.1.21} and $\delta_k(x)=\dist(x,\partial\Omega_k)$ for $k=0,1,\dots,K$.
Then by \eqref{eq.1.02}, \eqref{eq.1.19}, \eqref{eq.2.58}, and \eqref{eq.3.053}--\eqref{eq.3.054}, $\varphi_{k,\epsilon}$ satisfies
\begin{align}
\label{eq.3.055}
\epsilon\Delta\varphi_{k,\epsilon}+c_{\epsilon}(x)\varphi_{k,\epsilon}=f_{0}'(u_{k}(\delta_{k}(x)/\sqrt{\epsilon}))+o_{\epsilon}(1)\quad\text{for}~x\in\overline\Omega_{k,\epsilon,\beta},
\end{align}
where
\[c_\epsilon(x)=\begin{cases}\dd\frac{f_{\epsilon}(\phi_{\epsilon}(x))-f_{\epsilon}(u_{k}(\delta_k(x)/\sqrt{\epsilon}))}{\phi_{\epsilon}(x)-u_{k}(\delta_k(x)/\sqrt{\epsilon})}&\text{if}~\phi_{\epsilon}(x)\neq u_{k}(\delta_k(x)/\sqrt{\epsilon});\\
f_{\epsilon}'(\phi_{\epsilon}(x))&\text{if}~\phi_{\epsilon}(x)=u_{k}(\delta_k(x)/\sqrt{\epsilon}).\end{cases}\]
For the boundary condition of $\varphi_{k,\epsilon}$, we use \eqref{eq.1.04}, \eqref{eq.1.19}, \eqref{eq.2.03} and \eqref{eq.3.054} to get
\begin{align}
\label{eq.3.056}
\varphi_{k,\epsilon}+\gamma_k\sqrt{\epsilon}\partial_{\nu}\varphi_{k,\epsilon}=0\quad\text{on}~\partial\Omega_k.
\end{align}

Following \Cref{proposition:2.7}, we prove the uniform boundedness of $\varphi_{k,\epsilon}$:
\begin{proposition}
\label{proposition:3.09}
Suppose that \eqref{eq.3.041} holds true.
Then there exists a constant $M>0$ independent of $\epsilon$ such that $\dd\max_{\overline\Omega_{k,\epsilon,\beta}}|\varphi_{k,\epsilon}|\leq M$ for $k=0,1,\dots,K$ and $0<\epsilon<\epsilon^{*}$.
\end{proposition}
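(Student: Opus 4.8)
The plan is to reproduce the proof of \Cref{proposition:2.7} almost verbatim, with the normalizing factor $\sqrt\epsilon$ there replaced by $\phi_{\epsilon}^{*}-\phi_{0}^{*}$ and with the standing hypothesis \eqref{eq.3.041} invoked wherever the size of that factor enters. It suffices to show $\max_{\overline\Omega_{k,\epsilon,\beta}}\varphi_{k,\epsilon}\le M$ and $\min_{\overline\Omega_{k,\epsilon,\beta}}\varphi_{k,\epsilon}\ge-M$ for a constant $M$ independent of $\epsilon$, and I would carry out the upper bound in detail, the lower bound being entirely symmetric. First I would argue by contradiction: suppose there exist $\epsilon_{n}\to0^{+}$ and $x_{n}\in\overline\Omega_{k,\epsilon_{n},\beta}$ with $\varphi_{k,\epsilon_{n}}(x_{n})=\max_{\overline\Omega_{k,\epsilon_{n},\beta}}\varphi_{k,\epsilon_{n}}\ge n$. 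By \eqref{eq.3.041} I may assume $|\phi_{\epsilon_{n}}^{*}-\phi_{0}^{*}|\ge\sqrt{\epsilon_{n}}$ for every $n$.

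Next I would locate $x_{n}$ by ruling out the two boundary pieces of $\Omega_{k,\epsilon_{n},\beta}$. On $\partial\Omega_{k}$ the Robin-type condition \eqref{eq.3.056} forces $\partial_{\nu}\varphi_{k,\epsilon_{n}}(x_{n})=-\varphi_{k,\epsilon_{n}}(x_{n})/(\gamma_{k}\sqrt{\epsilon_{n}})\le-n/(\gamma_{k}\sqrt{\epsilon_{n}})<0$, which is impossible at a maximum on this outer boundary portion; hence $x_{n}\notin\partial\Omega_{k}$. On the parallel surface $\{\delta_{k}=\epsilon_{n}^{\beta}\}$ one has $\delta(x)=\delta_{k}(x)=\epsilon_{n}^{\beta}$ for $\epsilon_{n}$ small (since $\Omega_{k,\epsilon,\beta}$ is separated from the other tubular neighborhoods), so the exponential estimate \eqref{eq.3.036} for $\phi_{\epsilon_{n}}$ and the exponential-type estimate \eqref{eq.3.023} for $u_{k}$ give
\[
|\phi_{\epsilon_{n}}(x)-u_{k}(\delta_{k}(x)/\sqrt{\epsilon_{n}})|\le|\phi_{\epsilon_{n}}^{*}-\phi_{0}^{*}|+M'\exp\!\big(-M\epsilon_{n}^{(2\beta-1)/2}\big),
\]
whence $|\varphi_{k,\epsilon_{n}}(x)|\le 1+M'\epsilon_{n}^{-1/2}\exp(-M\epsilon_{n}^{(2\beta-1)/2})\to 1$ because $2\beta-1<0$; for $n$ large this is $<n$, so $x_{n}$ does not lie on that surface either.

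Consequently $x_{n}$ is an interior maximum point of $\varphi_{k,\epsilon_{n}}$ in $\Omega_{k,\epsilon_{n},\beta}$, so $\nabla\varphi_{k,\epsilon_{n}}(x_{n})=0$ and $\Delta\varphi_{k,\epsilon_{n}}(x_{n})\le0$. Evaluating \eqref{eq.3.055} at $x_{n}$ then gives
\[
0\le-\epsilon_{n}\Delta\varphi_{k,\epsilon_{n}}(x_{n})=c_{\epsilon_{n}}(x_{n})\varphi_{k,\epsilon_{n}}(x_{n})-f_{0}'(u_{k}(\delta_{k}(x_{n})/\sqrt{\epsilon_{n}}))-o_{\epsilon_{n}}(1).
\]
By \eqref{eq.3.007}, $c_{\epsilon_{n}}(x_{n})\le-C_{3}^{2}<0$; since $u_{k}$ is bounded on $[0,\infty)$ (by \eqref{eq.3.023} together with \Cref{proposition:B.1}), the term $f_{0}'(u_{k}(\cdot))$ is uniformly bounded, as is the $o_{\epsilon_{n}}(1)$ remainder. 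Hence the right-hand side is at most $-C_{3}^{2}n+M\to-\infty$, contradicting its nonnegativity. This yields $\max_{\overline\Omega_{k,\epsilon,\beta}}\varphi_{k,\epsilon}\le M$; running the identical scheme on $-\varphi_{k,\epsilon}$ (using \eqref{eq.3.023}, \eqref{eq.3.036}, \eqref{eq.3.055}, \eqref{eq.3.056}, and that a minimum point satisfies $\Delta\ge0$) gives the lower bound.

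The argument is routine given the earlier machinery; the one point that needs attention — the \emph{main obstacle}, though a mild one — is the control on the parallel surface $\{\delta_{k}=\epsilon^{\beta}\}$. Because $\varphi_{k,\epsilon}$ is now normalized by $\phi_{\epsilon}^{*}-\phi_{0}^{*}$ rather than by $\sqrt\epsilon$, one must use \eqref{eq.3.041} to guarantee that $\epsilon^{-1/2}\exp(-M\epsilon^{(2\beta-1)/2})$ (and a fortiori $\exp(-M\epsilon^{(2\beta-1)/2})/|\phi_{\epsilon}^{*}-\phi_{0}^{*}|$) tends to $0$; and one must rely on \eqref{eq.3.053} to know that the inhomogeneity in \eqref{eq.3.055} really is $f_{0}'(u_{k}(\delta_{k}(x)/\sqrt\epsilon))+o_{\epsilon}(1)$ with a uniformly bounded remainder — the curvature contribution $\sqrt\epsilon(d-1)H(p_{x})u_{k}'$ coming from $\Delta\delta_{k}$ being absorbed into $o_{\epsilon}(1)$ precisely because $|\phi_{\epsilon}^{*}-\phi_{0}^{*}|\gg\sqrt\epsilon$ under \eqref{eq.3.041}.
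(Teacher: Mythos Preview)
Your proposal is correct and follows essentially the same argument as the paper's proof: contradiction via an interior maximum, with \eqref{eq.3.056} ruling out $\partial\Omega_k$, the exponential estimates \eqref{eq.3.023} and \eqref{eq.3.036} ruling out the parallel surface $\{\delta_k=\epsilon^\beta\}$ (using \eqref{eq.3.041} to control the ratio), and then \eqref{eq.3.055} together with $c_\epsilon\le -C_3^2$ from \eqref{eq.3.007} forcing the contradiction. Your closing remark about why the curvature term is absorbed into the $o_\epsilon(1)$ of \eqref{eq.3.055} is exactly the point that distinguishes this case from \Cref{proposition:2.7}.
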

\begin{proof}
Fix $k\in\{0,1,\dots,K\}$.
It is equivalent to showing that $\dd\max_{\overline\Omega_{k,\epsilon,\beta}}\varphi_{k,\epsilon}\leq M$ and $\dd\min_{\overline\Omega_{k,\epsilon,\beta}}\varphi_{k,\epsilon}\geq-M$ for some constant $M>0$ independent of $\epsilon$.
We first prove that $\dd\max_{\overline\Omega_{k,\epsilon,\beta}}\varphi_{k,\epsilon}\leq M$ for $0<\epsilon<\epsilon^{*}$.
Suppose by contradiction that there exists a sequence $\{\epsilon_{n}\}_{n=1}^\infty$ of positive numbers with $\dd\lim_{n\to\infty}\epsilon_{n}=0$ and $\{x_n\}_{n=1}^\infty\subset\overline\Omega_{k,\epsilon_{n},\beta}$ such that $\dd\varphi_{k,\epsilon_{n}}(x_n)=\max_{\overline\Omega_{k,\epsilon_{n},\beta}}\varphi_{k,\epsilon_{n}}\geq n$ for $n\in\N$.
Since $0<\beta<1/2$, we may, without loss of generality, assume that $8(\overline{\phi_{bd}}-\underline{\phi_{bd}})\exp(-C_{3}\epsilon_{n}^{(2\beta-1)/2}/8)\leq|\phi_{\epsilon_{n}}^{*}-\phi_{0}^{*}|$ for all $n\in\N$, , respectively.
Note that the maximum point $x_n$ cannot lie on the boundary $\partial\Omega_k$ because from \eqref{eq.3.056}, $\partial_\nu\varphi_{k,\epsilon_{n}}(x_n)=-\varphi_{k,\epsilon_{n}}(x_n)/(\gamma_k\sqrt{\epsilon_{n}})\leq-n/(\gamma_k\sqrt{\epsilon_{n}})<0$ if the maximum point $x_n\in\partial\Omega_k$.
On the other hand, by \eqref{eq.3.023} (with \Cref{proposition:3.05}) and \eqref{eq.3.036}, we have
\begin{align*}
|\varphi_{k,\epsilon_{n}}(x)|&\leq\frac{|\phi_{\epsilon_{n}}(x)-\phi_{\epsilon_{n}}^{*}|+|u_{k}(\delta_k(x)/\sqrt{\epsilon_{n}})-\phi_{0}^{*}|+|\phi_{\epsilon_{n}}^{*}-\phi_{0}^{*}|}{|\phi_{\epsilon_{n}}^{*}-\phi_{0}^{*}|}\\&\leq1+\frac{4(\overline{\phi_{bd}}-\underline{\phi_{bd}})}{|\phi_{\epsilon_{n}}^{*}-\phi_{0}^{*}|}\exp\left(-\frac{C_{3}}8\epsilon_{n}^{(2\beta-1)/2}\right)\leq\frac{3}{2}\quad\text{for}~\delta_k(x)=\epsilon_{n}^{\beta}~\text{and}~n\in\N.
\end{align*}
This shows that $x_n$ cannot lie on the boundary $\partial\Omega_{k,\epsilon_{n},\beta}$.
Hence $x_n\in\Omega_{k,\epsilon_{n},\beta}$ for all $n\in\N$, which implies $\nabla\varphi_{k,\epsilon_{n}}(x_n)=0$ and $\Delta\varphi_{k,\epsilon_{n}}(x_n)\leq0$ for $n\in\N$.
Thus by \eqref{eq.3.055},
\begin{align}
\label{eq.3.057}
0\leq-\epsilon_{n}\Delta\varphi_{k,\epsilon_{n}}(x_n)=c_{\epsilon_{n}}(x_n)\varphi_{k,\epsilon_{n}}(x_n)-f_{0}'(u_{k}(\delta_k(x)/\sqrt{\epsilon_{n}}))+o_{\epsilon_{n}}(1).
\end{align}
Recall that $f_{0}'(u_{k}(t))$ is uniformly bounded for $t\in[0,\infty)$, $\varphi_{k,\epsilon_{n}}(x_n)\geq n$ for $n\in\N$, and $c_\epsilon\leq-C_{3}^2<0$ in $\overline\Omega_{k,\epsilon}$ for $\epsilon>0$.
Letting $n\to\infty$, we obtain $\dd\lim_{n\to\infty}c_{\epsilon_{n}}(x_n)\varphi_{k,\epsilon_{n}}(x_n)=-\infty$, which contradicts \eqref{eq.3.057}.
Thus, $\dd\max_{\overline\Omega_{k,\epsilon,\beta}}\varphi_{k,\epsilon}\leq M$ for some constant $M>0$ independent of $\epsilon$.
Similarly, we may use \eqref{eq.3.023}, \eqref{eq.3.036}, \eqref{eq.3.055}, and \eqref{eq.3.056} to prove $\dd\min_{\overline\Omega_{k,\epsilon,\beta}}\varphi_{k,\epsilon}\geq-M$ for $0<\epsilon<\epsilon^{*}$, where $M>0$ is independent of $\epsilon>0$.
Therefore, we complete the proof of \Cref{proposition:3.09}.
\end{proof}

To derive a contradiction from \eqref{eq.3.041}, we study the asymptotic expansion of $\varphi_{k,\epsilon}$ in $\Omega_{k,\epsilon,\beta}$ and define
\begin{align}
\label{eq.3.058}
\W_{k,p,\epsilon}(z)=\varphi_{k,\epsilon}(\Psi_{p}(\sqrt{\epsilon}z))\quad\text{for}~z\in\overline{B}_{\epsilon^{(2\beta-1)/2}}^{+}~\text{and}~0<\epsilon<\epsilon^{*}
\end{align}
(cf. \eqref{eq.2.62}), where $\Psi_{p}$ is given in \eqref{eq.2.01}.
Similar to \eqref{eq.2.63}--\eqref{eq.2.66}, $\W_{k,p,\epsilon}$ satisfies
\begin{alignat*}{2}
\sum_{i,j=1}^da_{ij}(z)\frac{\partial^2\W_{k,p,\epsilon}}{\partial z^{i}\partial z^j}+\sum_{j=1}^db_j(z)\frac{\partial\W_{k,p,\epsilon}}{\partial z^j}+c_\epsilon(z)\W_{k,p,\epsilon}&=f_{0}(u_{k}(z^{d}))+o_{\epsilon}(1)\quad&&\text{in}~B_{\epsilon^{(2\beta-1)/2}}^+,\\\W_{k,p,\epsilon}-\gamma_k\partial_{z^{d}}\W_{k,p,\epsilon}&=0\quad&&\text{on}~\overline B_{\epsilon^{(2\beta-1)/2}}^+\cap\partial\R_{+}^{d}
\end{alignat*}
for $0<\epsilon<\epsilon^{*}$, where $a_{ij}$ and $b_j$ are given in \eqref{eq.2.05}--\eqref{eq.2.06}, and
\begin{align*}
c_\epsilon(z)=\begin{cases}\dd\frac{f_{\epsilon}(u_{k}(z^{d})+(\phi_{\epsilon}^{*}-\phi_{0}^{*})\W_{k,p,\epsilon}(z))-f_{\epsilon}(u_{k}(z^{d})}{(\phi_{\epsilon}^{*}-\phi_{0}^{*})\W_{k,p,\epsilon}(z)}&\text{if}~\W_{k,p,\epsilon}(z)\neq0;\\f_{\epsilon}'(u_{k}(z^{d}))&\text{if}~\W_{k,p,\epsilon}(z)=0.\end{cases}
\end{align*}
As in \Cref{lemma:2.8} of \Cref{section:2.2}, we use the uniform boundedness of $\varphi_{k,\epsilon}$ (cf. \Cref{proposition:3.09}) to prove
\begin{lemma}
\label{lemma:3.10}
Suppose that \eqref{eq.3.041} holds true.
For any sequence $\{\epsilon_{n}\}_{n=1}^\infty$ of positive numbers with $\dd\lim_{n\to\infty}\epsilon_{n}=0$, $\alpha\in(0,1)$, and $p\in\partial\Omega_k$ ($k\in\{0,1,\dots,K\}$), there exists a subsequence $\{\epsilon_{nn}\}_{n=1}^\infty$ such that $\dd\lim_{n\to\infty}\|\W_{k,p,\epsilon_{nn}}-\W_{k,p}\|_{\C^{2,\alpha}(\overline B_m^+)}=0$ for $m\in\N$, where $\W_{k,p}\in\C_{\text{loc}}^{2,\alpha}(\overline\R_{+}^{d})$ satisfies
\begin{alignat}{2}
\label{eq.3.059}
\Delta\W_{k,p}+f_{0}'(u_{k})\W_{k,p}&=f_{0}'(u_{k})&&\quad\text{in}~\R_{+}^{d},\\
\label{eq.3.060}
\W_{k,p}-\gamma_k\partial_{z^{d}}\W_{k,p}&=0&&\quad\text{on}~\partial\R_{+}^{d}.
\end{alignat}
Moreover, there exists $M>0$ such that $\W_{k,p}$ satisfies the following estimate
\begin{align}
\label{eq.3.061}
|\W_{k,p}(z)-\theta_{k}(z^{d})|\leq M\exp(-C_{3}z^{d}/8)\quad\text{for}~z=(z',z^{d})\in\overline\R_{+}^{d}~\text{and}~z^{d}\geq2(d-1)/C_{3},
\end{align}
where $C_{3}$ is given in \eqref{eq.3.007} and $\theta_{k}=\theta_{k}(t)$ is the solution to ordinary differential equation
\begin{align}
\label{eq.3.062}
&\theta_{k}''+f_{0}'(u_{k})\theta_{k}=f_{0}'(u_{k})\quad\text{in}~(0,\infty),\\
\label{eq.3.063}
&\theta_{k}(0)-\gamma_k\theta_{k}'(0)=0,\\
\label{eq.3.064}
&\lim_{t\to\infty}\theta_{k}(t)=1.
\end{align}
\end{lemma}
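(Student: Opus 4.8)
The plan is to follow the scheme of \Cref{lemma:2.8} (and, for the decay estimate, \Cref{lemma:2.4,lemma:3.04}), applied to the rescaled function $\W_{k,p,\epsilon}$ of \eqref{eq.3.058}: first obtain uniform local regularity and extract a convergent subsequence, then identify the limiting boundary-value problem \eqref{eq.3.059}--\eqref{eq.3.060}, and finally derive the exponential estimate \eqref{eq.3.061} by comparing $\W_{k,p}$ with $\theta_k$.

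By \Cref{proposition:3.09} and \eqref{eq.3.058}, one has $\|\W_{k,p,\epsilon}\|_{L^\infty(\overline B_m^+)}\le M$ for all $m\in\N$ and $0<\epsilon<\epsilon^{*}$. In the equation for $\W_{k,p,\epsilon}$ the coefficients $a_{ij},b_j$ obey \eqref{eq.2.05}--\eqref{eq.2.06}, so $a_{ij}\to\delta_{ij}$, $b_j\to0$ locally uniformly and the principal part is uniformly elliptic on each $\overline B_m^+$ for small $\epsilon$; the zeroth-order term satisfies $-M\le c_\epsilon\le-C_{3}^{2}<0$ (the upper bound by \eqref{eq.3.007}, the lower bound because, by \eqref{eq.3.008}, $f_\epsilon'$ is bounded uniformly in $\epsilon$ on the fixed compact interval that contains $u_k(z^d)+(\phi_\epsilon^{*}-\phi_0^{*})\W_{k,p,\epsilon}(z)$); and the right-hand side is bounded since $u_k'$ and $f_0'(u_k)$ are bounded on $[0,\infty)$. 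The elliptic $L^q$-estimate, Sobolev embedding and Schauder's estimate then yield uniform $\C^{2,\alpha}(\overline B_m^+)$ bounds, and a diagonal extraction produces $\W_{k,p}\in\C^{2,\alpha}_{\text{loc}}(\overline\R_+^d)$ with $\|\W_{k,p}\|_{L^\infty(\overline\R_+^d)}\le M$. To pass to the limit I would use $\phi_\epsilon^{*}-\phi_0^{*}=o_\epsilon(1)$ (recorded at the start of \Cref{section:3.3}): since $\W_{k,p,\epsilon}\to\W_{k,p}$ locally uniformly, the difference quotient defining $c_\epsilon$ is evaluated arbitrarily close to $u_k(z^d)$, so $c_\epsilon(z)\to f_0'(u_k(z^d))$ locally uniformly by \eqref{eq.3.016}, and the right-hand side likewise tends to $f_0'(u_k(z^d))$; together with $a_{ij}\to\delta_{ij}$, $b_j\to0$ this gives \eqref{eq.3.059}, and the $\C^1$ convergence on $\partial\R_+^d$ gives \eqref{eq.3.060}.

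For the decay estimate, set $\eta(z):=\W_{k,p}(z)-\theta_k(z^d)$, where $\theta_k$ solves \eqref{eq.3.062}--\eqref{eq.3.064} (its existence and boundedness on $[0,\infty)$ being standard, as for $v_k$ and $w_k$; cf.\ \Cref{appendix:C,appendix:D}). Since $\theta_k(z^d)$ depends only on $z^d$, $\Delta(\theta_k(z^d))=\theta_k''(z^d)$, so subtracting \eqref{eq.3.062} from \eqref{eq.3.059} gives $\Delta\eta+f_0'(u_k)\eta=0$ in $\R_+^d$ and $\eta-\gamma_k\partial_{z^d}\eta=0$ on $\partial\R_+^d$, with $|\eta|\le M_0:=\|\W_{k,p}\|_{L^\infty(\overline\R_+^d)}+\sup_{[0,\infty)}|\theta_k|$ globally. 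Fix $\overline z=(\overline z',\overline z^{d})$ with $\overline z^{d}\ge2(d-1)/C_{3}$ and let $\zeta>0$ be the radial solution of $\Delta\zeta-C_{3}^{2}\zeta=0$ in $B_{\overline z^{d}}(\overline z)$ with $\zeta=M_0$ on $\partial B_{\overline z^{d}}(\overline z)$; a direct computation gives $\Delta(\zeta\mp\eta)+f_0'(u_k)(\zeta\mp\eta)=(C_{3}^{2}+f_0'(u_k))\zeta\le0$ by \eqref{eq.3.017}, and since $\zeta\mp\eta\ge0$ on $\partial B_{\overline z^{d}}(\overline z)$ and $f_0'(u_k)<0$, the maximum principle forces $|\eta|\le\zeta$ on $\overline B_{\overline z^{d}}(\overline z)$. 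Evaluating at the center $\overline z$ and invoking the radial exponential-type estimate of \Cref{appendix:A} (applied with $\epsilon=1$ to $\zeta$, whose associated decay constant is $C_{3}$, as in the derivations of \eqref{eq.3.023} and \eqref{eq.3.040}) yields $|\eta(\overline z)|\le 2M_0\exp(-C_{3}\overline z^{d}/8)$, which is \eqref{eq.3.061} with $M=2M_0$.

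The step needing the most care is the identification of the limit equation: one must check that $c_\epsilon(z)\to f_0'(u_k(z^d))$ locally uniformly, which rests both on $\phi_\epsilon^{*}-\phi_0^{*}\to0$ (so $c_\epsilon$ is computed near $u_k(z^d)$) and on the uniform-in-$\epsilon$ convergence $f_\epsilon'\to f_0'$ coming from \eqref{eq.3.016}. The a priori hypothesis \eqref{eq.3.041} is no obstacle here, since it constrains only the rate, not the limit, of $\phi_\epsilon^{*}-\phi_0^{*}$; the remaining arguments are routine adaptations of those in \Cref{section:2.2}.
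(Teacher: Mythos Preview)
Your argument is correct and follows essentially the same scheme the paper has in mind: the paper does not give a separate proof of \Cref{lemma:3.10} but simply says ``As in \Cref{lemma:2.8} of \Cref{section:2.2}, we use the uniform boundedness of $\varphi_{k,\epsilon}$ (cf.\ \Cref{proposition:3.09}) to prove \ldots'', so your compactness/identification step and your ball-comparison for the decay are exactly the intended route. The only cosmetic difference is that for \eqref{eq.3.061} you compare $\eta=\W_{k,p}-\theta_k$ with a \emph{linear} radial barrier $\zeta$ solving $\Delta\zeta-C_3^2\zeta=0$, whereas the template in \Cref{lemma:2.4} uses nonlinear super/subsolutions; since $\eta$ already satisfies a linear homogeneous equation this is the natural simplification and lands on the same use of \Cref{appendix:A}.
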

\begin{remark}
\label{remark:5}
By the standard linear ODE theory, the unique solution $\theta_{k}$ to \eqref{eq.3.062}--\eqref{eq.3.064} can be expressed explicitly as
\begin{align}
\label{eq.3.065}
\theta_{k}(t)=1-\frac{u_{k}'(t)}{u_{k}'(0)+\gamma_{k}f_{0}(u_{k}(0))}\quad\text{for}~t\geq0.
\end{align}
By (A1)--(A2), \Cref{proposition:B.1}, together with \eqref{eq.3.065}, it follows that
\begin{align}
\label{eq.3.066}
\theta_{k}'(0)=\frac{f_{0}(u_{k}(0))}{u_{k}'(0)+\gamma_{k}f_{0}(u_{k}(0))}>0.
\end{align}
\end{remark}

From \Cref{lemma:3.10}, the solution $\W_{k,p}$ to \eqref{eq.3.059}--\eqref{eq.3.060} may, a priori, depend on the sequence $\{\epsilon_{n}\}_{n=1}^\infty$ and on the point.
To establish the independence from the sequence and the point, we apply the moving plane arguments, as in \Cref{proposition:2.5,proposition:2.9}, to prove that $\W_{k,p}=\W_{k,p}(z^{d})=\theta_{k}(z^{d})$ for $z=(z',z^{d})\in\overline{\R}_{+}^{d}$, where $\theta_{k}$ is the unique solution to \eqref{eq.3.062}--\eqref{eq.3.064}.
Here $\theta_{k}$ is independent of the sequence $\{\epsilon_{n}\}_{n=1}^\infty$ and the point $p\in\partial\Omega_k$.
Consequently, the results of \Cref{lemma:3.10} can be improved as
\begin{align}
\label{eq.3.067}
\lim_{\epsilon\to0^+}\|\W_{k,p}-\theta_{k}\|_{\C^{2,\alpha}(\overline B_m^+)}=0\quad\text{for}~m\in\N~\text{and}~\alpha\in(0,1),
\end{align}
and
\begin{align}
\label{eq.3.068}
\lim_{\epsilon\to0^+}\W_{k,p,\epsilon}(z)=\theta_{k}(z^{d})\quad\text{for}~z=(z',z^{d})\in\overline\R_{+}^{d},
\end{align}
where $\W_{k,p,\epsilon}$ is defined in \eqref{eq.3.058}.
The details are provided below.
\begin{proposition}
\label{proposition:3.11}
Suppose that \eqref{eq.3.041} holds true.
For $p\in\partial\Omega_k$ and $k\in\{0,1,\dots,K\}$, the solution $\W_{k,p}$ to \eqref{eq.3.059}--\eqref{eq.3.060} satisfies
\begin{enumerate}
\item[(a)] $\W_{k,p}$ depends only on the variable $z^{d}$, i.e., $\W_{k,p}(z)=\W_{k,p}(z^{d})$ for $z=(z',z^{d})\in\overline\R_{+}^{d}$.
\item[(b)] $\W_{k,p}$ is independent of $p$ and depends only on $k$, i.e., $\W_{k,p}(z^{d})=\theta_{k}(z^{d})$ for $z^{d}\in[0,\infty)$, where $\theta_{k}$ is the unique solution to \eqref{eq.3.062}--\eqref{eq.3.064}.
\end{enumerate}
\end{proposition}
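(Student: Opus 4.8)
The plan is to mimic, almost verbatim, the moving plane (translation) argument used for \Cref{proposition:2.5,proposition:2.9}, since \eqref{eq.3.059}--\eqref{eq.3.060} differs from \eqref{eq.2.10}--\eqref{eq.2.11} only by an inhomogeneous term depending solely on the normal variable. The crucial structural observation is that although $\W_{k,p}$ solves the \emph{inhomogeneous} equation \eqref{eq.3.059}, its right-hand side $f_{0}'(u_{k}(z^{d}))$ depends only on $z^{d}$; hence for any fixed $h\in\R^{d-1}\setminus\{0\}$ the translated difference
\[
\tilde{\W}(z)=\W_{k,p}(z'+h,z^{d})-\W_{k,p}(z),\qquad z=(z',z^{d})\in\overline\R_{+}^{d},
\]
satisfies the \emph{homogeneous} problem
\begin{align*}
\Delta\tilde{\W}+f_{0}'(u_{k}(z^{d}))\tilde{\W}&=0\quad\text{in }\R_{+}^{d},\\
\tilde{\W}-\gamma_k\partial_{z^{d}}\tilde{\W}&=0\quad\text{on }\partial\R_{+}^{d},
\end{align*}
whose zeroth-order coefficient obeys $f_{0}'(u_{k}(z^{d}))\le-C_{3}^{2}<0$ by \eqref{eq.3.017}. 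To prove (a) it suffices to show $\dd\sup_{\overline\R_{+}^{d}}\tilde{\W}=0$ and $\dd\inf_{\overline\R_{+}^{d}}\tilde{\W}=0$; by symmetry I treat only the supremum.

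First I would invoke the exponential estimate \eqref{eq.3.061} of \Cref{lemma:3.10}: since $\theta_{k}$ is independent of $z'$, it cancels in the difference, so $|\tilde{\W}(z)|\le2M\exp(-C_{3}z^{d}/8)$ for $z^{d}\ge2(d-1)/C_{3}$, with $M$ and the decay rate independent of $h$. Consequently, if $\zeta:=\dd\sup_{\overline\R_{+}^{d}}\tilde{\W}>0$ then $\zeta$ is approached only for $z^{d}$ in a bounded strip $\R^{d-1}\times[0,L]$ with $L$ independent of $h$. The strong maximum principle (using $f_{0}'(u_{k})<0$) excludes an interior maximum in $\R^{d-1}\times(0,L)$, the strip estimate excludes $z^{d}=L$, and the Robin relation $\partial_{z^{d}}\tilde{\W}(z',0)=\tilde{\W}(z',0)/\gamma_k$ together with the Hopf boundary-point argument excludes $z^{d}=0$; hence $\zeta$ is attained in the limit along a sequence $z_{n}=(z_{n}',z_{n}^{d})$ with $|z_{n}'|\to\infty$ and $z_{n}^{d}\to z_{\infty}^{d}\in[0,L]$. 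I then translate, $u_{n}(z):=\W_{k,p}(z'+z_{n}',z^{d})$, use the uniform bound of \Cref{proposition:3.09} together with the elliptic $L^{q}$- and Schauder estimates and a diagonal argument to extract a $\C^{2,\alpha}_{\mathrm{loc}}(\overline\R_{+}^{d})$-limit $u_{\infty}$ solving the same problem \eqref{eq.3.059}--\eqref{eq.3.060}, set $\tilde u_{\infty}(z)=u_{\infty}(z'+h,z^{d})-u_{\infty}(z)$, and argue exactly as in \Cref{claim:1} that $\tilde u_{\infty}$ attains its maximum value $\zeta$ at $(0,z_{\infty}^{d})$. Applying the strong maximum principle and the Robin condition to $\tilde u_{\infty}$ (whose zeroth-order coefficient is again strictly negative) forces $z_{\infty}^{d}=0$ and yields the contradiction $0<\zeta=\gamma_k\partial_{z^{d}}\tilde u_{\infty}(0,0)\le0$. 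Running the same scheme on $-\tilde{\W}$ gives $\inf_{\overline\R_{+}^{d}}\tilde{\W}=0$, so $\W_{k,p}$ is independent of $z'$, proving (a).

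For (b), once $\W_{k,p}=\W_{k,p}(z^{d})$, it solves the linear ODE \eqref{eq.3.062} with the Robin initial condition \eqref{eq.3.063}; moreover \eqref{eq.3.061} gives $\W_{k,p}(t)-\theta_{k}(t)\to0$ as $t\to\infty$, and since $\theta_{k}(t)\to1$ by \eqref{eq.3.064} (cf. the explicit formula \eqref{eq.3.065}), we obtain $\lim_{t\to\infty}\W_{k,p}(t)=1$, i.e. $\W_{k,p}$ satisfies the full boundary-value problem \eqref{eq.3.062}--\eqref{eq.3.064}. By uniqueness for that problem (standard linear ODE theory, cf. \Cref{remark:5} and \Cref{appendix:D}) we conclude $\W_{k,p}\equiv\theta_{k}$ on $[0,\infty)$. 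Since \eqref{eq.3.062}--\eqref{eq.3.064} is independent of $p$ and of the sequence $\{\epsilon_{n}\}_{n=1}^{\infty}$, so is $\theta_{k}$, and this upgrades \Cref{lemma:3.10} to the convergences \eqref{eq.3.067}--\eqref{eq.3.068}.

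The main obstacle, exactly as in \Cref{proposition:2.5}, is the passage to the limit in the translated sequence: one must verify that $u_{\infty}$ inherits both the equation and the Robin boundary condition \emph{and} that $\tilde u_{\infty}$ genuinely realizes the value $\zeta$ at a finite point, so that the strong maximum principle and the Hopf-type boundary argument apply without loss. The uniform-in-$h$ exponential decay \eqref{eq.3.061} is precisely what confines the maximizing sequence to a bounded strip and thereby makes the whole argument go through.
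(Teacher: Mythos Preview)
Your proposal is correct and follows essentially the same approach as the paper: the paper's proof simply says to apply the moving plane arguments of \Cref{proposition:2.9} (with $f$ replaced by $f_{0}$) for (a), and then invokes the uniqueness of the ODE \eqref{eq.3.062}--\eqref{eq.3.064} for (b), which is exactly the scheme you have spelled out in detail. Your explicit observation that the inhomogeneous term $f_{0}'(u_{k}(z^{d}))$ cancels in the translated difference, together with the use of \eqref{eq.3.061} to confine the maximizing sequence to a strip, is precisely the content hidden behind the paper's reference to \Cref{proposition:2.9}.
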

\begin{proof}
Following \Cref{proposition:2.9}, we replace $f$ with $f_{0}$ and apply the moving plane arguments to obtain (a).
By (a), \eqref{eq.3.061}, and \eqref{eq.3.064}, $\W_{k,p}=\W_{k,p}(z^{d})$ satisfies
\begin{align*}
&\W_{k,p}''+f_{0}'(u_{k})\W_{k,p}=f_{0}'(u_{k})\quad\text{in}~(0,\infty),\\
&\W_{k,p}-\gamma_k\W_{k,p}'(0)=0,\\
&\lim_{t\to\infty}\W_{k,p}(t)=1.
\end{align*}
By the uniqueness of the solution to \eqref{eq.3.062}--\eqref{eq.3.064}, it follows that $\W_{k,p}\equiv\theta_{k}$, which gives (b).
This completes the proof of \Cref{proposition:3.11}.
\end{proof}
We may use \Cref{proposition:3.11} to prove \eqref{eq.3.067}, \eqref{eq.3.068}, and $\W_{k,p}(z)=\theta_{k}(z^{d})$ for $z=(z',z^{d})\in\overline{\R}_{+}^{d}$.
Let $T>0$, $k\in\{0,1,\dots,K\}$, and $p\in\partial\Omega_k$.
Then, under the assumption \eqref{eq.3.041}, we can use \eqref{eq.2.01}, \eqref{eq.3.058}, \eqref{eq.3.067}--\eqref{eq.3.068}, and $\nabla\delta_k(p-t\sqrt{\epsilon}\nu_{p})=-\nu_{p}$ to get
\[\varphi_{k,\epsilon}(p-t\sqrt{\epsilon}\nu_{p})=\theta_{k}(t)+o_{\epsilon}(1),\qquad\nabla\varphi_{k,\epsilon}(p-t\sqrt{\epsilon}\nu_{p})=-\frac{1}{\sqrt{\epsilon}}[\theta_{k}'(t)\nu_{p}+o_{\epsilon}(1)]\]
for $0\leq t\leq T$ as $\epsilon\to0^+$.
Together with \eqref{eq.3.054} and \eqref{eq.3.058}, we arrive at
\begin{align}
\label{eq.3.069}
&\phi_{\epsilon}(p-t\sqrt{\epsilon}\nu_{p})=u_{k}(t)+(\phi_{\epsilon}^{*}-\phi_{0}^{*})\theta_{k}(t)+(\phi_{\epsilon}^{*}-\phi_{0}^{*})o_{\epsilon}(1),\\
\label{eq.3.070}
&\nabla\phi_{\epsilon}(p-t\sqrt{\epsilon}\nu_{p})=-\frac{1}{\sqrt{\epsilon}}\{[u_{k}'(t)+(\phi_{\epsilon}^{*}-\phi_{0}^{*})\theta_{k}'(t)]\nu_{p}+(\phi_{\epsilon}^{*}-\phi_{0}^{*})o_{\epsilon}(1)\}
\end{align}
for $0\leq t\leq T$ as $\epsilon\to0^+$.

By \eqref{eq.3.053} and \eqref{eq.3.069}, we have
\begin{align}
\label{eq.3.071}
\begin{aligned}
f_{\epsilon}(\phi_{\epsilon}(p-t\sqrt{\epsilon}\nu_{p}))&=f_{0}(\phi_{\epsilon}(p-t\sqrt{\epsilon}\nu_{p}))-(\phi_{\epsilon}^{*}-\phi_{0}^{*})f_{0}'(\phi_{\epsilon}(p-t\sqrt{\epsilon}\nu_{p}))+(\phi_{\epsilon}^{*}-\phi_{0}^{*})o_{\epsilon}(1)\\&=
f_{0}(u_{k}(t))+(\phi_{\epsilon}^{*}-\phi_{0}^{*})f_{0}'(u_{k}(t))[\theta_{k}(t)-1]+(\phi_{\epsilon}^{*}-\phi_{0}^{*})o_{\epsilon}(1)
\end{aligned}
\end{align}
for $0\leq t\leq T$ as $\epsilon\to0^+$.
Combining \eqref{eq.3.070}--\eqref{eq.3.071}, we then follow an argument similar to that in \Cref{section:2.3} to obtain
\begin{proposition}
\label{proposition:3.12}
Suppose that \eqref{eq.3.041} holds true.
Then we have
\begin{itemize}
\item[(a)] $\int_{\overline\Omega_{k,T,\epsilon}}\!f_{\epsilon}(\phi_{\epsilon}(x))\,\mathrm{d}x=\sqrt{\epsilon}|\partial\Omega_{k}|(u_{k}'(0)-u_{k}'(T))+\sqrt{\epsilon}(\phi_{\epsilon}^{*}-\phi_{0}^{*})|\partial\Omega_{k}|(\theta_{k}'(0)-\theta_{k}'(T))+\sqrt{\epsilon}(\phi_{\epsilon}^{*}-\phi_{0}^{*})o_{\epsilon}(1)$,
\item[(b)] $\int_{\overline\Omega_{k,T,\epsilon,\beta}}\!f_{\epsilon}(\phi_{\epsilon}(x))\,\mathrm{d}x=\sqrt{\epsilon}|\partial\Omega_{k}|u_{k}'(T)+\sqrt{\epsilon}(\phi_{\epsilon}^{*}-\phi_{0}^{*})|\partial\Omega_{k}|\theta_{k}'(T)+\sqrt{\epsilon}(\phi_{\epsilon}^{*}-\phi_{0}^{*})\epsilon o_{\epsilon}(1)$,
\item[(c)] $\left|\int_{\overline\Omega_{\epsilon,\beta}}\!f_{\epsilon}(\phi_{\epsilon}(x))\,\mathrm{d}x\right|\leq\sqrt{\epsilon}M'\exp\left(-M\epsilon^{(2\beta-1)/2}\right)$
\end{itemize}
for $0<\epsilon<\epsilon^{*}$ and $0<\beta<1/2$, where $|\partial\Omega_k|$ is the surface area of $\partial\Omega_k$.
\end{proposition}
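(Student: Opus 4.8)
The plan is to follow the proof of \Cref{corollary:1} in \Cref{section:2.3} essentially verbatim, now feeding in the second-order expansions \eqref{eq.3.069}--\eqref{eq.3.071} (valid under the contradiction hypothesis \eqref{eq.3.041}) in place of the expansions used there, and carrying the factor $\phi_{\epsilon}^{*}-\phi_{0}^{*}$ through the computation. The only analytic inputs needed are \eqref{eq.3.069}--\eqref{eq.3.071}, the coarea formula with the Jacobian expansion \eqref{eq.2.75}, Steiner's formula \eqref{eq.2.77}, the exponential gradient estimate \eqref{eq.3.037}, and the two linear ODEs \eqref{eq.1.19} and \eqref{eq.3.062}.

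For (a) I would integrate \eqref{eq.3.071} over $\overline{\Omega}_{k,T,\epsilon}$ via the coarea formula and \eqref{eq.2.75}. The leading term $\sqrt{\epsilon}\int_{0}^{T}\!\int_{\partial\Omega_{k}}\!f_{0}(u_{k}(t))\,\mathrm{d}S_{p}\,\mathrm{d}t$ collapses through \eqref{eq.1.19} (i.e.\ $f_{0}(u_{k})=-u_{k}''$) to $\sqrt{\epsilon}|\partial\Omega_{k}|(u_{k}'(0)-u_{k}'(T))$, and the next term $\sqrt{\epsilon}(\phi_{\epsilon}^{*}-\phi_{0}^{*})\int_{0}^{T}\!\int_{\partial\Omega_{k}}\!f_{0}'(u_{k}(t))[\theta_{k}(t)-1]\,\mathrm{d}S_{p}\,\mathrm{d}t$ collapses through \eqref{eq.3.062} (i.e.\ $f_{0}'(u_{k})(\theta_{k}-1)=-\theta_{k}''$) to $\sqrt{\epsilon}(\phi_{\epsilon}^{*}-\phi_{0}^{*})|\partial\Omega_{k}|(\theta_{k}'(0)-\theta_{k}'(T))$. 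The cross-terms produced by the $t\sqrt{\epsilon}(d-1)H(p)$ part of the Jacobian, together with the $o_{\epsilon}(1)$ term of \eqref{eq.3.071}, are $O(\epsilon)$ after the overall $\sqrt{\epsilon}$; writing $\epsilon=\sqrt{\epsilon}(\phi_{\epsilon}^{*}-\phi_{0}^{*})\cdot\sqrt{\epsilon}/(\phi_{\epsilon}^{*}-\phi_{0}^{*})$ and using \eqref{eq.3.041}, these are absorbed into the stated $\sqrt{\epsilon}(\phi_{\epsilon}^{*}-\phi_{0}^{*})o_{\epsilon}(1)$.

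For (b), I would use that $\partial\Omega_{k,T,\epsilon,\beta}$ is the disjoint union of the two parallel surfaces at signed distances $T\sqrt{\epsilon}$ and $\epsilon^{\beta}$; integrating $-\epsilon\Delta\phi_{\epsilon}=f_{\epsilon}(\phi_{\epsilon})$ over $\overline{\Omega}_{k,T,\epsilon,\beta}$ and applying the divergence theorem, as in \eqref{eq.2.76}, the inner surface contributes (through \eqref{eq.3.070} and \eqref{eq.2.77}) the terms $\sqrt{\epsilon}|\partial\Omega_{k}|u_{k}'(T)+\sqrt{\epsilon}(\phi_{\epsilon}^{*}-\phi_{0}^{*})|\partial\Omega_{k}|\theta_{k}'(T)$ plus errors absorbed as above, while the outer surface $\{\delta(x)=\epsilon^{\beta}\}$ contributes a term bounded by $\sqrt{\epsilon}|\partial\Omega_{k}|M'\exp(-M\epsilon^{(2\beta-1)/2})$ via \eqref{eq.3.037}. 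Part (c) is then immediate: integrate $-\epsilon\Delta\phi_{\epsilon}=f_{\epsilon}(\phi_{\epsilon})$ over $\overline{\Omega}_{\epsilon,\beta}$, apply the divergence theorem, and bound the boundary integral on $\{\delta(x)=\epsilon^{\beta}\}$ by \eqref{eq.3.037}, exactly as for \Cref{corollary:1}(c). The computations themselves are routine; the only real point of care — and the place where the contradiction hypothesis \eqref{eq.3.041} is genuinely used — is the bookkeeping among the three scales $\sqrt{\epsilon}$, $\phi_{\epsilon}^{*}-\phi_{0}^{*}$, and the curvature corrections of order $\sqrt{\epsilon}$: hypothesis \eqref{eq.3.041} makes $\sqrt{\epsilon}$ (hence every second-order geometric term) negligible compared with $\phi_{\epsilon}^{*}-\phi_{0}^{*}$, so that no curvature term survives explicitly in the expansions (a) and (b) and every residual collapses to $\sqrt{\epsilon}(\phi_{\epsilon}^{*}-\phi_{0}^{*})o_{\epsilon}(1)$. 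No new input beyond the results of \Cref{section:2,section:3.1,section:3.2,section:3.3} is required.
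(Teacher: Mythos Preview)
Your proposal is correct and follows essentially the same approach as the paper: for (a) the paper integrates \eqref{eq.3.071} via the coarea formula and \eqref{eq.2.75}, then applies \eqref{eq.1.19} and \eqref{eq.3.062} exactly as you describe; for (b) and (c) it uses the divergence theorem as in \eqref{eq.2.76}, feeding in \eqref{eq.3.070}, \eqref{eq.2.77}, and the gradient estimate \eqref{eq.3.037}. Your remark that \eqref{eq.3.041} is precisely what lets the $O(\epsilon)$ curvature corrections be absorbed into $\sqrt{\epsilon}(\phi_{\epsilon}^{*}-\phi_{0}^{*})o_{\epsilon}(1)$ is exactly the bookkeeping point the paper uses implicitly.
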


\begin{proof}
Following \Cref{corollary:1}, we begin with (a).
Integrating \eqref{eq.3.071} over $\overline{\Omega}_{k,T,\epsilon}$ and applying the coarea formula (cf. \cite{2015evans,2002lin}) with \eqref{eq.2.75}, we get
\begin{align*}\int_{\overline{\Omega}_{k,T,\epsilon}}\!f_{\epsilon}(\phi_{\epsilon}(x))\,\mathrm{d}x&=\sqrt{\epsilon}\int_0^T\!\int_{\partial\Omega_{k}}\!f_{0}(u_{k}(t))\,\mathrm{d}S_{p}\,\mathrm{d}t\\&\quad+\sqrt{\epsilon}(\phi_{\epsilon}^{*}-\phi_{0}^{*})\int_{0}^{T}\!\int_{\partial\Omega_{k}}\!f_{0}'(u_{k}(t))[\theta_{k}(t)-1]\,\mathrm{d}S_{p}\,\mathrm{d}t+\sqrt{\epsilon}(\phi_{\epsilon}^{*}-\phi_{0}^{*})o_{\epsilon}(1).\end{align*}
Together with \eqref{eq.1.19} and \eqref{eq.3.062}, we obtain
\begin{align*}
\int_{\overline\Omega_{k,T,\epsilon}}\!f_{\epsilon}(\phi_{\epsilon}(x))\,\mathrm{d}x&=\sqrt{\epsilon}|\partial\Omega_{k}|\int_{0}^{T}\!(-u_{k}''(t))\,\mathrm{d}t+\sqrt{\epsilon}(\phi_{\epsilon}^{*}-\phi_{0}^{*})|\partial\Omega_{k}|\int_{0}^{T}\!(-\theta_{k}''(t))\,\mathrm{d}t+\sqrt{\epsilon}(\phi_{\epsilon}^{*}-\phi_{0}^{*})o_{\epsilon}(1)\\
&=\sqrt{\epsilon}|\partial\Omega_{k}|(u_{k}'(0)-u_{k}'(T))+\sqrt{\epsilon}(\phi_{\epsilon}^{*}-\phi_{0}^{*})|\partial\Omega_{k}|(\theta_{k}'(0)-\theta_{k}'(T))+\sqrt{\epsilon}(\phi_{\epsilon}^{*}-\phi_{0}^{*})o_{\epsilon}(1),
\end{align*}
which gives (a).

We now prove (b).
As in \eqref{eq.2.76}, we integrate \eqref{eq.1.02} over $\overline\Omega_{k,T,\epsilon,\beta}$ and then apply the divergence theorem to obtain
\begin{align}
\label{eq.3.072}
\begin{aligned}
\int_{\overline\Omega_{k,T,\epsilon,\beta}}\!f_{\epsilon}(\phi_{\epsilon}(x))\,\mathrm{d}x&=-\epsilon\int_{\overline\Omega_{k,T,\epsilon,\beta}}\!\Delta\phi_{\epsilon}(x)\,\mathrm{d}x\\&=-\epsilon\int_{\partial\Omega_{k,T,\epsilon,\beta}}\partial_{\nu_{x}}\phi_{\epsilon}(x)\,\mathrm{d}S_x\\&=-\int_{\partial\Omega_{k}}\!(\epsilon\partial_{\nu_p}\phi_{\epsilon}(p-T\sqrt{\epsilon}\nu_{p}))\mathcal{J}(T,p)\,\mathrm{d}S_p\\&\quad+\int_{\partial\Omega_{k}}\!(\epsilon\partial_{\nu_{p}}\phi_{\epsilon}(p-\epsilon^{\beta}\nu_{p}))\mathcal{J}(\epsilon^{(2\beta-1)/2},p)\,\mathrm{d}S_{p},
\end{aligned}
\end{align}
where $\nu_{x}$ is the unit outer normal at $x\in\partial\Omega_{k,T,\epsilon,\beta}$ with respect to $\Omega_{k,T,\epsilon,\beta}$, and $\mathcal{J}(T,p)$ and $\mathcal{J}(\epsilon^{(2\beta-1)/2},p)$ are given in \eqref{eq.2.77}.
Then by \eqref{eq.3.041} and \eqref{eq.3.070}, we have
\begin{align}
\label{eq.3.073}
\begin{aligned}
&\quad-\int_{\partial\Omega_{k}}\!(\epsilon\partial_{\nu_p}\phi_{\epsilon}(p-T\sqrt{\epsilon}\nu_{p}))\mathcal{J}(T,p)\,\mathrm{d}S_p\\&=-\int_{\partial\Omega_{k}}\!(\epsilon\partial_{\nu_p}\phi_{\epsilon}(p-T\sqrt{\epsilon}\nu_{p}))[1-T\sqrt{\epsilon}(d-1)H(p)+\sqrt{\epsilon}o_{\epsilon}(1)]\,\mathrm{d}S_{p}\\
&=\int_{\partial\Omega_{k}}\![\sqrt{\epsilon}u_{k}'(T)+\sqrt{\epsilon}(\phi_{\epsilon}^{*}-\phi_{0}^{*})\theta_{k}'(T)][1-T\sqrt{\epsilon}(d-1)H(p)+\sqrt{\epsilon}o_{\epsilon}(1)]\,\mathrm{d}S_{p}\\
&=\sqrt{\epsilon}|\partial\Omega_{k}|u_{k}'(T)+\sqrt{\epsilon}(\phi_{\epsilon}^{*}-\phi_{0}^{*})|\partial\Omega_{k}|\theta_{k}'(T)+\sqrt{\epsilon}(\phi_{\epsilon}^{*}-\phi_{0}^{*})o_{\epsilon}(1).
\end{aligned}
\end{align}
On the other hand, by \eqref{eq.3.037} in \Cref{proposition:3.07}, we have
\begin{align}
\label{eq.3.074}
\left|\int_{\partial\Omega_{k}}\!(\epsilon\partial_{\nu_{p}}\phi_{\epsilon}(p-\epsilon^{\beta}\nu_{p}))\mathcal{J}(\epsilon^{(2\beta-1)/2},p)\,\mathrm{d}S_{p}\right|\leq\sqrt{\epsilon}|\partial\Omega_{k}|M'\exp\left(-M\epsilon^{(2\beta-1)/2}\right).\end{align}
Combining \eqref{eq.3.072}--\eqref{eq.3.074} with $0<\beta<1/2$, we get (b).
Following the argument for (b), one may use \eqref{eq.3.074} to prove (c).
Therefore, we complete the proof of \Cref{proposition:3.12}.
\end{proof}

We are now in a position to derive a contradiction from \eqref{eq.3.041}.
By \eqref{eq.1.02}--\eqref{eq.1.03}, we observe that
\[\int_{\Omega}\!f_{\epsilon}(\phi_{\epsilon}(x))\,\mathrm{d}x=\sum_{i=1}^{I}\frac{m_{i}z_{i}\exp(-z_{i}\phi_{\epsilon}(x))}{\int_{\Omega}\!\exp(-z_{i}\phi_{\epsilon}(y))\,\mathrm{d}y}\,\mathrm{d}x=\sum_{i=1}^{I}m_{i}z_{i}=0,\]
which implies
\begin{align}
\label{eq.3.075}
\sum_{k=0}^{K}\!\int_{\overline{\Omega}_{k,T,\epsilon}\cup\overline{\Omega}_{k,T,\epsilon,\beta}}\!f_{\epsilon}(\phi_{\epsilon}(x))\,\mathrm{d}x=\int_{\overline{\Omega}_{\epsilon,\beta}}\!f_{\epsilon}(\phi_{\epsilon}(x))\,\mathrm{d}x.\end{align}
Along with \eqref{eq.3.027}, \eqref{eq.3.041}, and \Cref{proposition:3.12}, we have
\[\sqrt{\epsilon}(\phi_{\epsilon}^{*}-\phi_{0}^{*})\sum_{k=0}^{K}|\partial\Omega_{k}|\theta_{k}'(0)=\sqrt{\epsilon}(\phi_{\epsilon}^{*}-\phi_{0}^{*})o_{\epsilon}(1),\]
which gives
\[\sum_{k=0}^{K}|\partial\Omega_{k}|\theta_{k}'(0)=0.\]
However, since $\theta_{k}'(0)>0$ for all $k=0,1,\dots,K$ (see \eqref{eq.3.066} in \Cref{remark:5}), we get a contradiction, which shows \eqref{eq.3.041} cannot hold true.
Consequently, we establish the uniform boundedness of $|\phi_{\epsilon}^{*}-\phi_{0}^{*}|/\sqrt{\epsilon}$.

\begin{remark}
\label{remark:6}
Due to the uniform boundedness of $|\phi_{\epsilon}^{*}-\phi_{0}^{*}|/\sqrt{\epsilon}$, there exists a sequence $\{\epsilon_{n}\}_{n=1}^{\infty}$ with $\dd\lim_{n\to\infty}\epsilon_{n}=0$ and a constant $Q\in\R$ such that $(\phi_{\epsilon_{n}}^{*}-\phi_{0}^{*})/\sqrt{\epsilon_{n}}\to Q$ as $n\to\infty$.
In \Cref{section:3.5}, we will prove that $Q$ is uniquely determined by \eqref{eq.3.114}, which implies
\begin{align}
\label{eq.3.076}
\phi_{\epsilon}^{*}=\phi_{0}^{*}+\sqrt{\epsilon}(Q+o_{\epsilon}(1))\quad\text{as}~\epsilon\to0^+.
\end{align}
\end{remark}

\subsection{Second-order asymptotic expansion of \texorpdfstring{$\phi_{\epsilon}$}{ϕ.ϵ} in \texorpdfstring{$\Omega_{k,\epsilon}$}{Ω.k,ϵ}}
\label{section:3.4}

In this section, we will use uniform boundedness of $|\phi_{\epsilon}^{*}-\phi_{0}^{*}|/\sqrt{\epsilon}$ (cf. \eqref{eq.3.076}) to derive the second-order asymptotic expansions of $\phi_{\epsilon}$ and $\nabla\phi_{\epsilon}$.
As in \Cref{section:3.3}, we first study the second-order asymptotic expansions of $f_{\epsilon}$ and $A_{i,\epsilon}$.
A direct computation of $f_{\epsilon}$ shows
\begin{align}
\label{eq.3.077}
f_{1,\epsilon}(\phi):=\frac{f_{\epsilon}(\phi)-f_{0}(\phi)}{\sqrt{\epsilon}}=-\sum_{i=1}^{I}\frac{m_{i}z_{i}B_{i,\epsilon}}{|\Omega|[|\Omega|\exp(-z_{i}\phi_{0}^{*})+\sqrt{\epsilon}B_{i,\epsilon}]}\exp(-z_{i}(\phi-\phi_{0}^{*}))\quad\text{for}~\phi\in\R,
\end{align}
where
\begin{align}
\label{eq.3.078}
B_{i,\epsilon}=\frac{A_{i,\epsilon}-|\Omega|\exp(-z_{i}\phi_{0}^{*})}{\sqrt{\epsilon}}=\int_\Omega\!\frac{\exp(-z_{i}\phi_{\epsilon}(y))-\exp(-z_{i}\phi_{0}^{*})}{\sqrt{\epsilon}}\,\mathrm{d}y
\end{align}
for $i=1,\dots,I$ and $\epsilon>0$.
Note that $B_{i,\epsilon}$ can be expressed by
\begin{align}
\label{eq.3.079}
B_{i,\epsilon}=J_{i,1}+\sum_{k=0}^KJ_{i,k,2}\quad\text{for}~i=1,\dots,I,
\end{align}
where
\begin{align}
\label{eq.3.080}
&J_{i,1}=\int_{\overline{\Omega}_{\epsilon,\beta}}\!\frac{\exp(-z_{i}\phi_{\epsilon}(y))-\exp(-z_{i}\phi_{0}^{*})}{\sqrt{\epsilon}}\,\mathrm{d}y,\\
\label{eq.3.081}
&J_{i,k,2}=\int_{\Omega_{k,\epsilon,\beta}}\!\frac{\exp(-z_{i}\phi_{\epsilon}(y))-\exp(-z_{i}\phi_{0}^{*})}{\sqrt{\epsilon}}\,\mathrm{d}y\quad\text{for}~k=0,1,\dots,K.
\end{align}
Here $\overline\Omega_{\epsilon,\beta}=\{x\in\Omega\,:\,\delta(x)\geq\epsilon^\beta\}$ and $\Omega_{k,\epsilon,\beta}=\{x\in\Omega\,:\delta_k(x)<\epsilon^\beta\}$ for $0<\beta<1/2$ and $k=0,1,\dots,K$. Clearly, $\Omega_{k,\epsilon,\beta}$ are disjoint for sufficiently small $\epsilon>0$.
Using \eqref{eq.3.036}, \eqref{eq.3.078}, and \eqref{eq.3.076}, we obtain the following result, similar to Lemma 3.2 in \Cref{section:3.3}:
\begin{lemma}
\label{lemma:3.13}
$B_{i,\epsilon}=\exp(-z_{i}\phi_{0}^{*})[-z_{i}Q|\Omega|-(\hat m_{i}/m_{i})|\Omega|]+o_{\epsilon}(1)$ for $i=1,\dots,I$, where $u_{k}$ is the unique solution to \eqref{eq.1.19}--\eqref{eq.1.21}, and $\hat m_{i}=m_{i}|\Omega|^{-1}\sum_{k=0}^K|\partial\Omega_k|\int_0^\infty\![\exp(-z_{i}(u_{k}(s)-\phi_{0}^{*}))-1]\,\mathrm{d}s$ for $i=1,\dots,I$.
\end{lemma}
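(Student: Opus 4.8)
The plan is to decompose $B_{i,\epsilon}$ as in \eqref{eq.3.079}--\eqref{eq.3.081} and compute the limit of each piece. The strategy mirrors the proof of \Cref{lemma:3.08}, but now the rescaling is by $\sqrt\epsilon$ rather than by $\phi_\epsilon^*-\phi_0^*$, so the surface contributions $J_{i,k,2}$ no longer vanish — they produce the boundary-layer integrals that give $\hat m_i$. First I would treat the bulk term $J_{i,1}$. Using the exponential-type estimate \eqref{eq.3.036}, for $x\in\overline\Omega_{\epsilon,\beta}$ we have $|\phi_\epsilon(x)-\phi_\epsilon^*|\le M'\exp(-M\epsilon^{(2\beta-1)/2})$, and combining with \eqref{eq.3.076} gives $\phi_\epsilon(x)-\phi_0^*=\sqrt\epsilon(Q+o_\epsilon(1))$ uniformly on $\overline\Omega_{\epsilon,\beta}$. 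A first-order Taylor expansion of $\exp(-z_i(\phi_\epsilon(x)-\phi_0^*))-1$ about $0$ then yields
\begin{align*}
J_{i,1}=\frac{1}{\sqrt\epsilon}\int_{\overline\Omega_{\epsilon,\beta}}\!\exp(-z_i\phi_0^*)\big[-z_i(\phi_\epsilon(x)-\phi_0^*)+o(\phi_\epsilon(x)-\phi_0^*)\big]\,\mathrm dx=-z_iQ\exp(-z_i\phi_0^*)|\Omega|+o_\epsilon(1),
\end{align*}
since $|\overline\Omega_{\epsilon,\beta}|\to|\Omega|$ as $\epsilon\to0^+$. (One must check that the error from $x$ where $\delta(x)$ is only slightly larger than $\epsilon^\beta$ is still controlled; this follows from \eqref{eq.3.036} because $\exp(-M\epsilon^{(2\beta-1)/2})/\sqrt\epsilon\to0$ for $0<\beta<1/2$.)

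Next I would handle the surface terms $J_{i,k,2}$. Fix $k$ and apply the coarea formula together with Steiner's formula \eqref{eq.2.75}, writing $y=p-s\sqrt\epsilon\nu_p$ with $p\in\partial\Omega_k$ and $0\le s\le\epsilon^{(2\beta-1)/2}$:
\begin{align*}
J_{i,k,2}=\frac{\sqrt\epsilon}{\sqrt\epsilon}\int_0^{\epsilon^{(2\beta-1)/2}}\!\!\!\!\int_{\partial\Omega_k}\!\big[\exp(-z_i\phi_\epsilon(p-s\sqrt\epsilon\nu_p))-\exp(-z_i\phi_0^*)\big]\mathcal J(s,p)\,\mathrm dS_p\,\mathrm ds.
\end{align*}
By the first-order asymptotic expansion \eqref{eq.3.034}, $\phi_\epsilon(p-s\sqrt\epsilon\nu_p)\to u_k(s)$ as $\epsilon\to0^+$ uniformly on compact $s$-intervals, so the integrand converges pointwise to $\exp(-z_i u_k(s))-\exp(-z_i\phi_0^*)$; and $\mathcal J(s,p)\to1$. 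To pass the limit through the $s$-integral (whose upper limit $\epsilon^{(2\beta-1)/2}\to\infty$) I would invoke a dominated-convergence argument: for $s\ge T$ the exponential decay estimate \eqref{eq.3.031}--type bound (equivalently \eqref{eq.3.036} rewritten in the $t$-variable) gives $|\phi_\epsilon(p-s\sqrt\epsilon\nu_p)-\phi_\epsilon^*|\le M'e^{-Ms}$, while for the tail beyond $\epsilon^{(2\beta-1)/2}$ the contribution is negligible; the limit function $|\exp(-z_iu_k(s))-\exp(-z_i\phi_0^*)|$ is integrable on $[0,\infty)$ because $u_k(s)\to\phi_0^*$ exponentially (\Cref{proposition:B.1,proposition:B.2}). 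Hence
\begin{align*}
J_{i,k,2}\to|\partial\Omega_k|\int_0^\infty\!\big[\exp(-z_i(u_k(s)-\phi_0^*))-1\big]\exp(-z_i\phi_0^*)\,\mathrm ds=\exp(-z_i\phi_0^*)\frac{|\Omega|}{m_i}\cdot\frac{m_i}{|\Omega|}|\partial\Omega_k|\int_0^\infty[\cdots]\,\mathrm ds,
\end{align*}
and summing over $k$ gives exactly $\exp(-z_i\phi_0^*)(\hat m_i/m_i)|\Omega|$ with $\hat m_i$ as defined in the statement. Adding the bulk and surface contributions via \eqref{eq.3.079} produces $B_{i,\epsilon}=\exp(-z_i\phi_0^*)[-z_iQ|\Omega|-(\hat m_i/m_i)|\Omega|]+o_\epsilon(1)$.

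The main obstacle is the uniform control needed to interchange $\lim_{\epsilon\to0^+}$ with the $s$-integration over the $\epsilon$-dependent, unbounded interval $[0,\epsilon^{(2\beta-1)/2}]$ in $J_{i,k,2}$. The first-order expansion \eqref{eq.3.034} is only stated for $0\le t\le T$ with $T$ fixed, so for the regime $T\le s\le\epsilon^{(2\beta-1)/2}$ one cannot use pointwise convergence and must instead rely on the exponential-decay estimates of Region II/III; the delicate point is that the bound $M'e^{-Ms}$ on $|\phi_\epsilon(p-s\sqrt\epsilon\nu_p)-\phi_\epsilon^*|$ combined with $\phi_\epsilon^*-\phi_0^*=O(\sqrt\epsilon)$ must be shown to dominate both the integrand and to render the tail $o_\epsilon(1)$ after dividing by $\sqrt\epsilon$ — which works precisely because of the $\sqrt\epsilon$ in the denominator of \eqref{eq.3.078} being matched by the $\sqrt\epsilon$ from the coarea change of variables. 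Once this uniform integrability is established, the remainder is routine Taylor expansion and bookkeeping.
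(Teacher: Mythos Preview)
Your proposal is correct and reaches the same conclusion as the paper, but the treatment of the boundary-layer term $J_{i,k,2}$ follows a genuinely different route. The paper does not pass to the limit in $J_{i,k,2}$ directly via a $T$-splitting/dominated-convergence argument. Instead, it first proves only a crude uniform bound $|J_{i,k,2}|\le M$ (Claim~4), which together with Claim~3 shows $B_{i,\epsilon}$ and hence $f_{1,\epsilon}$ are uniformly bounded; this boundedness is then fed into a maximum-principle argument to establish the stronger pointwise estimate $|\varphi_{k,\epsilon}|=|\phi_\epsilon-u_k(\delta_k/\sqrt\epsilon)|/\sqrt\epsilon\le M$ uniformly on all of $\overline\Omega_{k,\epsilon,\beta}$ (Claim~5). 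Only with this second-order control in hand does the paper compute the limit of $J_{i,k,3}$, replacing $\phi_\epsilon$ by $u_k(s)+O(\sqrt\epsilon)$ uniformly over the entire range $0\le s\le\epsilon^{(2\beta-1)/2}$.

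Your argument is more elementary and self-contained for Lemma~3.13: you use only the first-order convergence \eqref{eq.3.034} on $[0,T]$ and the exponential decay \eqref{eq.3.036} on $[T,\epsilon^{(2\beta-1)/2}]$, both of which are already available, and you correctly identify that the tail contributes $O(e^{-MT})+O(\epsilon^\beta)$, which suffices. The paper's detour through Claim~5 is not logically necessary for Lemma~3.13 alone, but it is not wasted effort either: the uniform boundedness of $\varphi_{k,\epsilon}$ is precisely what drives the subsequent second-order asymptotic expansion of $\phi_\epsilon$ in \Cref{section:3.4}, so the paper proves it here because it is needed anyway. Your route gets Lemma~3.13 faster; the paper's route front-loads an estimate it will reuse.
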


\begin{remark}
\label{remark:7}
By \eqref{eq.3.078} and \Cref{lemma:3.13}, we obtain
\[A_{i,\epsilon}=|\Omega|\exp(-z_{i}\phi_{0}^{*})+\sqrt{\epsilon}\exp(-z_{i}\phi_{0}^{*})\left(-z_{i}Q|\Omega|-\frac{\hat m_{i}}{m_{i}}|\Omega|+o_{\epsilon}(1)\right)\quad\text{for}~i=1,\dots,I.\]
This allows us to compute the expansion of $c_{i,\epsilon}^{\mathrm b}=m_{i}/A_{i,\epsilon}$:
\begin{align*}
c_{i,\epsilon}^{\mathrm b}&=\frac{m_{i}}{A_{i,\epsilon}}=\frac{m_{i}}{|\Omega|\exp(-z_{i}\phi_{0}^{*})}\left(1+\sqrt{\epsilon}\left[-z_{i}Q-\frac{\hat m_{i}}{m_{i}}+o_{\epsilon}(1)\right]\right)^{-1}
\\&=\frac{m_{i}}{|\Omega|\exp(-z_{i}\phi_{0}^{*})}\left(1+\sqrt{\epsilon}\left[z_{i}Q+\frac{\hat m_{i}}{m_{i}}+o_{\epsilon}(1)\right]\right)
\\&=c_i^{\mathrm b}+\sqrt{\epsilon}\left(\frac{m_{i}z_{i}Q\exp(z_{i}\phi_{0}^{*})+\hat m_{i}\exp(z_{i}\phi_{0}^{*})}{|\Omega|}+o_{\epsilon}(1)\right)\quad\text{for}~i=1,\dots,I,\end{align*}
where $c_i^{\mathrm b}=m_{i}|\Omega|^{-1}\exp(z_{i}\phi_{0}^{*})$ for $i=1,\dots,I$.
\end{remark}
\begin{proof}[Proof of \Cref{lemma:3.13}]
To prove \Cref{lemma:3.13}, we analyze $J_{i,1}$ and $J_{i,k,2}$ as in \Cref{section:3.3}.
First, we compute the limit of $J_{i,1}$ using \eqref{eq.3.036} and \eqref{eq.3.076}.
\begin{claim}
\label{claim:3}
$J_{i,1}=-z_{i}Q\exp(-z_{i}\phi_{0}^{*})|\Omega|+o_{\epsilon}(1)$ for $i=1,\dots,I$.
\end{claim}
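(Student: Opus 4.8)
The plan is to exploit the two-sided exponential estimate \eqref{eq.3.036} on the bulk region $\overline\Omega_{\epsilon,\beta}$ together with the first-order expansion $\phi_{\epsilon}^{*}=\phi_{0}^{*}+\sqrt{\epsilon}(Q+o_{\epsilon}(1))$ from \eqref{eq.3.076}. First I would split, for $y\in\overline\Omega_{\epsilon,\beta}$,
\[
\phi_{\epsilon}(y)-\phi_{0}^{*}=\bigl(\phi_{\epsilon}(y)-\phi_{\epsilon}^{*}\bigr)+\bigl(\phi_{\epsilon}^{*}-\phi_{0}^{*}\bigr).
\]
Since $\delta(y)\geq\epsilon^{\beta}$ on $\overline\Omega_{\epsilon,\beta}$, \eqref{eq.3.036} yields $|\phi_{\epsilon}(y)-\phi_{\epsilon}^{*}|\leq2(\overline{\phi_{bd}}-\underline{\phi_{bd}})\exp(-C_{3}\epsilon^{(2\beta-1)/2}/8)$, a bound uniform in $y$ which, because $2\beta-1<0$, decays faster than any power of $\epsilon$; combined with \eqref{eq.3.076} this gives $\phi_{\epsilon}(y)-\phi_{0}^{*}=\sqrt{\epsilon}(Q+o_{\epsilon}(1))$ uniformly for $y\in\overline\Omega_{\epsilon,\beta}$.

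Next I would factor $\exp(-z_{i}\phi_{\epsilon}(y))-\exp(-z_{i}\phi_{0}^{*})=\exp(-z_{i}\phi_{0}^{*})\bigl[\exp(-z_{i}(\phi_{\epsilon}(y)-\phi_{0}^{*}))-1\bigr]$ and Taylor expand: using $\phi_{\epsilon}(y)-\phi_{0}^{*}\to0$ uniformly and $(\phi_{\epsilon}(y)-\phi_{0}^{*})^{2}=\epsilon\,\mathcal{O}_{\epsilon}(1)$,
\[
\frac{\exp(-z_{i}(\phi_{\epsilon}(y)-\phi_{0}^{*}))-1}{\sqrt{\epsilon}}=-z_{i}\,\frac{\phi_{\epsilon}(y)-\phi_{0}^{*}}{\sqrt{\epsilon}}+\frac{(\phi_{\epsilon}(y)-\phi_{0}^{*})^{2}}{\sqrt{\epsilon}}\,\mathcal{O}_{\epsilon}(1)=-z_{i}Q+o_{\epsilon}(1)
\]
uniformly in $y$. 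Integrating over $\overline\Omega_{\epsilon,\beta}$ and recalling \eqref{eq.3.080} gives $J_{i,1}=-z_{i}Q\exp(-z_{i}\phi_{0}^{*})|\overline\Omega_{\epsilon,\beta}|+|\overline\Omega_{\epsilon,\beta}|\,o_{\epsilon}(1)$. Finally, the coarea formula together with the Jacobian expansion \eqref{eq.2.75} (Steiner's formula) shows $|\Omega|-|\overline\Omega_{\epsilon,\beta}|=|\{y\in\Omega:\delta(y)<\epsilon^{\beta}\}|=\epsilon^{\beta}\mathcal{O}_{\epsilon}(1)\to0$, so $|\overline\Omega_{\epsilon,\beta}|\to|\Omega|$ and the claim $J_{i,1}=-z_{i}Q\exp(-z_{i}\phi_{0}^{*})|\Omega|+o_{\epsilon}(1)$ follows.

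The main obstacle is keeping every error term uniform in $y\in\overline\Omega_{\epsilon,\beta}$ so that the $o_{\epsilon}(1)$ survives integration against the (bounded) volume; the crucial point is that the boundary-layer correction $|\phi_{\epsilon}(y)-\phi_{\epsilon}^{*}|$ on the bulk region is not merely $o(\sqrt{\epsilon})$ but super-algebraically small, which is exactly where the hypothesis $0<\beta<1/2$ enters. To be safe one can proceed exactly as in the proof of \Cref{lemma:3.08}: first sandwich $\exp(-z_{i}\phi_{\epsilon}(y))-\exp(-z_{i}\phi_{0}^{*})$ between two expressions depending only on $\delta(y)$ (via \eqref{eq.3.036}) and on $\phi_{\epsilon}^{*}-\phi_{0}^{*}$, treating the cases $z_{i}>0$ and $z_{i}<0$ separately; the two-sided form of \eqref{eq.3.036} makes both cases structurally identical. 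This disposes of \Cref{claim:3}; the remaining ingredient for \Cref{lemma:3.13}, namely the evaluation of $J_{i,k,2}$ over the tubular neighborhoods $\Omega_{k,\epsilon,\beta}$, is then a variant of the computation \eqref{eq.3.050}--\eqref{eq.3.052}, now keeping the leading contribution of the boundary-layer profile $u_{k}$, which is precisely what produces the constants $\hat m_{i}$.
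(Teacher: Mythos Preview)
Your proposal is correct and essentially follows the paper's approach: the paper sandwiches $\exp(-z_{i}\phi_{\epsilon})-\exp(-z_{i}\phi_{0}^{*})$ via \eqref{eq.3.036} and \eqref{eq.3.076} exactly as you describe in your ``to be safe'' paragraph, treating $z_{i}>0$ and $z_{i}<0$ separately. Your primary route (direct Taylor expansion after writing $\phi_{\epsilon}(y)-\phi_{0}^{*}=\sqrt{\epsilon}(Q+o_{\epsilon}(1))$ uniformly on $\overline\Omega_{\epsilon,\beta}$) is a slightly cleaner packaging of the same estimates and avoids the case split, but both arguments rest on the identical inputs \eqref{eq.3.036}, \eqref{eq.3.076}, and $|\overline\Omega_{\epsilon,\beta}|\to|\Omega|$.
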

\noindent Henceforth $\epsilon^{*}>0$ is a sufficiently small constant, and $M,M'>0$ are generic constants independent of $\epsilon>0$.
\begin{proof}[Proof of \Cref{claim:3}]
Fix $i\in\{1,\dots,I\}$.
We first consider the case of $z_{i}>0$.
Then by \eqref{eq.3.036}, we may follow the approach of \eqref{eq.3.047}--\eqref{eq.3.048} in \Cref{lemma:3.08} (with $\sqrt{\epsilon}$ scaling instead of $\phi_{\epsilon}^*-\phi_{0}^{*}$) and apply \eqref{eq.3.076} and \eqref{eq.3.080} to obtain
\begin{align*}
&\quad\exp(-z_{i}\phi_{0}^{*})\left|\Omega-\bigcup_{k=0}^K\Omega_{k,\epsilon,\beta}\right|\frac{\exp(-z_{i}Q\sqrt{\epsilon})\exp[-z_{i}M'\exp(-M\epsilon^{(2\beta-1)/2})+\sqrt{\epsilon}o_{\epsilon}(1)]-1}{\sqrt{\epsilon}}\\
&\leq
J_{i,1}\leq\exp(-z_{i}\phi_{0}^{*})\left|\Omega-\bigcup_{k=0}^K\Omega_{k,\epsilon,\beta}\right|\frac{\exp(-z_{i}Q\sqrt{\epsilon})\exp[z_{i}M'\exp(-M\epsilon^{(2\beta-1)/2})+\sqrt{\epsilon}o_{\epsilon}(1)]-1}{\sqrt{\epsilon}}.
\end{align*}
Along with the fact that
\begin{align*}&\quad\lim_{\epsilon\to0^+}\frac{\exp(-z_{i}Q\sqrt\epsilon)\exp[\pm z_{i}M'\exp(-M\epsilon^{(2\beta-1)/2})+\sqrt{\epsilon}o_{\epsilon}(1)]-1}{\sqrt\epsilon}\\&=\lim_{\epsilon\to0^+}\frac{\exp[-z_{i}Q\sqrt\epsilon\pm z_{i}M'\exp(-M\epsilon^{(2\beta-1)/2})+\sqrt{\epsilon}o_{\epsilon}(1)]-1}{\sqrt{\epsilon}}=-z_{i}Q,\end{align*}
we obtain
\begin{align*}
J_{i,1}=-z_{i}Q\exp(-z_{i}\phi_{0}^{*})|\Omega|+o_{\epsilon}(1)\quad\text{for}~i=1,\dots,I.
\end{align*}
For the case of $z_{i}<0$, we can follow a similar argument and complete the proof of \Cref{claim:3}.
\end{proof}

To obtain the boundedness of $J_{i,k,2}$ (defined in \eqref{eq.3.081}), we employ \eqref{eq.3.036}, \eqref{eq.3.076}, the principal coordinate system (cf. \cite{1977gilbarg}), and the coarea formula (cf. \cite{2015evans,2002lin}), adapting techniques analogous to those in \Cref{section:3.3}. 
Below are the details.
\begin{claim}
\label{claim:4}
There exists $M>0$ independent of $\epsilon$ such that $|J_{i,k,2}|\leq M$ for $i=1,\dots,I$ and $k=0,1,\dots,K$ and $0<\epsilon<\epsilon^{*}$, where $\epsilon^{*}>0$ is sufficiently small.
\end{claim}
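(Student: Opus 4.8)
The plan is to mirror the treatment of the analogous term in the proof of \Cref{lemma:3.08}, but now exploiting that the $1/\sqrt{\epsilon}$ scaling in \eqref{eq.3.081} is exactly compensated by the Jacobian of the boundary–normal change of variables, so that \emph{boundedness} — rather than vanishing — is all one can and needs to assert.

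First I would rewrite $J_{i,k,2}$ as a boundary integral. For $0<\epsilon<\epsilon^{*}$ sufficiently small, condition (A3) ensures $\dist(\Omega_{i,\epsilon,\beta},\Omega_{j,\epsilon,\beta})\geq d_{0}>0$, so each $y\in\overline{\Omega}_{k,\epsilon,\beta}$ has a unique nearest boundary point $p\in\partial\Omega_{k}$, admits the representation $y=p-s\sqrt{\epsilon}\nu_{p}$ with $0\leq s\leq\epsilon^{(2\beta-1)/2}$, and satisfies $\delta(y)=\delta_{k}(y)=s\sqrt{\epsilon}$ (because $\delta_{j}(y)\geq\epsilon^{\beta}>s\sqrt{\epsilon}$ for $j\neq k$). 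Applying the coarea formula together with Steiner's formula \eqref{eq.2.75}, the prefactor $1/\sqrt{\epsilon}$ cancels against the $\sqrt{\epsilon}$ coming from the Jacobian, leaving
\[
J_{i,k,2}=\int_{0}^{\epsilon^{(2\beta-1)/2}}\!\!\int_{\partial\Omega_{k}}\!\big[\exp(-z_{i}\phi_{\epsilon}(p-s\sqrt{\epsilon}\nu_{p}))-\exp(-z_{i}\phi_{0}^{*})\big]\mathcal{J}(s,p)\,\mathrm{d}S_{p}\,\mathrm{d}s,
\]
with $|\mathcal{J}(s,p)|\leq 2$ for $\epsilon$ small.

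Next I would bound the integrand pointwise and integrate. Since $\phi_{\epsilon}$ is uniformly bounded (\Cref{proposition:3.01}) and $\phi_{0}^{*}\in[\underline{\phi_{bd}},\overline{\phi_{bd}}]$, the map $t\mapsto\exp(-z_{i}t)$ is Lipschitz on the relevant interval, so $|\exp(-z_{i}\phi_{\epsilon}(y))-\exp(-z_{i}\phi_{0}^{*})|\leq M(|\phi_{\epsilon}(y)-\phi_{\epsilon}^{*}|+|\phi_{\epsilon}^{*}-\phi_{0}^{*}|)$. For $y=p-s\sqrt{\epsilon}\nu_{p}$ with $\delta(y)=s\sqrt{\epsilon}$, the exponential-type estimate \eqref{eq.3.036} gives $|\phi_{\epsilon}(y)-\phi_{\epsilon}^{*}|\leq M'\exp(-C_{3}s/8)$, while the uniform boundedness of $|\phi_{\epsilon}^{*}-\phi_{0}^{*}|/\sqrt{\epsilon}$ (cf. \eqref{eq.3.076}) gives $|\phi_{\epsilon}^{*}-\phi_{0}^{*}|\leq M'\sqrt{\epsilon}$. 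Hence the integrand is $\leq M(M'\exp(-C_{3}s/8)+M'\sqrt{\epsilon})$, and integrating — the first term over $[0,\infty)$ and the second over $[0,\epsilon^{(2\beta-1)/2}]$, using $\sqrt{\epsilon}\,\epsilon^{(2\beta-1)/2}=\epsilon^{\beta}$ — yields $|J_{i,k,2}|\leq 16MM'|\partial\Omega_{k}|/C_{3}+2MM'|\partial\Omega_{k}|\epsilon^{\beta}$, which is bounded independently of $\epsilon$ since $\epsilon^{\beta}<1$ for $\epsilon$ small.

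The only point requiring care — and the reason a crude estimate fails — is that the integrand is merely $\mathcal{O}_{\epsilon}(1)$, not small, near $\partial\Omega_{k}$, so integrating it over the $s$-interval $[0,\epsilon^{(2\beta-1)/2}]$ of diverging length would yield nothing useful; one must extract the genuine exponential decay in $s$ from the boundary-layer estimate \eqref{eq.3.036}, and this decay is available precisely because $\delta(y)$ \emph{equals} $\delta_{k}(y)=s\sqrt{\epsilon}$ — not merely bounds it — which in turn rests on the separation of the sets $\Omega_{k,\epsilon,\beta}$ guaranteed by (A3). I expect this identification of $\delta$ with $\delta_{k}$ on $\Omega_{k,\epsilon,\beta}$ to be the main (if modest) obstacle; everything else is routine bookkeeping with the Jacobian factor $\mathcal{J}(s,p)$ and the $\mathcal{O}_{\epsilon}(\sqrt{\epsilon})$ correction coming from $\phi_{\epsilon}^{*}-\phi_{0}^{*}$.
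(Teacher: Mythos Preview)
Your proposal is correct and follows essentially the same approach as the paper: rewrite $J_{i,k,2}$ via the coarea formula, use the Lipschitz bound together with \eqref{eq.3.036} to get \eqref{eq.3.085}, and integrate using the uniform boundedness of $|\phi_{\epsilon}^{*}-\phi_{0}^{*}|/\sqrt{\epsilon}$ from \eqref{eq.3.076}. The only cosmetic difference is that the paper splits $J_{i,k,2}=J_{i,k,3}-J_{i,k,4}$ (separating the leading $1$ in $\mathcal{J}(s,p)$ from the curvature correction) because it later needs the precise limit of $J_{i,k,3}$ in \eqref{eq.3.094}; for Claim~\ref{claim:4} alone your crude bound $|\mathcal{J}(s,p)|\leq 2$ is perfectly adequate.
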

\begin{proof}[Proof of \Cref{claim:4}]
Similar to the analysis in \Cref{section:3.3}, we consider $y\in\overline\Omega_{k,\epsilon,\beta}=\{x\in\overline\Omega:\delta_k(x)\leq\epsilon^{\beta}\}$. For sufficiently small $\epsilon>0$, the point $y$ can be expressed as $y=p-s\sqrt{\epsilon}\nu_{p}$, where $p\in\partial\Omega_k$, $\nu_{p}$ is the unit outer normal, and $0\leq s\leq\epsilon^{(2\beta-1)/2}$.
Then we apply the corea formula and \eqref{eq.2.75} to get
\begin{align}
\label{eq.3.082}
\begin{aligned}
J_{i,k,2}&=\int_{\partial\Omega_k}\!\int_0^{\epsilon^{(2\beta-1)/2}}\![\exp(-z_{i}\phi_{\epsilon}(p-s\sqrt{\epsilon}\nu_{p}))-\exp(-z_{i}\phi_{0}^{*})]\{1-s\sqrt\epsilon[(d-1)H(p)+o_{\epsilon}(1)]\}\,\mathrm{d}s\,\mathrm{d}S_p\\&:=J_{i,k,3}-J_{i,k,4}\quad\text{for}~i=1,\dots,I~\text{and}~k=0,1,\dots,K,
\end{aligned}
\end{align}
where
\begin{align}
\label{eq.3.083}
&J_{i,k,3}=\int_{\partial\Omega_k}\!\int_0^{\epsilon^{(2\beta-1)/2}}\!(\exp(-z_{i}\phi_{\epsilon}(p-s\sqrt{\epsilon}\nu_{p}))-\exp(-z_{i}\phi_{0}^{*}))\,\mathrm{d}s\,\mathrm{d}S_p,\\
\label{eq.3.084}
&J_{i,k,4}=\sqrt\epsilon\int_{\partial\Omega_k}\!\int_0^{\epsilon^{(2\beta-1)/2}}\!s[\exp(-z_{i}\phi_{\epsilon}(p-s\sqrt{\epsilon}\nu_{p}))-\exp(-z_{i}\phi_{0}^{*})][(d-1)H(p)+o_{\epsilon}(1)]\,\mathrm{d}s\,\mathrm{d}S_p.
\end{align}

By \Cref{proposition:3.01,proposition:3.03}, $|\phi_{\epsilon}(x)-\phi_{0}^{*}|$ is uniformly bounded for $x\in\overline\Omega$ and $\epsilon>0$, which implies that there exists $M>1$ such that $|\exp(-z_{i}(\phi_{\epsilon}(x)-\phi_{0}^{*}))-1|\leq M|\phi_{\epsilon}(x)-\phi_{0}^{*}|$ for $x\in\overline\Omega$ and $0<\epsilon<\epsilon^{*}$.
Then by \eqref{eq.3.036}, we have
\begin{align}
\label{eq.3.085}
\left|\exp(-z_{i}\phi_{\epsilon}(x))-\exp(-z_{i}\phi_{0}^{*})\right|\leq M'\exp\left(-M\delta(x)/\sqrt\epsilon\right)+M'|\phi_{\epsilon}^{*}-\phi_{0}^{*}|
\end{align}
for $x\in\overline\Omega$ and $0<\epsilon<\epsilon^{*}$.
Using \eqref{eq.3.076} and \eqref{eq.3.085} with $x=p-s\sqrt{\epsilon}\nu_{p}$, $J_{i,k,3}$ can be estimated by
\begin{align}
\label{eq.3.086}
\begin{aligned}
|J_{i,k,3}|&=\left|\int_{\partial\Omega_k}\!\int_0^{\epsilon^{(2\beta-1)/2}}\!\left[\exp(-z_{i}\phi_{\epsilon}(p-s\sqrt{\epsilon}\nu_{p}))-\exp(-z_{i}\phi_{0}^{*})\right]\,\mathrm{d}s\,\mathrm{d}S_p\right|\\&\leq M'\int_0^{\epsilon^{(2\beta-1)/2}}\![\exp(-Ms)+|\phi_{\epsilon}^{*}-\phi_{0}^{*}|]\,\mathrm{d}s\\&
=M'(1-\exp(-M\epsilon^{(2\beta-1)/2}))+M'\epsilon^{(2\beta-1)/2}|\phi_{\epsilon}^{*}-\phi_{0}^{*}|\leq M\quad\text{for}~k=0,1,\dots,K.
\end{aligned}
\end{align}
On the other hand, due to the smoothness of $\partial\Omega_k$, there exists a positive constant $M$ independent of $\epsilon$ such that $|(d-1)H(p)+o_{\epsilon}(1)|\leq M$ for $0<\epsilon<\epsilon^{*}$.
Using \eqref{eq.3.076} and \eqref{eq.3.085} with $x=p-s\sqrt{\epsilon}\nu_{p}$, $J_{i,k,4}$ can be estimated by
\begin{align}
\label{eq.3.087}
\begin{aligned}
|J_{i,k,4}|&=\sqrt\epsilon\left|\int_{\partial\Omega_k}\!\int_0^{\epsilon^{(2\beta-1)/2}}\!s[\exp(-z_{i}\phi_{\epsilon}(p-s\sqrt{\epsilon}\nu_{p}))-\exp(-z_{i}\phi_{0}^{*})][(d-1)H(p)+o_{\epsilon}(1)]\,\mathrm{d}s\,\mathrm{d}S_p\right|\\
&\leq M'\sqrt\epsilon\int_0^{\epsilon^{(2\beta-1)/2}}\!s\left[\exp(-Ms)+|\phi_{\epsilon}^{*}-\phi_{0}^{*}|\right]\,\mathrm{d}s=\sqrt\epsilon\mathcal{O}_\epsilon(1)\quad\text{for}~k=0,1,\dots,K.
\end{aligned}
\end{align}
Combining \eqref{eq.3.082}--\eqref{eq.3.087}, we arrive at $|J_{i,k,2}|\leq|J_{i,k,3}|+|J_{i,k,4}|\leq M$ for $0<\epsilon<\epsilon^{*}$, which completes the proof of \Cref{claim:4}.
\end{proof}

To complete the proof of \Cref{lemma:3.13}, we define
\begin{align}
\label{eq.3.088}
\varphi_{k,\epsilon}(x)=\frac{\phi_{\epsilon}(x)-u_{k}(\delta_k(x)/\sqrt\epsilon)}{\sqrt\epsilon}\quad\text{for}~x\in\overline\Omega_{k,\epsilon,\beta}~\text{and}~k=0,1,\dots,K,
\end{align}
where $u_{k}$ is the unique solution to \eqref{eq.1.19}--\eqref{eq.1.21}, $\delta_k(x)=\dist(x,\partial\Omega_k)$ and $\overline\Omega_{k,\epsilon,\beta}=\{x\in\overline\Omega:\delta_k(x)\leq\epsilon^{\beta}\}$ for $k=0,1,\dots,K$.
Note that \eqref{eq.3.088} has the same form as \eqref{eq.2.54}.
Following the approach of \Cref{proposition:2.7} in \Cref{section:3.3}, we claim
\begin{claim}
\label{claim:5}
There exists a constant $M>0$ independent of $\epsilon$ such that $\dd\max_{\overline{\Omega}_{k,\epsilon,\beta}}|\varphi_{k,\epsilon}|\leq M$ for $k=0,1,\dots,K$ and $0<\epsilon<\epsilon^{*}$.
\end{claim}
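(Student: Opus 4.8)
The plan is to adapt the contradiction argument used for \Cref{proposition:2.7} and \Cref{proposition:3.09}, now exploiting the uniform boundedness of $|\phi_{\epsilon}^{*}-\phi_{0}^{*}|/\sqrt{\epsilon}$ established in \eqref{eq.3.076} together with the boundedness of the $B_{i,\epsilon}$ coming from \Cref{claim:3,claim:4}. Fix $k\in\{0,1,\dots,K\}$; it suffices to prove $\max_{\overline{\Omega}_{k,\epsilon,\beta}}\varphi_{k,\epsilon}\leq M$ and $\min_{\overline{\Omega}_{k,\epsilon,\beta}}\varphi_{k,\epsilon}\geq-M$, and by symmetry I only sketch the upper bound. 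Suppose, for contradiction, that there exist $\epsilon_{n}\to0^+$ and $x_{n}\in\overline{\Omega}_{k,\epsilon_{n},\beta}$ with $\varphi_{k,\epsilon_{n}}(x_{n})=\max_{\overline{\Omega}_{k,\epsilon_{n},\beta}}\varphi_{k,\epsilon_{n}}\geq n$.

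First I would locate $x_{n}$. Since the numerator $\phi_{\epsilon}-u_{k}(\delta_{k}(\cdot)/\sqrt{\epsilon})$ inherits a homogeneous Robin boundary condition from \eqref{eq.1.04} and \eqref{eq.1.20}, one has $\varphi_{k,\epsilon_{n}}+\gamma_{k}\sqrt{\epsilon_{n}}\,\partial_{\nu}\varphi_{k,\epsilon_{n}}=0$ on $\partial\Omega_{k}$ (exactly as in \eqref{eq.2.59} and \eqref{eq.3.056}); thus if $x_{n}\in\partial\Omega_{k}$ then $\partial_{\nu}\varphi_{k,\epsilon_{n}}(x_{n})=-\varphi_{k,\epsilon_{n}}(x_{n})/(\gamma_{k}\sqrt{\epsilon_{n}})<0$, contradicting the nonnegativity of the outward normal derivative at a boundary maximum. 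Moreover, on the outer face $\{\delta_{k}(x)=\epsilon_{n}^{\beta}\}$ one has $\delta(x)=\delta_{k}(x)=\epsilon_{n}^{\beta}$ (the sets $\overline{\Omega}_{j,\epsilon_{n},\beta}$ being a positive distance apart for small $\epsilon_{n}$), so $\delta_{k}(x)/\sqrt{\epsilon_{n}}=\epsilon_{n}^{(2\beta-1)/2}$, and by \eqref{eq.3.023}, \eqref{eq.3.036} and \eqref{eq.3.076},
\[|\varphi_{k,\epsilon_{n}}(x)|\leq\frac{|\phi_{\epsilon_{n}}(x)-\phi_{\epsilon_{n}}^{*}|+|\phi_{\epsilon_{n}}^{*}-\phi_{0}^{*}|+|u_{k}(\delta_{k}(x)/\sqrt{\epsilon_{n}})-\phi_{0}^{*}|}{\sqrt{\epsilon_{n}}}\leq\frac{4(\overline{\phi_{bd}}-\underline{\phi_{bd}})}{\sqrt{\epsilon_{n}}}\exp\!\Big(-\frac{C_{3}}{8}\epsilon_{n}^{(2\beta-1)/2}\Big)+M,\]
which stays bounded as $n\to\infty$ since $0<\beta<1/2$; hence for $n$ large this is $<n$, so $x_{n}$ cannot lie on that face either. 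Therefore $x_{n}\in\Omega_{k,\epsilon_{n},\beta}$ for all large $n$, giving $\nabla\varphi_{k,\epsilon_{n}}(x_{n})=0$ and $\Delta\varphi_{k,\epsilon_{n}}(x_{n})\leq0$.

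Next I would use the interior equation. Combining $-\epsilon\Delta\phi_{\epsilon}=f_{\epsilon}(\phi_{\epsilon})$, $u_{k}''=-f_{0}(u_{k})$, the formula \eqref{eq.2.58} for $\Delta\delta_{k}$, and $f_{\epsilon}-f_{0}=\sqrt{\epsilon}f_{1,\epsilon}$ from \eqref{eq.3.077}, a direct computation (as for \eqref{eq.2.55}--\eqref{eq.2.57} and \eqref{eq.3.055}) shows that $\varphi_{k,\epsilon}$ defined in \eqref{eq.3.088} satisfies
\[\epsilon\Delta\varphi_{k,\epsilon}+c_{\epsilon}(x)\varphi_{k,\epsilon}=(d-1)H(p_{x})u_{k}'(\delta_{k}(x)/\sqrt{\epsilon})-f_{1,\epsilon}(u_{k}(\delta_{k}(x)/\sqrt{\epsilon}))+\epsilon^{\beta}\mathcal{O}_{\epsilon}(1)\quad\text{in}~\Omega_{k,\epsilon,\beta},\]
where $c_{\epsilon}(x)$ is the difference quotient of $f_{\epsilon}$ between $\phi_{\epsilon}(x)$ and $u_{k}(\delta_{k}(x)/\sqrt{\epsilon})$, so $c_{\epsilon}(x)\leq-C_{3}^{2}<0$ by \eqref{eq.3.007}. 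The right-hand side is uniformly bounded in $\epsilon$: $u_{k}'$ is bounded on $[0,\infty)$ by \Cref{proposition:B.1}, $H$ is bounded by smoothness of $\partial\Omega_{k}$, and $f_{1,\epsilon}$ is uniformly bounded on the bounded range of $u_{k}$ because $B_{i,\epsilon}=J_{i,1}+\sum_{k}J_{i,k,2}$ is bounded by \Cref{claim:3,claim:4}. Evaluating at $x_{n}$ then yields
\[0\leq-\epsilon_{n}\Delta\varphi_{k,\epsilon_{n}}(x_{n})=c_{\epsilon_{n}}(x_{n})\varphi_{k,\epsilon_{n}}(x_{n})-\big(\text{uniformly bounded}\big)\longrightarrow-\infty,\]
since $c_{\epsilon_{n}}(x_{n})\leq-C_{3}^{2}$ and $\varphi_{k,\epsilon_{n}}(x_{n})\geq n\to\infty$; this contradiction gives the upper bound. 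The lower bound follows by the same argument applied to $\min_{\overline{\Omega}_{k,\epsilon,\beta}}\varphi_{k,\epsilon}$, using \eqref{eq.3.023}, \eqref{eq.3.036}, \eqref{eq.3.076}, \eqref{eq.3.056} and the same interior equation. The one delicate point — and the only place input beyond the template of \Cref{proposition:2.7} is needed — is that the forcing term $f_{1,\epsilon}(u_{k}(\cdot))$ in the equation for $\varphi_{k,\epsilon}$ is bounded uniformly in $\epsilon$; this is exactly what \Cref{claim:3,claim:4} (already proven above) supply, so there is no circular dependence on \Cref{lemma:3.13}.
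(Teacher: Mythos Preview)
Your proposal is correct and follows essentially the same route as the paper: derive the interior equation for $\varphi_{k,\epsilon}$ with coefficient $c_{\epsilon}\leq-C_{3}^{2}$ and forcing term $(d-1)H(p_{x})u_{k}'-f_{1,\epsilon}(u_{k})+\epsilon^{\beta}\mathcal{O}_{\epsilon}(1)$, use \Cref{claim:3,claim:4} to bound $B_{i,\epsilon}$ and hence $f_{1,\epsilon}$ uniformly, rule out boundary maxima via the homogeneous Robin condition on $\partial\Omega_{k}$ and the exponential decay \eqref{eq.3.023}, \eqref{eq.3.036} together with the boundedness of $|\phi_{\epsilon}^{*}-\phi_{0}^{*}|/\sqrt{\epsilon}$ on the outer face, and conclude by the contradiction template of \Cref{proposition:2.7}. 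You also correctly flag the one non-template ingredient (uniform boundedness of $f_{1,\epsilon}$) and the absence of circularity, which matches the paper's logic exactly.
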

\begin{proof}[Proof of \Cref{claim:5}]
Following the method of \Cref{section:2.2}, we derive the partial differential equation for $\varphi_{k,\epsilon}$ in $\Omega_{k,\epsilon,\beta}$.
As in \eqref{eq.2.55}, we use \eqref{eq.1.02}, \eqref{eq.1.19}, \eqref{eq.2.58} and \eqref{eq.3.088} to get
\begin{align*}
\epsilon\Delta\varphi_{k,\epsilon}(x)&=-\frac{f_{\epsilon}(\phi_{\epsilon}(x))-f_{0}(u_{k}(\delta_k(x)/\sqrt\epsilon))}{\sqrt\epsilon}+[(d-1)H(p_x)+\epsilon^{\beta}\mathcal{O}_\epsilon(1)]u_{k}'(\delta_k(x)/\sqrt\epsilon)\\
&=-\frac{f_{\epsilon}(\phi_{\epsilon}(x))-f_{\epsilon}(u_{k}(\delta_k(x)/\sqrt\epsilon))}{\sqrt\epsilon}-\frac{f_{\epsilon}(u_{k}(\delta_k(x)/\sqrt\epsilon))-f_{0}(u_{k}(\delta_k(x)/\sqrt\epsilon))}{\sqrt\epsilon}\\
&\quad+[(d-1)H(p_x)+\epsilon^{\beta}\mathcal{O}_\epsilon(1)]u_{k}'(\delta_k(x)/\sqrt\epsilon)\\
&=-c_\epsilon(x)\varphi_{k,\epsilon}+g_\epsilon(x)\quad\text{for}~x\in\Omega_{k,\epsilon,\beta},
\end{align*}
where
\begin{align}
\label{eq.3.089}
&c_\epsilon(x)=\begin{cases}\dd\frac{f_{\epsilon}(\phi_{\epsilon}(x))-f_{\epsilon}(u_{k}(\delta_k(x)/\sqrt\epsilon))}{\phi_{\epsilon}(x)-u_{k}(\delta_k(x)/\sqrt\epsilon)}&\text{if}~\phi_{\epsilon}(x)\neq u_{k}(\delta_k(x)/\sqrt\epsilon);\\
f_{\epsilon}'(\phi_{\epsilon}(x))&\text{if}~\phi_{\epsilon}(x)=u_{k}(\delta_k(x)/\sqrt\epsilon),\end{cases}\\
\label{eq.3.090}
&g_\epsilon(x)=(d-1)H(p_x)u_{k}'(\delta_k(x)/\sqrt\epsilon)-f_{1,\epsilon}(u_{k}(\delta_k(x)/\sqrt\epsilon))+\epsilon^{\beta}\mathcal{O}_\epsilon(1).
\end{align}
Here we have used the definition of $f_{1,\epsilon}$ (cf. \eqref{eq.3.077}) and the fact that $u_{k}'$ is bounded on $[0,\infty)$ (cf. \Cref{proposition:B.1}).
Note that functions $c_\epsilon$ and $g_\epsilon$ are different from \eqref{eq.2.56}--\eqref{eq.2.57} because $f_{1,\epsilon}$ is a nonzero function.
Along with \eqref{eq.1.04}, \eqref{eq.1.20}, \eqref{eq.2.03} and \eqref{eq.3.020}, we obtain
\begin{alignat}{2}
\label{eq.3.091}
\epsilon\Delta\varphi_{k,\epsilon}+c_\epsilon(x)\varphi_{k,\epsilon}&=g_\epsilon(x)\quad&&\text{for}~x\in\Omega_{k,\epsilon},\\
\label{eq.3.092}
\varphi_{k,\epsilon}+\gamma_k\sqrt\epsilon\partial_\nu\varphi_{k,\epsilon}&=0\quad&&\text{on}~\partial\Omega_k\quad\text{for}~k=0,1,\dots,K.
\end{alignat}
Note that $c_\epsilon(x)<0$ for $x\in\Omega_{k,\epsilon}$ because $f_{\epsilon}'(\phi)\leq-C_{3}^2$ for $\phi\in\R$ and $\epsilon>0$ (cf \eqref{eq.3.007}).
By \eqref{eq.3.077}, \eqref{eq.3.079}, \Cref{claim:3,claim:4}, $f_{1,\epsilon}$ is uniformly bounded on the interval $[\underline{\phi_{bd}},\overline{\phi_{bd}}]$ with respect to $\epsilon$ and then $g_\epsilon$ is uniformly bounded with respect to $\epsilon$.
By \eqref{eq.3.036}, \eqref{eq.3.076} and \eqref{eq.3.023} with \Cref{proposition:3.05}, we find the following inequality:
\begin{align*}
|\varphi_{k,\epsilon}(x)|\leq\frac{|\phi_{\epsilon}(x)-\phi_{0}^{*}|+|u_{k}(\delta_k(x)/\sqrt\epsilon)-\phi_{0}^{*}|}{\sqrt\epsilon}\leq\frac{M'}{\sqrt\epsilon}\exp\left(-M\epsilon^{(2\beta-1)/2}\right)+\frac{|\phi_{\epsilon}^{*}-\phi_{0}^{*}|}{\sqrt\epsilon}\leq 2|Q|+\frac12
\end{align*}
for $\delta_k(x)=\epsilon^{1/8}$ as $\epsilon$ sufficiently small, which has the same form as \eqref{eq.2.60} in \Cref{proposition:2.7}.
Hence as in \Cref{proposition:2.7}, we conclude that $\varphi_{k,\epsilon}$ is uniformly bounded in $\overline\Omega_{k,\epsilon}$, i.e., there exists a positive constant $M$ independent of $\epsilon$ such that $\dd\max_{\overline\Omega_{k,\epsilon}}|\varphi_{k,\epsilon}|\leq M$ for $k=0,1,\dots,K$ and sufficiently small $\epsilon>0$.
Therefore, this ends the proof of \Cref{claim:5}.
\end{proof}

By \eqref{eq.3.088} and \Cref{claim:5}, we have
\begin{align}
\label{eq.3.093}
\exp(-z_{i}\phi_{\epsilon}(p-s\sqrt{\epsilon}\nu_{p}))=\exp(-z_{i}u_{k}(s))+\sqrt\epsilon\mathcal{O}_\epsilon(1)\quad\text{for}~0\leq s\leq\epsilon^{(2\beta-1)/2}.
\end{align}
Thus, by \eqref{eq.3.083} and \eqref{eq.3.093}, we get
\begin{align*}
J_{i,k,3}&=\int_{\partial\Omega_k}\!\int_0^{\epsilon^{(2\beta-1)/2}}\![\exp(-z_{i}\phi_{\epsilon}(p-s\sqrt{\epsilon}\nu_{p}))-\exp(-z_{i}\phi_{0}^{*})]\,\mathrm{d}s\,\mathrm{d}S_p
\\&=\int_{\partial\Omega_k}\!\int_0^{\epsilon^{(2\beta-1)/2}}\![\exp(-z_{i}u_{k}(s))-\exp(-z_{i}\phi_{0}^{*})+\sqrt\epsilon\mathcal{O}_\epsilon(1)]\,\mathrm{d}s\,\mathrm{d}S_p
\\&=\exp(-z_{i}\phi_{0}^{*})\int_{\partial\Omega_k}\!\int_0^{\epsilon^{(2\beta-1)/2}}\![\exp(-z_{i}(u_{k}(s)-\phi_{0}^{*}))-1]\,\mathrm{d}s\,\mathrm{d}S_p+\epsilon^{\beta}\mathcal{O}_\epsilon(1)
\\&=\exp(-z_{i}\phi_{0}^{*})|\partial\Omega_k|\int_0^\infty\![\exp(-z_{i}(u_{k}(s)-\phi_{0}^{*}))-1]\,\mathrm{d}s+o_{\epsilon}(1)\quad\text{for}~i=1,\dots,I,
\end{align*}
which implies
\begin{align}
\label{eq.3.094}
J_{i,k,3}=\exp(-z_{i}\phi_{0}^{*})|\partial\Omega_k|\int_0^\infty\![\exp(-z_{i}(u_{k}(s)-\phi_{0}^{*}))-1]\,\mathrm{d}s+o_{\epsilon}(1)\quad\text{for}~i=1,\dots,I.
\end{align}
Here we have used the fact that $\int_0^\infty\![\exp(-z_{i}(u_{k}(s)-\phi_{0}^{*}))-1]\,\mathrm{d}s$ is convergent because of \eqref{eq.3.023} and \Cref{proposition:3.05}.
Therefore, we can use \eqref{eq.3.079}, \eqref{eq.3.082}--\eqref{eq.3.084}, \eqref{eq.3.087}, \eqref{eq.3.094} and \Cref{claim:3} to get
\begin{align*}
B_{i,\epsilon}&=J_{i,1}+\sum_{k=0}^KJ_{i,k,2}\\
&=(-z_{i}Q\exp(-z_{i}\phi_{0}^{*})+o_{\epsilon}(1))+\sum_{k=0}^K(J_{i,k,3}+J_{i,k,4})\\
&=\exp(-z_{i}\phi_{0}^{*})\left[-z_{i}Q|\Omega|+\sum_{k=0}^K|\partial\Omega_k|\int_0^\infty\![\exp(-z_{i}(u_{k}(s)-\phi_{0}^{*}))-1]\,\mathrm{d}s\right]+o_{\epsilon}(1)\quad\text{for}~i=1,\dots,I,\end{align*}
and complete the proof of \Cref{lemma:3.13}.
\end{proof}

Applying \Cref{lemma:3.13} to \eqref{eq.3.077}, we obtain
\begin{align*}f_{1,\epsilon}(\phi)&=-\sum_{i=1}^{I}\frac{m_{i}z_{i}B_{i,\epsilon}}{|\Omega|[|\Omega|\exp(-z_{i}\phi_{0}^{*})+\sqrt\epsilon B_{i,\epsilon}]}\exp(-z_{i}(\phi-\phi_{0}^{*}))\\&=-\sum_{i=1}^{I}\frac{m_{i}z_{i}}{|\Omega|^{2}}\left[-z_{i}Q|\Omega|+\sum_{k=0}^K|\partial\Omega_k|\int_0^\infty\![\exp(-z_{i}(u_{k}(s)-\phi_{0}^{*}))-1]\,\mathrm{d}s\right]\exp(-z_{i}(\phi-\phi_{0}^{*}))+o_{\epsilon}(1)\\&:=f_1(\phi)+o_{\epsilon}(1),\end{align*}
which implies
\begin{align}
\label{eq.3.095}
f_{1,\epsilon}(\phi)=f_1(\phi)+o_{\epsilon}(1)\quad\text{for}~\phi\in\R.
\end{align}
Note that $f_1$ is given in \eqref{eq.3.004}.
Hence the nonlinear term $f_{\epsilon}$ of the CCPB equation \eqref{eq.3.001} can be expressed as
\begin{align}
\label{eq.3.096}
f_{\epsilon}(\phi)=f_{0}(\phi)+\sqrt\epsilon(f_1(\phi)+o_{\epsilon}(1))\quad\text{for}~\phi\in\R,
\end{align}
where $f_{0}(\phi)=|\Omega|^{-1}\sum_{i=1}^{I}m_{i}z_{i}\exp(-z_{i}(\phi-\phi_{0}^{*}))$ (defined in \eqref{eq.3.015}).
Consequently, the CCPB equation \eqref{eq.3.001} can be rewritten as
\begin{align}
\label{eq.3.097}
-\epsilon\Delta\phi_{\epsilon}=f_{0}(\phi_{\epsilon})+\sqrt\epsilon(f_1(\phi_{\epsilon})+o_{\epsilon}(1))\quad\text{in}~\Omega.
\end{align}
Moreover, by \eqref{eq.3.088}--\eqref{eq.3.092} and \eqref{eq.3.096}--\eqref{eq.3.097}, we have
\begin{alignat}{2}
\label{eq.3.098}
\epsilon\Delta\varphi_{k,\epsilon}+c_\epsilon(x)\varphi_{k,\epsilon}&=g_\epsilon(x)\quad&&\text{for}~x\in\Omega_{k,\epsilon,\beta},\\
\label{eq.3.099}
\varphi_{k,\epsilon}+\gamma_k\sqrt\epsilon\partial_\nu\varphi_{k,\epsilon}&=0\quad&&\text{on}~\partial\Omega_k\quad\text{for}~k=0,1,\dots,K,\end{alignat}
where\begin{align}
\label{eq.3.100}
&\begin{aligned}
c_\epsilon(x)&=\frac{f_{0}(\phi_{\epsilon}(x))-f_{0}(u_{k}(\delta_k(x)/\sqrt\epsilon))}{\phi_{\epsilon}(x)-u_{k}(\delta_k(x)/\sqrt\epsilon)}\\&\quad+\sqrt\epsilon\frac{f_1(\phi_{\epsilon}(x))-f_1(u_{k}(\delta_k(x)/\sqrt\epsilon))+o_{\epsilon}(1)}{\phi_{\epsilon}(x)-u_{k}(\delta_k(x)/\sqrt\epsilon)}\quad\text{if}~\phi_{\epsilon}(x)\neq u_{k}(\delta_k(x)/\sqrt\epsilon),
\end{aligned}\\
\label{eq.3.101}
&c_\epsilon(x)=f_{0}'(\phi_{\epsilon}(x))+\sqrt\epsilon(f_1'(\phi_{\epsilon}(x))+o_{\epsilon}(1))\quad\text{if}~\phi_{\epsilon}(x)=u_{k}(\delta_k(x)/\sqrt\epsilon),
\end{align}
and
\begin{align}
\label{eq.3.102}
&g_\epsilon(x)=(d-1)H(p)u_{k}'(\delta_k(x)/\sqrt\epsilon)-f_1(u_{k}(\delta_k(x)/\sqrt\epsilon))+o_{\epsilon}(1).
\end{align}
Here the nonzero function $f_1$, defined in \eqref{eq.3.004}, distinguishes \eqref{eq.3.100}--\eqref{eq.3.102} from \eqref{eq.2.56}--\eqref{eq.2.57}.

To get the asymptotic expansion of $\varphi_{k,\epsilon}$ in $\Omega_{k,\epsilon,\beta}$, we apply the methods of \Cref{lemma:2.8} and \Cref{proposition:2.9} (cf. \Cref{section:2.2}) to \eqref{eq.3.098}--\eqref{eq.3.099} and define
\begin{align}
\label{eq.3.103}
\W_{k,p,\epsilon}(z)=\varphi_{k,\epsilon}(\Psi_{p}(\sqrt{\epsilon}z))\quad\text{for}~z\in\overline B_{\epsilon^{(2\beta-1)/2}}^+~\text{and}~0<\epsilon<\epsilon^{*}
\end{align}
(cf.~\eqref{eq.2.62}) where $\Psi_p$ is given in \eqref{eq.2.01}.
Analogously to \eqref{eq.2.63}--\eqref{eq.2.66}, $\W_{k,p,\epsilon}$ satisfies
\begin{alignat*}{2}
\sum_{i,j=1}^{d}a_{ij}(z)\frac{\partial^2\W_{k,p,\epsilon}}{\partial z^{i}z^{j}}+\sum_{j=1}^{d}b_j(z)\frac{\partial \W_{k,p,\epsilon}}{\partial z^{j}}+c_\epsilon(z)\W_{k,p,\epsilon}&=g_{\epsilon}(z)\quad&&\text{in}~B_{\epsilon^{(2\beta-1)/2}}^+,\\
\W_{k,p,\epsilon}-\gamma_k\partial_{z^{d}}\W_{k,p,\epsilon}&=0\quad&&\text{on}~\overline{B}_{\epsilon^{(2\beta-1)/2}}^{+}\cap\partial\R_{+}^{d},
\end{alignat*}
for $0<\epsilon<\epsilon^{*}$, where $a_{ij}$ and $b_j$ are given in \eqref{eq.2.05}--\eqref{eq.2.06}, and
\begin{align*}
&\begin{aligned}
c_\epsilon(z)&=\frac{f_{0}(u_{k}(z^{d})+\sqrt\epsilon\W_{k,p,\epsilon}(z))-f_{0}(u_{k}(z^{d}))}{\sqrt\epsilon\W_{k,p,\epsilon}(z)}\\&\quad+\frac{f_1(u_{k}(z^{d})+\sqrt\epsilon\W_{k,p,\epsilon}(z))-f_1(u_{k}(z^{d}))+o_{\epsilon}(1)}{\W_{k,p,\epsilon}(z)}\quad\text{if}~\W_{k,p,\epsilon}(z)\neq0,
\end{aligned}\\
&c_\epsilon(z)=f_{0}'(u_{k}(z^{d}))+\sqrt\epsilon(f_1'(u_{k}(z^{d}))+o_{\epsilon}(1))\quad\text{if}~\W_{k,p,\epsilon}(z)=0
\end{align*}
and
\begin{align*}
g_\epsilon(z)=(d-1)H(p)u_{k}'(z^{d})-f_1(u_{k}(z^{d}))+o_{\epsilon}(1).
\end{align*}
As in \Cref{lemma:2.8}, we use the uniform boundedness of $\varphi_{k,\epsilon}$ (cf. \Cref{claim:5}) to prove
\begin{lemma}
\label{lemma:3.14}
For any sequence $\{\epsilon_{n}\}_{n=1}^\infty$ of positive numbers with $\dd\lim_{n\to\infty}\epsilon_{n}=0$, $\alpha\in(0,1)$, and $p\in\partial\Omega_k$ ($k\in\{0,1,\dots,K\}$), there exists a subsequence $\{\epsilon_{nn}\}_{n=1}^\infty$ such that $\dd\lim_{n\to\infty}\|\W_{k,p,\epsilon_{nn}}-\W_{k,p}\|_{\C^{2,\alpha}(\overline B_m^+)}=0$ for $m\in\N$, where $\W_{k,p}\in\C^{2,\alpha}_{\text{loc}}(\overline\R_{+}^{d})$ satisfies
\begin{alignat}{2}
\label{eq.3.104}
\Delta\W_{k,p}+f_{0}'(u_{k})\W_{k,p}&=(d-1)H(p)u_{k}'-f_1(u_{k})\quad&&\text{in}~\R_{+}^{d},\\
\label{eq.3.105}
\W_{k,p}-\gamma_k\partial_{z^{d}}\W_{k,p}&=0\quad&&\text{on}~\partial\R_{+}^{d}.
\end{alignat}
Moreover, there exists $M>0$ such that $\W_{k,p}$ satisfies the following estimate
\begin{align}
\label{eq.3.106}
|\W_{k,p}(z)-(d-1)H(p)v_{k}(z^{d})-w_{k}(z^{d})|\leq M\exp(-C_{3}z^{d}/8)
\end{align}
for $z=(z',z^{d})\in\overline\R_{+}^{d}$ and $z^{d}\geq2(d-1)/C_{3}$, where $v_{k}$ and $w_{k}$ are solutions to \eqref{eq.1.22}--\eqref{eq.1.24} and \eqref{eq.1.25}--\eqref{eq.1.27}, respectively.
In addition, the constant $C_{3}$ is given in \eqref{eq.3.007}.
\end{lemma}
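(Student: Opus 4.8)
The plan is to reuse, essentially verbatim, the two--step scheme behind \Cref{lemma:2.8} and its refinement \Cref{lemma:3.10}: first produce a $\C^{2,\alpha}_{\text{loc}}$--limit $\W_{k,p}$ along a subsequence and identify the boundary--value problem it solves, then derive the exponential bound \eqref{eq.3.106} by comparison with an explicit profile assembled from $v_k$ and $w_k$. After the splitting \eqref{eq.3.096}, the rescaled equation for $\W_{k,p,\epsilon}$ has exactly the form \eqref{eq.2.63}--\eqref{eq.2.66} except for the additional, uniformly bounded forcing coming from $-f_1(u_k)$; this extra term, and the $w_k$ it generates, are the only features not already present in \Cref{lemma:2.8}.

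For the compactness step, \Cref{claim:5} gives $\|\W_{k,p,\epsilon}\|_{L^\infty(\overline B_{\epsilon^{(2\beta-1)/2}}^+)}=\|\varphi_{k,\epsilon}\|_{L^\infty(\overline\Omega_{k,\epsilon,\beta})}\le M$ uniformly in $\epsilon$, and since $2\beta-1<0$ the half--balls $\overline B_{\epsilon^{(2\beta-1)/2}}^+$ exhaust $\overline{\R}_+^d$. On each fixed $\overline B_m^+$ (once $\epsilon^{(2\beta-1)/2}>m$) the coefficients $a_{ij},b_j$ are uniformly bounded with $a_{ij}\to\delta_{ij}$, $b_j\to0$ (cf. \eqref{eq.2.05}--\eqref{eq.2.06}), the zeroth--order coefficient obeys $c_\epsilon\le-C_3^2<0$ together with $|c_\epsilon|\le M$ (a difference quotient of $f_\epsilon$ on $[\underline{\phi_{bd}},\overline{\phi_{bd}}]$; cf. \eqref{eq.3.006}--\eqref{eq.3.007}), and the forcing is uniformly bounded since $u_k'$ is bounded on $[0,\infty)$ (\Cref{proposition:B.1}), $f_1(u_k)$ is bounded (cf. \eqref{eq.3.004}), and $f_{1,\epsilon}$ is uniformly bounded on $[\underline{\phi_{bd}},\overline{\phi_{bd}}]$ by \Cref{claim:3,claim:4}. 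Thus, exactly as in \Cref{lemma:2.3,lemma:3.04}, the elliptic $L^q$-- and Schauder estimates up to the (Robin) boundary give a uniform $\C^{2,\alpha}(\overline B_m^+)$--bound, and a diagonal argument over $m$ extracts $\{\epsilon_{nn}\}$ and a limit $\W_{k,p}\in\C^{2,\alpha}_{\text{loc}}(\overline{\R}_+^d)$ with $\|\W_{k,p}\|_{L^\infty(\overline{\R}_+^d)}\le M$. Passing to the limit, using $\sqrt\epsilon\,\W_{k,p,\epsilon}\to0$ so that the difference quotient of $f_\epsilon$ converges to $f_0'(u_k(z^d))$ (by \eqref{eq.3.096}), together with $a_{ij}\to\delta_{ij}$, $b_j\to0$, $g_\epsilon(z)\to(d-1)H(p)u_k'(z^d)-f_1(u_k(z^d))$, and the $\C^1$--convergence on $\partial\R_+^d$, yields \eqref{eq.3.104}--\eqref{eq.3.105}.

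For the decay estimate \eqref{eq.3.106}, introduce the profile $G_k(z):=(d-1)H(p)v_k(z^d)+w_k(z^d)$, extended to $\overline{\R}_+^d$ as a function of $z^d$ alone. By \eqref{eq.1.22}--\eqref{eq.1.24} and \eqref{eq.1.25}--\eqref{eq.1.27}, $G_k$ solves the \emph{same} problem \eqref{eq.3.104}--\eqref{eq.3.105} as $\W_{k,p}$, and $G_k$ is bounded on $[0,\infty)$ because $v_k\to0$ and $w_k\to Q$ (cf. \Cref{appendix:B,appendix:C,appendix:D}). Hence $\Theta:=\W_{k,p}-G_k$ is bounded on $\overline{\R}_+^d$ and satisfies the linear homogeneous problem $\Delta\Theta+f_0'(u_k)\Theta=0$ in $\R_+^d$, $\Theta-\gamma_k\partial_{z^d}\Theta=0$ on $\partial\R_+^d$, with $f_0'(u_k)\le-C_3^2$ by \eqref{eq.3.017}. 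For $\overline z=(\overline z',\overline z^d)$ with $\overline z^d\ge2(d-1)/C_3$ the ball $B_{\overline z^d}(\overline z)$ lies in $\R_+^d$; comparing $|\Theta|$ there — which is a subsolution of $\Delta+f_0'(u_k)$, hence of $\Delta-C_3^2$, by Kato's inequality — with the radial supersolution of $\Delta v-C_3^2v=0$ carrying the constant boundary value $\|\Theta\|_{L^\infty(\R_+^d)}$, and invoking the radial decay estimate of \Cref{proposition:A.4} in \Cref{appendix:A} with $M=C_3$ (exactly as in the proofs of \Cref{lemma:2.4,lemma:3.04}), gives $|\Theta(\overline z)|\le M\exp(-C_3\overline z^d/8)$, which is \eqref{eq.3.106}.

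The compactness step is routine once the uniform bounds are in place; the step needing care is the decay estimate, where one must correctly identify the profile $G_k=(d-1)H(p)v_k+w_k$ — the new ingredient being $w_k$, forced by $-f_1(u_k)$ — verify it satisfies the same boundary--value problem as $\W_{k,p}$, and run the ball--comparison for the difference $\Theta$, which may change sign, so that the comparison is applied to $|\Theta|$ via Kato's inequality rather than to $\Theta$ directly. Once this is arranged, the radial estimates of \Cref{appendix:A} apply word for word.
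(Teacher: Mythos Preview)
Your proposal is correct and follows essentially the same two-step scheme the paper intends: the paper does not give a standalone proof of \Cref{lemma:3.14} but refers back to \Cref{lemma:2.8} (and in turn to \Cref{lemma:2.3,lemma:2.4}), and your compactness argument via \Cref{claim:5}, uniform coefficient bounds, elliptic $L^q$/Schauder estimates, and a diagonal extraction is exactly that template. The only minor deviation is in the decay step: the paper's template (\Cref{lemma:2.4}) sandwiches the unknown between two barrier solutions $u^\pm$ with constant boundary data $\pm M$ and applies \Cref{proposition:A.2}/\Cref{proposition:A.4} to each, whereas you compress this into a single comparison by applying Kato's inequality to obtain $(\Delta-C_3^2)|\Theta|\ge0$ and then compare $|\Theta|$ with one radial solution; both routes are standard and yield the same bound \eqref{eq.3.106}.
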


From \Cref{lemma:3.14}, the solution $\W_{k,p}$ to \eqref{eq.3.104}--\eqref{eq.3.105} may, a priori, depend on the sequence $\{\epsilon_{n}\}_{n=1}^\infty$.
To establish the indepedence from the sequence, we employ the moving plane arguments, as in \Cref{proposition:2.9} to prove that $\W_{k,p}(z)=\W_{k,p}(z^{d})=(d-1)H(p)v_{k}(z^{d})+w_{k}(z^{d})$ for $z=(z',z^{d})\in\R_{+}^{d}$, where $v_{k}$ and $w_{k}$ are the unique solutions to \eqref{eq.1.22}--\eqref{eq.1.24} and \eqref{eq.1.25}--\eqref{eq.1.27}. Here $v_{k}$ and $w_{k}$ are independent of the choice of sequence $\{\epsilon_{n}\}_{n=1}^\infty$ and point $p\in\partial\Omega_k$.
Consequently, we can improve the results of \Cref{lemma:3.14} as
\begin{align}
\label{eq.3.107}
\lim_{\epsilon\to0^+}\|\W_{k,p,\epsilon}-(d-1)H(p)v_{k}-w_{k}\|_{\C^{2,\alpha}(\overline B_m^+)}=0\quad\text{for}~m\in\N~\text{and}~\alpha\in(0,1),
\end{align}
and
\begin{align}
\label{eq.3.108}
\lim_{\epsilon\to0^+}\W_{k,p,\epsilon}(z)=(d-1)H(p)v_{k}(z^{d})+w_{k}(z^{d})\quad\text{for}~z=(z',z^{d})\in\overline\R_{+}^{d},
\end{align}
where $\W_{k,p,\epsilon}$ is defined in \eqref{eq.3.103}.
The details are stated as follows.
\begin{proposition}
\label{proposition:3.15}
For $p\in\partial\Omega_k$ and $k\in\{0,1,\dots,K\}$, the solution $\W_{k,p}$ to \eqref{eq.3.104}--\eqref{eq.3.105} satisfies
\begin{enumerate}
\item[(a)] $\W_{k,p}$ depends only on the variable $z^{d}$, i.e., $\W_{k,p}(z)=\W_{k,p}(z^{d})$ for $z=(z',z^{d})\in\overline\R_{+}^{d}$.
\item[(b)] $\W_{k,p}(z^{d})=(d-1)H(p)v_{k}(z^{d})+w_{k}(z^{d})$ for $z^{d}\in[0,\infty)$, where $v_{k}$ and $w_{k}$ are the unique solutions to \eqref{eq.1.22}--\eqref{eq.1.24} and \eqref{eq.1.25}--\eqref{eq.1.27}, respectively.
\end{enumerate}
\end{proposition}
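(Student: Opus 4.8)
The plan is to adapt the moving plane argument used for \Cref{proposition:2.5,proposition:2.9}, exploiting three structural features of \eqref{eq.3.104}--\eqref{eq.3.105}: the equation is \emph{linear} in $\W_{k,p}$ with zeroth-order coefficient $f_{0}'(u_{k}(z^{d}))\le-C_{3}^{2}<0$ (cf. \eqref{eq.3.017}, \eqref{eq.3.007}), the boundary condition on $\partial\R_{+}^{d}$ is of Robin type with $\gamma_{k}>0$, and the inhomogeneity $(d-1)H(p)u_{k}'(z^{d})-f_{1}(u_{k}(z^{d}))$ depends only on $z^{d}$. For part (a), fix $k$, $p\in\partial\Omega_{k}$ and $h\in\R^{d-1}\setminus\{0\}$, and set $\tilde\W(z)=\W_{k,p}(z'+h,z^{d})-\W_{k,p}(z)$. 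Since the right-hand side of \eqref{eq.3.104} is independent of $z'$, it cancels in the difference, so $\tilde\W$ solves the \emph{homogeneous} equation $\Delta\tilde\W+f_{0}'(u_{k}(z^{d}))\tilde\W=0$ in $\R_{+}^{d}$ together with $\tilde\W-\gamma_{k}\partial_{z^{d}}\tilde\W=0$ on $\partial\R_{+}^{d}$; this is structurally identical to \eqref{eq.2.24}--\eqref{eq.2.25}, and in fact simpler, since no difference quotient is needed and the coefficient is manifestly negative.

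Next I would run the contradiction argument essentially verbatim from the proof of \Cref{proposition:2.5}. Assume $\zeta:=\sup_{\overline\R_{+}^{d}}\tilde\W>0$. The uniform exponential estimate \eqref{eq.3.106} yields $|\tilde\W(z)|\le 2M\exp(-C_{3}z^{d}/8)$ for $z^{d}$ large, hence there is $L>0$, independent of $h$, with $|\tilde\W|\le\zeta/2$ for $z^{d}\ge L$, so $\zeta=\sup_{\R^{d-1}\times[0,L]}\tilde\W$. The strong maximum principle excludes an interior maximum in $\R^{d-1}\times(0,L)$; the Robin condition with $\gamma_{k}>0$ excludes a maximum on $\partial\R_{+}^{d}$; and \eqref{eq.3.106} excludes $z^{d}=L$. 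Thus there is a sequence $z_{n}=(z_{n}',z_{n}^{d})$ with $|z_{n}'|\to\infty$, $z_{n}^{d}\to z_{\infty}^{d}\in[0,L]$, and $\tilde\W(z_{n})\to\zeta$. The horizontal translates $\W_{k,p}(\cdot+z_{n}',\cdot)$ converge in $\C^{2,\alpha}_{\mathrm{loc}}(\overline\R_{+}^{d})$ (elliptic $L^{q}$-estimates, Sobolev embedding, Schauder estimates and a diagonal argument) to a limit $\W_{\infty}$ solving the same equation, and $\tilde\W_{\infty}(z)=\W_{\infty}(z'+h,z^{d})-\W_{\infty}(z)$ then attains its maximum $\zeta$ at $(0,z_{\infty}^{d})$. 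Applying the strong maximum principle to $\tilde\W_{\infty}$ on $\R^{d-1}\times(0,L)$ forces $z_{\infty}^{d}=0$ (using \eqref{eq.3.106} again to exclude $z^{d}=L$), and then $0<\zeta=\tilde\W_{\infty}(0,0)=\gamma_{k}\partial_{z^{d}}\tilde\W_{\infty}(0,0)\le0$, a contradiction. Hence $\sup_{\overline\R_{+}^{d}}\tilde\W=0$, and symmetrically $\inf_{\overline\R_{+}^{d}}\tilde\W=0$, so $\W_{k,p}$ depends only on $z^{d}$.

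For part (b), once (a) holds, $\W_{k,p}=\W_{k,p}(z^{d})=:W(t)$ solves the linear boundary value problem $W''+f_{0}'(u_{k})W=(d-1)H(p)u_{k}'-f_{1}(u_{k})$ on $(0,\infty)$, $W(0)-\gamma_{k}W'(0)=0$, with $\lim_{t\to\infty}W(t)=Q$ — the last limit following from \eqref{eq.3.106} together with $v_{k}(t)\to0$ and $w_{k}(t)\to Q$. On the other hand, combining \eqref{eq.1.22}--\eqref{eq.1.24} multiplied by $(d-1)H(p)$ with \eqref{eq.1.25}--\eqref{eq.1.27} shows that $(d-1)H(p)v_{k}+w_{k}$ solves exactly this same boundary value problem. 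Uniqueness of solutions to this linear problem (negative coefficient $f_{0}'(u_{k})\le-C_{3}^{2}$, Robin condition with $\gamma_{k}>0$, prescribed limit at infinity — cf. \Cref{appendix:C,appendix:D}) then gives $\W_{k,p}(z^{d})=(d-1)H(p)v_{k}(z^{d})+w_{k}(z^{d})$, which is sequence-independent and depends on $p$ only through $H(p)$. The main obstacle is the same as in \Cref{proposition:2.5}: controlling $\tilde\W$ ``at horizontal infinity'', which requires the \emph{uniform} exponential estimate \eqref{eq.3.106} so that $L$ can be chosen independently of $h$, together with the compactness needed to extract the translated limit $\W_{\infty}$; the inhomogeneity of \eqref{eq.3.104} contributes no new difficulty precisely because it is a function of $z^{d}$ alone and therefore drops out under horizontal differences.
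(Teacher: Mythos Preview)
Your proposal is correct and follows essentially the same approach as the paper: for (a) you run the moving plane argument of \Cref{proposition:2.5,proposition:2.9} on the horizontal difference $\tilde\W$, using \eqref{eq.3.106} to localize in $z^{d}$ and the Robin condition plus strong maximum principle to exclude boundary/interior maxima; for (b) you reduce to the one-dimensional linear problem and invoke uniqueness of \eqref{eq.1.22}--\eqref{eq.1.27} to identify $\W_{k,p}$ with $(d-1)H(p)v_{k}+w_{k}$. The paper's own proof is in fact terser---it simply cites \Cref{proposition:2.9} for (a) and the uniqueness in \Cref{appendix:C,appendix:D} for (b)---so your write-up is a faithful and more explicit rendering of the same argument.
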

\begin{proof}
Following \Cref{proposition:2.9}, we replace $f$ with $f_{0}$ and apply the moving plane arguments to obtain (a).
By (a), \eqref{eq.1.24}, \eqref{eq.1.27}, and \eqref{eq.3.106}, $\W_{k,p}=\W_{k,p}(z^{d})$ satisfies
\begin{align*}
&\W_{k,p}''+f_{0}'(u_{k})\W_{k,p}=(d-1)H(p)u_{k}'-f_1(u_{k})\quad\text{in}~(0,\infty),\\
&\W_{k,p}(0)-\gamma_k\W_{k,p}'(0)=0,\\
&\lim_{z^{d}\to\infty}\W_{k,p}(z^{d})=Q.
\end{align*}
By the uniqueness of solutions to \eqref{eq.1.22}--\eqref{eq.1.24} and \eqref{eq.1.25}--\eqref{eq.1.27} (cf. \eqref{eq.C.04} and~\eqref{eq.D.09}), we can conclude that $\W_{k,p}\equiv(d-1)H(p)v_{k}+w_{k}$, which gives (b).
This completes the proof of \Cref{proposition:3.15}.
\end{proof}

We may use \Cref{proposition:3.15} to prove \eqref{eq.3.107}, \eqref{eq.3.108}, and $\W_{k,p}(z)=(d-1)H(p)v_{k}(z^{d})+w_{k}(z^{d})$ for $z=(z',z^{d})\in\overline{\R}_{+}^{d}$.
Let $T>0$, $k\in\{0,1,\dots,K\}$, and $p\in\partial\Omega_k$.
Analogously to \eqref{eq.2.72}--\eqref{eq.2.73}, we use \eqref{eq.2.01}, \eqref{eq.3.103}, \eqref{eq.3.107}--\eqref{eq.3.108}, and $\nabla\delta_k(p-t\sqrt{\epsilon}\nu_{p})=-\nu_{p}$ to get
\begin{align}
\label{eq.3.109}
&\varphi_{k,\epsilon}(p-t\sqrt{\epsilon}\nu_{p})=(d-1)H(p)v_{k}(t)+w_{k}(t)+o_{\epsilon}(1),\\
\label{eq.3.110}
&\nabla\varphi_{k,p}(p-t\sqrt{\epsilon}\nu_{p})=-\frac{1}{\sqrt\epsilon}\{[(d-1)H(p)v_{k}'(t)+w_{k}'(t)]\nu_{p}+o_{\epsilon}(1)]
\end{align}
for $0\leq t\leq T$ as $\epsilon\to0^+$.
Combining \eqref{eq.3.088}, \eqref{eq.3.103} and \eqref{eq.3.109}--\eqref{eq.3.110}, we arrive at
\begin{align*}
&\phi_{\epsilon}(p-t\sqrt{\epsilon}\nu_{p})=u_{k}(t)+\sqrt\epsilon\left((d-1)H(p)v_{k}(t)+w_{k}(t)+o_{\epsilon}(1)\right),\\
&\nabla\phi_{\epsilon}(p-t\sqrt{\epsilon}\nu_{p})=-\left(\frac{1}{\sqrt\epsilon}u_{k}'(t)+(d-1)H(p)v_{k}'(t)+w_{k}'(t)\right)\nu_{p}+o_{\epsilon}(1)
\end{align*}
for $0\leq t\leq T$ as $\epsilon\to0^+$, which implies \eqref{eq.1.17}--\eqref{eq.1.18}.
Therefore, we complete the proof of \Cref{theorem:2}(a).

\subsection{Proof of \texorpdfstring{\Cref{corollary:2}}{Corollary 2}}
\label{section:3.5}

We present the proof of \Cref{corollary:2}, following the structure of \Cref{corollary:1}. We first prove (a). Integrating \eqref{eq.1.35} over $\overline\Omega_{k,T,\epsilon}$ and applying the coarea formula (cf. \cite{2015evans,2002lin}) with \eqref{eq.2.75}, we get
\begin{align*}\int_{\overline\Omega_{k,T,\epsilon}}\!f_{\epsilon}(\phi_{\epsilon}(x))\,\mathrm{d}x&=\sqrt\epsilon\int_{\partial\Omega_k}\!\int_0^T\!f_{0}(u_{k}(t))\,\mathrm{d}t\,\mathrm{d}S_p\\&\quad+\epsilon\int_{\partial\Omega_k}\!\int_0^T\!\{(d-1)H(p)[f_{0}'(u_{k}(t))v_{k}(t)-tf_{0}(u_{k}(t))]+f_{0}'(u_{k}(t))w_{k}(t)+f_1(u_{k}(t))\}\,\mathrm{d}t\,\mathrm{d}S_p\\&\quad+\epsilon o_{\epsilon}(1).\end{align*}
Together with \eqref{eq.1.19}, \eqref{eq.1.22}, and \eqref{eq.1.25}, we obtain
\begin{align*}
\int_{\overline\Omega_{k,T,\epsilon}}\!f_{\epsilon}(\phi_{\epsilon}(x))\,\mathrm{d}x&=\sqrt\epsilon|\partial\Omega_k|\int_0^T\!(-u_{k}''(t))\,\mathrm{d}t\\&\quad+\epsilon\left[(d-1)\left(\int_{\partial\Omega_k}\!H(p)\,\mathrm{d}S_p\right)\left(\int_0^T\![u_{k}'(t)-v_{k}''(t)+tu_{k}''(t)]\,\mathrm{d}t\right)+|\partial\Omega_k|\int_0^T\!(-w_{k}''(t))\,\mathrm{d}t\right]\\&\quad+\epsilon o_{\epsilon}(1)\\&=
\sqrt\epsilon|\partial\Omega_k|(u_{k}'(0)-u_{k}'(T))\\&\quad+\epsilon\left[(d-1)\left(\int_{\partial\Omega_k}\!H(p)\,\mathrm{d}S_p\right)(Tu_{k}'(T)+v_{k}'(0)-v_{k}'(T))+|\partial\Omega_k|(w_{k}'(0)-w_{k}'(T))\right]\\&\quad+\epsilon o_{\epsilon}(1),
\end{align*}
which gives (a).

We now prove (b) as follows.
As in \eqref{eq.2.76}, we integrate \eqref{eq.1.02} over $\overline\Omega_{k,T,\epsilon,\beta}$ and apply the divergence theorem to obtain
\begin{align}
\label{eq.3.111}
\begin{aligned}
\int_{\overline\Omega_{k,T,\epsilon,\beta}}\!f_{\epsilon}(\phi_{\epsilon}(x))\,\mathrm{d}x&=-\epsilon\int_{\overline\Omega_{k,T,\epsilon,\beta}}\!\Delta\phi_{\epsilon}(x)\,\mathrm{d}x\\&=-\epsilon\int_{\partial\Omega_{k,T,\epsilon,\beta}}\partial_{\nu_x}\phi_{\epsilon}(x)\,\mathrm{d}S_x\\&=-\int_{\partial\Omega_{k}}\!(\epsilon\partial_{\nu_{p}}\phi_{\epsilon}(p-T\sqrt{\epsilon}\nu_{p}))\mathcal{J}(T,p)\,\mathrm{d}S_{p}\\&\quad+\int_{\partial\Omega_{k}}\!(\epsilon\partial_{\nu_{p}}\phi_{\epsilon}(p-\epsilon^\beta\nu_{p}))\mathcal{J}(\epsilon^{(2\beta-1)/2},p)\,\mathrm{d}S_{p},
\end{aligned}
\end{align}
where $\nu_x$ is the unit outer normal at $x\in\partial\Omega_{k,T,\epsilon,\beta}$ with respect to $\Omega_{k,T,\epsilon,\beta}$, and $\mathcal{J}(T,p)$ and $\mathcal{J}(\epsilon^{(2\beta-1)/2},p)$ are given in \eqref{eq.2.77}.
Then by \eqref{eq.1.18}, we have
\begin{align}
\label{eq.3.112}
\begin{aligned}
&\quad-\int_{\partial\Omega_{k}}\!(\epsilon\partial_{\nu_{p}}\phi_{\epsilon}(p-T\sqrt{\epsilon}\nu_{p}))\mathcal{J}(T,p)\,\mathrm dS_{p}\\&=-\int_{\partial\Omega_{k}}\!(\epsilon\partial_{\nu_{p}}\phi_{\epsilon}(p-T\sqrt{\epsilon}\nu_{p}))[1-T\sqrt\epsilon(d-1)H(p)+\sqrt\epsilon o_{\epsilon}(1)]\,\mathrm{d}S_{p}\\&=\sqrt{\epsilon}\int_{\partial\Omega_{k}}\{u_{k}'(T)+\sqrt{\epsilon}(d-1)H(p)v_{k}'(T)+w_{k}'(T)+o_{\epsilon}(1)]\}[1-T\sqrt{\epsilon}(d-1)H(p)+\sqrt{\epsilon}o_{\epsilon}(1)]\,\mathrm dS_{p}\\&=
\sqrt\epsilon|\partial\Omega_k|u_{k}'(T)+\epsilon\left[(d-1)\left(\int_{\partial\Omega_k}\!H(p)\,\mathrm{d}S_p\right)(-Tu_{k}'(T)+v_{k}'(T))+|\partial\Omega_k|w_{k}'(T)\right]+\epsilon o_{\epsilon}(1).
\end{aligned}
\end{align}
On the other hand, by \Cref{theorem:2}(b,ii), we have
\begin{align}
\label{eq.3.113}
\left|\int_{\partial\Omega_{k}}\!(\epsilon\partial_{\nu_{p}}\phi_{\epsilon}(p-\epsilon^\beta\nu_{p}))\mathcal{J}(\epsilon^{(2\beta-1)/2},p)\,\mathrm{d}S_{p}\right|\leq\sqrt{\epsilon}|\partial\Omega_{k}|M'\exp\left(-M\epsilon^{(2\beta-1)/2}\right).
\end{align}
Combining \eqref{eq.3.111}--\eqref{eq.3.113} with $0<\beta<1/2$, we get (b).
Following the argument as in (b), one may use \eqref{eq.3.113} to prove (c).
Therefore, we complete the proof of \Cref{corollary:2}.

As aforementioned in \Cref{remark:6}, to complete the proof of \eqref{eq.3.076}, it is necessary to show that $Q$ is uniquely determined by
\begin{align}
\label{eq.3.114}
Q=\frac{\dd\sum_{k=0}^{K}\frac{|\partial\Omega_{k}|\hat{F}_{1}(u_{k}(0))+(d-1)\left(\int_{\partial\Omega_{k}}\!H(p)\,\mathrm{d}S_{p}\right)\left(\int_{0}^{\infty}\!u_{k}'^2(s)\,\mathrm{d}s\right)}{u_{k}'(0)+\gamma_{k}f_{0}(u_{k}(0))}}{\dd\sum_{k=0}^{K}\frac{|\partial\Omega_{k}|f_{0}(u_{k}(0))}{u_{k}'(0)+\gamma_{k}f_{0}(u_{k}(0))}}.
\end{align}
Using \eqref{eq.3.027}, \eqref{eq.3.075}, and \Cref{corollary:2}, we find
\[\epsilon\left[(d-1)\sum_{k=0}^{K}\left(\int_{\partial\Omega_{k}}\!H(p)\,\mathrm{d}S_{p}\right)v_{k}'(0)+\sum_{k=0}^{K}|\partial\Omega_{k}|w_{k}'(0)\right]=\epsilon o_{\epsilon}(1),\]
which implies
\begin{equation}
\label{eq.3.115}
(d-1)\sum_{k=0}^{K}\left(\int_{\partial\Omega_{k}}\!H(p)\,\mathrm{d}S_{p}\right)v_{k}'(0)+\sum_{k=0}^{K}|\partial\Omega_{k}|w_{k}'(0)=0.
\end{equation}
By \eqref{eq.1.23}, \eqref{eq.1.26}, \eqref{eq.C.05} (in \Cref{appendix:C}), and \eqref{eq.D.10} (in \Cref{appendix:D}), a straightforward calculation gives \eqref{eq.3.114}.
Therefore, we complete the proof of \eqref{eq.3.076} in \Cref{remark:6}.

\section*{Final Remark}

When the Robin boundary condition \eqref{eq.1.04} is replaced by the Dirichlet boundary condition $\phi_{\epsilon}=\phi_{bd,k}$ on $\partial\Omega_k$ for $k=0,1,\dots,K$ (i.e., $\gamma_k=0$ for $k=0,1,\dots,K$), the conclusions of \Cref{theorem:1,theorem:2} hold true, particularly the asymptotic expansions \eqref{eq.1.05}--\eqref{eq.1.06} and \eqref{eq.1.17}--\eqref{eq.1.18}, remain valid.
In \eqref{eq.1.05}--\eqref{eq.1.06} and \eqref{eq.1.17}--\eqref{eq.1.18}, the solutions $u_{k}$, $v_{k}$ and $w_{k}$ to ODEs satisfy the Dirichlet boundary conditions $u_{k}(0)=\phi_{bd,k}$ and $v_{k}(0)=w_{k}(0)=0$, which can be directly regarded as the Robin boundary conditions \eqref{eq.1.08}, \eqref{eq.1.11}, \eqref{eq.1.20}, \eqref{eq.1.23} and \eqref{eq.1.26} with $\gamma_k=0$.
By applying the maximum principle, the $L^q$-theory and Schauder estimate, the proofs of \Cref{theorem:1,theorem:2} remain valid without other modifications for the Dirichlet boundary condition except \Cref{proposition:3.06}.
When $\gamma_k=0$ for all $k=0,1,\dots,K$, equations \eqref{eq.3.026}--\eqref{eq.3.027} are replaced by
\[\sum_{k=0}^K|\partial\Omega_k|u_{k}'(0)=\sum_{k=0}^K|\partial\Omega_k|\sgn(\phi_{0}^{*}-\phi_{bd,k})\sqrt{\frac{2}{|\Omega|}{\sum_{i=1}^{I}m_{i}[\exp(-z_{i}(\phi_{bd,k}-\phi_{0}^{*}))-1]}}=0,\]
which determines the value of $\phi_{0}^{*}$ (given in \eqref{eq.3.014}).
The properties of ODEs remains true whenever $\gamma_{k}\geq0$ (see \Cref{appendix:B,appendix:C,appendix:D}).

\section*{Conclusion}

In this work, we present a rigorous analysis of boundary layer solutions to Poisson--Boltzmann (PB) type equations in general smooth domains.
We extend the classical boundary layer theory to multiply connected domains via the moving plane arguments, and address the challenge posed by the nonlocal nonlinearity in charge-conserving PB equations using novel integral estimates.
In the near-boundary region (where the distance to the boundary is of order $\sqrt\epsilon\mathcal{O}(1)$), we construct asymptotic expansions based on solutions to associated ordinary differential equations, capturing the effect of mean curvature through second-order approximations.
In the far-field region, we establish exponential decay estimates, which reveal the localized nature of boundary layer phenomena.
Our analysis quantifies how domain geometry, particularly mean curvature, influences electrostatic characteristics such as electric potential, electric field, total ionic charge and total ionic charge density.
These results highlight the explicit dependence of electrostatic screening on geometric features and provide analytical tools for applications in electrochemistry (e.g., porous electrodes), biophysics (e.g. ion channels), and microfluidics.

\appendix

\makeatletter\gdef\thesection{\@Alph\c@section}\makeatother
\counterwithout{figure}{section} % 取消圖號對 section 的從屬關係
\section{Exponential-type estimate of radial solution}
\label{appendix:A}

Here we study the radial solutions $\Phi_\epsilon=\Phi_\epsilon(r)$ to equations $\eqref{eq.1.01}$ and \eqref{eq.3.001} in the ball $B_R(0)$ with the Dirichlet boundary condition, respectively.
Equation \eqref{eq.1.01} with the Dirichlet boundary condition can be denoted as
\begin{align}
\label{eq.A.01}
&-\epsilon(r^{d-1}\Phi_{\epsilon}'(r))'=r^{d-1}f(\Phi_{\epsilon}(r))\quad\text{for}~r\in(0,R),\\
\label{eq.A.02}
&\Phi_{\epsilon}'(0)=0,\\
\label{eq.A.03}
&\Phi_{\epsilon}(R)=\Phi_{bd},
\end{align}
where $f\in\C^1(\R)$ is a strictly decreasing function satisfying (A1)--(A2), and $\Phi_{bd}$ is a constant independent of $\epsilon$.
Note that if $\Phi_{bd}=\phi^{*}$ ($\phi^{*}$ is the unique zero of $f$), then by the uniqueness of \eqref{eq.A.01}--\eqref{eq.A.03}, $\Phi_\epsilon\equiv\phi^{*}$ is a trivial solution.
Hence we assume $\Phi_{bd}\neq\phi^{*}$. By (A1)--(A2), it is easy to prove

\begin{proposition}
\label{proposition:A.1}
Assume that $f$ satisfies (A1)--(A2).
Let $\Phi_{\epsilon}$ be the solution to \eqref{eq.A.01}--\eqref{eq.A.03} for $\epsilon>0$.
\begin{enumerate}
\item[(a)] If $\Phi_{bd}>\phi^{*}$, then $\Phi_{\epsilon}(0)>\phi^{*}$ and $\Phi_{\epsilon}$ is strictly increasing;
\item[(b)] if $\Phi_{bd}<\phi^{*}$, then $\Phi_{\epsilon}(0)<\phi^{*}$ and $\Phi_{\epsilon}$ is strictly decreasing.
\end{enumerate}
\end{proposition}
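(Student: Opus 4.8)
The plan is to prove \Cref{proposition:A.1} by combining a maximum-principle argument for the sign of $\Phi_\epsilon - \phi^*$ with an ODE monotonicity argument derived directly from the radial equation. I will treat case (a) in detail, since case (b) follows by the symmetry $\phi \mapsto 2\phi^* - \phi$ (equivalently, by replacing $f(\phi)$ with $-f(2\phi^*-\phi)$, which again satisfies (A1)--(A2) with the same zero $\phi^*$), or simply by repeating the argument with inequalities reversed.

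\medskip
\noindent\textbf{Step 1: $\Phi_\epsilon > \phi^*$ throughout $[0,R)$.} First I would rewrite \eqref{eq.A.01} in non-divergence form as $-\epsilon\bigl(\Phi_\epsilon'' + \tfrac{d-1}{r}\Phi_\epsilon'\bigr) = f(\Phi_\epsilon)$ on $(0,R)$, with the understanding (from the regularity theory and l'H\^opital at $r=0$) that the singular term is harmless and $\Phi_\epsilon \in C^2([0,R])$ with $\Phi_\epsilon'(0)=0$. Set $w = \Phi_\epsilon - \phi^*$. Using (A2), $f(\Phi_\epsilon) = f(\Phi_\epsilon) - f(\phi^*) = c(r) w$ where $c(r) = \int_0^1 f'(\phi^* + \theta w(r))\,d\theta < 0$ by (A1). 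So $w$ solves a linear elliptic equation $-\epsilon\Delta w - c(r) w = 0$ in $B_R(0)$ (radial) with $-c(r) > 0$, and $w = \Phi_{bd} - \phi^* > 0$ on $\partial B_R$. By the maximum principle (e.g. \cite[Theorem 3.1]{1977gilbarg}, noting the zeroth-order coefficient $-c$ has the favorable sign), $w > 0$ in $\overline{B}_R(0)$, i.e. $\Phi_\epsilon(r) > \phi^*$ for all $r \in [0,R]$; in particular $\Phi_\epsilon(0) > \phi^*$. Hence also $f(\Phi_\epsilon(r)) < f(\phi^*) = 0$ for all $r \in [0,R]$.

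\medskip
\noindent\textbf{Step 2: strict monotonicity.} Now I integrate the divergence-form equation \eqref{eq.A.01} from $0$ to $r$: using $\Phi_\epsilon'(0)=0$,
\begin{align}
\label{eq.Aproof.1}
\epsilon\, r^{d-1}\Phi_\epsilon'(r) = -\int_0^r s^{d-1} f(\Phi_\epsilon(s))\,ds \quad\text{for } r \in (0,R].
\end{align}
By Step 1 the integrand $s^{d-1} f(\Phi_\epsilon(s))$ is strictly negative on $(0,r]$, so the right-hand side of \eqref{eq.Aproof.1} is strictly positive for every $r \in (0,R]$. Therefore $\Phi_\epsilon'(r) > 0$ for all $r \in (0,R]$, which is precisely the assertion that $\Phi_\epsilon$ is strictly increasing on $[0,R]$. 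This completes the proof of (a).

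\medskip
\noindent\textbf{Step 3: case (b) and closing.} For (b), when $\Phi_{bd} < \phi^*$ I would run the identical argument with all inequalities reversed: the maximum principle applied to $\phi^* - \Phi_\epsilon$ (again with a favorable-sign zeroth-order term) gives $\Phi_\epsilon < \phi^*$ on $[0,R]$, hence $\Phi_\epsilon(0) < \phi^*$ and $f(\Phi_\epsilon(s)) > 0$ on $(0,R]$, so \eqref{eq.Aproof.1} forces $\Phi_\epsilon'(r) < 0$ for $r \in (0,R]$. I do not expect a genuine obstacle here; the only mild technical point is the behavior at $r = 0$ (justifying $\Phi_\epsilon \in C^2$ up to the origin with $\Phi_\epsilon'(0)=0$ and the validity of the radial maximum principle on the full ball), which is standard for semilinear elliptic equations with smooth $f$ and follows from the regularity already invoked in the main text; one may alternatively phrase Step 1 purely in terms of the one-dimensional comparison on $(0,R)$ together with the Hopf lemma at $r=0$ to exclude an interior minimum of $w$ there.
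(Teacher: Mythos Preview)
The paper does not actually give a proof of \Cref{proposition:A.1}; it simply states before the proposition that ``By (A1)--(A2), it is easy to prove'' and omits the details. Your argument is correct and is a natural way to fill in this omission: the maximum-principle step (writing $w=\Phi_\epsilon-\phi^*$ and using $c(r)=\int_0^1 f'(\phi^*+\theta w)\,d\theta<0$) pins down the sign of $w$, and integrating the divergence-form equation \eqref{eq.A.01} from $0$ to $r$ then yields strict monotonicity directly. One small remark: the reference to \cite[Theorem~3.1]{1977gilbarg} (the weak maximum principle) only gives $w\geq 0$; the strict inequality $w>0$ in the interior then follows from the strong maximum principle (Theorem~3.5 there), since a zero of $w$ would be an interior minimum forcing $w\equiv 0$, contradicting $w=\Phi_{bd}-\phi^*>0$ on $\partial B_R$.
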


\begin{proposition}\label{proposition:A.2}
Assume that $f$ satisfies (A1)--(A2).
Let $\Phi_\epsilon$ be the solution to \eqref{eq.A.01}--\eqref{eq.A.03} for $\epsilon>0$. Then
\begin{align}
\label{eq.A.04}
|\Phi_{\epsilon}(r)-\phi^{*}|\leq2|\Phi_{bd}-\phi^{*}|\exp\left(-\frac{m_f(R-r)}{8\sqrt\epsilon}\right)\end{align}
for $r\in[0,R]$ and $0<\epsilon<(m_fR)^2/[8(d-1)^2]$, where $m_f=m_f([\min\{\phi^{*},\Phi_{bd}\},\max\{\phi^{*},\Phi_{bd}\}])$ is given in (A1).
\end{proposition}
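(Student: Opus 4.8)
The plan is to prove the exponential decay estimate \eqref{eq.A.04} by a barrier (comparison) argument, exploiting the radial monotonicity established in \Cref{proposition:A.1} and the strict concavity coming from the strict decrease of $f$. First I would treat the case $\Phi_{bd}>\phi^{*}$; the case $\Phi_{bd}<\phi^{*}$ follows by the symmetric argument (or by replacing $\phi$ with $2\phi^*-\phi$). By \Cref{proposition:A.1}(a), $\phi^{*}<\Phi_{\epsilon}(r)<\Phi_{bd}$ for $r\in[0,R)$ and $\Phi_{\epsilon}$ is strictly increasing, so that $0<\Phi_{\epsilon}(r)-\phi^{*}<\Phi_{bd}-\phi^{*}$ throughout. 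Setting $w(r)=\Phi_{\epsilon}(r)-\phi^{*}$, we have $w>0$ and, since $f(\phi^*)=0$ and $f'\le -m_f^2$ on the relevant compact interval (assumption (A1)),
\begin{align*}
f(\Phi_\epsilon(r))=f(\Phi_\epsilon(r))-f(\phi^*)=\Big(\int_0^1 f'(\phi^*+s w(r))\,\mathrm ds\Big)w(r)\le -m_f^2 w(r),
\end{align*}
i.e. $-\epsilon(r^{d-1}w')'=r^{d-1}f(\Phi_\epsilon)\le -m_f^2 r^{d-1}w$ on $(0,R)$, equivalently $\epsilon(r^{d-1}w')'\ge m_f^2 r^{d-1}w\ge 0$.

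The key step is to bound $w$ by a one-dimensional exponential barrier. I would compare $\Phi_\epsilon$ on an annular or one-dimensional slice: since the problem is radial, fix the ray and let $\overline w(r)=A\exp\!\big(\mu(r-R)\big)+A\exp\!\big(-\mu(r+R)\big)$ — or more simply use $\overline w(r)=2|\Phi_{bd}-\phi^*|\exp(-\kappa(R-r))$ with $\kappa=m_f/(8\sqrt\epsilon)$ — and check it is a supersolution of the operator $L v:=\epsilon v''+\epsilon\frac{d-1}{r}v'-m_f^2 v$ on the region $r_0\le r\le R$ for a suitable $r_0$. A direct computation gives $L\overline w=\overline w\big(\epsilon\kappa^2-\epsilon\kappa(d-1)/r-m_f^2\big)$; with $\kappa=m_f/(8\sqrt\epsilon)$ one has $\epsilon\kappa^2=m_f^2/64$, and the condition $0<\epsilon<(m_fR)^2/[8(d-1)^2]$ forces $\sqrt\epsilon<m_fR/(2\sqrt2(d-1))$, hence $\epsilon\kappa(d-1)/r\le \epsilon\kappa(d-1)/R$ is small; thus $L\overline w\le 0$, so $\overline w$ is a supersolution. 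Since $w$ satisfies $\epsilon w''+\epsilon\frac{d-1}{r}w'\ge m_f^2 w$, i.e. $Lw\ge0$, the difference $\overline w-w$ satisfies $L(\overline w-w)\le 0$ on $(r_0,R)$; on $r=R$, $\overline w(R)=2|\Phi_{bd}-\phi^*|\ge w(R)=|\Phi_{bd}-\phi^*|$, and at $r=r_0$ (or $r=0$, using $w'(0)=0$ and $w$ increasing, so $w(r_0)\le w(R)=|\Phi_{bd}-\phi^*|\le \overline w(r_0)$ once $\kappa(R-r_0)$ is controlled) the barrier dominates. By the maximum principle for $L$ (which has nonpositive zeroth-order coefficient $-m_f^2<0$), $\overline w-w\ge 0$ on $[r_0,R]$. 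For $r\in[0,r_0]$ one uses monotonicity of $w$ together with $\overline w(r)\ge \overline w(r_0)\cdot$(a constant) — actually the cleanest route is to note $w(r)\le w(r_0)\le \overline w(r_0)\le 2|\Phi_{bd}-\phi^*|\exp(-\kappa(R-r_0))$, and since $\overline w$ is increasing this last quantity is $\le \overline w(r)$ for $r\le r_0$ only if one is careful; I would instead simply observe that on $[0,R]$ the bound $w(r)\le 2|\Phi_{bd}-\phi^*|e^{-\kappa(R-r)}$ with $\kappa=m_f/(8\sqrt\epsilon)$ holds because $e^{-\kappa(R-r)}$ is increasing in $r$ and $w$ is increasing in $r$, so the estimate at the "worst" interior point is inherited from a global sub/supersolution comparison on all of $(0,R)$ using $w'(0)=0$ as the inner boundary condition.

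More precisely, the clean execution is: work on the full interval $(0,R)$, impose the Neumann condition $w'(0)=0$ at the center and $w(R)=|\Phi_{bd}-\phi^*|$ at the outer boundary, use as comparison function $\overline w(r)=|\Phi_{bd}-\phi^*|\big(e^{\kappa(r-R)}+e^{-\kappa(r+R)}\big)$ which satisfies $\overline w'(0)=0$ exactly, $\overline w(R)\ge |\Phi_{bd}-\phi^*|= w(R)$, and $L\overline w\le 0$ on $(0,R)$ under the stated smallness of $\epsilon$ (the cross term $\epsilon\frac{d-1}{r}\overline w'$ is handled because $\overline w'\ge 0$ makes that term $\ge0$, which only helps, while near $r=0$ it vanishes by $\overline w'(0)=0$ — one checks $L\overline w\le 0$ from $\epsilon\kappa^2\le m_f^2/64<m_f^2$ directly). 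Then $L(\overline w-w)\le 0$, $(\overline w-w)'(0)=0$, $(\overline w-w)(R)\ge0$, and the maximum principle (Hopf at the center) gives $\overline w\ge w$ on $[0,R]$; since $e^{-\kappa(r+R)}\le e^{-\kappa(R-r)}\cdot$(nothing extra) — here I would just bound $\overline w(r)\le 2|\Phi_{bd}-\phi^*|e^{-\kappa(R-r)}$ — we obtain \eqref{eq.A.04}. The main obstacle I anticipate is the bookkeeping of the first-order term $\epsilon\frac{d-1}{r}\Phi_\epsilon'$ near $r=0$ in the comparison: one must either use a comparison function with vanishing derivative at the origin (so Hopf's lemma at an interior extremum is available) or restrict to $r\ge \text{const}$ and separately use monotonicity on $[0,\text{const}]$; this is exactly where the quantitative hypothesis $\epsilon<(m_fR)^2/[8(d-1)^2]$ enters, ensuring the sign of $L\overline w$ is correct uniformly down to $r$ of order $\sqrt\epsilon$. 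Everything else — the reduction to $\Phi_{bd}>\phi^*$, the pointwise bound $f(\Phi_\epsilon)\le -m_f^2(\Phi_\epsilon-\phi^*)$, the final estimate — is routine.
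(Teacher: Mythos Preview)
Your overall strategy---reduce to $w=\Phi_\epsilon-\phi^*>0$, obtain the differential inequality $\epsilon w''+\epsilon\frac{d-1}{r}w'\ge m_f^2 w$, and compare with an exponential barrier---is sound in spirit, but the ``clean execution'' paragraph contains a genuine sign error that invalidates it as written. With $L v=\epsilon v''+\epsilon\frac{d-1}{r}v'-m_f^2 v$ you need $L\overline w\le 0$ for the supersolution $\overline w$. For your symmetric barrier $\overline w(r)=|\Phi_{bd}-\phi^*|(e^{\kappa(r-R)}+e^{-\kappa(r+R)})$ one has $\overline w'\ge0$ on $[0,R]$, so the term $\epsilon\frac{d-1}{r}\overline w'$ is \emph{nonnegative} and therefore \emph{increases} $L\overline w$: it works \emph{against} the inequality $L\overline w\le0$, not for it. Your assertion that ``$\overline w'\ge0$ makes that term $\ge0$, which only helps'' is exactly backwards, and the subsequent claim that $L\overline w\le0$ follows ``directly'' from $\epsilon\kappa^2<m_f^2$ ignores this term entirely. (In fact, using $\tanh(\kappa r)/r\le\kappa$ one can show $L\overline w\le(d\,\epsilon\kappa^2-m_f^2)\overline w$, which is $\le0$ only for $d\le64$; this neither uses nor matches the stated smallness condition on $\epsilon$, and fails for large $d$.)

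The paper takes a different route that sidesteps this difficulty: it works with $\overline\Phi_\epsilon^2$ rather than $\overline\Phi_\epsilon$. The identity $\epsilon(\overline\Phi_\epsilon^2)''=2\epsilon\overline\Phi_\epsilon'^2+2\epsilon\overline\Phi_\epsilon\overline\Phi_\epsilon''$ produces an extra nonnegative term $2\epsilon\overline\Phi_\epsilon'^2$, and Young's inequality $\frac{2\epsilon(d-1)}{r}\overline\Phi_\epsilon'\overline\Phi_\epsilon\le 2\epsilon\overline\Phi_\epsilon'^2+\frac{\epsilon(d-1)^2}{2r^2}\overline\Phi_\epsilon^2$ lets that term absorb the bad cross contribution. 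Restricting to $r\in[R/4,R]$ bounds $1/r^2$, and the hypothesis $\epsilon<(m_fR)^2/[8(d-1)^2]$ is precisely what makes the residual $\frac{8\epsilon(d-1)^2}{R^2}\overline\Phi_\epsilon^2$ smaller than $m_f^2\overline\Phi_\epsilon^2$, yielding $\epsilon(\overline\Phi_\epsilon^2)''\ge m_f^2\overline\Phi_\epsilon^2$ on $[R/4,R]$. A comparison with $\cosh$-type barriers gives the exponential bound on $[3R/4,R]$, and monotonicity of $\overline\Phi_\epsilon^2$ (from \Cref{proposition:A.1}) pushes it down to $[0,3R/4]$. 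Your own alternative---restrict to $r\ge r_0$ and use monotonicity on $[0,r_0]$---is essentially this idea; to make it work for $w$ (not $w^2$) on an outer annulus you would still need to control $\epsilon\frac{d-1}{r}\kappa$ there, which the $\epsilon$-smallness does give you for $r\ge cR$. The squaring trick is what makes the cross-term bookkeeping painless.
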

\begin{proof}
Let $\overline\Phi_{\epsilon}:=\Phi_{\epsilon}-\phi^{*}$ on $[0,R]$ for $\epsilon>0$.
Then from \eqref{eq.A.01}--\eqref{eq.A.03}, $\overline{\Phi}_{\epsilon}$ satisfies
\begin{align}
\label{eq.A.05}
&-\epsilon(r^{d-1}\overline{\Phi}_{\epsilon}'(r))'=r^{d-1}f(\phi^{*}+\overline{\Phi}_{\epsilon}(r))\quad\text{for}~r\in(0,R),\\
\label{eq.A.06}
&\overline{\Phi}_{\epsilon}'(0)=0,\\
\label{eq.A.07}
&\overline{\Phi}_{\epsilon}(R)=\Phi_{bd}-\phi^{*}.
\end{align}
By (A1)--(A2), we have
\begin{align}
\label{eq.A.08}
\begin{aligned}
f(\phi^{*}+\overline\Phi_\epsilon(r))\overline\Phi_\epsilon(r)&=\left(\int_0^1\!\frac{\mathrm{d}}{\mathrm{d}s}f(\phi^{*}+s\overline\Phi_\epsilon(r))\,\mathrm{d}s\right)\overline\Phi_\epsilon(r)\\&=\left(\int_0^1\!f'(\phi^{*}+s\overline\Phi_\epsilon(r))\,\mathrm{d}s\right)\overline\Phi_\epsilon^2(r)\leq-m_f^2\overline\Phi_\epsilon^2(r)
\end{aligned}
\end{align}
for $r\in(0,R)$.
Here we have used the fact that $\overline\Phi_\epsilon=\Phi_{\epsilon}-\phi^{*}$ is uniformly bounded (cf. \Cref{proposition:2.1} also works for the Dirichlet boundary condition \eqref{eq.A.03}).
By \eqref{eq.A.05} and \eqref{eq.A.08}, we obtain
\begin{equation}
\label{eq.A.09}
\begin{aligned}
\epsilon(\overline{\Phi}_{\epsilon}^2(r))''&=2\epsilon\overline{\Phi}_{\epsilon}'^2(r)+2\epsilon\overline{\Phi}_{\epsilon}''(r)\overline{\Phi}_{\epsilon}(r)\\
&=2\epsilon\overline{\Phi}_{\epsilon}'^2(r)-\frac{2\epsilon(d-1)}{r}\overline{\Phi}_{\epsilon}'(r)\overline{\Phi}_{\epsilon}(r)-2f(\phi^{*}+\overline{\Phi}_{\epsilon}(r))\overline{\Phi}_{\epsilon}(r)\\
&\geq2\epsilon\overline{\Phi}_{\epsilon}'^2(r)-\frac{2\epsilon(d-1)}{r}\overline{\Phi}_{\epsilon}'(r)\overline{\Phi}_{\epsilon}(r)+2m_f^2\overline{\Phi}_{\epsilon}^2(r)\quad\text{for}~r\in(0,R).
\end{aligned}
\end{equation}
On the other hand, by Young's inequality, we have
\begin{align}
\label{eq.A.10}
\begin{aligned}\frac{2\epsilon(d-1)}{r}\overline{\Phi}_{\epsilon}'(r)\overline{\Phi}_{\epsilon}(r)
&\leq2\epsilon\overline{\Phi}_{\epsilon}'^2(r)+\frac{\epsilon(d-1)^2}{2r^2}\overline{\Phi}_{\epsilon}^{2}(r)
\\&\leq2\epsilon\overline{\Phi}_{\epsilon}'^2(r)+\frac{8\epsilon(d-1)^2}{R^2}\overline{\Phi}_{\epsilon}^{2}(r)\quad\text{for}~r\in[R/4,R).
\end{aligned}
\end{align}
Combining \eqref{eq.A.09} and \eqref{eq.A.10}, we get
\begin{align}
\label{eq.A.11}
\epsilon(\overline{\Phi}_{\epsilon}^2(r))''\geq\left(2m_f^2-\frac{8\epsilon(d-1)^2}{R^2}\right)\overline{\Phi}_{\epsilon}^{2}(r)\geq m_f^2\overline{\Phi}_{\epsilon}^{2}(r)\quad\text{for}~r\in[R/4,R).
\end{align}
Here we have used that $0<\epsilon<(m_fR)^2/[8(d-1)^2]$.
Applying the comparison theorem to \eqref{eq.A.11}, it yields
\begin{align}
\label{eq.A.12}
\overline{\Phi}_{\epsilon}^{2}(r)\leq\max\{\overline{\Phi}_{\epsilon}^{2}(R/4),\overline{\Phi}_{\epsilon}^{2}(R)\}\left[\exp\left(-\frac{m_f(R-r)}{\sqrt{\epsilon}}\right)+\exp\left(-\frac{m_f(r-R/4)}{\sqrt{\epsilon}}\right)\right]
\end{align}
for $r\in[R/4,R]$.
By \Cref{proposition:A.1}, $(\overline{\Phi}_{\epsilon}^2(r))'=2\overline{\Phi}_{\epsilon}'(r)\overline{\Phi}_{\epsilon}(r)>0$ on $(0,R)$, and $\overline{\Phi}^2$ is strictly increasing on $[0,R]$.
Consequently, we have $\overline{\Phi}_{\epsilon}^2(R/4)<\overline\Phi_{\epsilon}^2(R)=(\Phi_{bd}-\phi^{*})^2$.
From \eqref{eq.A.12}, we get
\[|\overline{\Phi}_{\epsilon}(r)|\leq|\Phi_{bd}-\phi^{*}|\left[\exp\left(-\frac{m_f(R-r)}{2\sqrt\epsilon}\right)+\exp\left(-\frac{m_fR}{8\sqrt\epsilon}\right)\right]\quad\text{for}~r\in[3R/4,R],\]
which implies
\begin{align}
\label{eq.A.13}
|\overline{\Phi}_{\epsilon}(r)|\leq2|\Phi_{bd}-\phi^{*}|\exp\left(-\frac{m_f(R-r)}{2\sqrt{\epsilon}}\right)\quad\text{for}~r\in[3R/4,R].
\end{align}
Since $\overline{\Phi}_{\epsilon}^2$ is strictly increasing on $[0,R]$, $|\overline{\Phi}|$ is also strictly increasing on $[0,R]$.
Then by \eqref{eq.A.13}, we have
\begin{align}
\label{eq.A.14}
|\overline{\Phi}_{\epsilon}(r)|\leq|\overline{\Phi}_{\epsilon}(3R/4)|\leq2|\Phi_{bd}-\phi^{*}|\exp\left(-\frac{m_fR}{8\sqrt\epsilon}\right)\leq2|\Phi_{bd}-\phi^{*}|\exp\left(-\frac{m_f(R-r)}{8\sqrt\epsilon}\right)
\end{align}
for $r\in[0,3R/4]$.
Combining \eqref{eq.A.13} and \eqref{eq.A.14}, we arrive at \eqref{eq.A.04} and complete the proof of \Cref{proposition:A.2}.
\end{proof}

Now we consider the radial solution $\Phi_\epsilon=\Phi_\epsilon(r)$ to equation \eqref{eq.3.001} with the Dirichlet boundary condition, which satisfies
\begin{align}
\label{eq.A.15}
&-\epsilon(r^{d-1}\Phi_\epsilon'(r))'=r^{d-1}f_{\epsilon}(\Phi_\epsilon(r))\quad\text{for}~r\in(0,R),\\
\label{eq.A.16}
&\Phi_\epsilon'(0)=0,\\
\label{eq.A.17}
&\Phi_\epsilon(R)=\Phi_{bd},
\end{align}
where $f_{\epsilon}$ has a unique zero $\phi_{\epsilon}^{*}\in\R$ and satisfies $f_{\epsilon}'(\phi)\leq-M^2<0$ for $\phi\in\R$. Note that $f_{\epsilon}$ and its zero $\phi_{\epsilon}^{*}$ may depend on $\epsilon$ but $M$ is independent of $\epsilon$.

\newpage

Analogous to \Cref{proposition:A.1,proposition:A.2} (with $\phi^{*}$ replaced by $\phi_{\epsilon}^{*}$), we can prove
\begin{proposition}\label{proposition:A.3}
Assume that $f_{\epsilon}$ has a unique zero $\phi_{\epsilon}^{*}\in\R$ and satisfies $f_{\epsilon}'(\phi)\leq-M^2<0$ for $\phi\in\R$ and $\epsilon>0$, where $M$ is independent of $\epsilon$.
Let $\Phi_\epsilon$ be the solution to \eqref{eq.A.15}--\eqref{eq.A.17} for $\epsilon>0$.
\begin{enumerate}
\item[(a)] If $\Phi_{bd}>\phi_{\epsilon}^{*}$ for $\epsilon>0$, then $\Phi_\epsilon(0)>\phi_{\epsilon}^{*}$ and $\Phi_\epsilon$ is strictly increasing for $\epsilon>0$;
\item[(b)] if $\Phi_{bd}<\phi_{\epsilon}^{*}$ for $\epsilon>0$, then $\Phi_\epsilon(0)<\phi_{\epsilon}^{*}$ and $\Phi_\epsilon$ is strictly decreasing for $\epsilon>0$.
\end{enumerate}
\end{proposition}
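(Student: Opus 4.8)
The plan is to mimic the proofs of \Cref{proposition:A.1} and \Cref{proposition:A.2}, replacing $f$ by $f_{\epsilon}$ and $\phi^{*}$ by $\phi_{\epsilon}^{*}$ throughout, and to keep track of the fact that all constants must remain independent of $\epsilon$ since $M$ is. First I would record the uniform bound on $\Phi_{\epsilon}$: by the same maximum-principle argument as in \Cref{proposition:2.1} (which applies verbatim to the Dirichlet problem \eqref{eq.A.15}--\eqref{eq.A.17} because $f_{\epsilon}$ is strictly decreasing and has zero $\phi_{\epsilon}^{*}$), one gets $\min\{\phi_{\epsilon}^{*},\Phi_{bd}\}\leq\Phi_{\epsilon}(r)\leq\max\{\phi_{\epsilon}^{*},\Phi_{bd}\}$ for $r\in[0,R]$, hence $|\Phi_{\epsilon}-\phi_{\epsilon}^{*}|\leq|\Phi_{bd}-\phi_{\epsilon}^{*}|$.

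For part (a), assume $\Phi_{bd}>\phi_{\epsilon}^{*}$. I would first show $\Phi_{\epsilon}(0)>\phi_{\epsilon}^{*}$ by contradiction: if $\Phi_{\epsilon}(0)\leq\phi_{\epsilon}^{*}$, then since $\Phi_{\epsilon}(R)=\Phi_{bd}>\phi_{\epsilon}^{*}$, the value $\phi_{\epsilon}^{*}$ is attained at some $r_{0}\in[0,R)$, and integrating the radial equation \eqref{eq.A.15} shows $r^{d-1}\Phi_{\epsilon}'(r)=-\frac{1}{\epsilon}\int_{0}^{r}s^{d-1}f_{\epsilon}(\Phi_{\epsilon}(s))\,\mathrm{d}s$; on a right neighborhood of any point where $\Phi_{\epsilon}\leq\phi_{\epsilon}^{*}$ one has $f_{\epsilon}(\Phi_{\epsilon})\geq0$, forcing $\Phi_{\epsilon}'\leq0$ there, which together with the boundary data produces a contradiction. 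The cleaner route, which I would actually follow, is to set $w=\Phi_{\epsilon}-\phi_{\epsilon}^{*}$ and note it solves a linear equation $-\epsilon(r^{d-1}w')'=r^{d-1}c_{\epsilon}(r)w$ with $c_{\epsilon}(r)=\int_{0}^{1}f_{\epsilon}'(\phi_{\epsilon}^{*}+sw(r))\,\mathrm{d}s\leq-M^{2}<0$; then $w$ satisfies the maximum principle, $w(R)>0$, so $w>0$ on $[0,R]$ and in particular $w(0)=\Phi_{\epsilon}(0)-\phi_{\epsilon}^{*}>0$. Monotonicity then follows from the integral formula: $r^{d-1}\Phi_{\epsilon}'(r)=-\frac{1}{\epsilon}\int_{0}^{r}s^{d-1}f_{\epsilon}(\Phi_{\epsilon}(s))\,\mathrm{d}s>0$ for $r\in(0,R]$ because $\Phi_{\epsilon}(s)>\phi_{\epsilon}^{*}$ implies $f_{\epsilon}(\Phi_{\epsilon}(s))<0$ by strict monotonicity of $f_{\epsilon}$. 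Part (b) is the mirror image: apply the same argument with reversed inequalities, or simply observe that $\tilde\Phi_{\epsilon}:=-\Phi_{\epsilon}$ solves a problem of the same type with nonlinearity $\tilde f_{\epsilon}(\phi):=-f_{\epsilon}(-\phi)$, which again has a unique zero $-\phi_{\epsilon}^{*}$ and derivative bounded above by $-M^{2}$, reducing (b) to (a).

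I do not anticipate a genuine obstacle here, since the statement is a routine transcription of \Cref{proposition:A.1} with $f\to f_{\epsilon}$ and $\phi^{*}\to\phi_{\epsilon}^{*}$; the only point requiring a moment's care is making sure the $\epsilon$-dependence of $f_{\epsilon}$ and $\phi_{\epsilon}^{*}$ does not leak into the qualitative conclusions, which it does not, because positivity of $w$ and the sign of $\int_{0}^{r}s^{d-1}f_{\epsilon}(\Phi_{\epsilon}(s))\,\mathrm{d}s$ are established for each fixed $\epsilon$ and use only the structural hypotheses (unique zero, $f_{\epsilon}'\leq-M^{2}$) that hold uniformly in $\epsilon$. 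Consequently I would simply state that the proof is identical to that of \Cref{proposition:A.1}, indicating the substitutions, and omit the repeated details.
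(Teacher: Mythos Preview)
Your proposal is correct and matches the paper's approach: the paper does not give an explicit proof of \Cref{proposition:A.3} at all, stating only that it is ``analogous to \Cref{proposition:A.1,proposition:A.2} (with $\phi^{*}$ replaced by $\phi_{\epsilon}^{*}$)'', and \Cref{proposition:A.1} itself is left unproved with the remark ``By (A1)--(A2), it is easy to prove''. Your sketch of the maximum-principle/integral-formula argument is thus already more detailed than what the paper supplies, and your final conclusion --- to simply indicate the substitutions $f\to f_{\epsilon}$, $\phi^{*}\to\phi_{\epsilon}^{*}$ and omit the repetition --- is exactly what the paper does.
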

\begin{proposition}\label{proposition:A.4}
Assume that $f_{\epsilon}$ has a unique zero $\phi_{\epsilon}^{*}\in\R$ and satisfies $f_{\epsilon}'(\phi)\leq-M^2<0$ for $\phi\in\R$ and $\epsilon>0$, where $M$ is independent of $\epsilon$.
Let $\Phi_\epsilon$ be the solution to \eqref{eq.A.15}--\eqref{eq.A.17} for $\epsilon>0$. Then
\begin{align}
\label{eq.A.18}
|\phi_{\epsilon}(r)-\phi_{\epsilon}^{*}|\leq2|\Phi_{bd}-\phi_{\epsilon}^{*}|\exp\left(-\frac{M(R-r)}{8\sqrt\epsilon}\right)\end{align}
for $r\in[0,R]$ and $0<\epsilon<(MR)^2/[8(d-1)^2]$.
\end{proposition}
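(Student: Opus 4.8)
The plan is to reproduce the proof of \Cref{proposition:A.2} almost verbatim, making three substitutions: the fixed zero $\phi^{*}$ becomes the $\epsilon$-dependent zero $\phi_{\epsilon}^{*}$, the constant $m_f$ becomes the uniform constant $M$, and \Cref{proposition:A.1} is replaced by \Cref{proposition:A.3}. If $\Phi_{bd}=\phi_{\epsilon}^{*}$ the estimate \eqref{eq.A.18} is trivial (both sides vanish by uniqueness of the solution to \eqref{eq.A.15}--\eqref{eq.A.17}), so I would assume $\Phi_{bd}\neq\phi_{\epsilon}^{*}$. First I would set $\overline{\Phi}_{\epsilon}:=\Phi_{\epsilon}-\phi_{\epsilon}^{*}$ on $[0,R]$; since $f_{\epsilon}(\phi_{\epsilon}^{*})=0$, this function solves $-\epsilon(r^{d-1}\overline{\Phi}_{\epsilon}'(r))'=r^{d-1}f_{\epsilon}(\phi_{\epsilon}^{*}+\overline{\Phi}_{\epsilon}(r))$ on $(0,R)$ with $\overline{\Phi}_{\epsilon}'(0)=0$ and $\overline{\Phi}_{\epsilon}(R)=\Phi_{bd}-\phi_{\epsilon}^{*}$. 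The one genuine simplification relative to \Cref{proposition:A.2} is that the inequality $f_{\epsilon}(\phi_{\epsilon}^{*}+\overline{\Phi}_{\epsilon})\overline{\Phi}_{\epsilon}=\big(\int_{0}^{1}f_{\epsilon}'(\phi_{\epsilon}^{*}+s\overline{\Phi}_{\epsilon})\,\mathrm{d}s\big)\overline{\Phi}_{\epsilon}^{2}\leq-M^{2}\overline{\Phi}_{\epsilon}^{2}$ follows directly from the global hypothesis $f_{\epsilon}'\leq-M^{2}$ on $\R$, so no uniform boundedness of $\overline{\Phi}_{\epsilon}$ is needed at this step.

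Next I would carry out the same chain of estimates: using the ODE to express $\epsilon\overline{\Phi}_{\epsilon}''$, one gets $\epsilon(\overline{\Phi}_{\epsilon}^{2})''\geq2\epsilon\overline{\Phi}_{\epsilon}'^{2}-2\epsilon(d-1)r^{-1}\overline{\Phi}_{\epsilon}'\overline{\Phi}_{\epsilon}+2M^{2}\overline{\Phi}_{\epsilon}^{2}$ on $(0,R)$; Young's inequality absorbs the cross term and, on $[R/4,R)$ where $r^{-2}\leq16/R^{2}$, this yields $\epsilon(\overline{\Phi}_{\epsilon}^{2})''\geq(2M^{2}-8\epsilon(d-1)^{2}/R^{2})\overline{\Phi}_{\epsilon}^{2}\geq M^{2}\overline{\Phi}_{\epsilon}^{2}$ once $0<\epsilon<(MR)^{2}/[8(d-1)^{2}]$. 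The comparison theorem then gives $\overline{\Phi}_{\epsilon}^{2}(r)\leq\max\{\overline{\Phi}_{\epsilon}^{2}(R/4),\overline{\Phi}_{\epsilon}^{2}(R)\}[\exp(-M(R-r)/\sqrt{\epsilon})+\exp(-M(r-R/4)/\sqrt{\epsilon})]$ on $[R/4,R]$. By \Cref{proposition:A.3}, $\overline{\Phi}_{\epsilon}$ is strictly monotone with no sign change on $(0,R)$, so $\overline{\Phi}_{\epsilon}^{2}$ is strictly increasing on $[0,R]$; hence $\overline{\Phi}_{\epsilon}^{2}(R/4)<\overline{\Phi}_{\epsilon}^{2}(R)=(\Phi_{bd}-\phi_{\epsilon}^{*})^{2}$, and the displayed bound gives $|\overline{\Phi}_{\epsilon}(r)|\leq2|\Phi_{bd}-\phi_{\epsilon}^{*}|\exp(-M(R-r)/(2\sqrt{\epsilon}))$ on $[3R/4,R]$. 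Finally, monotonicity of $|\overline{\Phi}_{\epsilon}|$ on $[0,R]$ propagates the estimate inward via $|\overline{\Phi}_{\epsilon}(r)|\leq|\overline{\Phi}_{\epsilon}(3R/4)|\leq2|\Phi_{bd}-\phi_{\epsilon}^{*}|\exp(-MR/(8\sqrt{\epsilon}))\leq2|\Phi_{bd}-\phi_{\epsilon}^{*}|\exp(-M(R-r)/(8\sqrt{\epsilon}))$ for $r\in[0,3R/4]$, which together with the previous bound yields \eqref{eq.A.18}.

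The only point needing care — and the one I would flag as the main obstacle — is checking that the $\epsilon$-dependence of $\phi_{\epsilon}^{*}$ does no harm. It does not: the constant $M$ appearing both in the hypothesis $f_{\epsilon}'\leq-M^{2}$ and in the smallness threshold $(MR)^{2}/[8(d-1)^{2}]$ is, by assumption, independent of $\epsilon$; the interval $[R/4,R)$ on which the comparison argument is run has $\epsilon$-independent endpoints; and the right-hand side of \eqref{eq.A.18} is explicitly permitted to carry the $\epsilon$-dependent factor $|\Phi_{bd}-\phi_{\epsilon}^{*}|$. Thus every step transfers from \Cref{proposition:A.2} without modification, the strict monotonicity and sign information being supplied by \Cref{proposition:A.3} in place of \Cref{proposition:A.1}.
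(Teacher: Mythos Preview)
Your proposal is correct and is exactly what the paper intends: the paper does not write out a separate proof of \Cref{proposition:A.4} but simply says it is ``analogous to \Cref{proposition:A.1,proposition:A.2} (with $\phi^{*}$ replaced by $\phi_{\epsilon}^{*}$)'', and your argument carries out precisely that substitution step by step. Your observation that the global hypothesis $f_{\epsilon}'\leq-M^{2}$ removes the need for a uniform bound on $\overline{\Phi}_{\epsilon}$ at the \eqref{eq.A.08}-analogue step is a correct and useful simplification.
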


\section{Properties of the solution to \texorpdfstring{\eqref{eq.1.07}}{(1.7)}--\texorpdfstring{\eqref{eq.1.09}}{(1.9)} and \texorpdfstring{\eqref{eq.1.19}}{(1.19)}--\texorpdfstring{\eqref{eq.1.21}}{(1.21)}}
\label{appendix:B}

In this appendix, we establish the existence, uniqueness, qualitative properties, and asymptotic behaviors of solutions to \eqref{eq.1.07}--\eqref{eq.1.09} and \eqref{eq.1.19}--\eqref{eq.1.21}.
For the sake of simplicity, equations \eqref{eq.1.07}--\eqref{eq.1.09} and \eqref{eq.1.19}--\eqref{eq.1.21} can be represented as follows.
\begin{align}
\label{eq.B.01}
&U''+f(U)=0\quad\text{in}~(0,\infty),\\
\label{eq.B.02}
&U(0)-\gamma\,U'(0)=\Phi_{bd},\\
\label{eq.B.03}
&\lim_{t\to\infty}U(t)=\phi^{*},
\end{align}
where $\gamma\geq0$ and $\Phi_{bd}\neq\phi^{*}$ are fixed constants. (When $\Phi_{bd}=\phi^{*}$, the solution $U\equiv\phi^{*}$ is trivial.)
Here the function $f\in\C^\infty(\R)$ satisfies assumptions (A1) and (A2).
It is clear that the existence and uniqueness of solution to equations \eqref{eq.1.07}--\eqref{eq.1.09} and \eqref{eq.1.19}--\eqref{eq.1.21} for $k=0,1,\dots,K$ follow from that of solution to equations \eqref{eq.B.01}--\eqref{eq.B.03}.
We may approach \eqref{eq.B.01}--\eqref{eq.B.03} by the following initial-value problem
\begin{align}
\label{eq.B.04}
&\tilde U''+f(\tilde U)=0\quad\text{in}~(0,\infty),\\
\label{eq.B.05}
&\tilde U(0)=U_0,\\
\label{eq.B.06}
&\tilde U'(0)=U_0',
\end{align}
where $U_0$ (between $\phi^{*}$ and $\Phi_{bd}$) and $U_0'$ are constants to be determined such that $\tilde U\equiv U$ on $[0,\infty)$.
The existence and uniqueness of \eqref{eq.B.04}--\eqref{eq.B.06} follow from the standard ODE theory.
From \eqref{eq.B.02} and \eqref{eq.B.05}--\eqref{eq.B.06}, $U_0$ and $U_0'$ must satisfy
\begin{align}
\label{eq.B.07}
U_0-\gamma U_0'=\Phi_{bd}.
\end{align}
Multiplying \eqref{eq.B.04} by $\tilde U'$ and then integrating it over $[0,t]$, we can use \eqref{eq.B.05}--\eqref{eq.B.06} to get
\begin{align}
\label{eq.B.08}
\tilde U'^2(t)+2F(\tilde U(t))=U_0'^2+2F(U_0)\quad\text{for}~t\geq0,
\end{align}
where $F(\phi)=\int_{\phi^{*}}^{\phi}\!f(s)\,\mathrm{d}s$ for $\phi\in\R$.
Note that $F'(\phi^{*})=f(\phi^{*})=0$ and $F''(\phi)=f'(\phi)<0$ for $\phi\in\R$ (by (A1)--(A2)) so
\begin{align}
\label{eq.B.09}
0=F(\phi^{*})>F(\phi)\quad\text{for}~\phi\neq\phi^{*}.
\end{align}
Suppose that $\dd\lim_{t\to\infty}\tilde U'(t)=0$.
Then we use \eqref{eq.B.03} and \eqref{eq.B.07}--\eqref{eq.B.08} to find that $U_0$ (between $\phi^{*}$ and $\Phi_{bd}$) must satisfy
\begin{align}
\label{eq.B.10}
&\Phi_{bd}-U_0=\sgn(\Phi_{bd}-\phi^{*})\gamma\sqrt{-2F(U_0)}.
\end{align}
To show that there exists a unique $U_0$, between $\phi^{*}$ and $\Phi_{bd}$, satisfying \eqref{eq.B.10}, we define a function $g:\R\to\R$ by
\[g(s)=\Phi_{bd}-s-\sgn(\Phi_{bd}-\phi^{*})\gamma\sqrt{-2F(s)}\quad\text{for}~s\in\R,\]
which is well-defined because of \eqref{eq.B.09}.
Since $g(\phi^{*})g(\Phi_{bd})<0$ and $g'(s)<0$ for $s$ between $\phi^{*}$ and $\Phi_{bd}$, function $g$ has a unique zero $U_0$ between $\phi^{*}$ and $\Phi_{bd}$, which means that equation \eqref{eq.B.10} has a unique solution.
Here one may use the facts that (i) if $\Phi_{bd}>\phi^{*}$, then $f(s)<0$ for $s\in(\phi^{*},\Phi_{bd}]$ and (ii) if $\Phi_{bd}<\phi^{*}$, then $f(s)>0$ for $s\in[\Phi_{bd},\phi^{*})$ due to (A1)--(A2).
Therefore, the initial-value problem \eqref{eq.B.04}--\eqref{eq.B.06} can be reformulated as
\begin{align}
\label{eq.B.11}
&\tilde U''+f(\tilde U)=0\quad\text{in}~(0,\infty),\\
\label{eq.B.12}
&\tilde U(0)=U_0,\\
\label{eq.B.13}
&\tilde U'(0)=\sgn(\phi^{*}-\Phi_{bd})\sqrt{-2F(U_0)},
\end{align}
where $U_0$ (between $\phi^{*}$ and $\Phi_{bd}$) is uniquely determined  by \eqref{eq.B.10}.

\begin{figure}[!htb]\centering\includegraphics[scale=0.7]{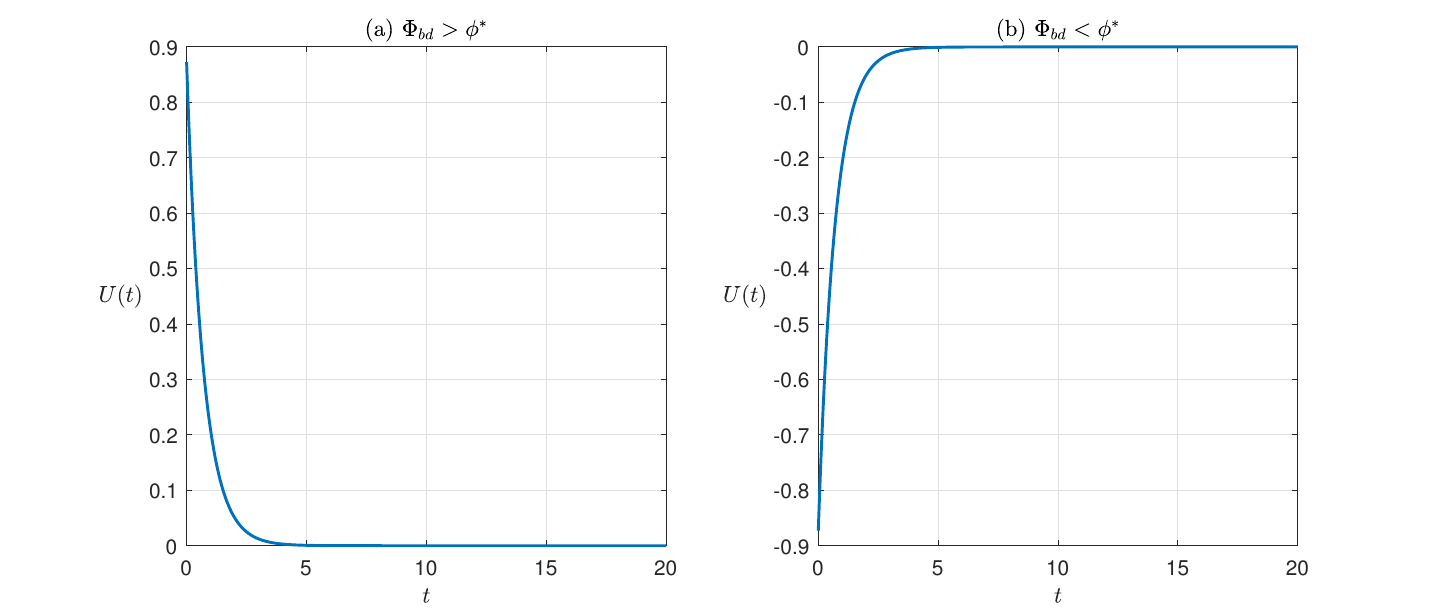}
\caption{We sketch the numerical profile for the solution $U$ to \eqref{eq.B.01}--\eqref{eq.B.03} for the case (a) $\Phi_{bd}=1$ and the case (b) $\Phi_{bd}=-1$, which is consistent with \Cref{proposition:B.1}.
Here $f(\phi)=\exp(\phi)-\exp(-\phi)$ and $\gamma=0.1$.}
\label{fig:U}\end{figure}

\newpage

To prove $\tilde U\equiv U$ on $[0,\infty)$, we need the following proposition.
\begin{proposition}
\label{proposition:B.1}
Let $\tilde U$ be the solution to \eqref{eq.B.11}--\eqref{eq.B.13}.
\begin{enumerate}
\item[(a)] If $\Phi_{bd}>\phi^{*}$, then $\Phi_{bd}\geq\tilde U(t)>\phi^{*}$ and $\tilde U'(t)<0$ for $t\geq0$;
\item[(b)] if $\Phi_{bd}<\phi^{*}$, then $\Phi_{bd}\leq\tilde U(t)<\phi^{*}$ and $\tilde U'(t)>0$ for $t\geq0$.
\end{enumerate}
\end{proposition}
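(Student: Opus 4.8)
The plan is to analyze the autonomous initial-value problem \eqref{eq.B.11}--\eqref{eq.B.13} through its conserved energy together with a phase-plane argument, treating case~(a) ($\Phi_{bd}>\phi^{*}$) in detail; case~(b) then follows by the reflection $\tilde U\mapsto-\tilde U$, $\phi^{*}\mapsto-\phi^{*}$, $f(\phi)\mapsto-f(-\phi)$, which turns it into case~(a). First I would extract the consequences of the initial data. Since $\gamma\geq0$ and $\Phi_{bd}>\phi^{*}$, equation \eqref{eq.B.10} reads $\Phi_{bd}-U_{0}=\gamma\sqrt{-2F(U_{0})}\geq0$, so $\phi^{*}<U_{0}\leq\Phi_{bd}$, where $U_{0}\neq\phi^{*}$ because $F(\phi^{*})=0$ would force $\Phi_{bd}=\phi^{*}$; hence $F(U_{0})<0$ by \eqref{eq.B.09} and $\tilde U'(0)=\sgn(\phi^{*}-\Phi_{bd})\sqrt{-2F(U_{0})}=-\sqrt{-2F(U_{0})}<0$. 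Substituting these data into \eqref{eq.B.08}, the right-hand side $U_{0}'^{2}+2F(U_{0})$ vanishes, so the energy identity collapses to $\tilde U'^{2}(t)+2F(\tilde U(t))=0$ on the interval of existence.

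The heart of the argument is to run the following reasoning on the maximal interval $[0,T_{\max})$. First I would show $\tilde U(t)>\phi^{*}$ there: if $t_{0}=\inf\{t>0:\tilde U(t)=\phi^{*}\}<T_{\max}$, then $\tilde U(t_{0})=\phi^{*}$ and the collapsed energy identity gives $\tilde U'^{2}(t_{0})=-2F(\phi^{*})=0$, so $(\tilde U(t_{0}),\tilde U'(t_{0}))=(\phi^{*},0)$; since $f(\phi^{*})=0$ this is an equilibrium of the first-order system equivalent to \eqref{eq.B.11}, and uniqueness for the IVP forces $\tilde U\equiv\phi^{*}$, contradicting $\tilde U(0)=U_{0}\neq\phi^{*}$. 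With $\tilde U>\phi^{*}$ in hand, \eqref{eq.B.09} gives $\tilde U'^{2}(t)=-2F(\tilde U(t))>0$, so $\tilde U'$ never vanishes; being continuous with $\tilde U'(0)<0$, it stays negative, hence $\tilde U$ is strictly decreasing and $\phi^{*}<\tilde U(t)\leq U_{0}\leq\Phi_{bd}$ on $[0,T_{\max})$. These bounds, together with $\tilde U'=-\sqrt{-2F(\tilde U)}$ being bounded on the compact range $[\phi^{*},U_{0}]$, rule out blow-up, so $T_{\max}=\infty$, and case~(a) follows. Case~(b) is verbatim after the sign reflection: there $\Phi_{bd}\leq U_{0}<\phi^{*}$, $\tilde U'(0)=+\sqrt{-2F(U_{0})}>0$, the same non-crossing and non-vanishing arguments give $\tilde U(t)<\phi^{*}$ and $\tilde U'(t)>0$, whence $\Phi_{bd}\leq\tilde U(t)<\phi^{*}$.

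I expect the one genuinely delicate point to be the exclusion of $\tilde U$ ever reaching $\phi^{*}$: a naive monotonicity/sign argument is insufficient because the energy level set $\{p^{2}=-2F(\phi)\}$ does pass through the point $(\phi^{*},0)$, so one must invoke uniqueness for the autonomous equation at that equilibrium. Everything else---the collapse of the energy, the non-vanishing of $\tilde U'$, the a priori bounds forcing global existence, and the monotonicity---is routine. Once \Cref{proposition:B.1} is established, $\tilde U$ is bounded and monotone, so $\lim_{t\to\infty}\tilde U(t)$ exists; since the collapsed energy forces $\tilde U'^{2}(t)\to-2F(\lim\tilde U)$ and a bounded solution cannot have $\tilde U'$ tending to a nonzero constant, this limit must equal $\phi^{*}$. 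Combined with $\tilde U(0)-\gamma\tilde U'(0)=U_{0}-\gamma U_{0}'=\Phi_{bd}$ (from \eqref{eq.B.07} and \eqref{eq.B.10}), this yields $\tilde U\equiv U$, completing the treatment of \eqref{eq.B.01}--\eqref{eq.B.03}.
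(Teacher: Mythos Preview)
Your proof is correct and follows essentially the same approach as the paper: both derive the collapsed energy identity $\tilde U'^{2}(t)=-2F(\tilde U(t))$ and then use uniqueness at the equilibrium $(\phi^{*},0)$ to rule out $\tilde U$ ever hitting $\phi^{*}$ or $\tilde U'$ vanishing. The only cosmetic differences are that the paper first shows $\tilde U'\neq0$ and deduces $\tilde U\neq\phi^{*}$ (you do it in the reverse order), and that you make the global-existence step explicit whereas the paper leaves it implicit.
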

\begin{proof}
We firstly claim that $\tilde U'(t)\neq0$ for $t\geq0$.
Suppose by contradiction that there exists $t_1\in(0,\infty)$ such that $\tilde U'(t_1)=0$.
Multiplying \eqref{eq.B.11} by $\tilde U$ and then integrating it over $[0,t]$, we obtain
\begin{align}
\label{eq.B.14}
\tilde U'^2(t)=\tilde U'^2(0)+2(F(\tilde U(0))-F(\tilde U(t)))=-2F(\tilde U(t))\quad\text{for}~t\geq0.
\end{align}
Here we have used \eqref{eq.B.12}--\eqref{eq.B.13}.
By \eqref{eq.B.14} and $\tilde U'(t_1)=0$, we get $\tilde U(t_1)=\phi^{*}$, which implies $\tilde U\equiv\phi^{*}$ on $[t_1,\infty)$.
Due to the unique continuation, $U\equiv\phi^{*}$ on $[0,\infty)$, which contradicts to \eqref{eq.B.12}.
Hence $\tilde U'(t)\neq0$ for $t\geq0$.
By \eqref{eq.B.09} and \eqref{eq.B.14}, it is clear that $\tilde U(t)\neq\phi^{*}$ for $t\geq0$ because $\tilde U'(t)\neq0$ for $t\geq0$.

Now we prove (a) and (b).
For the case of (a), we assume that $\Phi_{bd}>\phi^{*}$.
From \eqref{eq.B.10} and \eqref{eq.B.12}--\eqref{eq.B.13}, we have $\Phi_{bd}\geq\tilde U(0)>\phi^{*}$ and $\tilde U'(0)<0$.
Since $\tilde U(t)\neq\phi^{*}$ and $\tilde U'(t)\neq0$ for $t\geq0$, which implies $\Phi_{bd}\geq\tilde U(t)>\phi^{*}$ and $\tilde U'(t)<0$ for $t\geq0$.
Thus, the proof of (a) is complete.
The proof of (b) is similar to (a).
Therefore, we complete the proof of \Cref{proposition:B.1}.\end{proof}

Now we are in the position to show that $\tilde U$ solves equations \eqref{eq.B.01}--\eqref{eq.B.03} and the uniqueness of \eqref{eq.B.01}--\eqref{eq.B.03}, which implies $\tilde U\equiv U$ in $[0,\infty)$.
By \eqref{eq.B.10} and \eqref{eq.B.12}--\eqref{eq.B.13}, we have
\[\tilde U(0)-\gamma\tilde U'(0)=U_0\gamma\sgn(\phi^{*}-\Phi_{bd})\sqrt{-2F(U_0)}=\Phi_{bd},\]
which means $\tilde U$ satisfies \eqref{eq.B.02}.
By \Cref{proposition:B.1}, $\tilde U$ is bounded and strictly monotonic on $[0,\infty)$, which implies that there exists a constant $C$ such that\[\lim_{t\to\infty}\tilde U(t)=C.\]
Suppose by contradiction that $C\neq\phi^{*}$.
By \eqref{eq.B.09} and \eqref{eq.B.12}--\eqref{eq.B.14}, we get
\begin{align}
\label{eq.B.15}
\lim_{t\to\infty}\tilde U'^2(t)=\tilde U'^2(0)+2(F(U_0)-F(C))=-2F(C)>0.
\end{align}
Since $\tilde U'(t)\neq0$ for $t\geq0$, we can use \eqref{eq.B.15} to get $\dd\lim_{t\to\infty}|\tilde U(t)|=\infty$, which leads to a contradiction.
Thus, $\dd\lim_{t\to\infty}\tilde U(t)=\phi^{*}$, i.e., $\tilde U$ satisfies \eqref{eq.B.03}.
Hence $\tilde U$ solves equations \eqref{eq.B.01}--\eqref{eq.B.03}.
To prove $\tilde U\equiv U$ on $[0,\infty)$, it suffices to show the uniqueness of solution to \eqref{eq.B.01}--\eqref{eq.B.03}.
Suppose that there exist two solutions $U_1$ and $U_2$ to equations \eqref{eq.B.01}--\eqref{eq.B.03}.
Let $\overline{U}=U_1-U_2$.
Then $\overline{U}$ satisfies
\begin{align}
\label{eq.B.16}
&\overline{U}''+c(t)\overline{U}=0\quad\text{in}~(0,\infty),\\
\label{eq.B.17}
&\overline{U}(0)-\gamma\,\overline{U}'(0)=0,\\
\label{eq.B.18}
&\lim_{t\to\infty}\overline{U}(t)=0,
\end{align}
where
\[c(t)=\begin{cases}\dd\frac{f(U_1(t))-f(U_2(t))}{U_1(t)-U_2(t)}&\text{if}~U_1(t)\neq U_2(t);\\
f'(U_1(t))&\text{if}~U_1(t)=U_2(t).
\end{cases}\]
Note that $c(t)<0$ for $t\geq0$ by (A1).
Thus, by the standard maximum principle, $\overline{U}\equiv0$ on $[0,\infty)$, and we obtain the uniqueness of solution to \eqref{eq.B.01}--\eqref{eq.B.03}.
Therefore, $\tilde U\equiv U$ on $[0,\infty)$ and hence $U$ satisfies \Cref{proposition:B.1} and \eqref{eq.B.14}.

The following result will be used in \Cref{appendix:C} to prove the existence of solution to \eqref{eq.1.10}--\eqref{eq.1.12} and \eqref{eq.1.22}--\eqref{eq.1.24}.
\begin{proposition}\label{proposition:B.2}
Let $U$ be the solution to \eqref{eq.B.01}--\eqref{eq.B.03}. Then we have
\begin{itemize}
\item[(a)] $|U'(t)|\leq|U'(0)|\exp(-m_ft)$ for $t\geq0$, where $m_f=m_f([\min\{\phi^{*},\Phi_{bd}\},\max\{\phi^{*},\Phi_{bd}\}])$.
\item[(b)] $\int_0^\infty\!U'^2(t)\,\mathrm{d}t=\sgn(\phi^{*}-\Phi_{bd})\int_{U_0}^{\phi^{*}}\!\sqrt{-2F(s)}\,\mathrm{d}s<\infty$.
\item[(c)] $\int_0^t\!(U'(s))^{-2}\,\mathrm{d}s\geq t^2(\int_0^\infty\!U'^2(s)\,\mathrm{d}s)^{-1}$ for $t\geq0$.
\item[(d)] $\lim_{t\to\infty}[f(U(t))/U'(t)]=-\sqrt{-f'(\phi^{*})}$.
\item[(e)] $\int_0^\infty\!(U'(s_1))^{-2}\int_{s_1}^\infty\!U'^2(s_2)\,\mathrm{d}s_2\,\mathrm{d}s_1=\infty$.
\end{itemize}
\end{proposition}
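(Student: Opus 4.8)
The plan is to derive all five assertions from the first integral \eqref{eq.B.14}, namely $U'(t)^2=-2F(U(t))$ with $F(\phi)=\int_{\phi^{*}}^{\phi}f$, together with the monotonicity and sign information already recorded in \Cref{proposition:B.1}. Throughout I would reduce to the case $\Phi_{bd}>\phi^{*}$, where $U$ strictly decreases with $U(t)\downarrow\phi^{*}$ and $U'(t)<0$; the case $\Phi_{bd}<\phi^{*}$ follows by the symmetry $\phi\mapsto 2\phi^{*}-\phi$ (replacing $f(\cdot)$ by $-f(2\phi^{*}-\cdot)$), or simply by repeating the arguments with reversed signs. For (a), the heart of the matter is the pointwise bound $|f(U(t))|\ge m_f\,|U'(t)|$ for all $t\ge 0$. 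By \eqref{eq.B.14} this is equivalent to $f(\phi)^2+2m_f^2 F(\phi)\ge 0$ for every $\phi$ between $\phi^{*}$ and $\Phi_{bd}$, which I would prove by noting that the left-hand side vanishes at $\phi=\phi^{*}$ and that $\frac{d}{d\phi}\big(f^2+2m_f^2F\big)=2f(\phi)\big(f'(\phi)+m_f^2\big)$; on the relevant compact interval (A1) gives $f'\le -m_f^{2}$, while $f(\phi)$ has the sign of $\phi^{*}-\phi$ since $f$ is strictly decreasing with $f(\phi^{*})=0$, so this derivative is $\ge 0$ for $\phi>\phi^{*}$ and $\le 0$ for $\phi<\phi^{*}$, forcing $f^2+2m_f^2F\ge 0$. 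Granting the bound, $\frac{d}{dt}|U'(t)|=-|U''(t)|=-|f(U(t))|\le -m_f|U'(t)|$, and Gr\"onwall's inequality yields $|U'(t)|\le |U'(0)|e^{-m_f t}$.

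For (b), \Cref{proposition:B.1} shows that $t\mapsto U(t)$ is a $\C^{1}$ diffeomorphism of $[0,\infty)$ onto the interval with endpoints $U_0$ and $\phi^{*}$, so the substitution $s=U(t)$, $\mathrm ds=U'(t)\,\mathrm dt$, gives $\int_0^\infty U'(t)^2\,\mathrm dt=\int_{U_0}^{\phi^{*}}U'\,\mathrm ds$, where by \eqref{eq.B.13}--\eqref{eq.B.14} and the sign of $U'$ from \Cref{proposition:B.1} one has $U'=\sgn(\phi^{*}-\Phi_{bd})\sqrt{-2F(s)}$; finiteness is then immediate since $\sqrt{-2F}$ is continuous and nonnegative (cf. \eqref{eq.B.09}) on a compact interval, or directly from (a) via $\int_0^\infty U'^2\le |U'(0)|^2/(2m_f)$. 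Part (c) is just the Cauchy--Schwarz inequality applied to $t=\int_0^t 1\cdot\mathrm ds=\int_0^t |U'(s)|^{-1}|U'(s)|\,\mathrm ds\le\big(\int_0^t U'(s)^{-2}\,\mathrm ds\big)^{1/2}\big(\int_0^t U'(s)^2\,\mathrm ds\big)^{1/2}$, followed by $\int_0^t U'^2\le\int_0^\infty U'^2$ and squaring.

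For (d), \eqref{eq.B.14} gives $\big(f(U(t))/U'(t)\big)^2=f(U(t))^2/(-2F(U(t)))$; since $f(\phi^{*})=F(\phi^{*})=0$, L'H\^opital's rule yields $\lim_{\phi\to\phi^{*}}f(\phi)^2/(-2F(\phi))=-f'(\phi^{*})$, so that $\big(f(U(t))/U'(t)\big)^2\to-f'(\phi^{*})$ as $U(t)\to\phi^{*}$, and taking the square root with the sign fixed by the constant sign of $f(U)/U'$ (determined by \Cref{proposition:B.1} and the sign of $F$) gives the limit in (d). For (e), write the integrand as $\Xi(s_1)=U'(s_1)^{-2}\int_{s_1}^\infty U'(s_2)^2\,\mathrm ds_2>0$; both $\int_{s_1}^\infty U'^2$ and $U'(s_1)^2$ tend to $0$ as $s_1\to\infty$ (by (b) and (a)), the latter with nonvanishing derivative $2U'U''=-2U'f(U)$, so L'H\^opital's rule in $s_1$ gives $\lim_{s_1\to\infty}\Xi(s_1)=\tfrac12\big|\lim_{t\to\infty}f(U(t))/U'(t)\big|^{-1}=\tfrac12\,(-f'(\phi^{*}))^{-1/2}>0$ by (d). An integrand that converges to a strictly positive constant at infinity is not integrable on $[0,\infty)$, hence the double integral diverges.

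The computations are short, and I do not expect a genuine obstacle: the only real care is the sign bookkeeping across the two cases $\Phi_{bd}\gtrless\phi^{*}$ and verifying the hypotheses of the two L'H\^opital steps, which hold because $U'\ne 0$ for every finite $t$ and $U''=-f(U)\ne 0$ away from $\phi^{*}$, by \Cref{proposition:B.1} and (A2). If one prefers to avoid L'H\^opital in (d), an alternative is to note that $g:=f(U)/U'$ satisfies the Riccati equation $g'=g^2+f'(U)$ with $f'(U(t))\to f'(\phi^{*})$ and $g$ bounded by virtue of \eqref{eq.B.14}, which forces $g$ to the appropriate equilibrium; but the direct computation via the quotient $f^2/(-2F)$ is the cleanest route.
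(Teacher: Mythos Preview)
The paper omits the proof of this proposition entirely, remarking only that it ``is easy by \Cref{proposition:B.1} and calculus''. Your argument fills this in correctly and in precisely the spirit the authors intend: the first integral \eqref{eq.B.14}, the monotonicity from \Cref{proposition:B.1}, Cauchy--Schwarz for (c), and L'H\^opital for (d)--(e) are the natural tools, and each step goes through as you describe.

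One minor remark on (d): your sign bookkeeping actually yields $f(U)/U'>0$ in \emph{both} cases $\Phi_{bd}\gtrless\phi^{*}$, since by \Cref{proposition:B.1} and (A1)--(A2) the quantities $f(U(t))$ and $U'(t)$ always share the same sign. Hence your argument produces the limit $+\sqrt{-f'(\phi^{*})}$ rather than the stated $-\sqrt{-f'(\phi^{*})}$. This appears to be a sign typo in the statement itself (one can check it directly on the linear model $f(\phi)=-\phi$), and it is harmless for the applications in the paper---in the proof of \Cref{claim:6} and in your own argument for (e), only the fact that the limit is finite and nonzero is used.
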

\noindent The proof of \Cref{proposition:B.2} is easy by \Cref{proposition:B.1} and calculus so we omit it here.

\section{Properties of the solution to \texorpdfstring{\eqref{eq.1.10}}{(1.10)}--\texorpdfstring{\eqref{eq.1.12}}{(1.12)} and \texorpdfstring{\eqref{eq.1.22}}{(1.22)}--\texorpdfstring{\eqref{eq.1.24}}{(1.24)}}
\label{appendix:C}

In this appendix, we establish the existence, uniqueness, and qualitative properties for solutions to \eqref{eq.1.10}--\eqref{eq.1.12} and \eqref{eq.1.22}--\eqref{eq.1.24}.
For the sake of simplicity, equations \eqref{eq.1.10}--\eqref{eq.1.12} and \eqref{eq.1.22}--\eqref{eq.1.24} can be represented as follows.
\begin{align}
\label{eq.C.01}
&V''+f'(U)V=U'\quad\text{in}~(0,\infty),\\
\label{eq.C.02}
&V(0)-\gamma\,V'(0)=0,\\
\label{eq.C.03}
&\lim_{t\to\infty}V(t)=0,
\end{align}
where $U$ is the solution to equations \eqref{eq.B.01}--\eqref{eq.B.03}, $f\in\C^\infty(\R)$ and $\gamma\geq0$ are as given in \eqref{eq.B.01}--\eqref{eq.B.02}.
By standard ODE theory, the solution $V$ to equations \eqref{eq.C.01}--\eqref{eq.C.03} can be represented as
\begin{align}
\label{eq.C.04}
V(t)=\frac{V_0}{U'(0)}U'(t)-U'(t)\int_0^t\frac{1}{U'^2(s_1)}\int_{s_1}^{\infty}\!U'^2(s_2)\,\mathrm{d}s_2\,\mathrm{d}s_1\quad\text{for}~t\geq0,
\end{align}
where $V_0$ is given by
\begin{align}
\label{eq.C.05}
V_0=-\frac{\gamma}{U'(0)+\gamma f(U_0)}\int_0^\infty\!U'^2(t)\,\mathrm{d}t.
\end{align}
Note that $V_0$ is well-defined due to \Cref{proposition:B.2}(b) and $U'(0)+\gamma f(U_0)\neq0$.
Clearly, \eqref{eq.C.04} with \eqref{eq.C.05} satisfies \eqref{eq.C.01}--\eqref{eq.C.02}.
It suffices to verify that \eqref{eq.C.04} satisfies \eqref{eq.C.03}.
By \eqref{eq.B.01}, \eqref{eq.B.03}, (A1)--(A2), \Cref{proposition:B.2} and L'H\^opital's rule, we obtain $\lim\limits_{t\to\infty}V(t)=\lim\limits_{t\to\infty}(U'(t)/f'(U(t)))=0$.
Thus, \eqref{eq.C.04} defines a solution to \eqref{eq.C.01}--\eqref{eq.C.03}.

Now we prove the uniqueness of the solution to \eqref{eq.C.01}--\eqref{eq.C.03}.
Suppose that there exist two solutions $V$ and $\tilde V$ to \eqref{eq.C.01}--\eqref{eq.C.03}.
Let $\overline V=V-\tilde V$.
Then $\overline V$ satisfies
\begin{align}
\label{eq.C.06}
&\overline V''+f'(U)\overline V=0\quad\text{in}~(0,\infty),\\
\label{eq.C.07}
&\overline V(0)-\gamma\overline V'(0)=0,\\
\label{eq.C.08}
&\lim_{t\to\infty}\overline V(t)=0.
\end{align}
Hence by (A1) and the standard maximum principle, $\overline V\equiv0$ on $[0,\infty)$, and the solution to \eqref{eq.C.01}--\eqref{eq.C.03} must be given by \eqref{eq.C.04}.

\begin{proposition}
\label{proposition:C.1}
Let $V$ be the unique solution to \eqref{eq.C.01}--\eqref{eq.C.03}.
\begin{itemize}
\item[(a)] If $\Phi_{bd}>\phi^{*}$, then $V$ is nonnegative on $[0,\infty)$, and there exists a unique $t^{*}\in(0,\infty)$ such that $V$ is strictly increasing on $[0,t^{*})$ and strictly decreasing on $(t^{*},\infty)$. Moreover, if $\gamma>0$, then $V$ is positive on $[0,\infty)$.
\item[(b)] If $\Phi_{bd}<\phi^{*}$, then $V$ is nonpositive on $[0,\infty)$, and there exists a unique $t^{*}\in(0,\infty)$ such that $V$ is strictly decreasing on $[0,t^{*})$ and strictly increasing on $(t^{*},\infty)$. Moreover, if $\gamma>0$, then $V$ is negative on $[0,\infty)$.
\end{itemize}
\end{proposition}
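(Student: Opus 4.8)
The plan is to read off the qualitative properties directly from the explicit representation \eqref{eq.C.04}--\eqref{eq.C.05}, using the information on $U$ from \Cref{proposition:B.1,proposition:B.2} and the identity $U'^2=-2F(U)$ established in \Cref{appendix:B}. Throughout I treat case (a), $\Phi_{bd}>\phi^*$; case (b) is the mirror image obtained by reversing every inequality. Set $Q(t)=\int_t^\infty U'^2(s)\,ds$, finite by \Cref{proposition:B.2}(b), and $P(t)=\int_0^t U'^{-2}(s_1)Q(s_1)\,ds_1$, so that \eqref{eq.C.04} reads $V(t)=U'(t)(c_0-P(t))$ with $c_0:=V_0/U'(0)$; here $P$ is strictly increasing, $P(0)=0$, and $P(t)\to\infty$ by \Cref{proposition:B.2}(e). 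By \Cref{proposition:B.1}(a), in case (a) one has $U'<0$ on $[0,\infty)$, $U$ strictly decreasing with $U(0)=U_0\in(\phi^*,\Phi_{bd}]$, $f(U)<0$, and $U''=-f(U)>0$. The denominator $U'(0)+\gamma f(U_0)$ in \eqref{eq.C.05} is then strictly negative while $\int_0^\infty U'^2>0$, so $V_0\ge0$ with $V_0=0$ iff $\gamma=0$, whence $c_0\le0$. Therefore $c_0-P(t)\le0$ on $[0,\infty)$ and $<0$ on $(0,\infty)$, and multiplying by $U'<0$ gives $V\ge0$ on $[0,\infty)$, $V>0$ on $(0,\infty)$, with $V(0)=U'(0)c_0>0$ if $\gamma>0$ and $V(0)=0$ if $\gamma=0$; this is the nonnegativity (resp. positivity) assertion of (a).

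For the monotonicity I would show $V'$ has exactly one zero in $(0,\infty)$. First, the boundary behaviour: $V'(0)>0$ (equal to $V(0)/\gamma$ via \eqref{eq.C.02} when $\gamma>0$, and to $-Q(0)/U'(0)>0$ when $\gamma=0$), and $V'(t)<0$ for all large $t$ — in $V'=U''(c_0-P)-U'P'$ the exponential decay of $U'$, $f(U)$ and $Q$ (\Cref{proposition:B.2}(a),(d)) against the growth of $P$ makes $U''(c_0-P)$ the dominant, negative term. Since $V$ is bounded and $V(t)\to0$, $V$ attains a positive maximum at some $t^*\in(0,\infty)$ and is strictly increasing on $[0,t^*)$.

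Next, a key identity: because $U'$ solves the homogeneous equation $y''+f'(U)y=0$, the function $t\mapsto U'(t)V'(t)+f(U(t))V(t)$ has derivative $U'^2(t)$, and evaluating at $t=0$ with \eqref{eq.C.02} and \eqref{eq.C.05} yields $U'(t)V'(t)+f(U(t))V(t)=-Q(t)$ for all $t\ge0$. Hence at any critical point $t_0$ of $V$ one has $V(t_0)=Q(t_0)/|f(U(t_0))|$, so by \eqref{eq.C.01} the number $V''(t_0)=U'(t_0)-f'(U(t_0))V(t_0)$ has the same sign as $\Psi(U(t_0))$, where $\Psi(a):=|f'(a)|\int_{\phi^*}^a\sqrt{-2F(u)}\,du-|f(a)|\sqrt{-2F(a)}$ (using $U'^2=-2F(U)$ and the substitution $s\mapsto U(s)$ to rewrite $Q(t_0)=\int_{\phi^*}^{U(t_0)}\sqrt{-2F}$). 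Then for uniqueness: if $V$ had a second critical point it would possess a local minimum $t^\#>t^*$, and $V''(t^*)\le0$, $V''(t^\#)\ge0$ force $\Psi(U(t^*))\le0\le\Psi(U(t^\#))$; but $U$ strictly decreasing gives $U(t^\#)<U(t^*)$, which is impossible once one knows that $\Psi$ is negative just above $\phi^*$ and stays strictly positive after it first becomes nonnegative. Thus $t^*$ is the unique critical point, $V$ is strictly increasing on $[0,t^*)$ and strictly decreasing on $(t^*,\infty)$, which is exactly (a).

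The routine ingredients — the signs of $V_0$ and $V$, the identity $U'V'+f(U)V=-Q$, the asymptotics of $V'$ — are immediate from \Cref{appendix:B}. The step I expect to be the real obstacle is the sign/monotonicity property of $\Psi$ used for uniqueness; it has to be extracted from (A1)--(A2) alone (e.g. via a comparison estimate tying together $f$, $f'$ and $F=\int_{\phi^*}^{\cdot}f$ along the trajectory of $U$), since the naive claim ``$V''<0$ at every critical point'' is false in general — $\Psi$ can be positive at large arguments. Case (b), $\Phi_{bd}<\phi^*$, is handled identically with all inequalities reversed ($U'>0$, $f(U)>0$, $V\le0$, and the analogue of $\Psi$ taken on $(-\infty,\phi^*)$), giving the nonpositivity and the decreasing-then-increasing behaviour.
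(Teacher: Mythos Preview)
Your positivity argument is correct and actually cleaner than the paper's: reading the sign of $V$ directly from $V=U'(c_0-P)$ with $c_0\le0$, $P\ge0$ strictly increasing, and $U'<0$ is more transparent than the paper's maximum-principle contradiction. The identity $U'V'+f(U)V=-Q$ is exactly what the paper records (it is \eqref{eq.C.09}, and reappears as \Cref{prop:C.2}), and both approaches exploit it to write $V(t_0)=Q(t_0)/|f(U(t_0))|$ at critical points.

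The genuine gap is the one you flag yourself: the claimed sign structure of $\Psi(a)=|f'(a)|\int_{\phi^*}^a\sqrt{-2F}-|f(a)|\sqrt{-2F(a)}$. You need $\Psi$ to have at most one sign change (from $-$ to $+$) on $(\phi^*,\infty)$, but $\Psi'(a)=-f''(a)\int_{\phi^*}^a\sqrt{-2F}-f(a)^2/\sqrt{-2F(a)}$ involves $f''$, which is completely unconstrained by (A1)--(A2); there is no evident mechanism preventing $\Psi$ from oscillating. This is a missing idea, not a routine verification, and as written the uniqueness of $t^*$ is unproved.

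For comparison, the paper does not go through $\Psi$: it sets $G(t)=f(U(t))V(t)+Q(t)$, assumes a local minimum $t_2<t^*$ of $V$ with $V'\ge0$ on $(t_2,t^*)$, and asserts $G'<0$ there, contradicting $G(t_2)=G(t^*)=0$. But $G=-U'V'\ge0$ on $(t_2,t^*)$ with equality at both endpoints, so $G'$ must in fact change sign on that interval; concretely, the term $f'(U)U'V$ in $G'=f'(U)U'V+f(U)V'-U'^2$ is \emph{positive} in case~(a), so the asserted sign of $G'$ is not justified. Thus the paper's route to uniqueness has the same kind of gap as yours, packaged differently; a complete proof that $t^*$ is unique under (A1)--(A2) alone seems to require an additional argument that neither your $\Psi$-criterion nor the paper's $G'$-monotonicity supplies.
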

\begin{proof}
We first note that
\begin{claim}
\label{claim:6}
$\dd\lim_{t\to\infty}V'(t)=0$.
\end{claim}
\begin{proof}[Proof of \Cref{claim:6}]
By \eqref{eq.B.01} and \eqref{eq.C.04}, we have
\[V'(t)=-\frac{V_0}{U'(0)}f(U(t))+f(U(t))\int_0^t\!\frac1{U'^2(s_1)}\int_{s_1}^{\infty}\!U'^2(s_2)\,\mathrm ds_2\,\mathrm ds_1-\frac1{U'(t)}\int_t^\infty\!U'^2(s)\,\mathrm ds.\]
By (A2), \eqref{eq.B.01} and \Cref{proposition:B.2}(d), we apply the L'H\^opital's rule to obtain
\begin{align*}
\lim_{t\to\infty}V'(t)&=\lim_{t\to\infty}\left[\frac{\dd\int_0^t\!\frac1{U'^2(s_1)}\int_{s_1}^{\infty}\!U'^2(s_2)\,\mathrm ds_2\,\mathrm ds_1}{[f(U(t)]^{-1}}-\frac{\dd\int_t^\infty\!U'^2(s)\,\mathrm ds}{U'(t)}\right]=\lim_{t\to\infty}\left[\frac{\dd\frac1{U'^2(t)}\int_t^\infty\!U'^2(s)\,\mathrm ds}{[f(U(t))]^{-2}f'(U(t))U'(t)}+\frac{U'^2(t)}{U''(t)}\right]\\
&=\frac1{f'(\phi^*)}\lim_{t\to\infty}\frac{\dd[f(U(t))]^2\int_t^\infty\!U'^2(s)\,\mathrm ds}{U'^3(t)}-\lim_{t\to\infty}\frac{U'^2(t)}{f(U(t))}=-\lim_{t\to\infty}\frac{\dd\int_t^\infty\!U'^2(s)\,\mathrm ds}{U'(t)}=\lim_{t\to\infty}\frac{U'^2(t)}{U''(t)}\\
&=-\lim_{t\to\infty}\frac{U'^2(t)}{f(U(t))}=0,
\end{align*}which completes the proof of \Cref{claim:6}.
\end{proof}
We now assume $\Phi_{bd}>\phi^*$ and prove (a).
By \Cref{proposition:B.1}(a), we have $U'(0)<0$ and $f(U_0)<0$. Along with \eqref{eq.C.05}, we get $V(0)=V_0>0$, and $V'(0)=-(U'(0)+\gamma f(U_0))^{-1}\int_0^\infty\!U'^2(t)\,\mathrm dt>0$.
Suppose by contradiction that there exists $t_0\in(0,\infty)$ such that $V(t_0)\leq0$.
Since $\dd\lim_{t\to\infty}V(t)=0$, we may assume that $V$ attains its minimum value at $t_0\in(0,\infty)$, which implies $V''(t_0)\geq0$.
Then by \eqref{eq.C.01}, we have $U'(t_0)=V''(t_0)+f'(U(t_0))V(t_0)\geq0$, which contradicts Proposition~\ref{proposition:B.1}(a).
This shows that $V$ is positive on $[0,\infty)$.
Since $V'(0)>0$ and $\dd\lim_{t\to\infty}V(t)=0$, there exists $t^*\in(0,\infty)$ such that $V$ attains its maximum value at $t^*$ and $V'(t^*)=0$.
Suppose by contradiction that there exists another maximum point $t_1$ such that $V'(t_1)=0$.
Without loss of generality, we may assume that $t_1<t^*$, and there exists $t_2\in(t_1,t^*)$ such that $V$ attains its local minimum value at $t_2$ with $V'(t_2)=0$ and $V'(t)\geq0$ on $(t_2,t^*)$.
Integrating \eqref{eq.C.01} over $[t,\infty)$ and using (a), we obtain
\begin{align}
\label{eq.C.09}
V'(t)=\frac{f(U(t))V(t)+\int_t^\infty\!U'^2(s)\,\mathrm ds}{-U'(t)}\quad\text{for}~t\geq0.
\end{align}
Clearly, $V'(t_2)=V'(t^*)=0$.
Let $G(t)=f(U(t))V(t)+\int_t^\infty\!U'^2(s)\,\mathrm ds$ for $t\in\R$. Then $G(t_2)=G(t^*)=0$.
Differentiating $G$ gives
\[G'(t)=f'(U(t))U'(t)V(t)+f(U(t))V'(t)-U'^2(t)\quad\text{for}~t\in\R.\]
By Proposition~\ref{proposition:B.1}(a) and (A1)--(A2), we find that $U'(t)<0$, $f(U(t))<0$ and $f'(U(t))<0$.
Since $V(t)>0$ and $V'(t)>0$ on $(t_2,t^*)$, we get $G'(t)<0$ on $(t_2,t^*)$, which contradcits with $g(t_2)=g(t^*)=0$.
Therefore, there exists a unique $t^*\in(0,\infty)$ such that $V$ attains its maximum value.
The proof of (a) is complete.

Next we assume $\Phi_{bd}<\phi^*$ and prove (b).
By \Cref{proposition:B.1}(b), we have $U'(0)>0$ and $f(U_0)>0$. Along with \eqref{eq.C.05}, we get $V(0)=V_0<0$, and $V'(0)=-(U'(0)+\gamma f(U_0))^{-1}\int_0^\infty\!U'^2(t)\,\mathrm dt<0$.
Suppose by contradiction that there exists $t_0\in(0,\infty)$ such that $V(t_0)\geq0$.
Since $\dd\lim_{t\to\infty}V(t)=0$, we may assume that $V$ attains its maximum value at $t_0\in(0,\infty)$, which implies $V''(t_0)\leq0$.
Then by \eqref{eq.C.01}, we have $U'(t_0)=V''(t_0)+f'(U(t_0))V(t_0)\leq0$, which contradicts to \Cref{proposition:B.1}(b).
This shows that $V$ is negative on $[0,\infty)$.
Since $V'(0)<0$ and $\dd\lim_{t\to\infty}V(t)=0$, there exists $t^*\in(0,\infty)$ such that $V$ attains its minimum value at $t^*$ and $V'(t^*)=0$.
Suppose by contradiction that there exists another minimum point $t_1$ such that $V'(t_1)=0$.
Then by a similar fashion of (b), we can use \eqref{eq.C.09} to get a contradiction because $U'(t)>0$, $f(U(t))>0$ and $f'(U(t))<0$ (cf. \Cref{proposition:B.1}(b) and (A1)--(A2)).
Hence there exists a unique $t^*\in(0,\infty)$ such that $V$ attains its minimum value, which implis (b).
Therefore, we complete the proof of \Cref{proposition:C.1}.
\end{proof}
%The proof of \Cref{proposition:C.1} can be easily obtained by calculus and \Cref{proposition:B.1} so we omit it here.

\begin{figure}[!htb]\centering\includegraphics[scale=0.7]{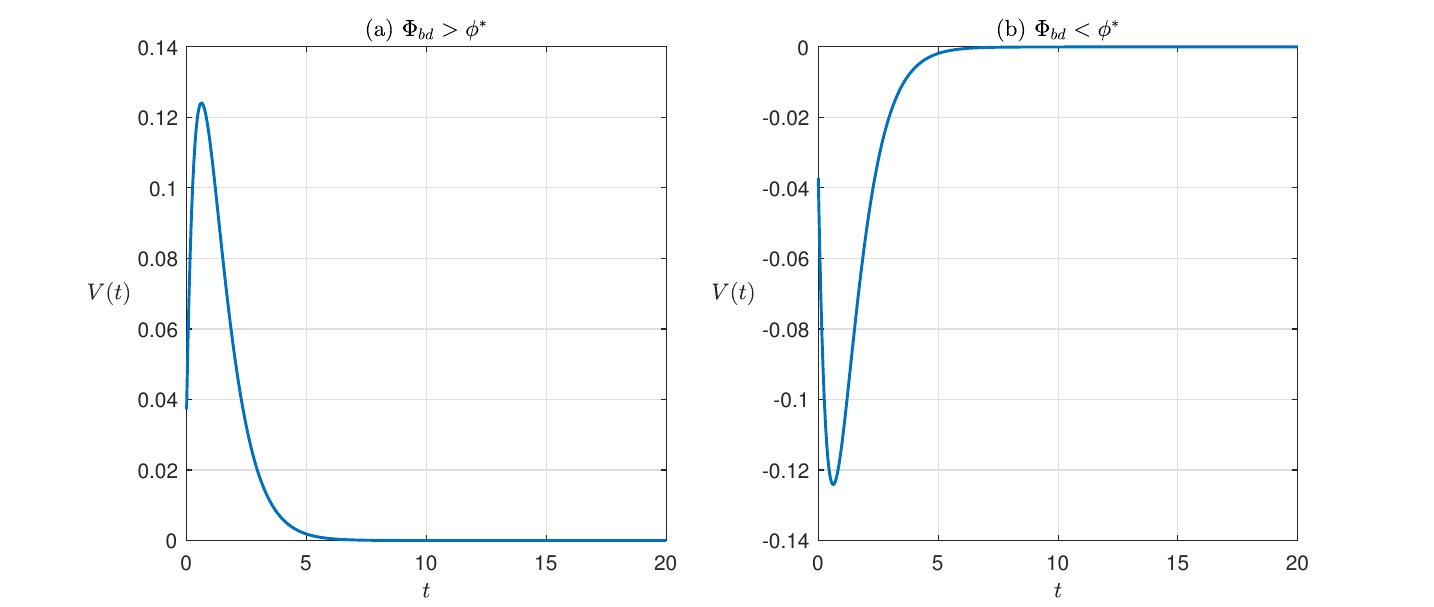}
\caption{We sketch the numerical profile for the solution $V$ to \eqref{eq.C.01}--\eqref{eq.C.03} for the case (a) $\Phi_{bd}=1$ and the case (b) $\Phi_{bd}=-1$, which is consistent with \Cref{proposition:C.1}.
Here $f(\phi)=\exp(\phi)-\exp(-\phi)$ and $\gamma=0.1$.}
\label{fig:V}\end{figure}

\begin{proposition}\label{prop:C.2}
Let $U$ and $V$ be the unique solutions to \eqref{eq.B.01}--\eqref{eq.B.03} and \eqref{eq.C.01}--\eqref{eq.C.03}, respectively.\\
If $\Phi_{bd}\neq\phi^{*}$, then function $g:=f(U)V+U'V'$ is negative on $[0,\infty)$.
\end{proposition}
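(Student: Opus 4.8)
The plan is to prove that $g$ is strictly increasing on $[0,\infty)$ and that $g(t)\to0$ as $t\to\infty$; these two facts force $g<0$ throughout $[0,\infty)$.

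The crucial first step is to differentiate $g=f(U)V+U'V'$ and eliminate the second derivatives using the two ODEs. Since $U''=-f(U)$ by \eqref{eq.B.01}, the contributions $f(U)V'$ and $U''V'$ cancel, so $g'=f'(U)U'V+U'V''$; substituting $V''=U'-f'(U)V$ from \eqref{eq.C.01} then gives
\begin{align*}
g'(t)=f'(U(t))U'(t)V(t)+U'(t)\big(U'(t)-f'(U(t))V(t)\big)=\big(U'(t)\big)^2\qquad\text{for}~t\geq0.
\end{align*}
By \Cref{proposition:B.1}, $U'$ never vanishes on $[0,\infty)$ (this is where the hypothesis $\Phi_{bd}\neq\phi^{*}$ enters), so $g'(t)>0$ and $g$ is strictly increasing.

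Next I would verify $\lim_{t\to\infty}g(t)=0$. Each factor appearing in $g$ is controlled by earlier results: $f(U(t))\to f(\phi^{*})=0$ by \eqref{eq.B.03}, (A2) and continuity of $f$; $U'(t)\to0$ by \Cref{proposition:B.2}(a); $V(t)\to0$ by \eqref{eq.C.03}; and $V'(t)\to0$ by \Cref{claim:6} in the proof of \Cref{proposition:C.1}. Hence $f(U)V\to0$ and $U'V'\to0$, so $g(t)\to0$ as $t\to\infty$.

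Finally, a strictly increasing function on $[0,\infty)$ tending to $0$ at infinity must be strictly negative everywhere: if $g(t_0)\geq0$ for some $t_0\geq0$, then $g(t)>g(t_0)\geq0$ for all $t>t_0$, contradicting $g(t)\to0$. This yields $g(t)<0$ for every $t\geq0$ and completes the proof of \Cref{prop:C.2}. (As a consistency check, $g(0)$ can be computed directly from \eqref{eq.C.02} and \eqref{eq.C.05} as $g(0)=(U'(0)+\gamma f(U_0))V'(0)=-\int_0^\infty\big(U'(s)\big)^2\,\mathrm{d}s<0$.) I do not expect a real obstacle here: the argument hinges entirely on the identity $g'=(U')^2$, and every limit it requires is already supplied by \Cref{proposition:B.1,proposition:B.2,proposition:C.1}; the only point needing a moment's care is the vanishing of $V'$ at infinity, which is precisely \Cref{claim:6}.
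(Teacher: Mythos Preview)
Your proof is correct and follows essentially the same approach as the paper: compute $g'=(U')^2>0$ from the two ODEs, use \Cref{proposition:B.1}, (A2), \eqref{eq.B.03}, \eqref{eq.C.03} and \Cref{claim:6} to get $\lim_{t\to\infty}g(t)=0$, and conclude $g<0$. Your exposition of the cancellation in $g'$ is in fact a bit more explicit than the paper's, and the consistency check for $g(0)$ is a nice extra.
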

\begin{proof}
By \eqref{eq.B.01}, \eqref{eq.C.01} and \Cref{proposition:B.1}, we have
\[g'(t)=f'(U(t))U'(t)V(t)+f(U(t))V'(t)+U''(t)V'(t)+U'(t)V''(t)=U'^2(t)>0\]for $t\geq0$ and $\Phi_{bd}\neq\phi^{*}$, which implies $g$ is strictly increasing on $[0,\infty)$. By (A2), \eqref{eq.B.03}, \eqref{eq.C.03} and \Cref{claim:6}, it is clear that $\dd\lim_{t\to\infty}g(t)=0$. Hence $g(t)<0$ for $t\geq0$, and the proof of \Cref{prop:C.2} is complete.
\end{proof}

\section{Properties of the solution to \texorpdfstring{\eqref{eq.1.25}}{(1.25)}--\texorpdfstring{\eqref{eq.1.27}}{(1.27)}}
\label{appendix:D}

In this appendix, we establish the existence, uniqueness, and qualitative properties for solutions to \eqref{eq.1.25}--\eqref{eq.1.27}.
\begin{align}
\label{eq.D.01}
&w_{k}''+f_{0}'(u_{k})w_{k}=-f_1(u_{k})\quad\text{in}~(0,\infty),\\
\label{eq.D.02}
&w_{k}(0)-\gamma_kw_{k}'(0)=0,\\
\label{eq.D.03}
&\lim_{t\to\infty}w_{k}(t)=Q,
\end{align}
where $u_{k}$ are the solutions to equations \eqref{eq.1.19}--\eqref{eq.1.21}, and $\gamma_k\geq0$ are given fixed constants for $k=0,1,\dots,K$.
Here $f_{0}$ and $f_1$ are smooth functions defined by
\begin{align}
\label{eq.D.04}
f_{0}(\phi)=\frac{1}{|\Omega|}\sum_{i=1}^{I}m_{i}z_{i}\exp(-z_{i}(\phi-\phi_{0}^{*}))\quad\text{and}\quad f_1(\phi)=-Qf_{0}'(\phi)+\hat f_1(\phi)\quad\text{for}~\phi\in\R,
\end{align}
where $Q\in\R$ (defined in \eqref{eq.1.30}), $m_{i}>0$ and
\begin{align}
\label{eq.D.05}
&\hat f_1(\phi)=\frac{1}{|\Omega|}\sum_{i=1}^{I}\hat{m}_iz_{i}\exp(-z_{i}(\phi-\phi_{0}^{*}))\quad\text{for}~\phi\in\R,\\
\label{eq.D.06}
&\hat m_{i}=\frac{m_{i}}{|\Omega|}\sum_{k=0}^K|\partial\Omega_k|\int_0^\infty\![1-\exp(-z_{i}(u_{k}(s)-\phi_{0}^{*}))]\,\mathrm{d}s\quad\text{for}~i=1,\dots,I,
\end{align}
In addition, $\phi_{0}^{*}$ and $u_{k}(0)$ satisfy (cf. \eqref{eq.3.026}--\eqref{eq.3.027})
\begin{align}
\label{eq.D.07}
&\phi_{bd,k}-u_{k}(0)=\sgn(\phi_{bd,k}-\phi_{0}^{*})\gamma_k\sqrt{\frac{2}{|\Omega|}\sum_{i=1}^{I}m_{i}[\exp(-z_{i}(u_{k}(0)-\phi_{0}^{*}))-1]}\quad\text{for}~k=0,1,\dots,K,\\
\label{eq.D.08}
&\sum_{k=0}^K|\partial\Omega_k|u'(0)=\sum_{k=0}^K|\partial\Omega_k|\frac{u_{k}(0)-\phi_{bd,k}}{\gamma_k}=0.
\end{align}
By standard ODE theory, the solution $w_{k}$ to equations \eqref{eq.D.01}--\eqref{eq.D.03} can be represented as
\begin{align}
\label{eq.D.09}
w_{k}(t)=\frac{w_{k}(0)}{u_{k}'(0)}u_{k}'(t)+u_{k}'(t)\int_0^t\!\frac{Qf_{0}(u_{k}(s))-\hat F_1(u_{k}(s))}{u_{k}'^2(s)}\,\mathrm{d}s\quad\text{for}~t\geq0,
\end{align}
where $w_{k}(0)$ and $\hat F_1$ are given by
\begin{align}
\label{eq.D.10}
&w_{k}(0)=\frac{\gamma_k(Qf_{0}(u_{k}(0))-\hat F_1(u_{k}(0)))}{u_{k}'(0)+\gamma_kf_{0}(u_{k}(0))},\\
\label{eq.D.11}
&\hat F_1(\phi)=\int_{\phi_{0}^{*}}^\phi\!\hat f_1(s)\,\mathrm{d}s=\frac{1}{|\Omega|}\sum_{i=1}^{I}\hat m_{i}[1-\exp(-z_{i}(\phi-\phi_{0}^{*}))]\quad\text{for}~\phi\in\R.
\end{align}
Note that \eqref{eq.D.10} is well-defined because $u_{k}'(0)+\gamma_kf_{0}(u_{k}(0))\neq0$.
Clearly, \eqref{eq.D.09} with \eqref{eq.D.10}--\eqref{eq.D.11} satisfies \eqref{eq.D.01}--\eqref{eq.D.02}.
To verify that \eqref{eq.D.09} satisfies \eqref{eq.D.03}, we need the following claim.
\begin{claim}
\label{claim:7}
Let $u_{k}$ be the solution to \eqref{eq.1.19}--\eqref{eq.1.21}, $\hat{f}_{1}$ and $\hat{F}_{1}$ defined in \eqref{eq.D.05} and \eqref{eq.D.11}, respectively, and $\phi_{0}^{*}$ be the unique zero of $f_{0}$.
Then we have
\begin{enumerate}
\item[(a)] $\hat{f}_{1}(\phi_{0}^{*})=\dfrac1{|\Omega|}\sum\limits_{i=1}^{I}\hat{m}_{i}z_{i}=0$;
\item[(b)] $\displaystyle\left|\int_{0}^{\infty}\!\frac{Qf_{0}(u_{k}(s))-\hat{F}_{1}(u_{k}(s))}{u_{k}'^2(s)}\,\mathrm{d}s\right|=\infty$ if $Q\neq0$.
\end{enumerate}
\end{claim}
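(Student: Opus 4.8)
The plan is to treat the two parts separately; both reduce to algebraic identities already available in the paper combined with the qualitative ODE facts of \Cref{appendix:B}.

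For (a), note first that $\exp(-z_i(\phi_0^*-\phi_0^*))=1$, so $\hat f_1(\phi_0^*)=|\Omega|^{-1}\sum_{i=1}^I\hat m_iz_i$ and it suffices to prove $\sum_{i=1}^I\hat m_iz_i=0$. Substituting the definition \eqref{eq.D.06} of $\hat m_i$ and interchanging the finite sum over $i$ with the sum over $k$ and the $s$-integral, the inner integrand becomes $\sum_{i=1}^Im_iz_i[1-\exp(-z_i(u_k(s)-\phi_0^*))]$. Its constant part $\sum_i m_iz_i$ vanishes by the charge-neutrality condition \eqref{eq.1.03}, and its exponential part equals $|\Omega|f_0(u_k(s))$ by the definition \eqref{eq.D.04} of $f_0$; hence the inner integrand equals $-|\Omega|f_0(u_k(s))$. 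By equation \eqref{eq.1.19} we have $f_0(u_k(s))=-u_k''(s)$, so
\[\int_0^\infty f_0(u_k(s))\,\mathrm{d}s=u_k'(0)-\lim_{s\to\infty}u_k'(s)=u_k'(0),\]
the last equality using the exponential decay of $u_k'$ from \Cref{proposition:B.2}(a). Therefore $\sum_{i=1}^I\hat m_iz_i=-\sum_{k=0}^K|\partial\Omega_k|u_k'(0)$, which is $0$ by \eqref{eq.3.027} (equivalently \eqref{eq.D.08}); this gives (a).

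For (b), fix $k$ with $\phi_{bd,k}\neq\phi_0^*$, so that $u_k$ is non-constant and formula \eqref{eq.D.09} is meaningful. The integrand is continuous on $[0,\infty)$ and its denominator $u_k'^2$ is strictly positive there by \Cref{proposition:B.1}, so the only possible source of divergence is the behavior as $s\to\infty$, where $u_k(s)\to\phi_0^*$. Using $f_0(\phi_0^*)=0$, $\hat F_1(\phi_0^*)=0$, and the cancellation $\hat f_1(\phi_0^*)=0$ from part (a) (which makes $\hat F_1$ vanish to second order at $\phi_0^*$), a Taylor expansion near $\phi_0^*$ gives
\[Qf_0(u_k(s))-\hat F_1(u_k(s))=Qf_0'(\phi_0^*)(u_k(s)-\phi_0^*)+O\bigl((u_k(s)-\phi_0^*)^2\bigr),\]
while \eqref{eq.B.14} applied with $U=u_k$ and the primitive $F_0$ of $f_0$ gives $u_k'^2(s)=-2F_0(u_k(s))=-f_0'(\phi_0^*)(u_k(s)-\phi_0^*)^2+O\bigl((u_k(s)-\phi_0^*)^3\bigr)$. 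Dividing, the integrand equals $-Q/(u_k(s)-\phi_0^*)$ times a factor $1+o(1)$ as $s\to\infty$. Since $f_0'(\phi_0^*)<0$ by \eqref{eq.3.017}, and $u_k(s)-\phi_0^*$ is monotone, of fixed sign, and decays exponentially to $0$ (\Cref{proposition:B.1} and \Cref{proposition:B.2}(a)), the integrand keeps a fixed sign for all large $s$ and its absolute value grows exponentially; comparing with $\int^\infty e^{cs}\,\mathrm{d}s$ for a suitable $c>0$ then shows the improper integral over $[0,\infty)$ diverges in absolute value whenever $Q\neq0$, which is (b).

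The main obstacle is the interaction between the two parts: without (a), the numerator would only be $O(u_k-\phi_0^*)$ with a leading coefficient also involving $\hat f_1(\phi_0^*)$, and the (non)divergence of the integral would depend delicately on how that coefficient compares to $f_0'(\phi_0^*)$. Establishing $\hat f_1(\phi_0^*)=0$ first is precisely what makes the $-Q/(u_k-\phi_0^*)$ asymptotics (hence the divergence) clean. The only remaining care in (b) is to upgrade the formal comparison of numerator and denominator to a genuine lower bound, e.g.\ via the exponential lower bound on $|u_k(s)-\phi_0^*|$ obtained by linearizing \eqref{eq.1.19} at $\phi_0^*$; this is routine once (a) is available.
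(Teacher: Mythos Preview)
Your proof is correct and follows essentially the same approach as the paper: part (a) is identical (interchange sums, use \eqref{eq.1.03} and the definition of $f_0$, integrate $f_0(u_k)=-u_k''$, and invoke \eqref{eq.3.027}), and for part (b) the paper likewise shows the integrand diverges to $\pm\infty$ as $s\to\infty$, reaching the limit via L'H\^opital's rule on the ratio rather than your explicit Taylor expansion of numerator and denominator, but relying in exactly the same way on (a) to kill the $\hat f_1(\phi_0^*)$ term.
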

\begin{proof}[Proof of \Cref{claim:7}]
By \eqref{eq.D.04} and $\sum_{i=1}^{I}m_{i}z_{i}=0$ (cf. \eqref{eq.1.03}), we have
\begin{align*}
\hat f_1(\phi_{0}^{*})&=\frac{1}{|\Omega|}\sum_{i=1}^{I}\hat m_{i}z_{i}=\frac{1}{|\Omega|}\sum_{i=1}^{I}m_{i}z_{i}\sum_{k=0}^K|\partial\Omega_k|\int_0^\infty\![1-\exp(-z_{i}(u_{k}(s)-\phi_{0}^{*}))]\,\mathrm{d}s\\
&=\frac{1}{|\Omega|}\sum_{k=0}^K|\partial\Omega_k|\int_0^\infty\!\left(\sum_{i=1}^{I}m_{i}z_{i}-\sum_{i=1}^{I}m_{i}z_{i}\exp(-z_{i}(u_{k}(s)-\phi_{0}^{*}))\right)\,\mathrm{d}s\\&=-\frac{1}{|\Omega|}\sum_{k=0}^K|\partial\Omega_k|\int_0^\infty\!f_{0}(u_{k}(s))\,\mathrm{d}s.
\end{align*}
Along with \eqref{eq.1.19}--\eqref{eq.1.20}, \eqref{eq.D.08} and \Cref{proposition:B.2}(a), we get
\[\hat f_1(\phi_{0}^{*})=\frac{1}{|\Omega|}\sum_{k=0}^K|\partial\Omega_k|\int_0^\infty\!u_{k}''(s)\,\mathrm{d}s=\frac{1}{|\Omega|}\sum_{k=0}^K|\partial\Omega_k|(-u_{k}'(0))=\frac{1}{|\Omega|}\sum_{k=0}^K|\partial\Omega_k|\frac{\phi_{bd,k}-u_{k}(0)}{\gamma_k}=0.\]
which shows (a).
To prove (b), we use \eqref{eq.1.19} and L'H\^opital's rule to get
\begin{align*}
\lim_{t\to\infty}\frac{Qf_{0}(u_{k}(t))-\hat F_1(u_{k}(t))}{u_{k}'^2(t)}&=\lim_{t\to\infty}\frac{Qf_{0}'(u_{k}(t))u_{k}'(t)-\hat f_1(u_{k}(t))u_{k}'(t)}{2u_{k}'(t)u_{k}''(t)}\\&=\lim_{t\to\infty}\frac{-Qf_{0}'(u_{k}(t))+\hat f_1(u_{k}(t))}{2f_{0}(u_{k}(t))}.
\end{align*}
By \eqref{eq.1.21}--\eqref{eq.1.28}, \eqref{eq.3.017}, \Cref{proposition:B.1}, we have $f_{0}(u_{k}(t))\neq0$ for $t\geq0$ and $\dd\lim_{t\to\infty}f_{0}(u_{k}(t))=0$.
By \eqref{eq.1.21}, \eqref{eq.3.017}, (a) and the condition $Q\neq0$, we get \[\lim_{t\to\infty}(-Qf_{0}'(u_{k}(t))+\hat f_1(u_{k}(t)))=-Qf_{0}'(\phi_{0}^{*})\neq0.\]
Hence $\left|\int_0^\infty\!(u_{k}'(s))^{-2}[Qf_{0}(u_{k}(s))-\hat F_1(u_{k}(s))]\,\mathrm{d}s\right|=\infty$, which gives (b) and completes the proof of \Cref{claim:7}.
\end{proof}

Now we show that \eqref{eq.D.09} satisfies \eqref{eq.D.03} and obtain the existence of \eqref{eq.D.01}--\eqref{eq.D.03}. For the case that $Q\neq0$, we can use \eqref{eq.B.01}, \Cref{claim:7} and L'H\^opital's rule to get
\begin{align}
\label{eq.D.12}
\begin{aligned}
\lim_{t\to\infty}w_{k}(t)&=\lim_{t\to\infty}\frac{\dd\frac{w_{k}(0)}{u_{k}'(0)}+\int_{0}^{t}\!\frac{Qf_{0}(u_{k}(s))-\hat F_1(u_{k}(s))}{u_{k}'^2(s)}\,\mathrm{d}s}{\dd\frac{1}{u_{k}'(t)}}\\&=\lim_{t\to\infty}\frac{\dd\frac{Qf_{0}(u_{k}(t))-\hat F_1(u_{k}(t))}{u_{k}'^2(t)}}{\dd-\frac{u_{k}''(t)}{u_{k}'^2(t)}}\\
&=\lim_{t\to\infty}\frac{Qf_{0}(u_{k}(t))-\hat F_1(u_{k}(t))}{f_{0}(u_{k}(t))}=Q+\lim_{t\to\infty}\frac{-\hat f_1(u_{k}(t))u_{k}'(t)}{f_{0}'(u_{k}(t))u_{k}'(t)}\\&=Q-\lim_{t\to\infty}\frac{\hat f_1(u_{k}(t))}{f_{0}'(u_{k}(t))}=Q-\frac{\hat f_1(\phi_{0}^{*})}{f_{0}'(\phi_{0}^{*})}=Q,
\end{aligned}
\end{align}
which gives \eqref{eq.D.03} for $Q\neq0$.
This shows the existence of a solution \eqref{eq.D.01}--\eqref{eq.D.03} for $Q\neq0$.
For the case of $Q=0$, we define $w_{k}^0$ and $w_{k}^\pm$ by \eqref{eq.D.09} for $Q=0$ and $Q=\pm1$, respectively.
Then due to the linearity of \eqref{eq.D.09} (with respect to $Q$), $w_{k}^0=(w_{k}^++w_{k}^-)/2$.
By \eqref{eq.D.12} with $Q=\pm1$, we find that $\lim\limits_{t\to\infty}w_{k}^0(t)=\lim\limits_{t\to\infty}(w_{k}^+(t)+w_{k}^-(t))/2=(1+(-1))/2=0$, which implies that $w_{k}^0$ satisfies \eqref{eq.D.03} for $Q=0$.
Therefore, we get the existence of the solution to \eqref{eq.D.01}--\eqref{eq.D.03} for $Q\in\R$.

It remains to prove the uniqueness of the solution to \eqref{eq.D.01}--\eqref{eq.D.03}.
Suppose that there exist two solutions $w_{k}$ and $\tilde w_{k}$ of \eqref{eq.D.01}--\eqref{eq.D.03}.
Let $\overline w_{k}=w_{k}-\tilde w_{k}$.
Then $\overline w_{k}$ satisfies
\begin{align*}
&\overline w_{k}''+f_{0}'(u_{k})\overline w_{k}=0\quad\text{in}~(0,\infty),\\
&\overline w_{k}(0)-\gamma_k\overline w_{k}'(0)=0,\\
&\lim_{t\to\infty}\overline w_{k}(t)=0.
\end{align*}
By the standard maximum principle, it is easy to obtain $\overline w_{k}\equiv0$ on $[0,\infty)$, which gives the uniqueness of solution to \eqref{eq.D.01}--\eqref{eq.D.03}.

\newpage

\bibliographystyle{elsarticle-num.bst}

\end{document}